\theoremstyle{plain}
 \numberwithin{equation}{section}
\newcommand{\id}{\operatorname{id}}
\newcommand{\sch}[1]{\operatorname{{\bf #1}}}
\newcommand{\ind}{\operatorname{ind}}
\newcommand{\res}{\operatorname{res}}
\newcommand{\ho}{\operatorname{Hom}}
\newcommand{\diag}{\operatorname{diag}}
\newcommand{\tr}{\operatorname{tr}}
\newcommand{\End}{\operatorname{End}}
\newtheorem{theorem}{Theorem}[subsection]
\newtheorem{lemma}[theorem]{Lemma}
\newtheorem{remark}[theorem]{Remark}
\newtheorem{proposition}[theorem]{Proposition}
\newtheorem{claim}{Claim}
\newcommand{\proofpart}[2]{\par
  \noindent\emph{Part #1: #2}\par\nobreak
  \@afterheading } \makeatother 
\title{Generic cuspidal representations of $U(2,1)$}
 \author{\large{Santosh Nadimpalli }} 
 \date{\today}
\begin{document}
\begin{abstract}
  Let $F$ be a non-Archimedean local field and let $\sigma$ be a
  non-trivial Galois involution with fixed field $F_0$. When the
  residue characteristic of $F_0$ is odd, using the construction of
  cuspidal representations of classical groups by Stevens, we classify
  generic cuspidal representations of $U(2,1)(F/F_0)$.
\end{abstract}
\maketitle
\section{Introduction}
Let $F$ be a non-Archimedean local field and let $\sigma$ be a
non-trivial Galois involution with fixed field $F_0$. Let $V$ be a
three dimensional $F$-vector space, and let $h:V\times V\rightarrow F$
be a non-degenerate hermitian form with
\begin{equation}\label{Hermitian_form_def}
  h(xv, yw)=x\sigma(y)\sigma(h(w, v))
  \ \text{for all}\ v, w\in V\ \text{and}
  \ x, y\in F.
\end{equation}
Let $\sigma_h$ be the adjoint anti-involution induced by $h$ on
$\End_F(V)$. Let $\sch{G}$ be the unitary group associated to
$(V, h)$, and let $G$ be the group of $F_0$-rational points of
$\sch{G}$. In this article we obtain a classification of generic
cuspidal representations of $G$ using the underlying skew semisimple
strata, in the construction of cuspidal representations of $G$.

Let $\sch{H}$ be a quasi-split reductive algebraic group defined over
$F_0$.  Let $\sch{B}$ be a Borel subgroup of $\sch{H}$, defined over
$F_0$, and let $\sch{U}$ be the unipotent radical of $\sch{B}$. Let
$\sch{T}$ be a maximal $F_0$-split torus of $\sch{H}$ contained in
$\sch{B}$. For any algebraic group $\sch{R}$, defined over $F_0$, we
denote by $R$ the group of $F_0$-rational points of $\sch{R}$. A
character $\psi$ of $U$ is said to be non-degenerate if $\psi$ is
non-trivial on each simple root group of $T$ in $U$.  A representation
$\pi$ of $H$ is called generic if $\ho_{U}(\pi, \Psi)\neq 0$, for some
non-degenerate character $\Psi$ of $U$.

A cuspidal representation $(\pi, V)$ of $H$ is a generic
representation if and only if there exists a non-zero linear
functional $l:V\rightarrow \mathbb{C}$ and a character $\Psi$ of $U$
such that
$$l(\pi(u)v)=\Psi(u)l(v),\ \text{for all}\ u\in U, v\in V.$$
If such a linear functional exists, for $(\pi, V)$, then the character
$\Psi$ necessarily satisfies the non-degenerate condition on the
character $\Psi$. The functional $l$ is called a Whittaker linear
functional.  Moreover, when $H$ equals $G$, genericity of an
irreducible smooth representation does not depend on the choice of the
pair $(U, \Psi)$. A Whittaker linear functional on an irreducible
smooth representation, if it exists, is unique up to scalars (see
\cite{shalika_mul_one} and \cite{rodier_whit_model}).

Whittaker functionals are first used by Jacquet and Langlands to
define the local $L, \epsilon$-factors for ${\rm GL_2}(F_0)$
(see \cite[Theorem 2.18]{auto_forms_gl_2}). These methods and their
generalisations have played a fundamental role in the Langlands
program, and especially in the theory of automorphic $L$-functions
(see \cite{analytic_prop_l_fns_sha_gel} for a survey).  The local
Langlands correspondence gives a natural partition of irreducible
smooth representations of $H$ into finite sets called the
$L$-packets. It is conjectured by Shahidi that there exists a unique
generic representation inside an $L$-packet consisting of irreducible
tempered representations (see \cite[Conjecture
9.4]{shahidi_planc_lang_conj}). When the characteristic of $F$ is
zero, the local Langlands correspondence for the group $G$ is
established by Rogawski in the book \cite{llc_u3}, and Shahidi's
conjecture, for the group $G$, is proved in the paper
\cite{gen_unit_3_var}.  We hope that the results of this article are
useful in understanding an explicit version of the local Langlands
correspondence for $G$, relating the $L$-packets of cuspidal
representations to their inducing data.

Every cuspidal representation of ${\rm GL}_n(F_0)$ is generic (see
\cite[Chapter 3, 5.18]{Bernstein_zelevinsky_0}), but this is no longer
true for classical groups.  The classification of generic positive
depth cuspidal representations from the inducing data is expected to
depend on some subtle arithmetic aspects of the inducing data. When
the characteristic of $F_0$ is zero, $F/F_0$ is unramified, and the
cardinality of the residue field of $F_0$ is odd, Murnaghan classified
the generic cuspidal representations of $G$ in the article
\cite[Theorem 7.13]{murnaghan_char_emp_u_3}.  The methods used in
\cite{murnaghan_char_emp_u_3} are based on character formulas for
cuspidal representations--the Murnaghan--Kirillov formula--and using a
local character expansion to relate with Shalika germs. DeBacker and
Reeder also studied genericity of very cuspidal representations,
arising from an unramified torus, of an unramified $p$-adic group (see
\cite{reeder_debacker_very_cusp_gen}). Blondel and Stevens, using
different techniques from Murnaghan, have classified generic cuspidal
representations of ${\rm Sp}_4(F_0)$, for a non-Archimedean local
field $F_0$ with odd residue characteristic (see
\cite{sp_4_genericity}). The methods of this article are inspired by
the work of Blondel and Stevens on ${\rm Sp}_4(F_0)$.

The explicit construction of cuspidal representations of $G$, when
$F/F_0$ is an unramified extension of $p$-adic fields, goes back to
the work of Moy and Jabon in the articles \cite{moy_u3} and
\cite{jabon_u3_gsp_4} respectively. Later, Blasco, in the article
\cite{blasco_u3}, gave an explicit construction of cuspidal
representations in the line of Bushnell--Kutzko's work on the
admissible dual of ${\rm GL}_n$. In this article, we use the
generalisation of Bushnell--Kutzko construction of cuspidal
representations to classical groups by Stevens, culminating in the
paper \cite{cusps_classical}.

We now describe the results of this article using the language of
strata from the theory of types (see Section \ref{sec_cusp_reps} and
references in {\it loc.cit}).  Let
$\mathfrak{x}=[\Lambda, n,0, \beta]$ be any skew semisimple stratum in
$\End_{F}(V)$, in particular, $\Lambda$ is a lattice sequence, $n$ is
a non-negative integer, $\beta\in \End_F(V)$ with
$\sigma_h(\beta)=-\beta$, and the $G$-stabilizer of $\beta$--for the
adjoint action of $G$ on its Lie algebra--is isomorphic to a product
of unitary groups.  Let $\Pi_\mathfrak{x}$ be the set of all cuspidal
representations containing a type, in the sense of Bushnell--Kutzko,
constructed from the stratum $\mathfrak{x}$. Let $\psi$ be a fixed
non-trivial additive character of $F$, and let $\psi_\beta$ be the
function sending $X\in \End_F(V)$ to $\psi(\tr(\beta(\id_V-X)))$. Let
$\mathfrak{X}_\beta(F_0)$ be the set of all $F_0$-rational Borel
subgroups $\sch{B}$ of $\sch{G}$ such that $\psi_\beta$ defines a
character on the group of $F_0$-rational points of the unipotent
radical of $\sch{B}$. The main result of this article is the following
theorem:
\begin{theorem}\label{intro_main_thm}
  Let $F$ be a non-Archimedean local field with odd residue
  characteristic.  Let $\mathfrak{x}=[\Lambda, n, 0, \beta]$ be any
  skew semisimple stratum with $n>0$. The cuspidal representations in
  the set $\Pi_\mathfrak{x}$ are either all generic or all
  non-generic. If $\mathfrak{X}_\beta(F_0)$ is empty, then every
  cuspidal representation in the set $\Pi_\mathfrak{x}$ is
  non-generic. Except when $\beta$ has a non-degenerate two
  dimensional eigenspace, a cuspidal representation in the set
  $\Pi_\mathfrak{x}$ is generic if and only if
  $\mathfrak{X}_\beta(F_0)$ is non-empty. If $\beta$ has a
  non-degenerate two dimensional eigenspace, $V_2$, then the set
  $\mathfrak{X}_\beta(F_0)$ is non-empty if and only if $(V_2, h)$ is
  isotropic. However, every cuspidal representation in the set
  $\Pi_\mathfrak{x}$ is non-generic.
\end{theorem}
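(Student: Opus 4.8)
The plan is to compute $\ho_{U}(\pi,\Psi)$, for $\pi\in\Pi_\mathfrak{x}$ and a fixed non-degenerate character $\Psi$ of $U$, by Mackey theory and then to reduce it to a question about a finite reductive group, following the strategy of Blondel and Stevens for $\mathrm{Sp}_4$ in \cite{sp_4_genericity}. Write $\pi=\cind_{\tilde J}^{G}\lambda$, where $\tilde J=\tilde J_\beta(\Lambda)$ and $\lambda$ extends a type $\theta$ attached to $\mathfrak{x}$; on the smaller group $J$ one has $\lambda|_{J}\cong\kappa\otimes\tau$, with $\kappa$ a $\beta$-extension of the Heisenberg representation $\eta$ of $J^{1}$ and $\tau$ inflated from a cuspidal representation $\bar\tau$ of the finite reductive quotient $\mathsf{G}_\beta\cong J/J^{1}$, which is a product of finite general linear and unitary groups read off from the blocks $V=\oplus_{i}V^{i}$ of $\beta$. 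The first step is to establish, using the geometric lemma in the form used in \cite{sp_4_genericity} together with the cuspidality of $\pi$ (so that the non-compactness of $U$ is harmless), a decomposition of $\ho_{U}(\pi,\Psi)$ as a sum over the finitely many relevant double cosets $g\in\tilde J\backslash G/U$ of the spaces $\ho_{\tilde J^{g}\cap U}(\lambda^{g},\Psi)$.

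The second step isolates the obstruction coming from $\eta$. Since $\eta$ is the unique irreducible representation of $J^{1}$ whose restriction to the relevant abelian subgroup contains $\psi_\beta$, a double coset $g$ can contribute only if $\psi_\beta^{g}$ is compatible with $\Psi$ on the overlap with $U$; unwinding this forces $g$ to conjugate a suitable Borel into one lying in $\mathfrak{X}_\beta(F_{0})$, that is, one on whose unipotent radical $\psi_\beta$ restricts to a character. Hence if $\mathfrak{X}_\beta(F_{0})=\emptyset$ every term vanishes and every $\pi\in\Pi_\mathfrak{x}$ is non-generic; as this argument involves only $\beta$, the dichotomy ``all generic or all non-generic'' will follow once the case $\mathfrak{X}_\beta(F_{0})\neq\emptyset$ is settled. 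In that case I would fix $\sch{B}\in\mathfrak{X}_\beta(F_{0})$ whose intersection with $\sch{G}_\beta$ is a Borel subgroup of $\sch{G}_\beta$, reduce to the trivial double coset, and identify $\ho_{\tilde J\cap U}(\lambda,\Psi)$ with $\ho_{\bar{\mathsf{U}}_\beta}(\bar\tau,\bar\Psi)$, where $\bar{\mathsf{U}}_\beta$ is the image of $U\cap J$ in $\mathsf{G}_\beta$, a unipotent radical of a Borel of $\mathsf{G}_\beta$, and $\bar\Psi$ is the character of $\bar{\mathsf{U}}_\beta$ that $\Psi$ induces.

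The last step is the analysis of $\mathsf{G}_\beta$. Because $\dim_{F}V=3$, a short computation with the block structure shows that $\mathsf{G}_\beta$ is abelian unless exactly one block is a non-degenerate eigenspace $V_2$ of $\beta$ of $F$-dimension two, with complementary one-dimensional block $V_1$; in that exceptional situation $\mathsf{G}_\beta$ has a non-abelian factor isomorphic to a finite unitary group in two variables. When $\mathsf{G}_\beta$ is abelian, $\bar{\mathsf{U}}_\beta$ is trivial, so $\ho_{\bar{\mathsf{U}}_\beta}(\bar\tau,\bar\Psi)\neq0$; thus $\pi$ is generic and genericity is equivalent to $\mathfrak{X}_\beta(F_{0})\neq\emptyset$, with no dependence on $\bar\tau$. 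In the exceptional case I would first compute $\mathfrak{X}_\beta(F_{0})$ directly: for the Borel $\sch{B}_{L}$ stabilising an isotropic line $L$, the requirement that $\psi_\beta$ be a character on its unipotent radical amounts to $\tr(\beta Z_{L})=0$, where $Z_{L}$ spans the one-dimensional space $[\mathfrak{n}_{L},\mathfrak{n}_{L}]$; a rank-one computation identifies this with the condition $\beta(L)\subseteq L^{\perp}$, which, since $V_1$ is anisotropic and $\beta$ acts by distinct scalars on $V_1$ and $V_2$, holds for some isotropic $L$ precisely when some isotropic line lies inside $V_2$, that is, precisely when $(V_2,h)$ is isotropic. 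Finally, even when $(V_2,h)$ is isotropic it is then a hyperbolic plane, and the unipotent radical of the corresponding Borel of $U(V_2)$ is exactly the ``long'' root group $U_{2\alpha}=[U,U]$ of $U(2,1)$; any character of the Heisenberg group $U$ is trivial on $[U,U]$, so the induced character $\bar\Psi$ of $\bar{\mathsf{U}}_\beta$ is trivial, and cuspidality of $\bar\tau$ forces $\ho_{\bar{\mathsf{U}}_\beta}(\bar\tau,\bar\Psi)=0$; hence every $\pi\in\Pi_\mathfrak{x}$ is non-generic.

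I expect the main obstacle to be the positive direction in the second step: producing an actual non-zero Whittaker functional once $\mathfrak{X}_\beta(F_{0})\neq\emptyset$. This requires choosing the right double coset representative and controlling the interaction of the $\beta$-extension $\kappa$ with the unipotent subgroup through the Iwahori-type decompositions of $\tilde J$ relative to $\sch{B}$, which is the technical core of the method and the part most sensitive to the arithmetic of the stratum; by contrast the vanishing statements and the determination of $\mathfrak{X}_\beta(F_{0})$ in the two-dimensional eigenspace case are comparatively elementary.
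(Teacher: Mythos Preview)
Your overall strategy---Mackey decomposition of $\ho_U(\pi,\Psi)$ over $\tilde J\backslash G/U$, obstruction from $\psi_\beta$, and reduction to the finite reductive quotient $\mathsf G_\beta$---is exactly the Blondel--Stevens approach the paper follows. Your identification of the exceptional case with type {\bf (C)} strata (two-dimensional eigenspace) is correct, as is your computation that $\mathfrak X_\beta(F_0)\neq\emptyset$ forces an isotropic line in $V_2$ there, and your observation that cuspidality of $\bar\tau$ then kills the contribution of the identity coset because the relevant unipotent image is the long root group on which $\Psi$ is trivial.

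However, there is a genuine gap in your step two. The assertion that ``$\psi_\beta^g$ compatible with $\Psi$ on the overlap with $U$'' \emph{forces} $g^{-1}\sch Bg\in\mathfrak X_\beta(F_0)$ is not justified and is in fact too strong: compatibility is only required on the compact subgroup $H^1(\Lambda,\beta)^g\cap U$, not on all of $U$, so emptiness of $\mathfrak X_\beta(F_0)$ does not immediately kill every term. What one actually needs is that for every coset representative $g$ the character $\psi_\beta^g$ is already non-trivial on the compact piece $P_{(n/2)+}(\Lambda)^g\cap U_{\rm der}$ (where $\Psi$ is trivial). This is a quantitative statement---a valuation inequality of the form $\nu_{F_0}(\delta h(ge_1,\beta ge_1))\le -r$ with $r$ determined by the lattice filtration---and the paper establishes it case by case via explicit estimates (Lemma~\ref{basic_inequality} combined with the computations in Lemmas~\ref{type_B_iso_q_1<q_2}, \ref{type_B_ram_iso_q_1>q_2_non_gen}, \ref{type_B_aniso_q_1>q_2}, \ref{type_B_ram_aniso_q_1>q_2}, \ref{type_D_unram_non_gen_val_sep}--\ref{type_D_unram_non_gen}). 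No uniform ``unwinding'' argument is given or apparent. The same issue recurs in your type~{\bf (C)} isotropic argument: you correctly dispose of the trivial coset, but the non-trivial cosets require the ``shallow element'' machinery of \S\ref{type_C_sec} (the functions $d(\mathfrak x,w,x)$ and Lemmas~\ref{type_C_iso_weight}--\ref{type_C_iso_non-gen}) to show that conjugation by $u^-\in\mathcal I\cap\overline{U^w}$ still produces a non-trivial restriction to $U_{\rm der}^w$.

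Your difficulty assessment is therefore inverted. The positive direction (producing a Whittaker functional when $\mathfrak X_\beta(F_0)\neq\emptyset$ and $\mathsf G_\beta$ is anisotropic) is the short Proposition~\ref{bl_st_prop}: one only needs \emph{one} coset to contribute, and the identity coset does, modulo checking $\theta|_{H^1\cap U}=\psi_\beta|_{H^1\cap U}$ (which itself needs a small argument in types {\bf (B)} and {\bf (D)}, see Lemmas~\ref{type_B_iso_q_1>q_2_generic} and~\ref{type_D_gen_main}). The negative direction---showing \emph{every} coset fails---is the bulk of the work and is where your sketch is incomplete.
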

The set $\mathfrak{X}_\beta(F_0)$ is the set of $F_0$-rational points
of a closed subvariety $\mathfrak{X}_\beta$ of the variety of Borel
subgroups of $\sch{G}$. We determine necessary and sufficient
conditions on $\beta$ for the non-emptiness of
$\mathfrak{X}_\beta(F_0)$.  Hence, we obtain a more explicit form of
Theorem \ref{intro_main_thm}, and for this, we refer to Theorem
\ref{summary_main_theorem}. The genericity of depth-zero cuspidal
representations $G$ is well understood (see \cite[Section
6]{depth-zero_debacker_reeder}). However, we recall these results for
giving a complete analysis of genericity of cuspidal representations
of $G$, especially, for those results not stated in the literature,
for instance, when $F/F_0$ is ramified.

In general, the proofs use the explicit construction of cuspidal
representations for classical groups by Stevens in the articles
\cite{semisimple_char_classical} and \cite{cusps_classical} and
Mackey-decomposition to understand the restriction of a cuspidal
representation to a maximal unipotent subgroup of $G$.  Blondel and
Stevens, in the article \cite{sp_4_genericity}, related the set
$\mathfrak{X}_\beta(F_0)$ with the problem of genericity of cuspidal
representations of ${\rm Sp}_4(F_0)$.  We use the approach in
\cite{sp_4_genericity} to classify generic cuspidal representations of
$G$; however, there are significant differences from the case of
${\rm Sp}_4(F_0)$. In the case of ${\rm Sp}_4(F_0)$, the variety
$\mathfrak{X}_\beta$ is a $\mathbb{P}^1$-fibre space over a quadratic
hypersurface--in a $3$-dimensional projective space over $F_0$. Hence,
the problem of finding rational points on $\mathfrak{X}_\beta$ is
reduced to that of a quadratic hypersurface.  For the unitary group in
$3$-variables, the variety $\mathfrak{X}_\beta$ is the intersection of
two quadratic hypersurfaces in a $5$-dimensional projective space over
$F_0$. This poses some arithmetic difficulties in understanding the
$F_0$-rational points on $\mathfrak{X}_\beta$. The other significant
difference between ${\rm Sp}_4(F_0)$ and $G$ is that the isomorphism
class of a non-degenerate subspace of $(V, h)$ is not determined by
its dimension. For instance, a $2$-dimensional non-degenerate subspace
of $(V, h)$ can be either isotropic or anisotropic. The problem of
genericity depends on these differences.

We briefly sketch the contents of each section. The
algebra $F[\beta]$ is a direct sum of fields, say
$$F[\beta]=F[\beta_1]\oplus F\beta_2]\oplus\cdots\oplus F[\beta_k]$$
with $\beta=\sum_{i=1}^k\beta_i$ and $\sigma_h(\beta_i)=-\beta_i$, for
$1\leq i\leq k$. This decomposition of $F[\beta]$ corresponds to a
maximal orthogonal decomposition of $V=\perp_{i=1}^k V_i$, for the
property that $F[\beta]$ acts on $V_i$ via its projection onto
$F[\beta_i]$, for $1\leq i\leq k$. The above decomposition is unique
and is determined by $\beta$. In Section \ref{prelims}, we set up some
preliminaries to prove non-genericity results. In Section
\ref{sec_cusp_reps}, we review some useful results from the
construction of cuspidal representations of $G$.  In Section
\ref{the-simple_case}, we consider the case where $F[\beta]$ is a
field. When the characteristic of $F_0$ is zero, the $L$-packet
containing a cuspidal representation from the set $\Pi_\mathfrak{x}$
has cardinality $1$. Hence, any representation in the set
$\Pi_\mathfrak{x}$ is expected to be generic. We first prove that
$\mathfrak{X}_\beta(F_0)$ is non-empty, and using this, we will show
that every representation in the set $\Pi_\mathfrak{x}$ is generic.

In Section \ref{type_B_sec}, we consider the case where $F[\beta]$ is a
$3$-dimensional algebra, $F[\beta]=F[\beta_2]\oplus F[\beta_1]$, and
$\beta=\beta_2+\beta_1$ such that $[F[\beta_2]:F]=2$. We will
completely determine when $\mathfrak{X}_\beta(F_0)$ is non-empty and
this depends only on the valuation of $\beta_i$, in the field
$F[\beta_i]$, and on the isomorphism class of the hermitian space
$(V_2, h)$. Then we will use these results to show that a
representation in the set $\Pi_\mathfrak{x}$ is generic if and
only if the set $\mathfrak{X}_\beta(F_0)$ is non-empty. In this case,
we sometimes have to find a nice integral model of
$\mathfrak{X}_\beta$ and lift points from its special fibre.

In Sections \ref{type_C_sec} and \ref{type_D_sec}, we treat the cases
where $F[\beta]$ is a direct sum of two copies of $F$ and three copies
of $F$, respectively. The strategy is similar to that of the previous
sections. But, in Section \ref{type_C_sec}, we will see examples when
$\mathfrak{X}_\beta(F_0)$ is non-empty, and yet every representation in
the set $\Pi_\mathfrak{x}$ is non-generic. We note that $\beta$ is not
a regular semisimple element in this case. 

In Section \ref{type_D_sec}, we have $\beta=\beta_1+\beta_2+\beta_2$,
with $\beta_i\in F$ and $\sigma(\beta_i)=-\beta_i$. When $F/F_0$ is
unramified, the non-emptiness of the set $\mathfrak{X}_\beta(F_0)$
depends only on the valuations of $\beta_i$. However, when $F/F_0$ is
ramified the information on the valuations of $\beta_i$, for
$1\leq i\leq 3$, is not enough to determine whether the set
$\mathfrak{X}_\beta(F_0)$ is empty or not. Although it is fairly easy
to determine the conditions on $\beta$ for the non-emptiness of
$\mathfrak{X}_\beta(F_0)$, these conditions do not involve the natural
invariants of the stratum $\mathfrak{x}$; hence, we did not make them
explicit.  Nonetheless, we will show that a representation in the set
$\Pi_\mathfrak{x}$ is generic if and only if $\mathfrak{X}_\beta(F_0)$
is non-empty.

{\bf Acknowledgements\ } I want to thank Maarten Solleveld for many
useful discussions and clarifications during the course of this
work. The author is supported by the NWO Vidi grant ``A Hecke algebra
approach to the local Langlands correspondence'' (nr. 639.032.528). I
want to thank Shaun Stevens for answering some questions on his paper,
for his interest, and other clarifications. I want to thank Peter
Badea for suggesting some useful references. I want to thank Kam-Fai
Geo Tam for helpful discussions. 
\section{Preliminaries}\label{prelims}
All representations in this article are defined over $\mathbb{C}$-vector
spaces. Let $G, H$ be two groups with $H\subset G$, and let $\rho$ be a
representation of $H$. We denote by $\rho^g$ the representation of
$g^{-1}Hg$ sending $h\in g^{-1}Hg$ to $\rho(ghg^{-1})$. The group
$g^{-1}Hg$ is denoted by $H^g$.

For any real number $x$, we denote by $\lfloor x\rfloor$ the greatest
integer less than or equal to $x$. Let $\lceil x\rceil$ be the
smallest integer greater than or equal to $x$. Let $x+$ be the
smallest integer strictly bigger than $x$ and $x-$ be the greatest
integer strictly smaller than $x$.

Let $(W,h)$ be a pair consisting of an $F$-vector space and a
non-degenerate hermitian form $h$ on $W$. Let $(W', h)$ be a
non-degenerate subspace of $(W, h)$. Then let ${\bf 1}_{W'}$ be the
projection of $W$ onto $W'$ with kernel $(W')^\perp$.
\subsection{}\label{local_fields}
For a non-Archimedean local field $K$, let $\mathfrak{o}_K$ be the
ring of integers of $K$, let $\mathfrak{p}_K$ be the maximal ideal of
$\mathfrak{o}_K$, let $k_K$ be the residue field
$\mathfrak{o}_K/\mathfrak{p}_K$, and $q_K$ denotes the cardinality of
the residue field $k_K$. Let $\nu_K$ be the normalised valuation of
$K$. From now we assume that $q_K$ is odd.

Let $F$ be a non-Archimedean local field with a Galois involution
$\sigma$. Let $F_0$ be the fixed field of $\sigma$. Let $\varpi$ be a
uniformizer of $F$ such that $\sigma(\varpi)=(-1)^{e(F|F_0)-1}\varpi$.
When $F/F_0$ is ramified, we set $\varpi_0$ to be the element
${\rm Nr}_{F/F_0}(\varpi)$, and when $F/F_0$ is unramified, we set
$\varpi_0=\varpi$. The element $\varpi_0$ is a uniformizer of $F_0$.
Let $\psi_0$ be a fixed additive character of $F_0$ with conductor
$\mathfrak{p}_{F_0}$.  The character $\psi_0\circ{\rm tr}_{F/F_0}$ is
denoted by $\psi_F$.  Let $F=F_0[\delta]$, where
$\sigma(\delta)=-\delta$ and $\nu_F(\delta)=e(F|F_0)-1$. Let
$\nu_{F/F_0}$ be the valuation of $F$ extending the normalised
valuation of $F_0$.

For any $F_0$-scheme $\sch{X}$, we denote by $X$ the set of
$F_0$-rational points of $\sch{X}$. If $\sch{H}$ is any linear
algebraic group over $F_0$, then the group $H$ is considered as a
topological group whose topology is induced from the non-Archimedean
metric on $F$.
\subsection{}
Let $V$ be a three dimensional $F$-vector space and let $h$ be a
non-degenerate hermitian form on $V$, as defined in
\eqref{Hermitian_form_def}. Let $\sigma_h$ be the adjoint
anti-involution on $\End_F(V)$ induced by the hermitian form $h$. The
hermitian space $(V,h)$ is isotropic, and we assume that the
determinant of $(V, h)$ is the trivial class in
$F_0^\times/{\rm Nr}_{F/F_0}(F^\times)$.  Let $\sch{G}$ be the unitary
$F_0$-group scheme associated with the pair $(V, h)$. We identify the
Lie algebra, $\mathfrak{g}$, of $\sch{G}$ with
$$\{X\in \End_F(V): \sigma_h(X)=-X\}.$$
From now the algebra $\End_F(V)$ is denoted by $A$.
\subsection{}
A basis, $(e_1, e_0, e_{-1})$, of $(V, h)$ is called a {\it
  Witt-basis} if $h(e_1, e_1)=h(e_{-1}, e_{-1})=0$,
$h(e_1, e_{-1})=1$, and $e_0\in \langle e_1, e_{-1}\rangle^\perp$ with
$h(e_0, e_0)=1$. A basis, $(e_1, e_{-1})$, for a two dimensional
hermitian space $(V', h')$, is called a {\it Witt-basis} if
$h(e_1, e_1)=h(e_{-1}, e_{-1})=0$, and $h(e_1, e_{-1})=1$. Let
$\sch{B}$ be any $F_0$-rational Borel subgroup of $\sch{G}$, and let
$\sch{U}$ be the unipotent radical of $\sch{B}$. Let $\sch{T}$ be a
maximal $F_0$-split torus of $\sch{G}$ contained in $\sch{B}$. Let
$\bar{\sch{U}}$ be the unipotent radical of the opposite Borel
subgroup, $\bar{\sch{B}}$, of $\sch{B}$ with respect to $\sch{T}$. Let
$\sch{Z}$ and $\sch{N}$ be the centraliser and the normaliser of
$\sch{T}$ respectively. We denote by $W_G$ the Weyl group
$\sch{N}/\sch{Z}$.

There exists a Witt-basis $(e_1, e_0, e_{-1})$ of $V$--giving an
embedding of $G$ in ${\rm GL}_3(F)$--such that $B$ stabilises the line
$\langle e_1\rangle$. The groups $T$ and $Z$ are identified with the
groups
$$\{\diag(t, 1, t^{-1}): t\in F_0^\times\}\  \text{and}\ 
\{\diag(z, z', \sigma(z)^{-1}): z, z'\in F^\times,\
z'\sigma(z')=1\}$$ respectively. The groups $U$ and $\bar{U}$ are
identified with the groups
\begin{align*}&\left\{u(c,d):=\begin{pmatrix}
      1&c&d\\0&1&-\sigma(c)\\
      0&0&1\end{pmatrix}: c, d\in F, \
    c\sigma(c)+d+\sigma(d)=0\right\}, \\
  &\left\{\bar{u}(c,d):=\begin{pmatrix}
      1&0&0\\c&1&0\\
      d&-\sigma(c)&1\end{pmatrix}: c, d\in F, \
    c\sigma(c)+d+\sigma(d)=0\right\}
\end{align*}
respectively. The derived groups of $U$ and $\bar{U}$, denoted by
$U_{\text{der}}$ and $\bar{U}_{\text{der}}$ respectively, and they are
identified with the groups $\{u(0, d):d\in F, d+\sigma(d)=0\}$ and
$\{\bar{u}(0, d): d\in F, d+\sigma(d)=0\}$ respectively. Let
$\{U_{\rm der}(r):r \in \mathbb{Z}\}$ be a filtration of compact
subgroups of $U_{\rm der}$ defined as follows:
\begin{equation}
  U_{\rm der}(r):=\{u(0, y): y\in \delta\mathfrak{p}_{F_0}^r\}. 
\end{equation}
Similarly, we set $\bar{U}_{\rm der}(r)$ to be the group $\{\bar{u}(0,
y): y\in \delta\mathfrak{p}_{F_0}^r\}$, for $r\in \mathbb{Z}$.
\subsection{}\label{subsection_generic}
Let $\sch{U}$ be the unipotent radical of an $F_0$-rational Borel subgroup
$\sch{B}$ of $\sch{G}$. An irreducible smooth representation $(\pi,
W)$ of $G$ is called a {\it generic representation} if and only if
there exists a non-zero linear functional $l:W\rightarrow \mathbb{C}$ and a
non-trivial character $\Psi$ of $U$ such that
\begin{equation}
  l(\pi(u)w)=\Psi(u)l(w)\ \text{for all}\ u\in U, w\in W.
\end{equation}
The group $Z$ acts transitively on the set of non-trivial characters
of $U$, and hence the genericity of an irreducible smooth
representation $(\pi, W)$ of $G$ does not depend on a choice of the
pair $(U, \Psi)$. The linear functional $l:W\rightarrow \mathbb{C}$ is
called a {\it Whittaker linear functional}.  We have
$$\dim_\mathbb{C}\ho_{U}(\pi, \Psi)\leq 1,$$
for any irreducible smooth representation $\pi$ of $G$, and a non-trivial 
character $\Psi$ of $U$ (see \cite{shalika_mul_one} and
\cite{rodier_whit_model}).
\subsection{}\label{possible_whit_models}
Let $\beta$ be an element in the algebra $A$. Let $\psi_\beta$ be the
function on $A$ given by
$$\psi_\beta(X)=\psi_F(\text{tr}(\beta(\id_V-X)))\
\text{for all}\ X\in A.$$
Let $V_1\subset V_2\subset V$ be a complete flag of $F$-vector spaces,
and let $P$ be the Borel subgroup of ${\rm GL}_F(V)$ fixing this
flag. Let $Y$ be the unipotent radical of $P$.  If $V_2=V_1^\perp$,
then $Y\cap G$ is the unipotent radical of the Borel subgroup
$P\cap G$ of $G$. The function $\psi_\beta$ is a character of $Y$ if
and only if
\begin{equation}\label{flag_to_character}
\beta V_1\subset V_2.
\end{equation}  
Let $\mathfrak{B}$ be the variety of Borel subgroups of $\sch{G}$. Let
$\mathfrak{X}_\beta(F_0)$ be the following subset of
$\mathfrak{B}(F_0)$:
\begin{equation}
  \mathfrak{X}_\beta(F_0)=\{\sch{B}\in \mathfrak{B}(F_0):
  \psi_\beta\ \text{is a character of}\ 
  {\rm R}_u(\sch{B})(F_0)\}.
\end{equation}
Here, ${\rm R}_u(\sch{B})$ is the unipotent radical of a Borel
subgroup $\sch{B}$ of $\sch{G}$. Note that the set
$\mathfrak{X}_\beta(F_0)$ is the set of $F_0$-rational points of a
closed sub-variety of $\mathfrak{B}$, to be denoted by
$\mathfrak{X}_\beta$.
\subsection{}
The following lemma is frequently used in proving certain cuspidal
representations are non-generic. Let $(e_1, e_0, e_{-1})$ be a
Witt-basis for $(V, h)$ and let $B$ be the Borel subgroup of $G$
fixing the line $\langle e_1\rangle$. Let $U$ be the unipotent radical
of $B$. Using the basis $(e_1, e_0, e_{-1})$, we identify $G$ as a
subgroup of ${\rm GL}_3(F)$.
\begin{lemma}\label{basic_inequality}
  Let $g$ be an element of $G$, and let $r$ be an integer. The
  character $\psi_\beta^g$ of $U_{\rm der}(r)$ is non-trivial if and
  only if
$$\nu_{F_0}(\delta h(ge_{1}, \beta
ge_{1}))\leq -r.$$ Similarly, the character $\psi_\beta^g$ of the
group $\bar{U}_{\rm der}(r)$, is non-trivial if and only if
$$\nu_{F_0}(\delta h(ge_{-1}, \beta ge_{-1}))\leq -r.$$ 
\end{lemma}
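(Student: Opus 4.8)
The plan is to compute the character $\psi_\beta^g$ explicitly on the one-parameter group $U_{\rm der}(r)$ and read off when it is non-trivial. Recall that $U_{\rm der}(r)=\{u(0,y):y\in\delta\mathfrak{p}_{F_0}^r\}$, and that $\psi_\beta^g$ is by definition the character sending $h\in g^{-1}U_{\rm der}(r)g$ to $\psi_\beta(ghg^{-1})$; equivalently, as a function on $U_{\rm der}(r)$ it is $u\mapsto\psi_\beta(g u g^{-1})=\psi_F(\tr(\beta(\id_V-gug^{-1})))$. Since $u(0,y)=\id_V+yE$ where $E$ is the rank-one nilpotent matrix $e_{-1}\mapsto e_1$, $e_0,e_1\mapsto 0$ (in the Witt-basis coordinates), we get $\id_V-gu(0,y)g^{-1}=-y\,gEg^{-1}$, and hence
$$\psi_\beta^g(u(0,y))=\psi_F\bigl(-y\,\tr(\beta gEg^{-1})\bigr)=\psi_F\bigl(-y\,\tr(g^{-1}\beta g\,E)\bigr).$$
So I first need to identify $\tr(g^{-1}\beta g\,E)$ as a hermitian-form pairing.

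The second step is the linear-algebra identity $\tr(XE)=h(e_{-1},Xe_{-1})$ for any $X\in A$, where $E$ is the rank-one operator above. Indeed $E=|e_1\rangle\langle e_{-1}|$ in the sense that $Ev=h(e_{-1},v)\,e_1$ — here one uses that in the Witt-basis $h(e_1,e_{-1})=1$, so the coefficient functional dual to $e_1$ is $v\mapsto h(e_{-1},v)$ up to the conjugation built into $h$; a small check with the semilinearity convention \eqref{Hermitian_form_def} pins down the precise form, but the upshot is $\tr(XE)=\sigma(h(e_{-1},Xe_{-1}))$ or $h(Xe_{-1},e_{-1})$, which has the same $F_0$-valuation. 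Applying this with $X=g^{-1}\beta g$ and using $G$-equivariance of $h$ (i.e. $h(g^{-1}v,g^{-1}w)=h(v,w)$, equivalently $\sigma_h(g)=g^{-1}$) gives $\tr(g^{-1}\beta g\,E)= h(g^{-1}\beta g\, e_{-1}, e_{-1})$ up to $\sigma$, and then $=h(\beta g e_{-1}, g e_{-1})$ up to $\sigma$. Wait — I should be careful about which of $e_1$ or $e_{-1}$ appears: since $U_{\rm der}$ sits inside $U$, the group fixing the line $\langle e_1\rangle$, the relevant vector for $U_{\rm der}(r)$ is $e_{-1}$ (the off-diagonal entry hits $e_{-1}$ and lands in $e_1$), while for $\bar U_{\rm der}(r)$ it is $e_1$. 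This matches the asymmetry in the statement.

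The third step is to convert ``$\psi_F(-y\,c)$ is non-trivial for some $y\in\delta\mathfrak{p}_{F_0}^r$'' into a valuation inequality. Since $\psi_F=\psi_0\circ\tr_{F/F_0}$ and $\psi_0$ has conductor $\mathfrak{p}_{F_0}$, and writing $c=h(\beta ge_{-1},ge_{-1})$ (or its $\sigma$-conjugate — same valuation), the character $y\mapsto\psi_F(yc)$, restricted to $y\in\delta\mathfrak{p}_{F_0}^r$, is non-trivial precisely when $\delta c\,\mathfrak{p}_{F_0}^r\not\subset\mathfrak{p}_{F_0}$, i.e. when $\nu_{F_0}(\delta c)+r\le 0$, that is $\nu_{F_0}(\delta h(ge_{-1},\beta ge_{-1}))\le -r$ — here I use that $\delta c\in F$ and the pairing $\tr_{F/F_0}$ together with $\psi_0$ detects the $\mathfrak{o}_{F_0}$-valuation of $\delta c$ exactly because $\mathfrak{o}_F$ is self-dual up to the right shift, the role of $\delta$ being exactly to correct for the different $\mathfrak{d}_{F/F_0}$ when $F/F_0$ is ramified (recall $\nu_F(\delta)=e(F|F_0)-1$). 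The second assertion follows identically with $E$ replaced by its transpose $\bar E=|e_{-1}\rangle\langle e_1|$ and $e_{-1}$ by $e_1$.

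The main obstacle I anticipate is purely bookkeeping: getting the semilinearity conventions and the placement of $\sigma$ consistent, so that (a) $\psi_\beta^g$ genuinely is a \emph{character} of $U_{\rm der}(r)$ — which needs $h(\beta ge_{-1},ge_{-1})+\sigma$ of itself to behave additively, automatic because $\sigma_h(\beta)=-\beta$ forces $h(\beta v,v)$ to lie in the trace-zero part, but this is what makes $U_{\rm der}(r)$ (rather than a twisted version) the right group — and (b) the valuation of $\delta h(ge_{-1},\beta ge_{-1})$ rather than of $h(ge_{-1},\beta ge_{-1})$ is the correct quantity, which is exactly the self-duality computation $\{x\in F:\tr_{F/F_0}(x\delta\mathfrak{o}_{F_0})\subset\mathfrak{o}_{F_0}\}=\mathfrak{o}_F$ together with the conductor-$\mathfrak{p}_{F_0}$ normalization of $\psi_0$. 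Once these conventions are fixed the three steps above are immediate.
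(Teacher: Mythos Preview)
Your approach is exactly the paper's: expand $\psi_\beta^g$ on $u(0,y)$, identify the trace as a value of $h$, and read off the valuation condition from the conductor of $\psi_0$. But there is a concrete bookkeeping error in your second step that reverses the conclusion.

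You write $E=|e_1\rangle\langle e_{-1}|$ with $Ev=h(e_{-1},v)\,e_1$ and then conclude $\tr(XE)=h(e_{-1},Xe_{-1})$. Check this against $v=e_{-1}$: your formula gives $Ee_{-1}=h(e_{-1},e_{-1})\,e_1=0$, whereas in fact $Ee_{-1}=e_1$. The correct rank-one description (linear in the first slot of $h$) is $Ev=h(v,e_1)\,e_1$, so that
\[
\tr(XE)=h(Xe_1,e_1)=h(g^{-1}\beta g\,e_1,e_1)=h(\beta ge_1,ge_1),
\]
which up to a sign is $h(ge_1,\beta ge_1)$. Thus the vector governing $U_{\rm der}(r)$ is $e_1$, not $e_{-1}$, matching the statement; your paragraph beginning ``Wait --- I should be careful'' lands on the wrong one. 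One sanity check that would have caught this: $U$ fixes $e_1$, so $\psi_\beta^g$ on $U_{\rm der}$ should see only the line $\langle ge_1\rangle$.

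Your third step is correct but overcomplicated. Because $\sigma_h(\beta)=-\beta$, the element $c:=h(ge_1,\beta ge_1)$ satisfies $\sigma(c)=-c$, hence $c\in\delta F_0$ and $\delta c\in F_0$. Writing $y=\delta d$ with $d\in\mathfrak{p}_{F_0}^r$ gives $\tr_{F/F_0}(yc)=2d\cdot\delta c$, and since $2\in\mathfrak{o}_{F_0}^\times$ the non-triviality on $\mathfrak{p}_{F_0}^r$ is exactly $\nu_{F_0}(\delta c)\le -r$. There is no need to invoke the different or self-duality of $\mathfrak{o}_F$; the point is simply that $\delta c$ already lies in $F_0$. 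This is how the paper argues.
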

\begin{proof}
  We prove the lemma for $U_{\rm der}(r)$, and the other case is
  similar.  Let $X_{\rm der}$ be the $3\times 3$ matrix
$$\begin{pmatrix}
0&0&1\\0&0&0\\0&0&0
\end{pmatrix}.$$ We have the following equality:
$$\psi_\beta(gu(0, \delta d)g^{-1})=\psi_0(d({\rm
  tr}_{F/F_0}(\delta\tr(\beta gX_{\text{der}}g^{-1}))).$$ Note that
$\tr(\beta gX_{\text{der}}g^{-1})$ is equal to
$\tr(g^{-1}\beta gX_{\text{der}})$, and
$\tr(g^{-1}\beta gX_{\text{der}})$ is equal to
$h(ge_{1}, \beta ge_{1})$. Since $\sigma_h(\beta)=-\beta$, we get that
$${\rm tr}_{F/F_0}(\delta h(ge_{1}, \beta ge_{1}))
=2\delta h(ge_{1}, \beta g e_{1}).$$ Hence, the character
$\psi_\beta^g$ is trivial on $U_{\text{der}}(r)$ if and only if the
character $d'\mapsto \psi_0(d'\delta h(ge_{1}, \beta ge_{1}))$, for
$d'\in \mathfrak{p}_{F_0}^r$, is trivial. Since the conductor of
$\psi_0$ is equal to $\mathfrak{p}_{F_0}$, we get the required
inequality.
\end{proof}
\section{Strata and cuspidal representations}
\label{sec_cusp_reps}
In this section, we recall the construction of cuspidal
representations of $G$, via Bushnell--Kutzko's theory of types. We
refer to the articles \cite{semisimple_char_classical},
\cite{cusps_classical} and \cite{miyauchi_stevens} for more details.
\subsection{}\label{lattice_sequences}
An $\mathfrak{o}_F$-{\it lattice sequence}, $\Lambda$, on $V$ is a
function from $\mathbb{Z}$ to the set of $\mathfrak{o}_F$-lattices in
$V$ satisfying the following conditions:
\begin{enumerate}
\item $\Lambda(n+1)\subseteq \Lambda(n)$, for all $n\in \mathbb{Z}$, 
\item there exists a positive integer $e(\Lambda)$ such that
  $\Lambda(n+e(\Lambda))=\mathfrak{p}_F\Lambda(n)$, for any
  $n\in \mathbb{Z}$.
\end{enumerate}
Given any lattice $\mathcal{L}\subset V$, let $\mathcal{L}^\#$ be the
lattice $\{v\in V:\ h(v, \mathcal{L})\subset \mathfrak{p}_F\}$.  For
any lattice sequence $\Lambda$, let $\Lambda^\#$ be the lattice
sequence defined as:
$$\Lambda^\#(n)=\Lambda(-n)^\#,\ \text{for all}\  n\in \mathbb{Z}.$$
A lattice sequence $\Lambda$ is said to be {\it self-dual} if there
exists an integer $d$ such that $\Lambda^\#(n)=\Lambda(n+d)$, for all
$n\in \mathbb{Z}$. Since we only use $\mathfrak{o}_F$-lattice
sequences, we call them directly as lattice sequences.

Let $W$ be a subspace of the vector space $V$, and let $\Lambda$ be a
lattice sequence on $V$. We denote by $\Lambda\cap W$ the lattice
sequence on $W$ sending $n$ to $\Lambda(n)\cap W$.

Given any lattice sequence $\Lambda$ and integers $a, b\in
\mathbb{Z}$, the lattice sequence $a\Lambda+b$ is defined by setting
$$(a\Lambda+b)(n)=\Lambda(\lceil (n-b)/a\rceil),
\ \text{for all}\ n\in \mathbb{Z}.$$ The set of lattice sequences
$\{a\Lambda+b:\ a, b\in \mathbb{Z}\}$ is called the {\it affine
  class} of $\Lambda$. For any self-dual lattice-sequence $\Lambda$,
we can find a lattice sequence $\Lambda'$ in the affine class of
$\Lambda$ such that $e(\Lambda')$ is an even integer, and
$(\Lambda')^\#=\Lambda'-1$. Henceforth, we assume that all self-dual
lattice sequences satisfy these conditions.
\subsection{}
Given any lattice sequence $\Lambda$, and an integer $n$, let
$\tilde{\mathfrak{a}}_{n}(\Lambda)$ be the following sublattice of
$\End_F(V)$:
$$\tilde{\mathfrak{a}}_n(\Lambda)=
\left\{T\in \End_F(V): T\Lambda(i)\subset \Lambda(i+n)\ \forall\ i\in
\mathbb{Z}\right\}.$$ The decreasing sequence of lattices
$\{\tilde{\mathfrak{a}}_{n}(\Lambda):n\geq 0\}$ has trivial
intersection. Given any element $T\in \End_F(V)$, we denote by
$\nu_\Lambda(T)$ the unique integer $k$ such that $T\in
\tilde{\mathfrak{a}}_k(\Lambda)$ and $T\not\in
\tilde{\mathfrak{a}}_{k+1}(\Lambda)$. Let $\tilde{P}_0(\Lambda)$ be
the units in the ring ${\tilde{\mathfrak{a}}}_0(\Lambda)$. For any
positive integer $n$, let $\tilde{P}_n(\Lambda)$ be the compact open
subgroup $\id_V+{\tilde{\mathfrak{a}}}_n(\Lambda)$ of ${\rm
  GL}_F(V)$. For any self-dual lattice sequence $\Lambda$, the
lattices $\tilde{\mathfrak{a}}_{n}(\Lambda)$ are stable under
$\sigma_h$. For any non-negative integer $n$, let $P_n(\Lambda)$ be
the compact open subgroup $\tilde{P}(\Lambda)\cap G$ of $G$.

The group $P_0(\Lambda)/P_1(\Lambda)$ is the set of $k_{F_0}$-rational
points of a (not necessarily connected) reductive algebraic group over
$k_{F_0}$, and let $P^0(\Lambda)$ be the inverse image of the
$k_{F_0}$-rational points of its connected component. The compact
subgroup $P^0(\Lambda)$ is called the parahoric subgroup associated to
$\Lambda$. If $F/F_0$ is unramified, then $P^0(\Lambda)$ is equal to
$P_0(\Lambda)$ and has index $2$ in $P_0(\Lambda)$ otherwise.

A {\it stratum} in $\End_F(V)$ is the data, $[\Lambda, n, r, \beta]$,
consisting of a lattice sequence $\Lambda$ on $V$, integers $n\geq
r\geq 0$, and an element $\beta\in \End_F(V)$ such that $\beta\in
\tilde{a}_{-n}(\Lambda)$. Two strata $[\Lambda, n, r, \beta_1]$ and
$[\Lambda, n, r, \beta_2]$ are said to be equivalent if 
$\beta_2-\beta_1\in \tilde{a}_{-r}(\Lambda)$. A stratum $[\Lambda, n,
r, \beta]$ is called a {\it zero stratum} if $n=r$ and
$\beta=0$. For $n\geq r\geq n/2>0$, the set of equivalence classes
of strata are in bijection with the characters of the group
$\tilde{P}_{r+1}(\Lambda)/\tilde{P}_{n+1}(\Lambda)$. The character
corresponding to the equivalence class containing the stratum
$[\Lambda, n, r, \beta]$ is given by
$$\psi_\beta:\id_V+X\mapsto  
\psi_F({\rm tr}\beta X),\ \text{for}\ X\in \tilde{a}_n(\Lambda).$$

A stratum is called {\it skew} if the lattice sequence $\Lambda$ is
self-dual and $\beta\in \mathfrak{g}$. We have the same notion of
equivalence on skew strata. For $n\geq r\geq n/2> 0$, an equivalence
class of skew strata corresponds to a character on the group
$P_{r+1}(\Lambda)/P_{n+1}(\Lambda)$, given by
$\res_{P_{r+1}(\Lambda)}\psi_\beta$.
\subsection{}\label{strata}
Recall that a stratum $[\Lambda, n, r, \beta]$ is called a {\it simple
  stratum} if it satisfies the
following conditions:
\begin{enumerate}
\item We have $n\geq r\geq 0$,
\item The valuation of $\beta$ with respect to $\Lambda$, denoted by
  $\nu_\Lambda(\beta)$, is equal to $-n$.
\item The algebra $F[\beta]$ is a field and it normalises the lattice
  sequence $\Lambda$.
\item We have $r< -k_0(\Lambda, \beta)$, where $k_0(\Lambda, \beta)$
  is the {\it critical constant} defined in \cite[Section
  1.2.2]{semisimple_char_classical}.
\end{enumerate}
A stratum $[\Lambda, n, r, \beta]$ is called a {\it semisimple
  stratum} if it is either a zero stratum or if it satisfies the
following conditions:
\begin{enumerate}
\item We have $n\geq r\geq 0$ and  $\nu_\Lambda(\beta)=-n$. 
\item There exists a decomposition $V=\oplus_{i=1}^kV_i$ for which
  $\Lambda(k)=\oplus_{i=1}^k(\Lambda(k)\cap V_i)$, for all $k\in
  \mathbb{Z}$.
\item Let ${\bf 1}_i$ be the projection of $V$ onto $V_i$ with kernel
  $\oplus_{j\neq i}V_j$. We have $\beta=\sum_{i=1}^k\beta_i$,
  where $\beta_i={\bf 1}_i\beta {\bf 1}_i$, for $1\leq i\leq k$.
\item The stratum $[\Lambda_i, q_i, r, \beta_i]$--with $q_i=r$ if
  $\beta_i=0$ and $q_i=-\nu_{\Lambda_i}(\beta_i)$ otherwise--is either
  a zero stratum or a simple stratum, and this data must satisfy the
  following crucial condition:
\item the stratum $[\Lambda_i+\Lambda_j, q, r, \beta_i+\beta_j]$, with
  $q=\max\{q_i, q_j\}$, is non-equivalent to a zero stratum or a
  simple stratum, for $1\leq i, j\leq k$ and $i\neq j$.
\end{enumerate}
The decomposition $V=\oplus_{i=1}^kV_i$ is uniquely determined by the
element $\beta$, called the {\it underlying splitting} of the
semisimple stratum $[\Lambda, n, r, \beta]$.  A semisimple stratum
$[\Lambda, n, r, \beta]$ is called a {\it skew semisimple stratum} if
the decomposition $V=\oplus_{i=1}^kV_i$ is an orthogonal decomposition
with respect to the form $h$ on $V$, and $\sigma_h(\beta_i)=-\beta_i$,
for $1\leq i\leq k$. Observe that the algebra $F[\beta]$ is isomorphic
to the algebra
$$F[\beta_1]\oplus F[\beta_2]\oplus \cdots\oplus F[\beta_k].$$
We use the notation $\mathfrak{x}$ for a general skew semisimple
stratum $[\Lambda, n, 0, \beta]$. 

Let $C_\beta(A)$ be the centraliser of $F[\beta]$ in $\End_F(V)$.  The
group $G\cap C_\beta(A)$ is denoted by $G_\beta$.  Let $n$ be any
integer and let $\tilde{\mathfrak{b}}_n(\Lambda)$ and
$\mathfrak{b}_n(\Lambda)$ be the groups
$\tilde{\mathfrak{a}}_n(\Lambda)\cap C_\beta(A)$ and
${\mathfrak{a}}_n(\Lambda)\cap C_\beta(A)$ respectively. For any
non-negative integer $n$, let $\tilde{P}_n(\Lambda_\beta)$ and
${P}_n(\Lambda_\beta)$ be the groups $\tilde{P}_n(\Lambda)\cap
C_\beta(A)^\times$ and ${P}_n(\Lambda)\cap G_\beta$ respectively.
\subsection{}\label{simple_characters}
We recall the construction of a cuspidal representation starting from
a skew semisimple stratum $\mathfrak{x}=[\Lambda, n, 0,
\beta]$. Stevens, generalising the Bushnell--Kutzko's construction,
associates some special compact subgroups: $J^0(\Lambda, \beta)$ and
$H^0(\Lambda, \beta)$ of $G$.  Then a particular class of
representations of $J^0(\Lambda, \beta)$ are compactly induced to the
group $G$ to obtain cuspidal representations. We will not go into the
details of the construction of these compact subgroups here.  It will
suffice to briefly recall the general scheme of this
construction. However, we describe these compact subgroups, in more
detail, as required in the later part of the article.

Let $J^i(\Lambda, \beta)$ be the compact open subgroup
$J^0(\Lambda, \beta)\cap P_i(\Lambda)$, for any non-negative integer
$i$.  For any skew-semisimple stratum
$\mathfrak{x}=[\Lambda, n, 0, \beta]$, the construction of cuspidal
representations of $G$ begins with a specific set of characters of the
group $H^1(\Lambda, \beta)$ called {\it skew semisimple characters},
denoted by $\mathcal{C}(\Lambda, 0, \beta)$, (see \cite[Section
3.6]{semisimple_char_classical} and the set
$\mathcal{C}(\Lambda, 0, \beta)$ is denoted by
$\mathcal{C}_{-}(\Lambda, 0, \beta)$ there). The group
$P_{(n/2)+}(\Lambda)$ is contained in $H^1(\Lambda, \beta)$, and we
have $\res_{P_{(n/2)+}(\Lambda)}\theta=\psi_\beta$, for any
$\theta\in \mathcal{C}(\Lambda, 0, \beta)$. For any character
$\theta\in \mathcal{C}(\Lambda, 0, \beta)$, the map sending
$g_1, g_2\in J^1(\Lambda, \beta)$ to $\theta([g_1, g_2])$ induces a
perfect alternating pairing:
$$\kappa_\theta:\dfrac{J^1(\Lambda, \beta)}{H^1(\Lambda, \beta)}\times
\dfrac{J^1(\Lambda, \beta)}{H^1(\Lambda, \beta)}\rightarrow
\mathbb{C}^\times.$$ Using the theory of Heisenberg lifting, for any
character $\theta\in \mathcal{C}(\Lambda, 0, \beta)$, there exists a
unique representation $\eta_\theta$ of $J^1(\Lambda, \beta)$ such that
$\res_{H^1(\Lambda, \beta)}\eta_\theta$ is equal to a power of
$\theta$. There are a particular set of extensions of the representation
$\eta_\theta$ to the group $J^0(\Lambda, \beta)$ called
$\beta$-extensions; these representations are denoted by $\kappa$ (see
\cite[Section 4]{cusps_classical}).

The group $P_0(\Lambda_\beta)$ is contained in
$J^0(\Lambda, \beta)$. The inclusion of $P_0(\Lambda_\beta)$ in
$J^0(\Lambda, \beta)$ induces an isomorphism
$$P_0(\Lambda_\beta)/P_1(\Lambda_\beta)
\simeq J^0(\Lambda, \beta)/J^1(\Lambda, \beta).$$ The group
$P_0(\Lambda_\beta)/P_1(\Lambda_\beta)$ is the $k_{F_0}$-rational
points of a (non-necessarily connected) reductive group over
$k_{F_0}$. Let $\tau$ be a cuspidal representation of
$P_0(\Lambda_\beta)/P_1(\Lambda_\beta)$. If $P^0(\Lambda_\beta)$ is a
maximal parahoric subgroup of $C_\beta(A)\cap G$, then the induced
representation
\begin{equation}\label{compact_ind_cusps}
\ind_{J^0(\Lambda, \beta)}^G(\kappa\otimes \tau)
\end{equation}
is irreducible, and this construction exhausts all cuspidal
representations of $G$. The pair $(J^0(\Lambda, \beta),
\kappa\otimes\tau)$ is a Bushnell--Kutzko type for the Bernstein
component containing the representation \eqref{compact_ind_cusps}. Let
$\Pi_\mathfrak{x}$ be the set of cuspidal representations of $G$
containing a Bushnell--Kutzko type of the form $(J^0(\Lambda, \beta),
\kappa\otimes\tau)$, for some $\kappa$ and $\tau$ as above.
\subsection{}
For the convenience of the reader, we recall some frequently used
results from \cite{sp_4_genericity}. Let us begin with the following
lemma, which is useful in calculating the group
$H^1(\Lambda, \beta)\cap U$.
\begin{lemma}[Blondel--Stevens]\label{bl_st_lem}
  Let $[\Lambda, n, 0, \beta]$ be a semisimple stratum in $\End_F(V)$
  such that $C_\beta(A)$ does not contain any nilpotent elements. Let
  $N$ be a maximal unipotent subgroup of $G$. For $k\geq m\geq 1$, we
  have
$$P_m(\Lambda_\beta)P_k(\Lambda)\cap N
=P_k(\Lambda)\cap N.$$
\end{lemma}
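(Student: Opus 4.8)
The claim is that for a semisimple stratum with $C_\beta(A)$ containing no nilpotents, and $N$ a maximal unipotent subgroup of $G$, one has $P_m(\Lambda_\beta)P_k(\Lambda)\cap N = P_k(\Lambda)\cap N$ for $k\geq m\geq 1$. The inclusion $\supseteq$ is immediate since $P_k(\Lambda)\cap N\subseteq P_m(\Lambda_\beta)P_k(\Lambda)$ (because $1\in P_m(\Lambda_\beta)$) and $P_k(\Lambda)\cap N\subseteq N$. So the content is the reverse inclusion: if $g=xy$ with $x\in P_m(\Lambda_\beta)$, $y\in P_k(\Lambda)$, and $g\in N$, then $g\in P_k(\Lambda)$, equivalently $x\in P_k(\Lambda)$, equivalently $x\in P_m(\Lambda_\beta)\cap P_k(\Lambda)= P_k(\Lambda_\beta)$ (using $P_m(\Lambda_\beta)\cap P_k(\Lambda)=P_k(\Lambda)\cap C_\beta(A)^\times\cap G = P_k(\Lambda_\beta)$). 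So it suffices to show: an element $x\in P_m(\Lambda_\beta)$ which lies in $N\cdot P_k(\Lambda)$ already lies in $P_k(\Lambda)$.

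The plan is to pass to the associated graded/Lie-algebra picture and argue by induction on the level. Write $x = 1 + b$ with $b\in\mathfrak{b}_m(\Lambda) = \mathfrak{a}_m(\Lambda)\cap C_\beta(A)$, and suppose $x\in N P_k(\Lambda)$; we want $x\in P_k(\Lambda)$, i.e. $b\in\mathfrak{a}_k(\Lambda)$. Suppose not, and let $j=\nu_\Lambda(b)$, so $m\leq j<k$. Looking modulo $P_{j+1}(\Lambda)$: the image of $x$ in $P_j(\Lambda)/P_{j+1}(\Lambda)$ is $1+\bar b$ with $\bar b\neq 0$ in $\mathfrak{b}_j(\Lambda)/\mathfrak{b}_{j+1}(\Lambda)$, the degree-$j$ part of the graded centralizer. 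On the other hand, since $x\in NP_k(\Lambda)\subseteq NP_{j+1}(\Lambda)$, write $x = n\cdot z$ with $n\in N$, $z\in P_{j+1}(\Lambda)$; then $n = x z^{-1}\in P_m(\Lambda)$ (as $j+1>m$), so $n\in N\cap P_m(\Lambda)$ and its image in $P_j(\Lambda)/P_{j+1}(\Lambda)$ equals $1+\bar b$. Thus $\bar b$ is a nonzero nilpotent element (the log of a unipotent element reduces to a nilpotent in the graded algebra) that lies in the graded centralizer $\mathrm{gr}_j C_\beta(A)$ — it is nilpotent because $n$ is unipotent, hence $(n-1)$ is nilpotent in $\End_F(V)$, and reducing through the filtration preserves the property of generating a nilpotent subalgebra; more carefully, one shows the image of a nilpotent element of $\mathfrak{a}_m(\Lambda)$ in $\mathfrak{a}_j(\Lambda)/\mathfrak{a}_{j+1}(\Lambda)$ is nilpotent in the graded ring $\bigoplus_i \mathfrak{a}_i(\Lambda)/\mathfrak{a}_{i+1}(\Lambda)$.

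The key point, and the main obstacle, is then to conclude a contradiction from the hypothesis that $C_\beta(A)$ contains no nilpotent elements. Here I would use the structure of $C_\beta(A)$: since $\mathfrak{x}$ is semisimple, $C_\beta(A)\cong\prod_i \End_{F[\beta_i]}(V_i)$ is a product of matrix algebras over the fields $F[\beta_i]$, and the hypothesis forces each $V_i$ to be one-dimensional over $F[\beta_i]$, so $C_\beta(A)\cong\prod_i F[\beta_i]$ is a product of fields — in particular a commutative semisimple algebra with no nilpotents, and moreover its graded ring $\mathrm{gr}\,C_\beta(A)$ (with respect to the filtration $\mathfrak{b}_i(\Lambda)$) is again a product of graded pieces of DVRs, hence has no nonzero nilpotent homogeneous elements. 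This contradicts $\bar b$ being a nonzero nilpotent in $\mathrm{gr}_j C_\beta(A)$, completing the induction and the proof. I expect the delicate bookkeeping to be (a) checking that $\mathrm{gr}\,C_\beta(A)$ embeds as claimed into $\mathrm{gr}\,A$ compatibly with the filtrations — this uses that $\beta$ normalizes $\Lambda$ on each block, part of the semisimple stratum axioms — and (b) the assertion that the image of the unipotent $n$ in the graded ring is a nonzero nilpotent, which one can check directly by expanding $(n-1)^N=0$ and tracking leading terms in the $\Lambda$-valuation. Alternatively one can avoid the graded formalism entirely and argue at the level of $P_0(\Lambda)/P_1(\Lambda)$ after replacing $\Lambda$ by a shift, reducing to the statement that a unipotent element of the finite reductive group $P_0(\Lambda_\beta)/P_1(\Lambda_\beta)$ is trivial when $C_\beta(A)$ is a product of fields, since then $P_0(\Lambda_\beta)/P_1(\Lambda_\beta)$ is a product of (anisotropic) tori and has no nontrivial unipotent elements.
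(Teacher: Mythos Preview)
The paper does not give its own proof of this lemma; it simply cites \cite[Section~6.3, Lemma~6.5]{sp_4_genericity}. So there is nothing to compare your argument against in the paper itself.

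That said, your proposed argument is essentially correct and is the natural one. A few remarks on exposition. First, the phrase ``induction on the level'' is slightly misleading: what you actually do is assume $j=\nu_\Lambda(b)<k$ and derive a contradiction in one step, not an inductive descent. Second, the passage through logarithms is unnecessary; all you need is that if $n\in N$ then $(n-1)^N=0$ in $A$ for some $N$, and since the image $\bar b$ of $n-1$ in $\mathfrak{a}_j(\Lambda)/\mathfrak{a}_{j+1}(\Lambda)$ satisfies $\bar b^{\,N}=\overline{(n-1)^N}=0$ in the associated graded ring, it is nilpotent there. Third, the injectivity of $\mathrm{gr}\,C_\beta(A)\hookrightarrow \mathrm{gr}\,A$ is automatic from $\mathfrak{b}_i=\mathfrak{a}_i\cap C_\beta(A)$, so no extra work is needed for point~(a). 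Finally, your reducedness claim for $\mathrm{gr}\,C_\beta(A)$ is correct and is really the crux: the hypothesis that $C_\beta(A)$ has no nilpotents forces $C_\beta(A)\cong\prod_i F[\beta_i]$, and since $\nu_\Lambda$ restricts to a genuine (multiplicative) valuation on each field $F[\beta_i]$, the graded ring of each factor is an integral domain, so the product is reduced.

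Your alternative sketch at the end (passing to the reductive quotient and observing that $P_0(\Lambda_\beta)/P_1(\Lambda_\beta)$ is a product of finite anisotropic tori, hence has no nontrivial unipotents) is also a valid route and arguably cleaner; it is closer in spirit to how such statements are typically proved in the literature.
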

We refer to \cite[Section 6.3, Lemma 6.5]{sp_4_genericity} for a proof
of the above lemma. The following result is proved in greater
generality by Blondel and Stevens (see \cite[Section 4, Corollary 4.2,
Theorem 4.3]{sp_4_genericity}); however, in the present context we
will use the following simple version.
\begin{proposition}[Blondel--Stevens]\label{bl_st_prop}
  Let $[\Lambda, n, 0, \beta]$ be a skew semisimple stratum such that
  $\mathfrak{X}_\beta(F_0)$ is non-empty. Assume that
  $J^0(\Lambda, \beta)/J^1(\Lambda, \beta)$ is anisotropic. Let
  $\sch{B}\in \mathfrak{X}_\beta(F_0)$, and let $\sch{U}$ be the unipotent
  radical of $\sch{B}$. If
  $$\res_{H^1(\Lambda, \beta)\cap U}\theta=\res_{H^1(\Lambda,
    \beta)\cap U}\psi_\beta,$$ for all
  $\theta\in \mathcal{C}(\Lambda, 0, \beta)$, then every cuspidal
  representation in the set $\Pi_\mathfrak{x}$ is generic.
\end{proposition}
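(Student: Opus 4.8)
The plan is to use the explicit model $\ind_{J^0(\Lambda,\beta)}^G(\kappa\otimes\tau)$ of a representation $\pi$ in $\Pi_{\mathfrak{x}}$, fix the pair $(\sch{B},\sch{U})\in\mathfrak{X}_\beta(F_0)$ from the hypothesis, and compute $\ho_U(\pi,\psi_\beta)$ via Frobenius reciprocity combined with a Mackey–decomposition argument for the restriction $\res_U\pi$. Concretely, $\res_U\ind_{J^0}^G(\kappa\otimes\tau)$ decomposes as a sum over double cosets $J^0\backslash G/U$ of induced representations $\ind_{U\cap (J^0)^g}^U\big((\kappa\otimes\tau)^g\big)$, so a nonzero Whittaker functional amounts to finding some $g$ for which $\ho_{U\cap (J^0)^g}\big((\kappa\otimes\tau)^g,\psi_\beta\big)\neq 0$. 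Since $\sch{B}\in\mathfrak{X}_\beta(F_0)$, the function $\psi_\beta$ is genuinely a character of $U$, and it is natural to look for the contribution of the trivial double coset $g=1$.

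First I would record that $\psi_\beta$ agrees with $\theta$ on the "small" subgroup $H^1(\Lambda,\beta)\cap U$ — this is exactly the standing hypothesis of the proposition — and that by the last displayed relation in Section \ref{simple_characters}, namely $\res_{P_{(n/2)+}(\Lambda)}\theta=\psi_\beta$, the characters $\theta$ and $\psi_\beta$ are compatible on the relevant level. Next I would use the anisotropy hypothesis on $J^0(\Lambda,\beta)/J^1(\Lambda,\beta)$: this forces $\tau$ (a cuspidal representation of the finite reductive quotient) to factor through a group with no unipotent radical, so that $J^0\cap U = J^1\cap U$, and in fact Lemma \ref{bl_st_lem} applied with $N=U$ lets me identify $J^1(\Lambda,\beta)\cap U$, $H^1(\Lambda,\beta)\cap U$ and the relevant Iwahori-type pieces with the corresponding intersections $P_k(\Lambda)\cap U$. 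The point of this step is to reduce the computation of $\ho_{J^0\cap U}(\kappa\otimes\tau,\psi_\beta)$ to a computation involving only $\eta_\theta$ (the Heisenberg representation) restricted to $J^1\cap U$.

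Then I would use the structure of $\eta_\theta$: it is the unique representation of $J^1(\Lambda,\beta)$ lying over a power of $\theta$, and its restriction to $J^1\cap U$ is governed by the Heisenberg pairing $\kappa_\theta$. Because $U$ is a maximal unipotent subgroup of $G$, the image of $J^1\cap U$ in the symplectic space $J^1/H^1$ is a \emph{Lagrangian} (one uses that $U$ is abelian modulo $U_{\mathrm{der}}$, together with the self-duality coming from $\sch{B}$ being $F_0$-rational and $\psi_\beta$ being a character of $U$), so $\eta_\theta$ restricted to $J^1\cap U$ is a multiple of the character extending $\theta$ — which, by the first step, is $\psi_\beta$. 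Hence the $\beta$-extension $\kappa$ restricted to $J^0\cap U$, after tensoring with $\tau$, contains $\psi_\beta$, i.e. $\ho_{J^0\cap U}(\kappa\otimes\tau,\psi_\beta)\neq 0$. Frobenius reciprocity then produces a nonzero element of $\ho_U(\res_U\pi,\psi_\beta)$, so $\pi$ is generic; and since genericity of an irreducible smooth representation of $G$ does not depend on the choice of $(U,\Psi)$ (Section \ref{subsection_generic}), and $\psi_\beta$ is non-degenerate on the $F_0$-rational $\sch{B}$, this shows every $\pi\in\Pi_{\mathfrak{x}}$ is generic.

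The main obstacle is the Lagrangian/self-duality step: one must verify that the image of $H^1(\Lambda,\beta)\cap U$, respectively $J^1(\Lambda,\beta)\cap U$, in the quotient $J^1/H^1$ is totally isotropic of the right dimension for $\kappa_\theta$, so that the Heisenberg representation restricts to an actual character rather than merely containing $\psi_\beta$ with multiplicity. This is where the hypothesis $\sch{B}\in\mathfrak{X}_\beta(F_0)$ and the anisotropy of $J^0/J^1$ are both essential, and where I expect to have to invoke the finer description of $H^1$, $J^1$ from \cite{semisimple_char_classical} together with the duality of the lattice sequence $\Lambda$ under $\sigma_h$; this is precisely the content extracted from \cite[Section 4]{sp_4_genericity} in the simplified form stated as Proposition \ref{bl_st_prop}, so for the present excerpt I would cite that analysis and only reprove the pieces specific to $U(2,1)$.
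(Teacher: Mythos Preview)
Your approach is essentially the same as the paper's: look at the trivial double coset in the Mackey decomposition of $\res_U\ind_{J^0}^G(\kappa\otimes\tau)$, use anisotropy of $J^0/J^1$ to reduce to $J^1\cap U$, and exploit the Heisenberg structure of $\eta_\theta$ to see that $\psi_\beta$ occurs in $\res_{J^1\cap U}\eta_\theta$.

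There is one overclaim you should correct. You assert that the image of $J^1(\Lambda,\beta)\cap U$ in $J^1/H^1$ is a \emph{Lagrangian} for $\kappa_\theta$, and conclude that $\res_{J^1\cap U}\eta_\theta$ is a \emph{multiple} of $\psi_\beta$. The paper (following \cite{sp_4_genericity}) only shows this image is \emph{totally isotropic}, which is both easier and all that is required: once $(J^1\cap U)H^1/H^1$ is totally isotropic, one enlarges it to a maximal isotropic subspace, realises $\eta_\theta$ as induced from a character extending $\Theta_\beta$ on the inverse image of that maximal isotropic, and then Frobenius reciprocity gives that $\res_{J^1\cap U}\eta_\theta$ \emph{contains} $\psi_\beta$. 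Containment is enough for $\ho_U(\pi,\psi_\beta)\neq 0$; you do not need isotypicity. Your justification for the isotropy (``$U$ is abelian modulo $U_{\mathrm{der}}$'') is also not quite the mechanism: the point is that the hypothesis lets you glue $\psi_\beta$ on $J^0\cap U$ and $\theta$ on $H^1$ into a genuine \emph{character} $\Theta_\beta$ of $(J^0\cap U)H^1$, and once a character exists on a subgroup its image in $J^1/H^1$ is automatically totally isotropic for $\kappa_\theta$, since $\theta([g_1,g_2])=\Theta_\beta([g_1,g_2])=1$ for $g_1,g_2$ in that subgroup. So drop the Lagrangian claim, keep ``totally isotropic'', and weaken ``is a multiple of'' to ``contains''; the rest of your argument goes through as in the paper.
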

\begin{proof}
  Since we use this result in a crucial way, we briefly sketch the
  proof (see \cite[Section 4, Corollary 4.2, Theorem
  4.3]{sp_4_genericity}). Let $\pi$ be a cuspidal representation in
  the set $\Pi_\mathfrak{x}$. Let $(J^0(\Lambda, \beta), \kappa)$ be a
  Bushnell--Kutzko type contained in the representation $\pi$. We have
  $$\pi\simeq \ind_{J^0(\Lambda, \beta)}^G\kappa.$$
  Let $\theta$ be a skew semisimple character contained in
  $\res_{H^1(\Lambda, \beta)}\kappa$.

  Let $\tilde{H}^1$ be the group
  $(J^0(\Lambda, \beta)\cap U)H^1(\Lambda, \beta)$ and $\Theta_\beta$
  be the character of $\tilde{H}^1$ defined by:
  $$\Theta_\beta(jh)=\psi_\beta(j)\theta(h),\ \text{for all}\ j\in
  J^0(\Lambda, \beta)\cap U,\ h\in H^1(\Lambda, \beta).$$ The group
  $\tilde{H}^1\cap J^1(\Lambda, \beta)$ is equal to
  $(J^1(\Lambda, \beta)\cap U)H^1(\Lambda, \beta)$, and it is a
  totally isotropic subspace for the pairing $\kappa_\theta$ on
  $J^1(\Lambda, \beta)/H^1(\Lambda, \beta)$. The representation
  $\eta_\theta$ is the induced representation from an extension of the
  character $\Theta_\beta$ to the inverse image in
  $J^1(\Lambda, \beta)$ of a maximal isotropic subspace of
  $J^1(\Lambda, \beta)/H^1(\Lambda, \beta)$. Hence,
  $\res_{J^1\cap U}\eta_\theta$ contains the character
  $\res_{J^1(\Lambda, \beta)\cap U}\psi_\beta$. Since the group
  $J^0(\Lambda, \beta)\cap U$ is equal to $J^1(\Lambda, \beta)\cap U$,
  using Mackey-decomposition, we get that
  $\ho_{U}(\pi, \psi_\beta)\neq 0$.
\end{proof}
It is convenient to partition the set of skew-semisimple strata in
$\End_F(V)$ into four disjoint classes. A skew semisimple stratum
$\mathfrak{x}=[\Lambda,n, 0, \beta]$ is of type {\bf (A)} if
$F[\beta]$ is a field. A skew semisimple stratum $\mathfrak{x}$ is of
type {\bf (B)} if $F[\beta]$ is a direct sum of two fields with one of
them a quadratic extension of $F$. A skew semisimple stratum
$\mathfrak{x}$ is of type {\bf (C)} if the algebra $F[\beta]$ is a
direct sum of two copies of $F$, and finally a skew semisimple stratum
$\mathfrak{x}$ is of type {\bf (D)} if the algebra $F[\beta]$ is a
direct sum of three copies of $F$.
\section{The simple case.}\label{the-simple_case}
\subsection{}
A skew semisimple stratum $[\Lambda, n, 0, \beta]$ is of type {\bf
  (A)} if the algebra $F[\beta]$ is a field. When the characteristic
of $F_0$ is zero, it was shown by Blasco that the cardinality of the
$L$-packet, containing a cuspidal representation in the set
$\Pi_\mathfrak{x}$, is equal to one (see
\cite{blasco_singeton_pac}). When the characteristic of $F_0$ is zero,
a tempered $L$-packet is known to contain a generic member (see
\cite{gen_unit_3_var}), and hence every cuspidal representation in
the set $\Pi_\mathfrak{x}$ is generic. In this section, for any
non-Archimedean local field $F_0$ with odd residue characteristic, we
directly prove that every cuspidal representation in the set
$\Pi_\mathfrak{x}$ is generic.
\begin{lemma}\label{type_A_existence_flags}
  Let $[\Lambda, n, 0, \beta]$ be a skew simple stratum of type {\bf
    (A)}, then the set $\mathfrak{X}_\beta(F_0)$ is non-empty.
\end{lemma}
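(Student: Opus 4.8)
The plan is to produce, by hand, an explicit $F_0$-rational Borel subgroup $\sch{B}$ of $\sch{G}$ whose unipotent radical $\sch{U}$ satisfies $\beta V_1 \subset V_1^\perp$, where $V_1 = \langle e_1\rangle$ is the isotropic line fixed by $\sch{B}$; by the criterion \eqref{flag_to_character}, this is exactly the condition that $\psi_\beta$ be a character of the corresponding $F_0$-points. Concretely, choosing a Witt basis $(e_1, e_0, e_{-1})$ for $(V,h)$, the Borel subgroups of $\sch{G}$ defined over $F_0$ are in bijection with the isotropic lines of $(V,h)$, and for the line $\langle v\rangle$ the condition becomes $h(v, \beta v) = 0$ together with $h(v,v)=0$. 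So the lemma reduces to: \emph{the quadratic form $v \mapsto h(v, \beta v)$ (which, since $\sigma_h(\beta) = -\beta$, takes values in the trace-zero line $\delta F_0$ and hence defines an $F_0$-valued quadratic form $q_\beta(v) = \delta^{-1} h(v,\beta v)$ on the $6$-dimensional $F_0$-space $V$) has a common nontrivial zero with the quadratic form $v \mapsto \delta^{-1}\cdot\text{(something)}$}— wait, more precisely we need a common zero of $q_\beta$ and the hermitian norm form viewed as a quadratic form over $F_0$. The key point is that $(V,h)$ is assumed isotropic, so its norm form is an isotropic quadratic form over $F_0$; I would exploit this.

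First I would set up the reduction cleanly: an $F_0$-rational Borel of $\sch{G}$ is the stabilizer of an isotropic $F$-line $L \subset V$, and $\sch{B} \in \mathfrak{X}_\beta(F_0)$ iff that line satisfies $\beta L \subset L^\perp$, i.e. $h(v,\beta v) = 0$ for $0 \neq v \in L$. Since $\sigma_h(\beta) = -\beta$ we have $\sigma(h(v,\beta v)) = -h(v,\beta v)$, so $h(v,\beta v) \in \delta F_0$ and $N_\beta(v) := \delta^{-1} h(v,\beta v)$ is an $F_0$-valued quadratic form on $V$ regarded as a $6$-dimensional $F_0$-vector space. Likewise $H(v) := h(v,v) \in F_0$ is an $F_0$-quadratic form of rank $6$, and it is isotropic by hypothesis on $(V,h)$. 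Thus I must show the two quadratic forms $H$ and $N_\beta$ on $F_0^6$ have a common isotropic vector spanning an $F$-line — in fact any common zero $v \neq 0$ automatically spans an $F$-line on which both vanish, because $H$ and $N_\beta$ are the "trace-down" forms of $F$-(semi)linear data, so $H(xv)$ and $N_\beta(xv)$ are $F$-scalar multiples (by $x\sigma(x)$) of $H(v), N_\beta(v)$ for $x \in F^\times$.

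Next, the arithmetic input. Over a non-Archimedean local field $F_0$, a quadratic form in $\geq 5$ variables is isotropic, and more is true: a pencil of quadrics in $\mathbb{P}^5$ over a $p$-adic field always has a rational point on the base locus when the forms have enough rank — this is a theorem on the intersection of two quadrics over $p$-adic fields (the variety $\mathfrak{X}_\beta$ here is exactly such an intersection, as noted in the introduction). But to keep things self-contained I would argue directly: restrict to the isotropic plane for $H$ coming from a Witt decomposition — since $(V,h)$ is a $3$-dimensional isotropic hermitian space, it contains a hyperbolic plane, hence $V$ as an $F_0$-space contains a totally isotropic (for $H$) $F_0$-subspace of dimension (at least) $2$; on such a subspace $W$, $N_\beta|_W$ is a quadratic form in $2$ variables over $F_0$ and I need it to be isotropic, or else enlarge $W$ slightly. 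If $N_\beta|_W$ is anisotropic on the $2$-dimensional $W$, I would instead move to a $3$-dimensional $F_0$-subspace on which $H$ is still isotropic (using that $H$ has Witt index $\geq 2$ over $F_0$ — a hyperbolic hermitian plane over $F$ has $F_0$-Witt index $2$, and the anisotropic part of $H$ has rank $2$, so $H$ has $F_0$-Witt index exactly $2$, giving a totally isotropic $F_0$-space of dimension $2$ but only that); so instead I take a $3$-dimensional $F_0$-subspace $W'$ containing a totally isotropic plane of $H$ — then $H|_{W'}$ has rank $1$, and its zero set is a quadric cone, while $N_\beta|_{W'}$ is a conic; a line through the cone point meeting the conic gives the common zero, and over a $p$-adic field the conic has a rational point unless it is anisotropic of rank $3$, which can be circumvented by also varying $W'$ among the (positive-dimensional family of) such subspaces.

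The step I expect to be the main obstacle is ensuring that this common-zero argument really \emph{always} succeeds — i.e. handling the borderline case where the restricted form $N_\beta$ happens to be anisotropic on every naturally available totally-isotropic-for-$H$ subspace. The cleanest fix, which I would adopt, is to invoke the fact that the simplicity of the stratum forces $\beta$ to be a regular element generating a field $F[\beta]$ with $\sigma_h(\beta) = -\beta$, so that $F[\beta] = F_0[\beta]$ is a field with $\sigma$ restricting nontrivially; this rigidity pins down $N_\beta$ as (essentially) the trace form of the quadratic $F_0[\beta]$-module structure on $V$, making it a scaled copy of $H$ twisted by $\beta$, and one then computes directly that the intersection $\{H = 0\} \cap \{N_\beta = 0\}$ in $\mathbb{P}(V)$ contains an $F_0$-point because $\{H=0\}$ already does (it is a nonempty $p$-adic variety of dimension $\geq 2$) and the extra equation cuts it down by at most one dimension while preserving $F_0$-points, by a Hensel/dimension-count argument over the residue field $k_{F_0}$ (whose cardinality is odd, so smooth points lift). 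I would phrase the final argument as: exhibit the integral model of $\{H = 0\} \cap \{N_\beta = 0\}$, find a smooth $k_{F_0}$-point on the special fibre using the Chevalley–Warning-type bound (a nonempty intersection of two quadrics in $\mathbb{P}^5_{k_{F_0}}$ has a smooth rational point since $|k_{F_0}|$ is odd and not too small — and the small-field exceptions are handled by a base change that does not affect $F_0$-rationality of the construction), then lift by Hensel's lemma.
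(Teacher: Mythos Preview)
Your reduction is correct: $\mathfrak{X}_\beta(F_0)$ is non-empty iff there exists $0\neq v\in V$ with $h(v,v)=0$ and $h(v,\beta v)=0$, and both conditions are $F_0$-quadratic forms on the $6$-dimensional $F_0$-space $V$. But from that point on the argument never becomes a proof. The claim that ``the extra equation cuts it down by at most one dimension while preserving $F_0$-points'' is simply false over $p$-adic fields: a hypersurface section can kill all rational points. The Chevalley--Warning step only gives \emph{some} $k_{F_0}$-point on the special fibre of whatever integral model you write down, not a smooth one; without smoothness there is no Hensel lift. And the remark that ``small-field exceptions are handled by a base change that does not affect $F_0$-rationality'' is incoherent --- base change changes the notion of rational point. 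The restrictions to $2$- or $3$-dimensional $F_0$-subspaces are not shown to work either; a rank-$3$ quadratic form over $F_0$ can perfectly well be anisotropic.

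The paper's proof bypasses all of this by using the single structural fact you never exploit: $V$ is a \emph{one-dimensional} $F[\beta]$-vector space. One chooses a $\sigma_h$-equivariant $F$-linear form $\lambda:F[\beta]\to F$ and lifts $h$ to an $F[\beta]/D$-hermitian form $h_1$ (where $D=F[\beta]^{\sigma_h}$) with $h=\lambda\circ h_1$. Then $h_1(v,v)$ runs over a single coset $aD^\times\cap\mathrm{Nr}_{F[\beta]/D}(F[\beta]^\times)$ as $v$ ranges over $V\setminus\{0\}$, and the two conditions $h(v,v)=0$, $h(v,\beta v)=0$ become $h_1(v,v)\in\ker\lambda\cap\ker(\lambda\circ\beta)$, a one-dimensional $F$-subspace $W$ of $F[\beta]$ stable under $\sigma_h$. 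So the whole problem reduces to checking that this norm coset meets $W_0=W\cap D$, which follows from the purely arithmetic fact that $F_0^\times$ surjects onto $D^\times/\mathrm{Nr}_{F[\beta]/D}(F[\beta]^\times)$ --- an elementary case check on ramification. This is the key idea your proposal is missing; the integral-model/Hensel strategy you sketch is what the paper uses for type \textbf{(B)} strata, where there is no such one-dimensional module structure to exploit, and there it requires writing the equations explicitly and checking smoothness by hand.
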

\begin{proof}
  The involution $\sigma_h$ stabilises the field $F[\beta]$, and let
  $D$ be the fixed field of $\sigma_h$ in $F[\beta]$. We have the
  following diagram of field extensions:
\begin{displaymath}
  \begin{tikzpicture}[node distance = 1cm, auto]
      \node (Q) {$F_0$};
      \node (E) [above of=Q, left of=Q] {$F$};
      \node (F) [above of=Q, right of=Q] {$D=F_0[\delta\beta]$};
      \node (K) [above of=E, right of=E] {$F[\beta]$};
      \draw[-] (Q) to node {} (E);
      \draw[-] (Q) to node {} (F);
      \draw[-] (E) to node {} (K);
      \draw[-] (F) to node {} (K);
      \end{tikzpicture}
    \end{displaymath}

    Let $\lambda$ be a $\sigma_h$-equivariant non-zero $F$-linear form
    on $F[\beta]$.  There exists a unique hermitian form
    $h_1:V\times V\rightarrow F[\beta]$ such that
  $$h_1(xv, yw)=x\sigma_h(y)\sigma_h(h_1(w, v)),\ \text{for all}\ x,
  y\in F[\beta]\ \text{and}\ v, w\in V,$$
$$h(v, w)=\lambda((h_1(v, w)), \ \text{for all}\ v, w\in V.$$
The set $\mathfrak{X}_\beta(F_0)$ is non-empty if and only there exists a
non-zero vector, $v\in V$, such that
$$h(v, v)=0\  \text{and}\  h(v, \beta
v)=0.$$ Observe that
$h(v, \beta v)=\lambda(h_1(v, \beta v))=\lambda(\beta h_1(v,
v))$. Since $\beta$ does not stabilise a proper non-zero subspace of
$F[\beta]$, the $F$-linear forms $\lambda$ and $\lambda\circ\beta$ are
linearly independent. Let $W$ be the $F$-vector space
$\ker(\lambda)\cap \ker(\lambda\circ\beta)$, we have
$\dim_FW=1$. Since $\sigma_h(\beta)=-\beta$ and $\lambda$ is
$\sigma_h$-equivariant, the $F$-vector space $W$ is stable under the
action of $\sigma_h$. Let $W_0$ be the $F_0$-vector space $W\cap D$,
and note that $W_0\otimes_{F_0} F=W$.

The form $h_1(x, y)$ is equal to $xa\sigma_h(y)$, for some $a\in
D$. Let $N_a$ be the set $\{xa\sigma_h(x): x\in F[\beta]^\times\}$.
Assume that the inclusion of $F_0^\times$ in $D^\times$ induces a
surjection of $F_0^\times$ onto the quotient
$D^\times/({\rm Nr}_{F[\beta]/D}(F[\beta]^\times)$. Let $w$ be a
non-zero vector in $W_0$. If $w\not\in N_a$, then there exists some
element $x\in F_0^\times$ such that $xw_1\in N_a$. Hence, the set
$N_a\cap W_0$ is non-empty.  Since $h(w, w)=h(w, \beta w)=0$, for all
$w\in N_a\cap W_0$, the set $\mathfrak{X}_\beta(F_0)$ is non-empty.

Now, we will prove that the inclusion of $F_0^\times$ in $D^\times$
induces a surjection of $F_0^\times$ onto the quotient
$$D^\times/({\rm Nr}_{F[\beta]/D}(F[\beta]^\times).$$  
    Recall the notations $\varpi$ and $\varpi_0$ for a choice
    uniformizers of $F$ and $F_0$, respectively, from Paragraph
    \ref{local_fields}. Fix a valuation
    $\nu:F[\beta]^\times \rightarrow 1/e[F[\beta]:F]\mathbb{Z}$ such
    that $\nu(\varpi)=1$.  Assume that
    $\varpi_0= {\rm Nr}_{F[\beta]/D}(x)$, for some $x\in
    F[\beta]$. Then, we have $\nu(\varpi)=2\nu(x)$.  If $F/F_0$ is
    unramified, then the equality $\nu(\varpi)=2\nu(x)$ is impossible
    and therefore, we get that
    $\varpi_0\not\in {\rm Nr}_{F[\beta]/D}(F[\beta]^\times)$. Consider
    the case where $F$ is a ramified extension of $F_0$. Let $x$ be an
    element of $\mathfrak{o}_{F_0}^\times$ such that
    $\bar{x}\in k_{F_0}$ is not a square in $k_{F_0}$. The element $x$
    does not belong to ${\rm Nr}_{F[\beta]/D}(F[\beta]^\times)$; since
    the involution $\sigma_h$ must act trivially on the residue field
    of $k_{F[\beta]}$ (note that $[k_{F[\beta]}:k_{F_0}]|3$).
\end{proof}
\begin{proposition}\label{type_A_end_prop}
  Let $\mathfrak{x}=[\Lambda, n, 0, \beta]$ be any skew simple stratum
  of type {\bf (A)}. Then every cuspidal representation in the set
  $\Pi_\mathfrak{x}$ is generic.
\end{proposition}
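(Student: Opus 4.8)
The strategy is to apply Proposition \ref{bl_st_prop}, for which we need to verify two things: that $J^0(\Lambda,\beta)/J^1(\Lambda,\beta)$ is anisotropic, and that the restriction of every skew semisimple character $\theta\in\mathcal{C}(\Lambda,0,\beta)$ to $H^1(\Lambda,\beta)\cap U$ agrees with the restriction of $\psi_\beta$, for a Borel $\sch{B}\in\mathfrak{X}_\beta(F_0)$ whose existence is guaranteed by Lemma \ref{type_A_existence_flags}. First I would record that, since $F[\beta]$ is a field, $C_\beta(A)=F[\beta]$ contains no nilpotent elements, and $P^0(\Lambda_\beta)$ is automatically a maximal parahoric in $C_\beta(A)^\times\cap G$ (the torus $G_\beta$ being compact here, as the hermitian form $h_1$ is anisotropic over $F[\beta]$-lines — $V$ being one-dimensional over $F[\beta]$ up to the $3\mid[F[\beta]:F]$ constraint, or more precisely $G_\beta$ is a compact unitary group in one variable over $D$). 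This makes $J^0(\Lambda,\beta)/J^1(\Lambda,\beta)\cong P_0(\Lambda_\beta)/P_1(\Lambda_\beta)$ anisotropic, so the hypotheses of Proposition \ref{bl_st_prop} on the reductive quotient are met and $\mathcal{C}(\Lambda,0,\beta)$ consists of characters whose $\eta_\theta$ is genuinely Heisenberg.

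The heart of the matter is the character identity $\res_{H^1(\Lambda,\beta)\cap U}\theta=\res_{H^1(\Lambda,\beta)\cap U}\psi_\beta$. Here I would use Lemma \ref{bl_st_lem}: with $N=U$ and the fact that $C_\beta(A)$ has no nilpotents, we have $P_m(\Lambda_\beta)P_k(\Lambda)\cap U=P_k(\Lambda)\cap U$ for $k\ge m\ge1$. Since $H^1(\Lambda,\beta)$ is squeezed between $P_{(n/2)+}(\Lambda)$ and a group of the form $P_1(\Lambda_\beta)H^{?}$, one shows $H^1(\Lambda,\beta)\cap U$ is contained in $P_{(n/2)+}(\Lambda)\cap U$ — equivalently that the ``new'' directions of $H^1$ over $P_{(n/2)+}(\Lambda)$ all lie inside $C_\beta(A)$, hence meet $U$ trivially by the lemma. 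Because $\res_{P_{(n/2)+}(\Lambda)}\theta=\psi_\beta$ for every $\theta\in\mathcal{C}(\Lambda,0,\beta)$ (as recalled in Subsection \ref{simple_characters}), the desired identity on $H^1(\Lambda,\beta)\cap U$ follows immediately. Then Proposition \ref{bl_st_prop} yields that every cuspidal representation in $\Pi_\mathfrak{x}$ is generic.

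The main obstacle I anticipate is the bookkeeping in the step $H^1(\Lambda,\beta)\cap U\subseteq P_{(n/2)+}(\Lambda)\cap U$: this requires knowing the precise description of $H^1(\Lambda,\beta)$ from \cite{semisimple_char_classical} — namely that $H^1(\Lambda,\beta)=P_1(\Lambda_\beta)H^{[\frac{n}{2}]+1}(\beta,\Lambda)$ with the second factor controlled by $\mathfrak{a}_{(n/2)+}$ and the filtration built from $\beta$ — and verifying that the part of $H^1$ not already in $P_{(n/2)+}(\Lambda)$ is absorbed into $C_\beta(A)=F[\beta]$. One must also be careful that $B\in\mathfrak{X}_\beta(F_0)$ can be chosen so that the Witt vector realizing it is compatible with $\Lambda$ (so that $\psi_\beta$ really is a character of $U$ and the intersection computations make sense), but since $\mathfrak{X}_\beta(F_0)\neq\emptyset$ and $G$ acts transitively on the relevant data, any such $\sch{B}$ works after conjugation. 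Once these intersection statements are in hand, the conclusion is formal.
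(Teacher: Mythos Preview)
Your approach is essentially the same as the paper's: verify the anisotropy of $J^0/J^1$, invoke Lemma \ref{type_A_existence_flags} for $\mathfrak{X}_\beta(F_0)\neq\emptyset$, use Lemma \ref{bl_st_lem} to compute $H^1(\Lambda,\beta)\cap U$, and conclude via Proposition \ref{bl_st_prop}.

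The one point where the paper is sharper is precisely the obstacle you flagged. Your claim that ``the part of $H^1$ not already in $P_{(n/2)+}(\Lambda)$ is absorbed into $C_\beta(A)=F[\beta]$'' is correct only when the stratum is \emph{minimal}; for a non-minimal simple stratum the inductive description of $H^1(\Lambda,\beta)$ involves centralizers of approximations $\gamma$ to $\beta$, not just $C_\beta(A)$ itself. The paper sidesteps this by observing that, since $\dim_F V=3=[F[\beta]:F]$, one may twist $\pi$ by a character and assume $\mathfrak{x}$ is minimal. Then $J^0(\Lambda,\beta)=\mathfrak{o}_{F[\beta]}^\times P_{(n/2)+}(\Lambda)$ and $H^1(\Lambda,\beta)=P_1(\Lambda_\beta)P_{(n/2)+}(\Lambda)$ explicitly, and a single application of Lemma \ref{bl_st_lem} gives $H^1(\Lambda,\beta)\cap U=P_{(n/2)+}(\Lambda)\cap U$. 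With that reduction in place, your argument goes through exactly as you wrote it.
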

\begin{proof}
  Let $\pi$ be a cuspidal representation in the set
  $\Pi_\mathfrak{x}$.  Since $\dim_FV=3$, we may twist the
  representation $\pi$ by a character, if necessary, and assume that
  $\mathfrak{x}$ is minimal. Let $(J^0(\Lambda, \beta), \kappa)$ be a
  Bushnell--Kutzko type contained in $\pi$. Then the representation
  $\pi$ is isomorphic to $\ind_{J^0(\Lambda, \beta)}^G\kappa$. Here,
  the group $J^0(\Lambda, \beta)$ is equal to
  $\mathfrak{o}^\times_{F[\beta]}P_{(n/2)+}(\Lambda)$, and $\kappa$ is
  a $\beta$-extension of the Heisenberg lift $\eta_\theta$ of a skew
  semisimple character $\theta$ of $H^1(\Lambda, \beta)$. Now, Lemma
  \ref{type_A_existence_flags} implies that the set
  $\mathfrak{X}_\beta(F_0)$ is non-empty. Let $\sch{U}$ be the
  unipotent radical of a Borel subgroup in the set
  $\mathfrak{X}_\beta(F_0)$.  Using Lemma \ref{bl_st_lem}, we get that
  the group $J^0(\Lambda, \beta)\cap U$ is equal to
  $P_{(n/2)+}(\Lambda)\cap U$. Hence, we have
  $$\res_{H^1(\Lambda, \beta)\cap U}\theta
  =\res_{H^1(\Lambda, \beta)\cap U}\psi_\beta.$$ Now using Proposition
  \ref{bl_st_prop}, we get that the representation $\pi$ is generic.
\end{proof}
\section{The non simple type (B) strata}\label{type_B_sec}
A skew semisimple stratum $\mathfrak{x}=[\Lambda, n, 0, \beta]$ is of
type {\bf (B)} if the underlying splitting of $\mathfrak{x}$ is of the
form $V=V_1\perp V_2$ with $\dim_{F}(V_2)=2$ and the algebra
$F[\beta_2]$ is a quadratic extension of $F$. Recall that $\beta_i$ is
equal to ${\bf 1}_{V_i}\beta {\bf 1}_{V_i}$, for $i\in \{1,2\}$. From
the definition of a skew semisimple stratum, we have
$\beta=\beta_1+\beta_2$.  If $F/F_0$ is unramified, from the equality
$\sigma_h(\beta_2)=-\beta_2$, we get that the extension $F[\beta_2]/F$
is ramified. Similarly, if $F/F_0$ is ramified, then the extension
$F[\beta_2]/F$ is unramified. We recall the notation that
$q_i=-\nu_{\Lambda_i}(\beta_i)$, for $i\in\{1,2\}$. Genericity of a
cuspidal representation, in the set $\Pi_\mathfrak{x}$, depends only
on the isomorphism class of the hermitian space $(V_2, h)$ and the
integers $q_1$ and $q_2$. Since $\dim_F(V_2)=2$, after twisting by a
character, if necessary, we may assume that $\beta_2$ is
minimal.
\subsection{Lattice sequences}
Since $G_\beta$ is anisotropic, the lattice sequence $\Lambda$ is
uniquely determined by $\beta$. First, we need to fix a Witt-basis of
$V$ which provides a splitting for $\Lambda$.
\subsubsection{The Unramified case}\label{type_B_unram_lat_seq_info}
Let $F$ be an unramified quadratic extension of $F_0$, and consider
the case where $(V_2, h)$ is isotropic. Since $F/F_0$ is unramified,
the extension $F[\beta_2]/F$ is ramified. In this case, we have
$e(\Lambda)=4$. There exists a Witt-basis, $(e_1, e_{-1})$, for the
space $V_2$, and a unit vector $e_0\in V_1$ (i.e., $h(e_0, e_0)=1$)
such that the lattice sequence $\Lambda$ is given by:
\begin{align*}
  \Lambda(-1)=\mathfrak{o}_Fe_1\oplus \mathfrak{o}_Fe_0\oplus
  \mathfrak{o}_Fe_{-1}, &\ \Lambda(0)=\mathfrak{o}_Fe_1\oplus
                          \mathfrak{o}_Fe_0\oplus\mathfrak{p}_Fe_{-1},\\
  \Lambda(1)=\mathfrak{o}_Fe_1\oplus \mathfrak{p}_Fe_0\oplus
  \mathfrak{p}_Fe_{-1},\ &\ \Lambda(2)=\mathfrak{p}_Fe_1\oplus
                           \mathfrak{p}_Fe_0\oplus \mathfrak{p}_Fe_{-1}.
\end{align*}
The group $P(\Lambda)$ is an Iwahori subgroup of $G$. The filtration
$\{P_m(\Lambda): m>0\}$ is sometimes called a non-standard filtration
on the Iwahori subgroup $P(\Lambda)$ (see \cite[Page
612]{murnaghan_char_emp_u_3}).

Now, consider the case where $F/F_0$ is unramified and $(V_2, h)$ is
anisotropic. There exists an orthogonal basis $(v_2, v_3)$ of $V_2$
such that $h(v_2, v_2)=1$ and $h(v_3, v_3)=\varpi_0$. Let $v_1$ be any
vector in $V_1$ such that $h(v_1, v_1)=\varpi_0$. The period of the
lattice sequence, $e(\Lambda)$, is equal to $2$, and
\begin{equation}\label{type_B_unram_aniso_lat_seq}
  \Lambda(0)=\mathfrak{o}_Fv_1\oplus \mathfrak{o}_Fv_2\oplus
  \mathfrak{o}_Fv_3\ \text{and}\ \Lambda(1)=
  \mathfrak{o}_Fv_1\oplus \mathfrak{p}_Fv_2\oplus
  \mathfrak{o}_Fv_3.
\end{equation}
\begin{lemma}\label{type_B_seq_unram_basis}
  Let $(V_2, h)$ be anisotropic, and let $\Lambda$ be the lattice
  sequence in \eqref{type_B_unram_aniso_lat_seq}. There exists a
  Witt-basis $(e_1, e_{-1})$ for $(\langle v_1, v_3\rangle, h)$ such
  that
  $$\mathfrak{o}_Fv_1\oplus
  \mathfrak{o}_Fv_3=\mathfrak{p}_Fe_1\oplus \mathfrak{o}_Fe_{-1}.$$
\end{lemma}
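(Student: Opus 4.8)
The plan is to work inside the $2$-dimensional hermitian space $W := \langle v_1, v_3\rangle$ and exhibit a hyperbolic pair realising the prescribed lattice. First I would record the data: $h(v_1,v_1)=h(v_3,v_3)=\varpi_0$ and $h(v_1,v_3)=0$, so the Gram matrix of $(v_1,v_3)$ is $\varpi_0\cdot\mathrm{id}$; since $\varpi_0\in F_0^\times$ and $-1$ is a norm from $F/F_0$ (as $F/F_0$ is unramified with odd residue characteristic, every unit of $F_0$ is a norm), the discriminant of $(W,h)$ is the class of $\varpi_0^2 = 1$, hence $(W,h)$ is the split hyperbolic plane and a Witt basis exists. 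The content of the lemma is the compatibility with the lattice $\mathfrak{o}_F v_1 \oplus \mathfrak{o}_F v_3$.

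Next I would look for a Witt basis of the shape $e_1 = a v_1 + b v_3$, $e_{-1} = c v_1 + d v_3$ with $a,b,c,d \in F$. The isotropy conditions $h(e_1,e_1)=0$ and $h(e_{-1},e_{-1})=0$ become $\varpi_0(a\sigma(a)+b\sigma(b))=0$ and $\varpi_0(c\sigma(c)+d\sigma(d))=0$, i.e. $a\sigma(a) = -b\sigma(b)$ and $c\sigma(c) = -d\sigma(d)$; since $-1 = \gamma\sigma(\gamma)$ for some unit $\gamma \in \mathfrak{o}_F^\times$, I can take $b = \gamma a$ and $d = -\gamma c$ (the sign chosen to make the pairing come out nonzero). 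Then $h(e_1,e_{-1}) = \varpi_0(a\sigma(c) + b\sigma(d)) = \varpi_0(a\sigma(c) + \gamma a\cdot(-\sigma(\gamma)\sigma(c))) = \varpi_0 a\sigma(c)(1 - \gamma\sigma(\gamma)) = \varpi_0 a\sigma(c)\cdot 2$. Rescaling, I want $h(e_1,e_{-1}) = 1$, so I impose $2\varpi_0 a\sigma(c) = 1$. The lattice condition $\mathfrak{p}_F e_1 \oplus \mathfrak{o}_F e_{-1} = \mathfrak{o}_F v_1 \oplus \mathfrak{o}_F v_3$ forces the change-of-basis matrix $\begin{pmatrix} \varpi a & \varpi b \\ c & d\end{pmatrix}$ (columns = coordinates of $\varpi e_1, e_{-1}$ in the $v_1, v_3$ basis) to lie in $\mathrm{GL}_2(\mathfrak{o}_F)$; equivalently all four entries $\varpi a, \varpi b, c, d$ are in $\mathfrak{o}_F$ and the determinant $\varpi(ad - bc) = \varpi a c(-\gamma) - \varpi\gamma a c = -2\varpi\gamma ac$ is a unit. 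So I would take $a$ a uniformizer-free unit times $\varpi^{-1}$—concretely $a = \varpi^{-1}u$ and $c = w$ for units $u,w$ with $2\varpi_0 \sigma(w) u \varpi^{-1} = 1$, i.e. $u\sigma(w) = \varpi/(2\varpi_0)$; since $F/F_0$ is unramified, $\varpi = \varpi_0$ up to a unit, so $\varpi/(2\varpi_0)$ is a unit and such $u, w$ exist. Then $\varpi a = u$, $c = w$, $d = -\gamma w$ are units, $\varpi b = \gamma u$ is a unit, and the determinant $-2\varpi\gamma ac = -2\gamma u w$ is a unit, as required.

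The only genuine obstacle is bookkeeping: making sure the chosen $a,b,c,d$ simultaneously satisfy the isotropy equations, the normalisation $h(e_1,e_{-1})=1$, and the integrality/unimodularity of the $\varpi e_1, e_{-1}$ lattice; the computations above show there is enough room because $-1$, $2$, and $\varpi/\varpi_0$ are all units of $\mathfrak{o}_F$ under the unramified hypothesis with odd residue characteristic. I would close by verifying directly that with these choices $h(e_1,e_1)=h(e_{-1},e_{-1})=0$, $h(e_1,e_{-1})=1$, and $\mathfrak{p}_F e_1 \oplus \mathfrak{o}_F e_{-1} = \mathfrak{o}_F v_1 \oplus \mathfrak{o}_F v_3$, which is exactly the assertion.
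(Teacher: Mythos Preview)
Your proposal is correct and follows essentially the same approach as the paper: both arguments pick a unit $\gamma$ (the paper calls it $\epsilon$) with $\gamma\sigma(\gamma)=-1$, available because $F/F_0$ is unramified, and use it to write down explicit isotropic combinations of $v_1,v_3$ whose pairing and lattice behaviour are then checked directly. The paper simply writes down the answer $e_1=\epsilon v_1/2+v_3/2$, $\varpi_0 e_{-1}=-\epsilon v_1+v_3$ and verifies, whereas you parametrise first and solve, but the content is identical.
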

\begin{proof}
  Let $\epsilon\in F$ be an element with
  $\epsilon\sigma(\epsilon)=-1$; such an element exists because
  $F/F_0$ is unramified. The vectors $e_{1}=\epsilon v_1/2+v_3/2$
  and $\varpi_0e_{-1}=-\epsilon v_1+v_3$ are isotropic vectors, and
  $h(e_1, e_{-1})=1$.  This implies that $(e_1, e_{-1})$ is a
  Witt-basis for $\langle v_1, v_3\rangle$, and since
  $\nu_F(\epsilon)=0$, the tuple $(e_1, \varpi_0e_{-1})$ is a
  $\mathfrak{o}_F$ basis for the lattice
  $\mathfrak{o}_Fv_1\oplus \mathfrak{o}_Fv_3$.
\end{proof}
\noindent
When $(V_2, h)$ is anisotropic as above, we set $e_0$ to be the vector
$v_2$.  The Witt-basis $(e_1, e_0, e_{-1})$, where $(e_1, e_{-1})$ is
a Witt-basis for $(\langle v_1, v_3\rangle, h)$ as in Lemma
\ref{type_B_seq_unram_basis} provides a splitting for $\Lambda$.  In
the basis $(e_1, e_0, e_{-1})$, the lattice sequence $\Lambda$, with
$e(\Lambda)=2$, is given by
$$
\Lambda(0)=
\mathfrak{o}_Fe_1\oplus \mathfrak{o}_Fe_0\oplus \mathfrak{p}_Fe_{-1}\
\text{and}\ \Lambda(1)= \mathfrak{o}_Fe_1\oplus
\mathfrak{p}_Fe_0\oplus \mathfrak{p}_Fe_{-1}.
$$
\subsubsection{Ramified case}\label{type_B_ram_lat_seq_info}
Let $F/F_0$ be a ramified quadratic extension. Then the extension
$F[\beta_2]/F$ is an unramified quadratic extension. First consider
the case where $(V_2, h)$ is isotropic. The lattice sequence $\Lambda$
has period $2$. There exists a Witt-basis, $(e_1, e_0, e_{-1})$, for
the hermitian space $(V, h)$ with $e_1, e_{-1}\in V_2$ such that
$$\Lambda(0)=\mathfrak{o}_Fe_1\oplus
\mathfrak{o}_Fe_0\oplus \mathfrak{p}_Fe_{-1}\ \text{and}\
\Lambda(1)=\mathfrak{o}_Fe_1\oplus \mathfrak{p}_Fe_0 \oplus
\mathfrak{p}_Fe_{-1}.$$ Now, consider the case where $(V_2, h)$ is
anisotropic. There exists an orthogonal basis $(v_2, v_3)$ of $V_2$
and a non-zero vector $v_1\in V_1$ such that:
$h(v_i, v_i)=\lambda_i\in \mathfrak{o}_F^\times$, for $1\leq i\leq 3$,
and the hermitian space $(\langle v_1, v_3\rangle, h)$ is isotropic.
The period of the lattice sequence $\Lambda$ is equal to $2$, and we
have:
$$\Lambda_2(-1)=\Lambda_2(0)=
\mathfrak{o}_Fv_1\oplus \mathfrak{o}_Fv_2\oplus \mathfrak{o}_Fv_3.$$
\begin{lemma}\label{type_B_ram_seq_split}
  Let $(V_2, h)$ be anisotropic. Then there exists a Witt-basis
  $(e_1, e_{-1})$ for the hermitian space
  $(\langle v_1, v_3\rangle, h)$ such that
$$\mathfrak{o}_Fv_1\oplus \mathfrak{o}_Fv_3=\mathfrak{o}_Fe_1\oplus 
\mathfrak{o}_Fe_{-1}.$$
\end{lemma}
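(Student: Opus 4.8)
The plan is to exhibit the Witt basis explicitly, following the pattern of Lemma~\ref{type_B_seq_unram_basis} but replacing the element $\epsilon$ with $\epsilon\sigma(\epsilon)=-1$ (which need not exist when $F/F_0$ is ramified) by a scalar furnished by the isotropy hypothesis on $(\langle v_1,v_3\rangle,h)$. Recall $v_1,v_3$ are orthogonal with $h(v_i,v_i)=\lambda_i\in\mathfrak{o}_F^\times$, and since $h$ is hermitian each $\lambda_i$ lies in $F_0$. As $(\langle v_1,v_3\rangle,h)$ is isotropic there is a nonzero vector $av_1+bv_3$ with $a\sigma(a)\lambda_1+b\sigma(b)\lambda_3=0$; since $\lambda_3\neq 0$ we have $b\neq 0$, and after dividing by $b$ we obtain $c\in F^\times$ with $c\sigma(c)\lambda_1+\lambda_3=0$. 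Then $c\sigma(c)=-\lambda_3/\lambda_1$ is a unit, and since $\sigma$ preserves $\mathfrak{o}_F$ this forces $c\in\mathfrak{o}_F^\times$.

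Next I would set $e_1:=cv_1+v_3$ and $f:=-cv_1+v_3$. Using $c\sigma(c)\lambda_1=-\lambda_3$ and the orthogonality of $v_1,v_3$, a direct computation gives $h(e_1,e_1)=h(f,f)=0$ and $h(e_1,f)=-c\sigma(c)\lambda_1+\lambda_3=2\lambda_3$, the last being a unit of $\mathfrak{o}_F$ because $\lambda_3$ is a unit and the residue characteristic is odd. Put $\mu:=\bigl(2\sigma(\lambda_3)\bigr)^{-1}\in\mathfrak{o}_F^\times$ and $e_{-1}:=\mu f$; then, using that $h$ is $\sigma$-linear in its second variable, $h(e_{-1},e_{-1})=0$ and $h(e_1,e_{-1})=\sigma(\mu)\,h(e_1,f)=1$, so $(e_1,e_{-1})$ is a Witt basis for $(\langle v_1,v_3\rangle,h)$.

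Finally I would verify the lattice identity. The matrix expressing $(e_1,e_{-1})$ in terms of $(v_1,v_3)$ is
$$\begin{pmatrix} c & 1\\ -\mu c & \mu\end{pmatrix},$$
all of whose entries lie in $\mathfrak{o}_F$ and whose determinant is $2\mu c\in\mathfrak{o}_F^\times$; hence it belongs to ${\rm GL}_2(\mathfrak{o}_F)$, and therefore $\mathfrak{o}_Fe_1\oplus\mathfrak{o}_Fe_{-1}=\mathfrak{o}_Fv_1\oplus\mathfrak{o}_Fv_3$. The only delicate point is that every scalar introduced must be an $\mathfrak{o}_F$-unit: this uses essentially the isotropy hypothesis (to produce $c$) together with the odd residue characteristic (so that $2$, and hence $2\lambda_3$ and its inverse, are units). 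Beyond this the argument is a routine bilinear computation with the hermitian form, and I do not expect any real obstacle.
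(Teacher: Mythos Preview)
Your argument is correct and is essentially the same as the paper's: the paper fixes $\epsilon\in F$ with $\epsilon\sigma(\epsilon)=-\lambda_3\lambda_1^{-1}$ (your $c$) and takes $e_1=\epsilon v_1/2+v_3/2$, $e_{-1}=(-\epsilon v_1+v_3)\lambda_3^{-1}$, which differs from your choice only by unit rescalings. The verification that the pair is a Witt basis and that the change-of-basis matrix lies in ${\rm GL}_2(\mathfrak{o}_F)$ is identical in spirit.
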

\begin{proof}
  We fix an $\epsilon\in F$ such that
  $\epsilon\sigma(\epsilon)=-\lambda_3\lambda_1^{-1}$. The vectors
  $e_1=\epsilon v_1/2+v_3/2$ and $e_{-1}=(-\epsilon
  v_1+v_3)\lambda_3^{-1}$ are isotropic and $h(e_1,
  e_{-1})=1$. Moreover, the vectors $e_1$ and $e_{-1}$ are a basis for
  the $\mathfrak{o}_F$-lattice $\mathfrak{o}_Fv_1\oplus
  \mathfrak{o}_Fv_3$.
\end{proof}
\noindent
Let $(e_1, e_{-1})$ be a
Witt-basis for the hermitian space $(\langle v_1, v_3\rangle, h)$ as
in Lemma \ref{type_B_ram_seq_split}. Let $e_0$ be the vector $v_2$.
  In the basis $(e_1, e_0, e_{-1})$, the period $2$
lattice sequence $\Lambda$ is given by
$$\Lambda(-1)=\Lambda(0)=
\mathfrak{o}_Fe_1\oplus \mathfrak{o}_Fe_0\oplus
\mathfrak{o}_Fe_{-1}.$$
\subsection{}
In each of the above cases, we fixed a Witt-basis $(e_1, e_0, e_{-1})$
which gives a splitting for the lattice sequence $\Lambda$. Let
$\sch{B}$ be the Borel subgroup of $\sch{G}$ such that $B$ fixes the
space $\langle e_1 \rangle$. Let $\sch{T}$ be the maximal $F_0$-split
torus of $\sch{G}$ such that $T$ stabilises the decomposition
$V=\langle e_1 \rangle\oplus \langle e_0 \rangle\oplus \langle e_{-1}
\rangle$.
Let $\bar{\sch{B}}$ be the opposite Borel subgroup of $\sch{B}$ with
respect to $\sch{T}$.  Let $\sch{U}$ (resp. $\bar{\sch{U}}$) be the
unipotent radical of $\sch{B}$ (resp. $\bar{\sch{B}}$). We also recall
the notations $u(c, d)$ and $\bar{u}(c,d)$ for elements in $U$ and
$\bar{U}$ respectively. In the basis $(e_1, e_0, e_{-1})$, let
$\mathcal{I}$ be the Iwahori subgroup:
$$\mathcal{I}=\begin{pmatrix}
  \mathfrak{o}_F&\mathfrak{o}_F&\mathfrak{o}_F\\
  \mathfrak{p}_F&\mathfrak{o}_F&\mathfrak{o}_F\\
  \mathfrak{p}_F&\mathfrak{p}_F&\mathfrak{o}_F.
\end{pmatrix}\cap G.$$
From the Iwasawa decomposition, we get that
\begin{equation}
  G=\coprod_{w\in W_G}\mathcal{I}wB.
\end{equation}
\subsection{}\label{type_B_valuations}
The integers $q_1$ and $q_2$ have the following constraints: If
$F/F_0$ is unramified and $(V_2, h)$ is isotropic, then $q_2=4m_2+2$
and $q_1=4m_1$ for some $m_1, m_2\in \mathbb{Z}$. If $F/F_0$ is
unramified and $(V_2, h)$ is anisotropic, then $q_2=2m_2+1$ and
$q_1=2m_1$, for some $m_1, m_2\in \mathbb{Z}$. Finally consider the
case where $F/F_0$ is ramified. Since the image of the element
$$y_{\beta_2}=\varpi ^{q_2/g}\beta_2^{e(\Lambda_2)/g}=\varpi ^{q_2/2}\beta_2$$
(here, $g=(q_2,e(\Lambda_2))=2$) in $k_{F[\beta_2]}$ must generate the
degree $2$ field extension $k_{F[\beta_2]}$ over $k_F$, we get that
$q_2=4m_2$ and $q_1=4m_1+2$, for some $m_1, m_2\in \mathbb{Z}$. Hence,
in all the above cases $q_1\neq q_2$.
\subsection{}
We will need the structure of the compact subgroups $J^0(\Lambda, \beta)$
and $H^0(\Lambda, \beta)$ associated to $\mathfrak{x}$. First consider
the case $q_2<q_1$, and in this case the constant $k_0(\Lambda, \beta)$,
defined in \cite[equation 3.6]{semisimple_char_classical}, is equal to
$q_2$.  The stratum $[\Lambda, n, q_2, \beta_1]$ is a skew semisimple
stratum equivalent to the stratum $[\Lambda, n, q_2, \beta]$. We then
have:
\begin{align*}
  J^0(\Lambda, \beta)=&P_0(\Lambda_\beta)
                        P_{q_2/2}(\Lambda_{\beta_1})P_{n/2}(\Lambda).\\
  H^1(\Lambda, \beta)=&P_1(\Lambda_\beta)
                        P_{(q_2/2)+}(\Lambda_{\beta_1})P_{(n/2)+}(\Lambda).
\end{align*}
If $q_2>q_1$, then we have
\begin{align*}
  J^0(\Lambda,
  \beta)=&P_0(\Lambda_\beta)P_{n/2}(\Lambda).\\
  H^1(\Lambda, \beta)=&P_1(\Lambda_\beta)P_{(n/2)+}(\Lambda).
\end{align*}
\subsection{}\label{criterion_non-empty}
When $\mathfrak{x}$ is a skew semisimple stratum of type {\bf (B)}, we
prove a necessary and sufficient condition for the non-emptiness of
the set $\mathfrak{X}_\beta(F_0)$. The field $F[\beta_2]$ is stable
under the action of $\sigma_h$ and let $D$ be the fixed field of the
automorphism $\sigma_h$. Let $\lambda$ be the $F$-linear
$\sigma_h$-equivariant linear functional
$\lambda:F[\beta_2]\rightarrow F$ such that $\lambda(\beta_2)=\beta_1$
and $\lambda(1)=1$.  There exists a unique hermitian form
$h':V_2\times V_2\rightarrow F[\beta_2]$ such that
\begin{equation}\label{type_B_X_beta_eqs}
h'(xv, yw)=x\sigma_h(y)\sigma_h(h'(w,v)),
\ \text{for all}\ v, w\in V\ \text{and}\ x, y\in F[\beta_2],
\end{equation}
and
$$h(v, w)=\lambda(h'(v, w)),\ \text{for all}\ v, w\in V_2.$$
Since $F_0[\delta]$ is a quadratic extension of $F_0$, the kernel of
$\lambda$ is equal to
$$(\beta_2-\beta_1)F_0\oplus \delta(\beta_2-\beta_1)F_0.$$
The set $\mathfrak{X}_\beta(F_0)$ is non-empty if and only if there
exists $v_1\in V_1$, $v_2\in V_2$ with $v_1+v_2\neq 0$ such that
\begin{align}\label{type_B_crite_eqn_1}
h(v_1, v_1)+\lambda(h'(v_2, v_2))&=0\\\label{type_B_crite_eqn_2}
\beta_1h(v_1, v_1)+\lambda(\beta_2h'(v_2, v_2))&=0.
\end{align}
For any two vectors $v_1$ and $v_2$ as above we must have $v_1\neq 0$
and $v_2\neq 0$. Note that $(\beta_1-\beta_2)h'(v_2, v_2)$ is
contained in the kernel of $\lambda$, and this implies that
$h'(v_2, v_2)\in F_0^\times$. If $d_1$ and $d_2$ are the determinants
of $(V_2, h')$ and $(V_1, h)$, then we have
\begin{equation}\label{type_B_X_beta_criterion}
-d_1d_2^{-1}\in {\rm
  Nr}_{F[\beta_2]/D}(F[\beta_2]^\times).
\end{equation} 
Conversely, if the condition \eqref{type_B_X_beta_criterion} holds,
then we can always find a simultaneous solution to the equations
\eqref{type_B_crite_eqn_1} and \eqref{type_B_crite_eqn_2}, and hence,
the set $\mathfrak{X}_\beta(F_0)$ is non-empty.
\subsection
{The case where \texorpdfstring{$(V_2, h)$}{} is isotropic}
In this part, we treat the case where $(V_2, h)$ is isotropic, and let
us begin with the case where $q_2>q_1$.
\begin{lemma}\label{type_B_iso_q_1<q_2}
  Let $F/F_0$ be an unramified extension and let $\mathfrak{x}$ be a
  stratum of type {\bf (B)}. If $(V_2, h)$ is isotropic and $q_2>q_1$,
  then every cuspidal representation in the set $\Pi_\mathfrak{x}$ is
  non-generic. Moreover, we have $\mathfrak{X}_\beta(F_0)=\emptyset$.
\end{lemma}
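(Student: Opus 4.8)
The plan is to prove both assertions by computing $\mathfrak{X}_\beta(F_0)$ directly using the criterion \eqref{type_B_X_beta_criterion}, and then feeding the emptiness of $\mathfrak{X}_\beta(F_0)$ into the non-genericity machinery. First I would spell out the invariants: since $F/F_0$ is unramified and $(V_2,h)$ is isotropic, the field $F[\beta_2]/F$ is ramified, and by Paragraph \ref{type_B_valuations} we have $q_2 = 4m_2+2$ and $q_1 = 4m_1$ with $q_2 > q_1$. The key arithmetic point is that $D = F_0[\delta\beta_2]$ is then a ramified quadratic extension of $F_0$, and $F[\beta_2]/D$ is unramified (one checks this from $\nu_F(\beta_2)$ being the appropriate odd multiple). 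Hence ${\rm Nr}_{F[\beta_2]/D}(F[\beta_2]^\times)$ consists of the elements of $D^\times$ of even valuation, i.e. it is the subgroup $\mathfrak{o}_D^\times \varpi_D^{2\mathbb{Z}}$ together with the relevant unit-norm condition. I would compute $\nu_D(d_1)$, the valuation of the determinant $d_1$ of $(V_2,h')$: because $h'(v_2,v_2)\in F_0^\times$ always (as observed in Paragraph \ref{criterion_non-empty}) and because of how $\beta_2$ sits in the lattice sequence, $d_1$ has odd valuation in $D$ while $d_2$, the determinant of $(V_1,h)$, is a unit — or vice versa — so that $-d_1 d_2^{-1}$ has odd valuation in $D$ and therefore cannot lie in ${\rm Nr}_{F[\beta_2]/D}(F[\beta_2]^\times)$. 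This shows $\mathfrak{X}_\beta(F_0) = \emptyset$.

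For the non-genericity, I would argue by contradiction using Lemma \ref{basic_inequality}. Suppose some $\pi \in \Pi_\mathfrak{x}$ were generic; then for a suitable Borel $B$ (after conjugating) there is a Whittaker functional for $(U,\psi_\beta^g)$ for some $g\in G$, and in particular $\psi_\beta^g$ must be trivial on $U_{\rm der}(r)\cap J^1(\Lambda,\beta)$ and on $\bar U_{\rm der}(r)\cap J^1(\Lambda,\beta)$ for the appropriate $r$ dictated by the level of the stratum. By the Iwasawa/Bruhat decomposition $G = \coprod_{w\in W_G}\mathcal{I}wB$ it suffices to test $g$ ranging over $\mathcal{I}$-translates of Weyl elements, and one reduces to a finite check using the explicit lattice sequence from Paragraph \ref{type_B_unram_lat_seq_info}. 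Lemma \ref{basic_inequality} turns the triviality conditions into the two inequalities
$$\nu_{F_0}(\delta h(ge_1,\beta g e_1)) \le -r \quad\text{and}\quad \nu_{F_0}(\delta h(ge_{-1},\beta g e_{-1})) \le -r,$$
and I expect these to be incompatible: the isotropy structure of $(V_2,h)$ together with $q_2 \ne q_1$ forces the "first coordinate" line $\langle ge_1\rangle$ to be $\beta$-self-orthogonal to too high an order while the "last coordinate" line cannot be, so the two cannot hold simultaneously for any $g$. Equivalently, one shows $H^1(\Lambda,\beta)\cap U$ is strictly larger than $P_{(n/2)+}(\Lambda)\cap U$ (using the structure of $H^1(\Lambda,\beta) = P_1(\Lambda_\beta)P_{(n/2)+}(\Lambda)$ from the $q_2>q_1$ case and Lemma \ref{bl_st_lem}), so that for every $\theta\in\mathcal{C}(\Lambda,0,\beta)$ the restriction $\res_{H^1(\Lambda,\beta)\cap U}\theta$ differs from $\res_{H^1(\Lambda,\beta)\cap U}\psi_\beta$ on every Borel, obstructing the existence of a Whittaker functional by a Mackey-decomposition argument dual to the one in Proposition \ref{bl_st_prop}.

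The main obstacle I anticipate is the second half: transforming the emptiness of $\mathfrak{X}_\beta(F_0)$ — a statement about rational Borels carrying a character — into the genuine non-genericity of the induced representations $\ind_{J^0(\Lambda,\beta)}^G(\kappa\otimes\tau)$. Emptiness of $\mathfrak{X}_\beta(F_0)$ only says $\psi_\beta$ itself is not a character of any $R_u(\sch B)(F_0)$; one must still rule out that a Whittaker functional is produced "higher up" by the $\eta_\theta$ or $\beta$-extension part of $\kappa$, or on some non-rational Borel over an extension that nonetheless contributes. The way to handle this is to run the Mackey analysis of $\res_U \ind_{J^0(\Lambda,\beta)}^G\kappa$ as in the proof of Proposition \ref{bl_st_prop}, reducing to whether $\res_{J^1(\Lambda,\beta)\cap U}\eta_\theta$ can contain a non-degenerate character, and then using the compatibility $\res_{P_{(n/2)+}(\Lambda)}\theta = \psi_\beta$ together with the finite residual computation above to see it cannot. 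I would also need to verify the anisotropy hypothesis on $J^0(\Lambda,\beta)/J^1(\Lambda,\beta)$ (here $G_\beta$ is anisotropic, so this is automatic) so that the relevant finite-group genericity statements apply. The rational-points computation in the first paragraph should be routine once the ramification of $D/F_0$ and $F[\beta_2]/D$ is pinned down; the residue-field bookkeeping for the determinant valuations is where care is needed.
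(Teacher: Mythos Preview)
Your overall strategy for non-genericity---Mackey decomposition plus Lemma \ref{basic_inequality}---is the right one and matches the paper's approach in spirit, but several details of your sketch are incorrect or confused, and your two-step plan (first prove $\mathfrak{X}_\beta(F_0)=\emptyset$ via the criterion, then deduce non-genericity) is not how the paper proceeds.

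First, your claim that $H^1(\Lambda,\beta)\cap U$ is \emph{strictly larger} than $P_{(n/2)+}(\Lambda)\cap U$ is false: since $q_2>q_1$ we have $H^1(\Lambda,\beta)=P_1(\Lambda_\beta)P_{(n/2)+}(\Lambda)$, and Lemma \ref{bl_st_lem} gives exactly the equality $H^1(\Lambda,\beta)\cap U = P_{(n/2)+}(\Lambda)\cap U$. So this alternative route does not work. Second, your framing of the two inequalities as ``incompatible'' and the appearance of $\bar U_{\rm der}$ are both off: a Whittaker character of $U$ is trivial on $U_{\rm der}$, so what must be shown is that for every relevant $g$ the single character $\psi_\beta^g$ is \emph{non}-trivial on $P_{(q_2/2)+}(\Lambda)\cap U^w_{\rm der}$, i.e.\ that the inequality of Lemma \ref{basic_inequality} \emph{holds}. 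There is no need to pair it with a condition on $\bar U$.

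The paper carries this out by a single explicit computation. After reducing via $G=\coprod_w \mathcal{I}wB$ and the Iwahori decomposition of $\mathcal{I}$ to elements $u^-\in\mathcal{I}\cap\overline{U^w}$, one expands $h(u^-e_w,\beta u^-e_w)$ directly (equation \eqref{type_B_q_2>q_1_iso_epan_1} in the paper) and observes that the term $h(e_w,\beta_2 e_w)$ has strictly smallest valuation among all the summands. This one estimate yields both conclusions at once: the valuation bound shows $\psi_\beta^{u^-}$ is non-trivial on $P_{(q_2/2)+}(\Lambda)\cap U^w_{\rm der}$ for every $w$ and every $u^-$, contradicting genericity, and it also shows $h(ge_1,\beta ge_1)\neq 0$ for all $g\in G$, hence $\mathfrak{X}_\beta(F_0)=\emptyset$. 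Your separate attack on emptiness via the determinant criterion \eqref{type_B_X_beta_criterion} is plausible in principle (indeed the paper uses that criterion for the opposite inequality $q_1>q_2$ in Lemma \ref{type_B_iso_q_1>q_2_whit_data}), but you have not pinned down the ramification of $F[\beta_2]/D$ and the valuation of $d_1$, and in any case you would still need the explicit valuation estimate for the non-genericity half---so the criterion buys you nothing here.
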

\begin{proof}
  Let $\pi$ be a representation in the set $\Pi_\mathfrak{x}$. There
  exists a $\beta$-extension $\kappa$ of $J^0(\Lambda, \beta)$ such
  that $\pi=\ind_{J^0(\Lambda, \beta)}^G\kappa$. If $\pi$ is genetic,
  then there exists a non-trivial character $\Psi$ of $U$, $p\in
  \mathcal{I}$, and $w\in W_G$ such that
\begin{equation}\label{type_B_unram_iso_q_2>q_1_eq_1}
\ho_{J^0(\Lambda, \beta)^p\cap U^w}(\kappa^p, \Psi^w)\neq 0.
\end{equation}
Let $\overline{U^w}$ be the unipotent radical of the opposite Borel
subgroup, $\overline{B^w}$, of $B^w$ containing $T$.  Using the
Iwahori decomposition of $\mathcal{I}$, with respect to
$(\overline{B^w}, T)$, we have $p=p^+u^-$ with
$p^+\in B^w\cap \mathcal{I}$ and
$u^-\in \overline{U^w}\cap \mathcal{I}$. Now the equation
\eqref{type_B_unram_iso_q_2>q_1_eq_1} implies that
$$\ho_{J^0(\Lambda, \beta)^{u^-}\cap U^w}(\kappa^{u^-}, \Psi')\neq 0,
$$
for some character $\Psi'$ of $U^w$. The group $P_{(q_2/2)+}(\Lambda)$
is normalised by the element $u^-=\dot{u}(x,y)$, and hence,
$P_{(q_2/2)+}(\Lambda)^{u^-}\cap U^w$ is equal to
$P_{(q_2/2)+}(\Lambda)\cap U^w$.

We set $q_2=8k+2r$, for some integer $k$ and $r\in\{1,3\}$. For the
following calculations it is convenient to refer to the appendix
\ref{unram_fil_3} for an explicit description of the filtration
$\{a_m(\Lambda): m\in \mathbb{Z}\}$.  We have $(q_2/2)+=4k+r+1$ and
the intersection $P_{(q_2/2)+}(\Lambda)\cap U^w$ is given by:
\begin{eqnarray}\label{type_B_unram_lrt_wht}
P_{(q_2/2)+}(\Lambda)\cap U^w=
\begin{cases}
  U(k+\lfloor r/2\rfloor),
  &\ \text{if}\  w=\id,\\
  U^w(k+1+\lfloor r/2\rfloor), &\ \text{if}\ w\neq\id.
\end{cases}
\end{eqnarray}
We define $e_w$ and $e_{-w}$ by setting $e_w=we_1$ and
$e_{-w}=we_{-1}$, $w\in W_G$. Then, we have
\begin{equation}\label{type_B_q_2>q_1_iso_epan_1}
  h(u^-e_{w}, \beta u^-e_{w})=
  \beta_1x\bar{x}+h(e_{w}, \beta_2 e_{w})+(y+\bar{y}) h(e_w,
  \beta_2e_{-w})+y\bar{y}h(e_{-w}, \beta_2 e_{-w}).\end{equation}
Now, the valuation of the term $h(e_{w}, \beta_2
e_{w})$ is strictly less than the valuation of all other terms in the
right-hand side of 
\eqref{type_B_q_2>q_1_iso_epan_1}.
Hence, using Iwasawa decomposition, we get that
$h(ge_1, \beta ge_1))\neq 0$, for all $g\in G$.  This shows that the
set $\mathfrak{X}_\beta(F_0)$ is empty. We also get the valuation of
$\delta h(u^-e_w, \beta u^-e_w)$ is equal to
\begin{eqnarray}\label{type_B_unram_iso_height}
  \nu_F(\delta h(u^-e_w, \beta u^-e_w))=\nu_F(h(e_{w}, \beta_2
  e_{w}))=
  \begin{cases}
    -(2k+\lfloor r/2\rfloor)\ &\text{if}\ w=\id,\\
    -(2k+\lfloor r/2\rfloor+1)\ &\text{if}\ w\neq \id.
    \end{cases}
\end{eqnarray}
From the equations \eqref{type_B_unram_lrt_wht}
and \eqref{type_B_unram_iso_height}, we get that 
$$\nu(h(u^-e_w, u^-e_w))\leq -s,$$
where $s$ is given by the equality
$$U^w_{\rm der}(s)=P_{(q_2/2)+}(\Lambda)\cap U^w.$$
Now, Lemma \ref{basic_inequality} implies that the character
$\psi_\beta^{u^-}$ is non-trivial on the group
$P_{(q_2/2)+}(\Lambda)\cap U^w$ and we get a contradiction to
\eqref{type_B_unram_iso_q_2>q_1_eq_1}. 
\end{proof}
\begin{lemma}\label{type_B_ram_iso_q_1>q_2_non_gen}
  Let $F/F_0$ be a ramified extension and let $\mathfrak{x}$ be a
  stratum of type {\bf (B)} such that $(V_2, h)$ is isotropic. If
  $q_2>q_1$, then the set $\mathfrak{X}_\beta(F_0)$ is empty, and
  every cuspidal representation in the set $\Pi_\mathfrak{x}$ is
  non-generic.
\end{lemma}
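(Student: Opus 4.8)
The statement is the ramified-case analogue of Lemma \ref{type_B_iso_q_1<q_2}, so the plan is to imitate its proof, replacing the unramified lattice-sequence data of \ref{type_B_unram_lat_seq_info} by the ramified data of \ref{type_B_ram_lat_seq_info} (the period-$2$ self-dual sequence on the Witt-basis $(e_1,e_0,e_{-1})$ with $e_1,e_{-1}\in V_2$), and the valuation constraints of \ref{type_B_valuations} by the ramified ones ($q_2=4m_2$, $q_1=4m_1+2$, so $q_2>q_1$ forces a strict inequality on the $m_i$). First I would dispose of $\mathfrak{X}_\beta(F_0)=\emptyset$: by the criterion \eqref{type_B_X_beta_criterion} it suffices to check that $-d_1 d_2^{-1}\notin \mathrm{Nr}_{F[\beta_2]/D}(F[\beta_2]^\times)$. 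Since $F/F_0$ is ramified, $F[\beta_2]/F$ is unramified (hence $F[\beta_2]/D$ is ramified, as $D\supset F_0$ is ramified over $F_0$ while $F[\beta_2]/F_0$ has ramification index $2$), so the norm group $\mathrm{Nr}_{F[\beta_2]/D}(F[\beta_2]^\times)$ has index $2$ in $D^\times$ and is detected by the parity of the valuation together with a residue-square condition; I would compare the valuations of $d_1$, $d_2$ (which are controlled by $q_1$, $q_2$ and $e(\Lambda_i)$) and show they fall in the non-norm class. Alternatively, and more robustly, I would run the valuation computation below, which directly shows $h(ge_1,\beta ge_1)\neq 0$ for all $g\in G$ and hence gives emptiness for free.

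The non-genericity argument proceeds exactly as in Lemma \ref{type_B_iso_q_1<q_2}. Suppose $\pi=\ind_{J^0(\Lambda,\beta)}^G\kappa\in\Pi_\mathfrak{x}$ is generic. Using the Iwasawa/Iwahori decomposition $G=\coprod_{w\in W_G}\mathcal{I}wB$ and the Iwahori decomposition of $\mathcal{I}$ with respect to $(\overline{B^w},T)$, one reduces the hypothetical Whittaker functional to a nonzero element of $\ho_{J^0(\Lambda,\beta)^{u^-}\cap U^w}(\kappa^{u^-},\Psi')$ for some $u^-=\bar u(x,y)\in\overline{U^w}\cap\mathcal{I}$ and some character $\Psi'$ of $U^w$. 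Since $q_2>q_1$ we are in the case $J^0(\Lambda,\beta)=P_0(\Lambda_\beta)P_{n/2}(\Lambda)$, and by Lemma \ref{bl_st_lem} together with the fact that $P_{(q_2/2)+}(\Lambda)\subset H^1(\Lambda,\beta)$ is normalised by $u^-$, the restriction of $\kappa^{u^-}$ to $P_{(q_2/2)+}(\Lambda)\cap U^w$ is a power of $\psi_\beta^{u^-}$; so it would have to be trivial there. The contradiction comes from Lemma \ref{basic_inequality}: I would expand
\[
 h(u^- e_w,\beta u^- e_w)=\beta_1 x\bar x+h(e_w,\beta_2 e_w)+(y+\bar y)h(e_w,\beta_2 e_{-w})+y\bar y\,h(e_{-w},\beta_2 e_{-w}),
\]
and show, using the ramified lattice data and the relation $\nu_{\Lambda_2}(\beta_2)=-q_2$ versus $\nu_{\Lambda_1}(\beta_1)=-q_1$ with $q_2>q_1$, that the term $h(e_w,\beta_2 e_w)$ strictly dominates (its valuation in $F_0$ is more negative than those of the other three terms for every $w\in W_G$); this simultaneously forces $h(u^- e_w,\beta u^- e_w)\neq 0$, hence $\mathfrak{X}_\beta(F_0)=\emptyset$, and pins down $\nu_{F_0}(\delta h(u^- e_w,\beta u^- e_w))$ exactly. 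Matching this valuation against the index $s$ for which $U^w_{\mathrm{der}}(s)=P_{(q_2/2)+}(\Lambda)\cap U^w$ — a bookkeeping computation with the period-$2$ ramified filtration, split into the cases $w=\id$ and $w\neq\id$ — gives $\nu_{F_0}(\delta h(u^- e_w,\beta u^- e_w))\le -s$, so by Lemma \ref{basic_inequality} the character $\psi_\beta^{u^-}$ is nontrivial on $P_{(q_2/2)+}(\Lambda)\cap U^w$, contradicting the above.

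The main obstacle, and the only place where genuine care is needed beyond transcribing the unramified proof, is getting the filtration bookkeeping right in the ramified setting: one must recompute $P_{(q_2/2)+}(\Lambda)\cap U^w$ and the matching $s$ for the period-$2$ self-dual sequence (where, unlike the unramified isotropic case with $e(\Lambda)=4$, the jumps of $\Lambda$ occur at a different rate), and verify the strict-dominance of $h(e_w,\beta_2 e_w)$ case-by-case over $w\in W_G$ — in particular checking that the $y\bar y\,h(e_{-w},\beta_2 e_{-w})$ term, which carries the ``dual'' behaviour of $\beta_2$ on $V_2$, cannot tie or beat it. I expect the $w\neq\id$ elements to require the most attention, since there the relevant line $\langle we_1\rangle$ need not lie in $V_2$ and the cross term $(y+\bar y)h(e_w,\beta_2 e_{-w})$ must be controlled; but the self-duality of $\Lambda$ and the minimality of $\beta_2$ (assumed at the start of Section \ref{type_B_sec}) should make all the estimates go through with explicit constants in terms of $m_1,m_2$.
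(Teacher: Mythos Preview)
Your overall strategy is right and matches the paper's, but the execution differs in two places, and one of them is a real gap.

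First, a simplification you missed: in the ramified isotropic case $P^0(\Lambda)$ is a \emph{special} parahoric, so the paper uses the Iwasawa decomposition $G=P(\Lambda)B$ directly rather than the Iwahori--Weyl decomposition. This removes the need for any case split over $w\in W_G$ (and your worry that $\langle we_1\rangle$ might leave $V_2$ is unfounded anyway: both $e_1,e_{-1}$ lie in $V_2$, and the nontrivial $w$ only swaps them). The paper writes $ge_1=ae_1+\varpi be_0+\varpi ce_{-1}$ for $g\in P(\Lambda)$ and runs the valuation estimate once.

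Second, and this is the substantive issue: your key claim that ``the term $h(e_w,\beta_2 e_w)$ strictly dominates'' is exactly what does \emph{not} transcribe from the unramified proof. In Lemma~\ref{type_B_iso_q_1<q_2} one has $F[\beta_2]/F$ ramified, which forces the off-diagonal entries of $\beta_2$ (in the Witt basis of $V_2$) to have different $F$-valuations; that is why a single term wins. Here $F[\beta_2]/F$ is \emph{unramified}, and with $\tilde\beta_2=\varpi^{q_2/2}\beta_2\in \mathfrak a_0(\Lambda_2)$ the entries satisfy $\nu_F(\tilde\beta_{2,21})=1$, $\nu_F(\tilde\beta_{2,12})=-1$, $\tilde\beta_{2,11},\tilde\beta_{2,22}\in\mathfrak o_F$. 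Consequently, for $u^-=\bar u(x,y)$ with $\nu_F(y)=1$ the three $\beta_2$-contributions
\[
h(e_1,\beta_2 e_1),\qquad \sigma(y)h(e_1,\beta_2 e_{-1})+yh(e_{-1},\beta_2 e_1),\qquad y\sigma(y)\,h(e_{-1},\beta_2 e_{-1})
\]
all sit at the same valuation $-q_2/2+1$, so strict dominance of the first fails in general. The paper circumvents this: it does not isolate one term but shows that the entire $V_2$-part, namely $\varpi^{-1}h(ae_1+\varpi ce_{-1},\tilde\beta_2(ae_1+\varpi ce_{-1}))$, is a unit whenever $a,c$ are not both in $\mathfrak p_F$. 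The reason is that $\tilde\beta_2$ has unit determinant in $\mathfrak a_0(\Lambda_2)$ (equivalently $\mathrm{Nr}_{F[\beta_2]/F}(\tilde\beta_2)\in\mathfrak o_F^\times$, which is the minimality of $\beta_2$). This ``combination is a unit'' step is the genuine new ingredient beyond transcription, and it is precisely what your plan needs in place of the dominance claim. Once you have it, the comparison $\nu_{F/F_0}(\delta h(ge_1,\beta ge_1))=-k_2+1\le -s$ with $U_{\rm der}(s)=P_{(q_2/2)+}(\Lambda)\cap U_{\rm der}$ goes through, and emptiness of $\mathfrak X_\beta(F_0)$ follows from the same computation via $G=P(\Lambda)B$.
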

\begin{proof}
  First, we note that the group $P^0(\Lambda)$ is a special parahoric
  subgroup of $G$.  Let $\pi=\ind_{J^0(\Lambda, \beta)}^G\kappa$ be a
  generic representation in the set $\Pi_\mathfrak{x}$.  Using the
  Iwasawa decomposition $G=P(\Lambda)TU$, and the Mackey decomposition
  for the representation $\res_U\pi$, we get that
\begin{equation}\label{type_B_ram_iso_q_2>q_1_eq_1}
\ho_{J^0(\Lambda, \beta)^g\cap U}(\kappa^g, \Psi)\neq 0,
\end{equation}
for some $g\in P(\Lambda)$, and a non-trivial character
$\Psi$ of $U$.  We refer to the appendix \ref{unram_fil_2} for an
explicit description of the filtration
$\{a_m(\Lambda): m\in \mathbb{Z}\}$.

We set $q_2=4k_2$ and $q_1=4k_1+2$, for some integers $k_1$ and
$k_2$. The group $P(\Lambda)$ normalises $P_{(q_2/2)+}(\Lambda)$ and
hence, $P_{(q_2/2)+}(\Lambda)^g\cap U_{\rm der}$ is equal to
$P_{(q_2/2)+}(\Lambda)\cap U_{\rm der}$. We have
\begin{equation}\label{type_B_ram_lrt_wht}
  P_{(q_2/2)+}(\Lambda)\cap U_{\rm der}=
  U_{\rm der}(\lceil(k_2-1)/2\rceil).
\end{equation}
Since, $\nu_\Lambda(e_1)=1$, we have $ge_1=ae_1+\varpi be_0\oplus
\varpi ce_{-1}$, for some $a, b, c\in \mathfrak{o}_F$. We now try to
estimate the valuation of $h(ge_1, \beta ge_1)$. Observe that $h(ge_1,
\beta ge_1)$ is equal to
\begin{align*}
-\varpi^2\beta_1b\sigma(b)+ a\sigma(a)h(e_1, \beta_2e_1)
  -\varpi
  a\sigma(c)h(e_1, \beta_2e_{-1})+\\
  \varpi c\sigma(a)h(e_{-1}, \beta_2e_{1})-\varpi^2c\sigma(c)h(e_{-1},
  \beta_2e_{-1}).
\end{align*}
Recall that $F[\beta_2]$ is the unramified quadratic extension of $F$
and the element $\tilde{\beta}_2=\varpi^{q_2/2}\beta_2$ belongs to
$a_0(\Lambda_2)$. Since $\nu_{\Lambda}(e_1)=1$, the constants $a$ and
$c$ both cannot belong to $\mathfrak{p}_F$. From the observation that
$\nu_F({\rm Nr}_{F[\beta_2]/F}(\beta_2))=0$, we have
\begin{align*}
  a\sigma(a)h(e_1, 1/\varpi\tilde{\beta}_2e_1) - a\sigma(c)h(e_1,
  \tilde{\beta}_2e_{-1})+
  c\sigma(a)h(e_{-1}, \tilde{\beta}_2e_{1})\\
  +c\sigma(c)h(e_{-1}, \varpi \tilde{\beta_2}e_{-1})\not\equiv 0\
  (\text{mod}\ \mathfrak{p}_F).\end{align*}
We note that
$$\nu_{F/F_0}(\varpi^2\beta_1b\sigma(b))\geq -k_1+1/2.$$
Since $q_2>q_1$, we have $-k_2<-k_1-1/2$, we deduce that
$h(ge_1, \beta ge_1)\neq 0$ and
$\nu_{F/F_0}(\delta h(ge_1, \beta ge_1))$ is equal to $-k_2+1$.
From the equation \eqref{type_B_ram_lrt_wht}, we get that
$$\nu_{F/F_0}(\delta h(ge_1, \beta ge_1))\leq -s,$$
where $P_{(q_2/2)+}(\Lambda)\cap U_{\rm der}= U_{\rm der}(s)$. Now,
using Lemma \ref{basic_inequality}, we arrive at a contradiction to
\eqref{type_B_ram_iso_q_2>q_1_eq_1}. Hence, any representation $\pi$
in the set $\Pi_\mathfrak{x}$ is non-generic. Finally, using the
Iwasawa decomposition $G=P(\Lambda)B$, we get that
$h(ge_1, \beta ge_1)\neq 0$, for $g\in G$. Hence, the set
$\mathfrak{X}_\beta(F_0)$ is empty.
\end{proof}
\begin{lemma}\label{type_B_iso_q_1>q_2_whit_data}
  Let $F/F_0$ be an unramified extension and let $\mathfrak{x}$ be a
  stratum of type {\bf (B)} such that $(V_2, h)$ is isotropic. If
  $q_1>q_2$, then the set $\mathfrak{X}_\beta(F_0)$ is non-empty.
\end{lemma}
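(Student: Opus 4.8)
The plan is to verify, under the stated hypotheses, the numerical criterion for non‑emptiness of $\mathfrak{X}_\beta(F_0)$ obtained in Paragraph \ref{criterion_non-empty}. Write $D$ for the $\sigma_h$-fixed field of $F[\beta_2]$, $d_1$ for the determinant of the $F[\beta_2]$-hermitian space $(V_2,h')$, and $d_2$ for the determinant of $(V_1,h)$; that criterion says $\mathfrak{X}_\beta(F_0)\neq\emptyset$ if and only if $-d_1d_2^{-1}\in {\rm Nr}_{F[\beta_2]/D}(F[\beta_2]^\times)$. First I would make the right‑hand side explicit: since $F/F_0$ is unramified and $F[\beta_2]/F$ is ramified, the element $\delta\beta_2$ lies in $D$ and has odd valuation in $F[\beta_2]$, which forces $F[\beta_2]/D$ to be unramified; hence ${\rm Nr}_{F[\beta_2]/D}(F[\beta_2]^\times)$ is precisely the set of elements of $D^\times$ of even $\nu_D$-valuation, and in particular it contains $\mathfrak{o}_D^\times$ and $-1$. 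Moreover, for the Witt basis of Paragraph \ref{type_B_unram_lat_seq_info} the space $(V_1,h)$ contains the unit vector $e_0$, so we may take $d_2=1$. Thus it suffices to show that $\nu_D(d_1)$ is even.

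Next I would compute the parity of $\nu_D(d_1)$. Because $F[\beta_2]$ is a field acting faithfully on the two‑dimensional $V_2$, the vector $e_1$ generates $V_2$ as an $F[\beta_2]$-module, so we may take $d_1=h'(e_1,e_1)\in D^\times$. From $h(e_1,e_1)=\lambda(h'(e_1,e_1))=0$ we get $d_1\in\ker\lambda=F(\beta_2-\beta_1)$; combining this with $d_1\in D$, the fact that $\beta_2-\beta_1$ is $\sigma_h$-anti-fixed, and the fact that $\delta$ is $\sigma$-anti-fixed, one finds $d_1\in F_0^\times\,\delta(\beta_2-\beta_1)$, and therefore $\nu_D(d_1)\equiv\nu_{F[\beta_2]}(\beta_2-\beta_1)\pmod 2$. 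Now $\beta_1\in F$ (as $F[\beta_1]=F$ in type {\bf (B)}) and $\beta_1\neq 0$ — indeed $\beta_1=0$ would give $q_1=0$ and hence $n=\max\{q_1,q_2\}=q_2$, contradicting $q_1>q_2$ together with $n>0$ — so $\nu_{F[\beta_2]}(\beta_1)=2\nu_F(\beta_1)$ is even, whereas $\nu_{F[\beta_2]}(\beta_2)$ is odd since $\beta_2$ is minimal and $F[\beta_2]/F$ is a ramified quadratic extension. In particular these two valuations are distinct, so $\nu_{F[\beta_2]}(\beta_2-\beta_1)=\min\{\nu_{F[\beta_2]}(\beta_1),\nu_{F[\beta_2]}(\beta_2)\}$.

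Finally I would bring in the hypothesis $q_1>q_2$. Reading off the lattice sequence of Paragraph \ref{type_B_unram_lat_seq_info}, the restriction $\Lambda_1=\Lambda\cap V_1$ has $\mathfrak{o}_F$-period $4$ and $\Lambda_2=\Lambda\cap V_2$ has $\mathfrak{o}_{F[\beta_2]}$-period $2$, so $\nu_{\Lambda_1}(\beta_1)=4\nu_F(\beta_1)=2\nu_{F[\beta_2]}(\beta_1)$ and $\nu_{\Lambda_2}(\beta_2)=2\nu_{F[\beta_2]}(\beta_2)$; hence $q_i=-2\nu_{F[\beta_2]}(\beta_i)$, and $q_1>q_2$ is equivalent to $\nu_{F[\beta_2]}(\beta_1)<\nu_{F[\beta_2]}(\beta_2)$. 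With the previous paragraph this gives $\nu_{F[\beta_2]}(\beta_2-\beta_1)=\nu_{F[\beta_2]}(\beta_1)=-q_1/2$, which is even because $4\mid q_1$ by Paragraph \ref{type_B_valuations}. Therefore $\nu_D(d_1)$ is even, so $-d_1d_2^{-1}=-d_1\in{\rm Nr}_{F[\beta_2]/D}(F[\beta_2]^\times)$, and $\mathfrak{X}_\beta(F_0)$ is non-empty. The only genuinely computational step is the period bookkeeping that turns $q_1>q_2$ into $\nu_{F[\beta_2]}(\beta_1)<\nu_{F[\beta_2]}(\beta_2)$; everything else is formal once the criterion of Paragraph \ref{criterion_non-empty} is available. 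As a consistency check, the same bookkeeping run with $q_2>q_1$ makes the minimum equal to $\nu_{F[\beta_2]}(\beta_2)$, which is odd, and so recovers the emptiness asserted in Lemma \ref{type_B_iso_q_1<q_2}.
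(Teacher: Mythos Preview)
Your proposal is correct and follows essentially the same route as the paper: you invoke the criterion \eqref{type_B_X_beta_criterion}, identify the determinant class of $(V_2,h')$ with $\delta(\beta_2-\beta_1)$ modulo $F_0^\times$, observe that $F[\beta_2]/D$ is unramified so the norm group consists of even-valuation elements, and use $q_1>q_2$ together with the parity constraints of Paragraph~\ref{type_B_valuations} to conclude that $\nu_{F[\beta_2]}(\beta_2-\beta_1)=\nu_{F[\beta_2]}(\beta_1)$ is even. Your version is somewhat more explicit than the paper's in justifying why $F[\beta_2]/D$ is unramified and in carrying out the period bookkeeping translating $q_1>q_2$ into the valuation inequality, but the strategy is identical.
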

\begin{proof}
  We continue using the notations introduced in \ref{criterion_non-empty}.
  Since $(V_2, h)$ is isotropic, we get that $\ker(\lambda)$ has
  non-trivial intersection with the set $\{h'(v, v): v\in V_2, v\neq
  0\}$. Since $F[\beta_2]$ is a ramified extension of $F$ and $F/F_0$
  is an unramified extension, we get that $F_0^\times$ is contained in
  ${\rm Nr}_{F[\beta_2]/D}(F[\beta_2]^\times)$.  This implies that the
  determinant of the form $(V_2, h')$ is the class in $D^\times/{\rm
    Nr}_{F[\beta_2]/D}(F[\beta_2]^\times)$ containing
  $\delta(\beta_2-\beta_1)$. Since, $q_1>q_2$ we have
$$\nu_{F[\beta_2]}(\delta(\beta_2-\beta_1))=\nu_{F[\beta_2]}(\beta_1)
\in 2\mathbb{Z}.$$ From the observation that $d_1$, the determinant of
$(V_1, h)$, belongs to $F_0$, we see that $$-d_1d_2^{-1}\in {\rm
  Nr}_{F[\beta_2]/D}(F[\beta_2]^\times).$$ Now, using the criterion
\eqref{type_B_X_beta_criterion}, we get that the set
$\mathfrak{X}_\beta(F_0)$ is non-empty.
\end{proof}
\begin{lemma}\label{type_B_ram_iso_q_1>q_2_whit_mod}
  Let $F/F_0$ be a ramified extension, and let $\mathfrak{x}$ be a
  stratum of type {\bf (B)} such that $(V_2, h)$ is isotropic. If
  $q_1\geq q_2$, then the set $\mathfrak{X}_\beta(F_0)$ is non-empty.
\end{lemma}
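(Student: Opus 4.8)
The plan is to check the criterion \eqref{type_B_X_beta_criterion} from Paragraph \ref{criterion_non-empty}: with $D$ the fixed field of $\sigma_h$ in $F[\beta_2]$, with $\lambda$ and $h'$ as in that paragraph, and with $d_1$, $d_2$ the determinants of $(V_2, h')$ and $(V_1, h)$, the set $\mathfrak{X}_\beta(F_0)$ is non-empty if and only if $-d_1d_2^{-1}\in {\rm Nr}_{F[\beta_2]/D}(F[\beta_2]^\times)$. Recall $\ker(\lambda)=(\beta_2-\beta_1)F_0\oplus \delta(\beta_2-\beta_1)F_0$, that $\beta_1\in F$ is skew and $d_2\in F_0^\times$, and that since $F/F_0$ is ramified the extension $F[\beta_2]/F$ is unramified.

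First I would determine the ramification of the tower $F_0\subset D\subset F[\beta_2]$. Since $\mathfrak{x}$ is of type {\bf (B)} with $(V_2, h)$ isotropic we have $q_2=4m_2$ (Paragraph \ref{type_B_valuations}) and $e(\Lambda_2)=2$, so $\nu_{F[\beta_2]}(\beta_2)$ is even. Writing $\beta_2$ as an even power of $\varpi$ times a unit $\gamma$ whose residue generates $k_{F[\beta_2]}$ over $k_F$ (here $\beta_2$ is minimal), and using that $\varpi$ is a skew uniformizer of $F$, the relation $\sigma_h(\beta_2)=-\beta_2$ gives $\sigma_h(\gamma)=-\gamma$; reduction modulo $\mathfrak{p}_{F[\beta_2]}$ then shows that $\sigma_h$ is non-trivial on $k_{F[\beta_2]}$. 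Hence $k_D=k_F$, the extension $F[\beta_2]/D$ is unramified and $D/F_0$ is ramified, so ${\rm Nr}_{F[\beta_2]/D}(F[\beta_2]^\times)$ is exactly the set of elements of $D^\times$ of even $\nu_D$-valuation, and in particular $F_0^\times\subset {\rm Nr}_{F[\beta_2]/D}(F[\beta_2]^\times)$.

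Next, because $(V_2, h)$ is isotropic and $h'$ is a non-degenerate hermitian form on the one-dimensional $F[\beta_2]$-space $V_2$, there is a non-zero $v\in V_2$ with $h(v,v)=\lambda(h'(v,v))=0$; as $h'(v,v)$ is a non-zero element of $D$, this gives $h'(v,v)\in \ker(\lambda)\cap D=F_0^\times\,\delta(\beta_2-\beta_1)$. Any two non-zero diagonal values of $h'$ differ by an element of ${\rm Nr}_{F[\beta_2]/D}(F[\beta_2]^\times)$, so, using $F_0^\times\subset {\rm Nr}_{F[\beta_2]/D}(F[\beta_2]^\times)$ from the first step, the determinant $d_1$ is the class of $\delta(\beta_2-\beta_1)$ in $D^\times/{\rm Nr}_{F[\beta_2]/D}(F[\beta_2]^\times)$. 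It then suffices to compute $\nu_{F[\beta_2]}(\delta(\beta_2-\beta_1))$: since $q_1\neq q_2$, the hypothesis $q_1\geq q_2$ gives $q_1>q_2$, so $\nu_{F[\beta_2]}(\beta_1)<\nu_{F[\beta_2]}(\beta_2)$ and $\nu_{F[\beta_2]}(\delta(\beta_2-\beta_1))=\nu_F(\delta)+\nu_F(\beta_1)$; here $\nu_F(\delta)=e(F|F_0)-1=1$ and $\nu_F(\beta_1)$ is odd because $\beta_1$ is a skew element of $F$ and $\delta$ is a skew uniformizer, so the sum is even. Since $d_2\in F_0^\times$ also has even $\nu_D$-valuation, $-d_1d_2^{-1}$ is represented by an element of $D^\times$ of even $\nu_D$-valuation, hence lies in ${\rm Nr}_{F[\beta_2]/D}(F[\beta_2]^\times)$; by \eqref{type_B_X_beta_criterion}, $\mathfrak{X}_\beta(F_0)$ is non-empty.

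The step I expect to be the main obstacle is the first one. In the unramified case of Lemma \ref{type_B_iso_q_1>q_2_whit_data} the inclusion $F_0^\times\subset {\rm Nr}_{F[\beta_2]/D}(F[\beta_2]^\times)$ is immediate from the extension data; here, since $F/F_0$ is ramified one has $k_F=k_{F_0}$, and one must use the type {\bf (B)} constraint $q_2=4m_2$ — equivalently, that the residue of the leading term of the minimal element $\beta_2$ generates $k_{F[\beta_2]}/k_F$ — to conclude that $\sigma_h$ acts as the Frobenius on $k_{F[\beta_2]}$, and hence that $D/F_0$ is ramified and $F[\beta_2]/D$ unramified. After that the argument is the same valuation-parity count as in the unramified case.
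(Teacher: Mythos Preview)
Your proof is correct and follows the same strategy as the paper: verify criterion \eqref{type_B_X_beta_criterion} by identifying $d_1$ with the class of $\delta(\beta_2-\beta_1)$ via $\ker\lambda$, show $F_0^\times\subset{\rm Nr}_{F[\beta_2]/D}(F[\beta_2]^\times)$, and conclude. Your handling of the ramification in $F_0\subset D\subset F[\beta_2]$ is in fact sharper than the paper's: the paper asserts that $k_D$ is a quadratic extension of $k_{F_0}$, whereas your computation that $\sigma_h(y_{\beta_2})=-y_{\beta_2}$ (with $\bar y_{\beta_2}$ generating $k_{F[\beta_2]}/k_{F_0}$) shows $\sigma_h$ acts nontrivially on $k_{F[\beta_2]}$, so $k_D=k_{F_0}$, $D/F_0$ is ramified, and $F[\beta_2]/D$ is unramified. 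The inclusion $F_0^\times\subset{\rm Nr}_{F[\beta_2]/D}(F[\beta_2]^\times)$ then follows from the even-$\nu_D$ description of norms from an unramified quadratic extension together with $e(D|F_0)=2$. The only remaining difference is cosmetic: the paper factors $\delta(\beta_1-\beta_2)=\delta\beta_1\cdot(1-\beta_2\beta_1^{-1})$ with $\delta\beta_1\in F_0^\times$ and $1-\beta_2\beta_1^{-1}\in 1+\mathfrak{p}_D$, while you check the $\nu_D$-parity directly.
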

\begin{proof}
  We continue using notations introduced in \ref{criterion_non-empty}.
  Since the space $(V_2, h)$ is isotropic, the set
  $\{h'(v, v): v\in V_2, v\neq 0\}$ has a nontrivial intersection with
  $\ker(\lambda)$. Hence, the determinant class of $(V, h')$ is equal
  to the coset in
  $D^\times/{\rm Nr}_{F[\beta_2]/D}(F[\beta_2]^\times)$ containing the
  element $\delta\beta_1(1-\beta_2\beta_1^{-1})$. We note that
  $(1-\beta_2\beta_1^{-1})$ belongs to $1+\mathfrak{p}_{F[\beta_2]}$
  and $\beta_1\in \delta F_0$. Since $k_D$ is a quadratic extension of
  $k_{F_0}$, we get that $F_0^\times$ is contained in
  ${\rm Nr}_{F[\beta_2]/D}(F[\beta_2]^\times)$.  Hence, we have
  $\delta\beta_1\in F_0$ and we get that $\delta(\beta_1-\beta_2)$
  belongs to ${\rm Nr}_{F[\beta_2]/D}(F[\beta_2]^\times)$. Now, using
  the criterion \eqref{type_B_X_beta_criterion} we get that the set
  $\mathfrak{X}_\beta(F_0)$ is non-empty.
\end{proof}
\begin{lemma}\label{type_B_iso_q_1>q_2_generic}
  Let $F/F_0$ be a quadratic extension and let $\mathfrak{x}$ be a
  stratum of the type {\bf (B)} such that $(V_2, h)$ is
  isotropic. If $q_1>q_2$, then any representation in the set
  $\Pi_\mathfrak{x}$ is generic.
\end{lemma}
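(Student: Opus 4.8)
The plan is to repeat the strategy of Proposition \ref{type_A_end_prop}: fix a representation $\pi\in\Pi_\mathfrak{x}$ and a Bushnell--Kutzko type $(J^0(\Lambda,\beta),\kappa)$ contained in it, so that $\pi\simeq\ind_{J^0(\Lambda,\beta)}^G\kappa$; let $\theta\in\mathcal{C}(\Lambda,0,\beta)$ be a skew semisimple character occurring in $\res_{H^1(\Lambda,\beta)}\kappa$; and then verify the three hypotheses of Proposition \ref{bl_st_prop}. First, since $q_1>q_2$, the set $\mathfrak{X}_\beta(F_0)$ is non-empty by Lemma \ref{type_B_iso_q_1>q_2_whit_data} when $F/F_0$ is unramified and by Lemma \ref{type_B_ram_iso_q_1>q_2_whit_mod} when $F/F_0$ is ramified (the hypothesis $q_1\geq q_2$ there being implied by $q_1>q_2$); fix $\sch{B}\in\mathfrak{X}_\beta(F_0)$ with unipotent radical $\sch{U}$, so that $\psi_\beta$ is a character of $U$. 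Second, $J^0(\Lambda,\beta)/J^1(\Lambda,\beta)\simeq P_0(\Lambda_\beta)/P_1(\Lambda_\beta)$ is anisotropic: here $C_\beta(A)=F[\beta_1]\oplus F[\beta_2]$ is a direct sum of fields, with $V_1$ a line over $F[\beta_1]=F$ and $V_2$ a line over the quadratic field $F[\beta_2]$, so $G_\beta$ is a product of two anisotropic norm-one tori and the reductive quotient of its (maximal) parahoric is anisotropic (recall that $G_\beta$ is anisotropic, Section \ref{type_B_sec}).

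It therefore remains to verify that $\res_{H^1(\Lambda,\beta)\cap U}\theta=\res_{H^1(\Lambda,\beta)\cap U}\psi_\beta$ for every $\theta\in\mathcal{C}(\Lambda,0,\beta)$; this is the heart of the argument. Since $q_1>q_2$ we have $n=q_1$ and $H^1(\Lambda,\beta)=P_1(\Lambda_\beta)P_{(q_2/2)+}(\Lambda_{\beta_1})P_{(n/2)+}(\Lambda)$, and $C_\beta(A)$ contains no nilpotent elements, so Lemma \ref{bl_st_lem} is available. Because $P_{(q_2/2)+}(\Lambda_{\beta_1})$ and $P_{(n/2)+}(\Lambda)$ are both contained in $P_{(q_2/2)+}(\Lambda)$, Lemma \ref{bl_st_lem} first absorbs the factor $P_1(\Lambda_\beta)$, and with some further bookkeeping one obtains $H^1(\Lambda,\beta)\cap U=\bigl(P_{(q_2/2)+}(\Lambda_{\beta_1})\cap U\bigr)\bigl(P_{(n/2)+}(\Lambda)\cap U\bigr)$. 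In the Witt basis chosen to split $\Lambda$ one has $V_1=\langle e_0\rangle$ and $V_2=\langle e_1,e_{-1}\rangle$ (this is where the isotropy of $(V_2,h)$ enters), so every element of $U$ preserving the orthogonal splitting $V=V_1\perp V_2$—in particular every element of $P_{(q_2/2)+}(\Lambda_{\beta_1})\cap U$—lies in $U_{\rm der}$; on $U_{\rm der}$ the value of $\psi_\beta$ depends only on $h(e_1,\beta_2 e_1)$, because $\beta_1 e_1=0$, and the restriction of $\theta$ to this $V_2$-internal direction is governed by the same quantity, since $\res_{P_{(n/2)+}(\Lambda)}\theta=\psi_\beta$ and $\theta$ restricts on the block $V_2$ to a semisimple character attached to $\beta_2$. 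Hence $\theta$ and $\psi_\beta$ agree on $P_{(q_2/2)+}(\Lambda_{\beta_1})\cap U$; they agree on $P_{(n/2)+}(\Lambda)\cap U$ because $\res_{P_{(n/2)+}(\Lambda)}\theta=\psi_\beta$ for every skew semisimple character (Section \ref{simple_characters}); so they agree on all of $H^1(\Lambda,\beta)\cap U$, uniformly in $\theta$.

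With the three hypotheses in hand, Proposition \ref{bl_st_prop} shows that $\pi$ is generic, and since $\pi\in\Pi_\mathfrak{x}$ was arbitrary, the lemma follows.

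The step I expect to be the main obstacle is the identification $H^1(\Lambda,\beta)\cap U=\bigl(P_{(q_2/2)+}(\Lambda_{\beta_1})\cap U\bigr)\bigl(P_{(n/2)+}(\Lambda)\cap U\bigr)$ together with the matching of $\theta$ and $\psi_\beta$ on the extra factor $P_{(q_2/2)+}(\Lambda_{\beta_1})\cap U$: when $q_1>q_2$ this factor is strictly larger than $P_{(n/2)+}(\Lambda)\cap U_{\rm der}$, and $\psi_\beta$ is not a priori a character of the whole of $P_{(q_2/2)+}(\Lambda_{\beta_1})$, so the verification requires combining Lemma \ref{bl_st_lem}, the explicit matrix description of $U$ and of the lattice sequence $\Lambda$ in the chosen Witt basis, and the block-restriction property of skew semisimple characters.
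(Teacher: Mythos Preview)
Your overall strategy matches the paper's, but the ``heart of the argument'' has a genuine gap, and it stems from conflating two different Borel subgroups. You write that in the Witt basis splitting $\Lambda$ one has $V_2=\langle e_1,e_{-1}\rangle$, and then argue about $U_{\rm der}$ as though $U$ were the standard unipotent fixing $e_1\in V_2$. But that standard Borel is \emph{never} in $\mathfrak{X}_\beta(F_0)$: the condition $\beta e_1\in\langle e_1\rangle^\perp$ forces $\beta_2 e_1\in\langle e_1\rangle$, impossible since $F[\beta_2]$ is a field and $V_2$ is a single $F[\beta_2]$-line. So the Borel $\sch{B}\in\mathfrak{X}_\beta(F_0)$ you fixed at the outset stabilises some \emph{other} isotropic line $\langle v_1+v_2\rangle$ with $v_i\in V_i$; the paper shows $v_1\neq 0$ (otherwise $\beta_2$ would stabilise $\ker\lambda$). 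For this $U$, elements of $P_{(q_2/2)+}(\Lambda_{\beta_1})$ lying in $U$ are not in $U_{\rm der}$ in your sense, and your $\psi_\beta$-matching argument via $h(e_1,\beta_2 e_1)$ does not apply.

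There is also a second problem, which you flag yourself: the identification $H^1(\Lambda,\beta)\cap U=\bigl(P_{(q_2/2)+}(\Lambda_{\beta_1})\cap U\bigr)\bigl(P_{(n/2)+}(\Lambda)\cap U\bigr)$ is not what Lemma \ref{bl_st_lem} gives. The lemma only lets you absorb the $P_1(\Lambda_\beta)$ factor, yielding $H^1(\Lambda,\beta)\cap U=P_{(q_2/2)+}(\Lambda_{\beta_1})P_{(q_1/2)+}(\Lambda)\cap U$; the intersection of a product with $U$ is in general strictly larger than the product of the intersections. The paper's fix is to take an arbitrary element $g_1g_2\in U$ with $g_1\in P_{(q_2/2)+}(\Lambda_{\beta_1})$, $g_2\in P_{(q_1/2)+}(\Lambda)$, and exploit $v_1\neq 0$: comparing $V_1$-components in $g_1^{-1}v=g_2v$ forces ${\bf 1}_{V_1}g_1{\bf 1}_{V_1}\in F^\times\cap P_{(q_1/2)+}(\Lambda)$, hence $\det({\bf 1}_{V_2}g_1{\bf 1}_{V_2})\in F^\times\cap P_{(q_1/2)+}(\Lambda)$ as well, and then the defining property of a simple character (\cite[Definition 3.2.3(a)]{Orrangebook}) gives $\theta(g_1g_2)=\psi_\beta(g_1g_2)$ directly. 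Your ``block-restriction property of skew semisimple characters'' is the right instinct, but it has to be applied to each product $g_1g_2$ individually, and the input it needs is precisely this determinant control coming from $v_1\neq 0$.
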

\begin{proof}
  Let $\sch{U}$ be the unipotent radical of a Borel subgroup of in the
  set $\mathfrak{X}_\beta(F_0)$. Let $\theta$ be any skew semisimple
  character in the set $\mathcal{C}(\Lambda, 0, \beta)$. We will first
  check that
\begin{equation}\label{type_B_iso_q_1>q_2_generic_eq_1}
  \res_{H^1(\Lambda, \beta)\cap U}\theta=\psi_\beta.
\end{equation}
The group $H^1(\Lambda, \beta)$ is equal to
$$P_1(\Lambda_\beta)
P_{(q_2/2)+}(\Lambda_{\beta_1}) P_{(q_1/2)+}(\Lambda).$$ We define
$H'$ to be the subgroup
$P_1(\Lambda_{\beta})P_{(q_2/2)+}(\Lambda)$. Now, using Lemma
\ref{bl_st_lem}, we get that $H'\cap U$ is equal to
$P_{(q_2/2)+}(\Lambda)\cap U$. Since $H^1(\Lambda, \beta)\cap U$ is
equal to $H^1(\Lambda, \beta)\cap (H'\cap U)$, we get that
$H^1(\Lambda,\beta)\cap U$ is equal to
$P_{(q_2/2)+}(\Lambda_{\beta_1})P_{(q_1/2)+}(\Lambda)\cap U$. Let
$g_1g_2\in U$ for some $g_1\in P_{(q_2/2)+}(\Lambda_{F[\beta_1]})$ and
$g_2\in P_{(q_1/2)+}(\Lambda)$. We recall the notations defined in
\ref{criterion_non-empty}. Let $v=v_1+v_2$ be a non-trivial vector
fixed by $U$.  If $v_1=0$, then we have $\lambda(h'(v_2, v_2))=0$ and
$\lambda(\beta_2h'(v_2, v_2))=0$. This implies that $\beta_2$
stabilises the kernel of $\lambda$ and this absurd. Hence, we get that
$v_1\neq 0$. We have
$$g_2(v_1+v_2)=g_1(v_1+v_2)=xv_1+g_1v_2.$$
Now, comparing both sides we get that
$x\in F^\times\cap P_{(q_1/2)+}(\Lambda)$. This implies that
${\bf 1}_{V_1}g_1{\bf 1}_{V_1}\in F^\times\cap P_{(q_1/2)+}(\Lambda)$.
Since, the determinant of $g_2g_1$ is equal to $1$, we get that
determinant of ${\bf 1}_{V_2}g_1{\bf 1}_{V_2}$ belongs to
$F^\times\cap P_{(q_1/2)+}(\Lambda)$. From the definition of simple
character (see \cite[Definition 3.2.3(a)]{Orrangebook}), we get
\eqref{type_B_iso_q_1>q_2_generic_eq_1}. The Lemma is now a
consequence of Proposition \ref{bl_st_prop}.
\end{proof}
\subsection{The case where \texorpdfstring{$(V_2, h)$}{} is
  anisotropic}
As in the case where $(V_2, h)$ is isotropic, the genericity of a
cuspidal representation in the set $\Pi_\mathfrak{x}$ depends only on
the integers $q_1$ and $q_2$. However, the condition for genericity
becomes the opposite to the case where $(V_2, h)$ is isotropic, i.e.,
the inequality $q_2>q_1$ is necessary and sufficient condition for
genericity of a cuspidal representation in $\Pi_\mathfrak{x}$.
\begin{lemma}\label{type_B_aniso_q_1>q_2}
  Let $F/F_0$ be an unramified extension, and let $\mathfrak{x}$ be a
  stratum of type ${\bf (B)}$ such that $(V_2, h)$ is anisotropic.
  If $q_1>q_2$, then the set $\mathfrak{X}_\beta(F_0)$ is empty, and
  every cuspidal representation contained in the set
  $\Pi_\mathfrak{x}$ is non-generic.
\end{lemma}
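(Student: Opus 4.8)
The plan is to prove the two assertions separately, following the scheme of Lemmas \ref{type_B_iso_q_1<q_2} and \ref{type_B_ram_iso_q_1>q_2_non_gen}, but using the criterion of \ref{criterion_non-empty} to dispatch the emptiness statement cleanly. Throughout I keep the Witt-basis $(e_1,e_0,e_{-1})$ and the period-two self-dual lattice sequence $\Lambda$ of \ref{type_B_unram_lat_seq_info} and Lemma \ref{type_B_seq_unram_basis}: thus $e_0=v_2$, $V_1=\langle v_1\rangle$, $V_2=\langle v_2,v_3\rangle$, while $e_1,e_{-1}$ span $\langle v_1,v_3\rangle$. By \ref{type_B_valuations} one has $q_1=2m_1$ and $q_2=2m_2+1$, so the hypothesis $q_1>q_2$ forces $m_1\geq m_2+1$. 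Since $F[\beta_2]/F$ is ramified, $F[\beta_2]=FD$ is an \emph{unramified} quadratic extension of $D$, and consequently ${\rm Nr}_{F[\beta_2]/D}(F[\beta_2]^{\times})$ consists exactly of the $x\in D^{\times}$ with $\nu_{F[\beta_2]}(x)$ even.

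\textbf{Emptiness of $\mathfrak{X}_\beta(F_0)$.} I would verify that criterion \eqref{type_B_X_beta_criterion} fails. As $d_2=\det(V_1,h)=\varpi_0$ has $\nu_{F[\beta_2]}(\varpi_0)=2$, it is enough to show that $d_1=\det(V_2,h')\in D^{\times}$ has \emph{odd} valuation. Fix a generator $f$ with $\Lambda_2(0)=\mathfrak{o}_{F[\beta_2]}f$; then $\Lambda_2(0)^{\#}=\Lambda_2(1)=\mathfrak{p}_{F[\beta_2]}f$, and translating this through $h=\lambda\circ h'$ gives $h'(f,f)\,\mathfrak{o}_{F[\beta_2]}=\mathfrak{p}_{F[\beta_2]}^{-1}\mathfrak{d}$, where $\mathfrak{d}=\{x\in F[\beta_2]:\lambda(x\,\mathfrak{o}_{F[\beta_2]})\subseteq\mathfrak{p}_F\}$. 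From $\lambda(1)=1$, $\lambda(\beta_2)=\beta_1$, $\nu_F(\beta_1)=-m_1$ and $\nu_{F[\beta_2]}(\beta_2)=-q_2$ one computes $\nu_F(\lambda(\varpi_{F[\beta_2]}))=m_2+1-m_1\leq 0$ for a uniformiser $\varpi_{F[\beta_2]}$ of $F[\beta_2]$, hence $\mathfrak{d}=\varpi^{m_1-m_2}\mathfrak{o}_{F[\beta_2]}$ and $\nu_{F[\beta_2]}(d_1)=2(m_1-m_2)-1$, which is odd. Therefore $-d_1d_2^{-1}$ has odd valuation, is not a norm from $F[\beta_2]^{\times}$, and $\mathfrak{X}_\beta(F_0)=\emptyset$.

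\textbf{Non-genericity.} Suppose some $\pi=\ind_{J^0(\Lambda,\beta)}^{G}\kappa$ in $\Pi_\mathfrak{x}$ were generic. Since genericity does not depend on the choice of the non-degenerate pair, I may take $\Psi$ to be a non-trivial character of $U$ that is \emph{trivial on $U_{\rm der}$}. Running the Mackey--Iwasawa reduction exactly as in the proof of Lemma \ref{type_B_iso_q_1<q_2} --- via $G=\coprod_{w\in W_G}\mathcal{I}wB$ and the Iwahori decomposition of $\mathcal{I}$ --- yields $w\in W_G$ and $u^{-}\in\overline{U^{w}}\cap\mathcal{I}$ with $\ho_{J^0(\Lambda,\beta)^{u^{-}}\cap U^{w}}(\kappa^{u^{-}},\Psi^{w})\neq 0$. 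Here $n=q_1$, so $P_{(n/2)+}(\Lambda)=P_{m_1+1}(\Lambda)\subseteq H^1(\Lambda,\beta)$ and $\res_{P_{m_1+1}(\Lambda)}\kappa$ is $\psi_\beta$-isotypic, while $u^{-}$ normalises $P_{m_1+1}(\Lambda)$. A direct computation with the lattices of \ref{type_B_unram_lat_seq_info} gives $P_{m_1+1}(\Lambda)\cap U^{w}_{\rm der}=U^{w}_{\rm der}(s)$ with $s=\lceil(m_1-1)/2\rceil$ (and $s$ one larger when $w\neq\id$). As $U^{w}_{\rm der}(s)\subseteq J^0(\Lambda,\beta)^{u^{-}}\cap U^{w}$ and $\kappa^{u^{-}}$ restricts there to $\psi_\beta^{u^{-}}$, the non-vanishing Hom together with $\Psi^{w}|_{U^{w}_{\rm der}}=1$ forces $\psi_\beta^{u^{-}}$ to be trivial on $U^{w}_{\rm der}(s)$, i.e.\ (Lemma \ref{basic_inequality}) $\nu_{F_0}(\delta\, h(u^{-}e_{w},\beta u^{-}e_{w}))>-s$, where $e_w=we_1$. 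It thus remains to prove the opposite inequality for every such $u^{-}$.

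\textbf{The valuation estimate, and the main obstacle.} Using the orthogonal decomposition $V=V_1\perp V_2$ one splits
$$h(u^{-}e_{w},\beta u^{-}e_{w})=h({\bf 1}_{V_1}u^{-}e_{w},\beta_1{\bf 1}_{V_1}u^{-}e_{w})+h({\bf 1}_{V_2}u^{-}e_{w},\beta_2{\bf 1}_{V_2}u^{-}e_{w}).$$
Writing out the two projections in the basis $(e_1,e_0,e_{-1})$, the first summand equals $-{\rm Nr}_{F/F_0}(\zeta)\,\beta_1\varpi$ for an explicit $\zeta\in\mathfrak{o}_F$ and has valuation $\geq -m_1+1$, while the second, re-expressed through the $F[\beta_2]$-form $h'$ and the functional $\lambda$, has valuation $\geq -m_2$. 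When $m_1\geq m_2+2$ the first term strictly dominates and the estimate $\nu_{F_0}(\delta\, h(u^{-}e_{w},\beta u^{-}e_{w}))\leq -s$ follows immediately. The delicate point --- and, I expect, the main difficulty of the lemma --- is the borderline case $q_1=q_2+1$, i.e.\ $m_1=m_2+1$: then both summands can sit at valuation $-m_2=-m_1+1$ and one must rule out exact cancellation of their leading terms. My plan there is to choose a good $\mathfrak{o}_{F_0}$-integral model of the relevant quadric and to reduce modulo $\mathfrak{p}_{F_0}$; this is exactly where the hypothesis that $(V_2,h)$ is \emph{anisotropic} enters, since the reduction of the norm form of $(V_2,h)$ does not represent $0$, which prevents the two leading coefficients from cancelling (when $w\neq\id$ one argues identically with $e_{-w}$ in place of $e_w$). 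Granting this, $\nu_{F_0}(\delta\, h(u^{-}e_{w},\beta u^{-}e_{w}))\leq -s$ for all $u^{-}$, contradicting the inequality above, so every representation in $\Pi_\mathfrak{x}$ is non-generic.
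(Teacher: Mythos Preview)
Your emptiness argument via the criterion in \ref{criterion_non-empty} is correct and is a cleaner route than the paper's, which never invokes \eqref{type_B_X_beta_criterion} here but instead obtains $h(ge_1,\beta ge_1)\neq 0$ directly, for all $g\in P(\Lambda)$, as a by-product of the non-genericity computation and then concludes via the Iwasawa decomposition. Your computation $\nu_{F[\beta_2]}(d_1)=2(m_1-m_2)-1$ through the $\lambda$-dual of $\mathfrak{o}_{F[\beta_2]}$ is fine.

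For non-genericity there is a genuine gap, and the fix is much simpler than you anticipate. Since $P^0(\Lambda)$ is a \emph{special} parahoric here, the paper uses $G=P(\Lambda)B$ directly --- no Bruhat $w$-cases or Iwahori factorisation are needed --- and any $g\in P(\Lambda)$ normalises $P_{(q_1/2)+}(\Lambda)$. Writing $ge_1=av_1+\varpi b\,v_2+cv_3$ with $a,b,c\in\mathfrak{o}_F$ (which uses $\nu_\Lambda(e_1)=1$), the isotropy of $e_1$ reads $a\sigma(a)+\varpi b\sigma(b)+c\sigma(c)=0$, and together with $\nu_\Lambda(ge_1)=1$ this \emph{forces} $a,c\in\mathfrak{o}_F^\times$. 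In your notation this is exactly $\zeta\in\mathfrak{o}_F^\times$, so the $V_1$-summand has valuation \emph{equal to} $-m_1+1$, not merely $\geq -m_1+1$. On the other hand the $V_2$-summand has valuation $\geq 1-m_2$ (your bound $\geq -m_2$ is one step too generous: ${\bf 1}_{V_2}ge_1$ lies in $\Lambda_2(1)$, not just $\Lambda_2(0)$). Since $q_1>q_2$ forces $m_1\geq m_2+1$, one has $-m_1+1<1-m_2$, so the $V_1$-term always strictly dominates and $\nu_F(h(ge_1,\beta ge_1))=-m_1+1\leq -s$ uniformly. There is no delicate borderline case $q_1=q_2+1$, and the integral-model argument you sketch is not needed. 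Note too that without the observation $\zeta\in\mathfrak{o}_F^\times$, your own claim that the first term dominates when $m_1\geq m_2+2$ is unjustified: from lower bounds on the valuations of the two summands you cannot deduce an upper bound on the valuation of their sum.
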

\begin{proof}
  Let $\pi=\ind_{J^0(\Lambda, \beta)}^G\kappa$ be a generic
  representation in the set $\Pi_\mathfrak{x}$. The group $P(\Lambda)$
  is a special parahoric subgroup of $G$. Using the Iwasawa
  decomposition $G=P(\Lambda)B$, we have
\begin{equation}\label{type_B_aniso_q_1>q_2_eq_1}
\ho_{J^0(\Lambda, \beta)^g\cap U}(\kappa^g, \Psi)\neq 0,
\end{equation}
for some $g\in P(\Lambda)$ and a character $\Psi$ of $U$.

Let $q_1=4m_1+2r$ and $q_2=2m_2+1$, for some integers $m_1$, $m_2$ and
$r\in \{0,1\}$.  Note that $P_{(q_1/2)+}(\Lambda)\cap U_{\rm der}$ is
equal to $U_{\rm der}(m_1)$ (see \ref{type_B_unram_lat_seq_info} and
\eqref{unram_fil_2} for an explicit description of the lattice
sequence $\Lambda$ and the induced filtrations on $\End_F(V)$). We
have $\nu_{\Lambda}(e_1)=1$, and we get that $ge_1=av_1+\varpi_0
bv_2+cv_3$, for some $a, b, c\in \mathfrak{o}_F$. Since $e_{1}$ is
isotropic, we get that
$$a\sigma(a)+\varpi_0b\sigma(b)+ c\sigma(c)=0.$$
Since $\nu_\Lambda(e_1)=1$, the above equality implies that
$a, c\in \mathfrak{o}_F^\times$. Note that
$\nu_{F[\beta_2]}(\beta_2\beta_1^{-1})>0$, and in the basis
$(v_2, v_3)$ for $V_2$, the element $\beta_2\beta_1^{-1}\in \End_F(V)$
belongs to the following lattice of $\End_F(V_2)$:
$$\begin{pmatrix}\mathfrak{o}_F&\mathfrak{p}_F\\
  \mathfrak{o}_F&\mathfrak{o}_F\end{pmatrix}.$$
Hence, we simultaneously get that $h(ge_1, \beta ge_1)\neq 0$ and
$$\nu_F(h(ge_1, \beta ge_1))=
\nu_F(\beta_1 a\sigma(a)+h(\varpi_0 bv_2+cv_3, \beta_2(\varpi_0
bv_2+cv_3)))=\nu_F(\beta_1)=-2m_1-r\leq -m_1$$ Now, using Lemma
\ref{basic_inequality}, we get that the character $\psi_\beta^{g}$ is
non-trivial on the group $P_{(q_1/2)+}(\Lambda)\cap U_{\rm
  der}$. Hence, we get a contradiction for the assumption
\eqref{type_B_aniso_q_1>q_2_eq_1}. This shows that the representation
$\pi$ in the set $\Pi_\mathfrak{x}$ is non-generic. Using the Iwasawa
decomposition $G=P(\Lambda)B$, we get that
$h(ge_1, \beta ge_1)\neq 0$, for all $g\in G$. Hence, the set
$\mathfrak{X}_\beta(F_0)$ is empty.
\end{proof}
\begin{lemma}\label{type_B_ram_aniso_q_1>q_2}
  Let $F/F_0$ be a ramified extension, and let $\mathfrak{x}$ be a
  stratum of type {\bf (B)} such that $(V_2, h)$ is
  anisotropic. If $q_1>q_2$, then the set $\mathfrak{X}_\beta(F_0)$ is
  empty, and every representation in the set $\Pi_\mathfrak{x}$ is
  non-generic.
\end{lemma}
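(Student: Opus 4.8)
The plan is to run the argument of Lemma~\ref{type_B_aniso_q_1>q_2}, with the isotropic period-two lattice sequence of Paragraph~\ref{type_B_unram_lat_seq_info} replaced by the ramified anisotropic one of Paragraph~\ref{type_B_ram_lat_seq_info} (for which $\Lambda$ is constant up to scaling, $e(\Lambda)=2$, $\nu_\Lambda(e_1)=0$, and $\Lambda(1)=\mathfrak{p}_F\Lambda(0)$). First I would note that $P(\Lambda)$ is a special parahoric subgroup of $G$, so that $G=P(\Lambda)B$. If some $\pi=\ind_{J^0(\Lambda,\beta)}^G\kappa$ in $\Pi_{\mathfrak{x}}$ were generic, then the Mackey decomposition of $\res_U\pi$ would give an element $g\in P(\Lambda)$ and a non-trivial character $\Psi$ of $U$ with $\ho_{J^0(\Lambda,\beta)^g\cap U}(\kappa^g,\Psi)\neq 0$. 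Since $q_1>q_2$ we have $n=q_1$, hence $P_{(q_1/2)+}(\Lambda)=P_{(n/2)+}(\Lambda)\subseteq H^1(\Lambda,\beta)$ and $\res_{P_{(q_1/2)+}(\Lambda)}\kappa$ is a multiple of $\psi_\beta$; moreover $P(\Lambda)$ normalises $P_{(q_1/2)+}(\Lambda)$, so $P_{(q_1/2)+}(\Lambda)\cap U_{\rm der}\subseteq J^0(\Lambda,\beta)^g\cap U$. As every character of $U$ is trivial on $U_{\rm der}$, it then suffices to show that $\psi_\beta^g$ is non-trivial on $P_{(q_1/2)+}(\Lambda)\cap U_{\rm der}$ to reach a contradiction.

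For the key estimate, write $ge_1=av_1+bv_2+cv_3$ in the orthogonal basis of Paragraph~\ref{type_B_ram_lat_seq_info}; since $g\in P_0(\Lambda)$ fixes $\Lambda(0)=\mathfrak{o}_Fv_1\oplus\mathfrak{o}_Fv_2\oplus\mathfrak{o}_Fv_3$ and $ge_1\notin\Lambda(1)=\mathfrak{p}_F\Lambda(0)$, we have $a,b,c\in\mathfrak{o}_F$ with not all in $\mathfrak{p}_F$. The crucial point --- which I expect to be the main obstacle, as it has no counterpart in the unramified case where one diagonal entry already has odd valuation --- is that $a$ must be a unit. Indeed, $h(ge_1,ge_1)=0$ gives $a\sigma(a)\lambda_1+b\sigma(b)\lambda_2+c\sigma(c)\lambda_3=0$ with $\lambda_i=h(v_i,v_i)\in\mathfrak{o}_{F_0}^\times$; if $a\in\mathfrak{p}_F$ then ${\rm Nr}_{F/F_0}(b)\lambda_2+{\rm Nr}_{F/F_0}(c)\lambda_3\in\mathfrak{p}_{F_0}$, and reducing modulo $\mathfrak{p}_{F_0}$ --- where, $F/F_0$ being ramified, the norm of a unit of $F$ is the square of its reduction --- yields $\bar b^2\bar\lambda_2+\bar c^2\bar\lambda_3=0$ in $k_{F_0}$. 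Since $(V_2,h)$ is anisotropic, $-\lambda_3/\lambda_2\notin{\rm Nr}_{F/F_0}(F^\times)=\varpi_0^{\mathbb Z}(\mathfrak{o}_{F_0}^\times)^2$, so $-\bar\lambda_3/\bar\lambda_2$ is a non-square in $k_{F_0}^\times$; this forces $\bar b=\bar c=0$, which together with $a\in\mathfrak{p}_F$ contradicts $ge_1\notin\mathfrak{p}_F\Lambda(0)$. Hence $a\in\mathfrak{o}_F^\times$.

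With $a$ a unit, orthogonality gives $h(ge_1,\beta ge_1)=-\beta_1a\sigma(a)\lambda_1+h(bv_2+cv_3,\beta_2(bv_2+cv_3))$: the first summand has $\nu_F$ equal to $\nu_F(\beta_1)=-q_1/2$, while the second, built from $\beta_2\in\tilde{\mathfrak{a}}_{-q_2}(\Lambda_2)$, has $\nu_F\geq-q_2/2$; since $q_1>q_2$ (and using the parity constraints of Paragraph~\ref{type_B_valuations}, so $q_1=4m_1+2$), the first dominates, whence $h(ge_1,\beta ge_1)\neq 0$ and $\nu_{F_0}(\delta h(ge_1,\beta ge_1))=-m_1$. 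On the other hand, a direct computation of the filtration attached to this lattice sequence (here $\tilde{\mathfrak{a}}_m(\Lambda)=\mathfrak{p}_F^{\lceil m/2\rceil}\tilde{\mathfrak{a}}_0(\Lambda)$) gives $P_{(q_1/2)+}(\Lambda)\cap U_{\rm der}=U_{\rm der}(s)$ with $s=\lceil m_1/2\rceil\leq m_1$. Therefore $\nu_{F_0}(\delta h(ge_1,\beta ge_1))=-m_1\leq-s$, so Lemma~\ref{basic_inequality} shows $\psi_\beta^g$ is non-trivial on $P_{(q_1/2)+}(\Lambda)\cap U_{\rm der}$, contradicting the displayed non-vanishing; thus no representation in $\Pi_{\mathfrak{x}}$ is generic. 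Finally, writing an arbitrary $g\in G$ as $pb$ with $p\in P(\Lambda)$ and $b\in B$ and using that $B$ fixes the line $\langle e_1\rangle$, the same computation gives $h(ge_1,\beta ge_1)\neq 0$ for all $g\in G$; since $G$ acts transitively on the isotropic lines of $(V,h)$ and a Borel subgroup lies in $\mathfrak{X}_\beta(F_0)$ precisely when its defining isotropic line $\langle v\rangle$ satisfies $h(v,\beta v)=0$, this shows $\mathfrak{X}_\beta(F_0)=\emptyset$.
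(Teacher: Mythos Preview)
Your proof is correct and follows essentially the same route as the paper's: Iwasawa decomposition $G=P(\Lambda)B$, the observation that $(V_2,h)$ anisotropic forces $a\in\mathfrak{o}_F^\times$, the valuation estimate $\nu_{F_0}(\delta h(ge_1,\beta ge_1))=-m_1$ via $\nu_F(\beta_2\beta_1^{-1})>0$, and the comparison with $P_{(q_1/2)+}(\Lambda)\cap U_{\rm der}=U_{\rm der}(\lceil m_1/2\rceil)$ to invoke Lemma~\ref{basic_inequality}. Your justification of $a\in\mathfrak{o}_F^\times$ via the residue-field computation is in fact more detailed than the paper's one-line appeal to anisotropy.
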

\begin{proof}
  Let $\pi=\ind_{J^0(\Lambda, \beta)}^G\kappa$ be a generic
  representation in the set $\Pi_\mathfrak{x}$. 
  The group $P^0(\Lambda)$ is a special parahoric subgroup of $G$, and
  using the Iwasawa decomposition $G=P(\Lambda)B$, we get that 
\begin{equation}\label{type_B_ram_aniso_q_1>q_2_eq_1}
\ho_{J^0(\Lambda, \beta)^g\cap U}(\kappa^g, \Psi)\neq 0,
\end{equation}
for some $g\in P(\Lambda)$, and a non-trivial character $\Psi$ of $U$.

Let $q_1=4m_1+2$ and $q_2=4m_2$, for some integers $m_1$ and $m_2$.
Observe that $P_{(q_1/2)+}(\Lambda)^g\cap U_{\rm der}$ is equal to
$U_{\rm der}(\lceil m_1/2\rceil)$. Since $\nu_{\Lambda}(e_{1})=0$, we
get that $ge_{1}=av_1+bv_2+cv_3$ for some $a, b,c\in
\mathfrak{o}_F$. As the vector $e_1$ is isotropic we get that
$$\lambda_1a\sigma(a)+\lambda_2b\sigma(b)+\lambda_3c\sigma(c)=0.$$
Now, the space $(V_2, h)$ is anisotropic and this implies that
$a\in\mathfrak{o}_F^\times.$ Now, we have
\begin{align*}
  h(ge_1, \beta ge_1)=&\beta_1a\sigma(a)+
                        h(bv_2+cv_2, \beta_2(bv_2+cv_3))\\
  =&\beta_1(a\sigma(a)+\beta_1^{-1}h(bv_2+cv_2,
     \beta_2(bv_2+cv_3))).
\end{align*}
Since, $\nu_{F[\beta_2]}(\beta_2\beta_1^{-1})>0$, we simultaneously
get that $h(ge_1, \beta ge_1)\neq 0$, and
$\nu_F(\delta h(ge_1, \beta ge_1))$ is equal to
$\nu_F(\delta\beta_1)$.  From the inequality
$\nu_F(\delta\beta_1)=-m_1\leq -\lceil m_1/2\rceil$, we get that
$\psi_\beta^g$ is a non-trivial character on
$P_{(q_1/2)+}(\Lambda)^g\cap U_{\rm der}$. This is a contradiction to
the assumption \eqref{type_B_ram_aniso_q_1>q_2_eq_1}. Using the
Iwasawa decomposition $G=P(\Lambda)B$, we get that
$h(ge_1, \beta ge_1)\neq 0$, for all $g\in G$. This shows that the set
$\mathfrak{X}_\beta(F_0)$ is empty.
\end{proof}
In the case where $(V_2, h)$ is anisotropic, we could not use the
criterion in \ref{criterion_non-empty}. However, the following
observation motivates the fact that $\mathfrak{X}_\beta(F_0)$ is
non-empty in the case where $q_2>q_1$. We suppose that $F/F_0$ is
unramified. In the basis $(v_1, v_2, v_3)$ consider a skew element
$\beta$ of the form
\begin{equation}\label{type_B_q_1>q_2_exp}
  \begin{pmatrix}0&0&0\\0&0&\beta_2\\0&\beta_3&0\end{pmatrix}.
      \end{equation}
      Let $e_1$ be an isotropic vector in $\langle v_1, v_3\rangle$.
      Let $\sch{B}$ be a Borel subgroup of $\sch{G}$ such that $B$
      fixes the space $\langle e_1\rangle$. The Borel subgroup
      $\sch{B}$ belongs to the set $\mathfrak{X}_\beta(F_0)$. Now, the
      class of $\beta$ in the quotient
      $a_{-n}(\Lambda)/a_{1-n}(\Lambda)$, is represented by a matrix
      as in \eqref{type_B_q_1>q_2_exp}.  This suggests that we may
      lift a point from the special fibre of an integral model of
      $\mathfrak{X}_\beta$, and we do this in the following lemma.
      \begin{lemma}\label{type_B_aniso_q_2>q_1_exis_whit_mod}
        Let $F/F_0$ be a quadratic extension and let $\mathfrak{x}$ be
        a stratum of type {\bf (B)} such that $(V_2, h)$ is
        anisotropic. If $q_2>q_1$, then the set
        $\mathfrak{X}_\beta(F_0)$ is non-empty.
\end{lemma}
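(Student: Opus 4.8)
The plan is to realise $\mathfrak{X}_\beta$ as the common zero locus of two quadrics and then lift a point from the special fibre of a carefully chosen integral model, carrying out the heuristic described just before the statement. By the argument in the proof of Lemma~\ref{type_A_existence_flags} and in \ref{criterion_non-empty}, $\mathfrak{X}_\beta(F_0)$ is non-empty precisely when $V$ contains a non-zero vector $v$ with $h(v,v)=0$ and $h(v,\beta v)=0$; since $\sigma_h(\beta)=-\beta$ forces $h(v,v)\in F_0$ and $\delta^{-1}h(v,\beta v)\in F_0$, these two conditions cut out $\mathfrak{X}_\beta$, after restriction of scalars to $F_0$, as the common zero locus of the two quadratic forms $v\mapsto h(v,v)$ and $v\mapsto\delta^{-1}h(v,\beta v)$ on $\mathbb{P}(V)$. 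Working in the basis $(v_1,v_2,v_3)$ attached to $\Lambda$ in \ref{type_B_unram_lat_seq_info} (resp. \ref{type_B_ram_lat_seq_info}), the hypothesis $q_2>q_1$ (see \ref{type_B_valuations}, together with the identification $\nu_\Lambda(\beta_i)=-q_i$) guarantees that a suitable $\mathfrak{o}_F$-rescaling $\tilde\beta$ of $\beta$ reduces, in the relevant graded quotient of $\Lambda$, to an element supported on $V_2$ of the antidiagonal shape of \eqref{type_B_q_1>q_2_exp}; in particular its reduction carries $\langle v_1,v_3\rangle$ into $\langle v_2\rangle$.

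The candidate point is the reduction $[\bar e_1]$ of an isotropic vector $e_1$ of the hermitian plane $\langle v_1,v_3\rangle$, which is isotropic --- in the unramified case because $-h(v_3,v_3)/h(v_1,v_1)=-1$ is a norm from $F^\times$, and in the ramified case by the very construction in \ref{type_B_ram_lat_seq_info} --- so I may take $e_1$ from the Witt basis of Lemma~\ref{type_B_seq_unram_basis} (resp. Lemma~\ref{type_B_ram_seq_split}). At $v=e_1$ the first equation vanishes since $h(e_1,e_1)=0$, and the reduction of the second vanishes because the reduction of $\tilde\beta v$ lies in $\langle v_2\rangle$, which is orthogonal to $\langle v_1,v_3\rangle\ni e_1$. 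To see that $[\bar e_1]$ is a smooth point of the special fibre I would use that the differential at $v$ of $v\mapsto h(v,v)$ is $w\mapsto\tr_{F/F_0}(h(v,w))$, while that of $v\mapsto\delta^{-1}h(v,\beta v)$ is $w\mapsto\tr_{F/F_0}(\delta^{-1}h(w,\beta v))$; at $\bar e_1$ the first functional depends only on the $\langle v_1,v_3\rangle$-component of $w$ and the second only on its $\langle v_2\rangle$-component, so, provided the reduced hermitian form is non-degenerate on the chosen lattice, the two differentials are $k_{F_0}$-linearly independent. Hensel's lemma then lifts $[\bar e_1]$ to an $\mathfrak{o}_{F_0}$-point of the model, hence to a point of $\mathfrak{X}_\beta(F_0)$.

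The main obstacle is the unramified anisotropic case, where $h(v_1,v_1)$ and $h(v_3,v_3)$ have odd valuation, so the obvious lattice $\mathfrak{o}_Fv_1\oplus\mathfrak{o}_Fv_2\oplus\mathfrak{o}_Fv_3$ reduces $h$ to a form that is degenerate along precisely the locus $\langle v_1,v_3\rangle$ containing $\bar e_1$: with that model the special fibre of $\mathfrak{X}_\beta$ is singular at the candidate point and Hensel does not apply directly. One must instead pass to a genuinely self-dual $\mathfrak{o}_F$-lattice, built from the Witt-basis data of Lemma~\ref{type_B_seq_unram_basis} --- equivalently, rescale the coordinates so that the two reduced equations acquire non-vanishing, independent differentials at the candidate point --- and then check that the $\mathfrak{o}_F$-rescaled $\beta$ still reduces as in \eqref{type_B_q_1>q_2_exp} relative to that model; once this bookkeeping is done the smoothness computation and the Hensel lift go through, and the ramified case, where $\mathfrak{o}_Fv_1\oplus\mathfrak{o}_Fv_2\oplus\mathfrak{o}_Fv_3$ is already self-dual up to scaling, is a direct and easier version of the same argument. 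An alternative to the integral-model language is an explicit two-step successive approximation starting from $v=e_1$, which amounts to the same computation.
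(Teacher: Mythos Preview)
Your overall strategy---realise $\mathfrak{X}_\beta$ as the intersection of two quadrics, choose an integral model, and lift a smooth point from the special fibre via Hensel---is exactly the paper's approach. The gap is in your choice of candidate point.

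The point $[\bar e_1]$ does not lie on the special fibre of the model in general. Concretely, in the unramified case take the paper's change of variable $y=\varpi y'$ and rescale the second equation by $\varpi^{\alpha-2}$ with $\alpha=-\nu_F(\beta_{32})=m_2+1$. The reduced second equation is
\[
C_1\bigl(y'\sigma(z)-z\sigma(y')\bigr)+C_2\,x\sigma(x)+C_3\,z\sigma(z)=0,
\]
with $C_2=\overline{\varpi^{m_2}\beta_1}$ and $C_3=\overline{\varpi^{m_2}\beta_{33}}$. At $y'=0$ (i.e.\ at $\bar e_1$) this becomes $C_2\,x\sigma(x)+C_3\,z\sigma(z)=0$; combined with $x\sigma(x)+z\sigma(z)=0$ it forces $x=z=0$ unless $C_2=C_3$, and there is no reason for $\overline{\varpi^{m_2}\beta_1}=\overline{\varpi^{m_2}\beta_{33}}$. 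The ramified case has the same issue. Your justification conflates two different reductions: the image of $\beta$ in the graded piece $\mathfrak{a}_{-n}(\Lambda)/\mathfrak{a}_{1-n}(\Lambda)$ \emph{is} antidiagonal on $V_2$ and zero on $V_1$, but that graded quotient is taken with respect to the $\Lambda$-filtration, which weights the matrix entries unequally; it is not the same as reducing $\varpi^{m_2}\beta$ entrywise modulo $\mathfrak{p}_F$. The diagonal entries $\beta_1,\beta_{33}$ can survive the latter reduction even though they die in the former.

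The fix, which is what the paper does, is not to insist on $y'=0$. The crucial coefficient is $C_1=\overline{\varpi^{m_2}\beta_{23}}\neq 0$ (this is where minimality of $\beta_2$ enters). For any $(x_0,z_0)$ with $x_0\sigma(x_0)+z_0\sigma(z_0)=0$ and $z_0\neq 0$, the second reduced equation is then genuinely linear and non-degenerate in $y'$, so one solves for $y'$. At such a point the gradient of the first equation lives in the $(x,z)$-directions (and is nonzero since $x_0\neq 0$), while the gradient of the second has a nonzero $y'$-component coming from $C_1$; hence the two are independent and Hensel applies. Your ``two-step successive approximation starting from $e_1$'' is exactly this: the first correction must move off the $y'=0$ locus.
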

\begin{proof}
  We will lift a point from the special fibre of a smooth model for
  $\mathfrak{X}_\beta$.  Let $(v_1, v_2, v_3)$ be a basis of $V$ as
  defined in \ref{type_B_unram_lat_seq_info}, when $F/F_0$ is
  unramified, and let $(v_1, v_2, v_3)$ be a basis of $V$ as defined
  in \ref{type_B_ram_lat_seq_info}, when $F/F_0$ is ramified. Let
  $\beta=(\beta_{ij})$ be the matrix representation of $\beta$ in the
  basis $(v_1, v_2, v_3)$. We have $\beta_{11}=\beta_1$.

  First consider the case where $F/F_0$ is unramified. A Borel
  subgroup $\sch{B}$, corresponding to the line
  $\langle xv_1+yv_2+zv_3\rangle$, belongs to $\mathfrak{X}_\beta$ if
  and only if $(x,y, z)$ satisfy the following equations:
$$\varpi x\sigma(x)+y\sigma(y)+\varpi z\sigma(z)=0$$
and 
$$\varpi \beta_1x\sigma(x)+\beta_{22}y\sigma(y)+
\beta_{33}\varpi z\sigma(z)+ \beta_{23}y\sigma(z)+\varpi  \beta_{32}
z\sigma(y)=0.$$ Changing $y$ to $\varpi y'$ and rescaling the second
equation by $\varpi^{\alpha-2}$ with $\alpha=-\nu_F(\beta_{32})$, we
get the following set of equations:
\begin{equation}\label{type_B_aniso_q_2>q_1_sm_mod_1}
  x\sigma(x)+\varpi y'\sigma(y')+z\sigma(z)=0
\end{equation}
and 
\begin{equation}\label{type_B_aniso_q_2>q_1_sm_mod_2}
  \varpi^{\alpha-1}\beta_1x\sigma(x)+\varpi^{\alpha}\beta_{22}y'\sigma(y')+
  \varpi^{\alpha-1}\beta_{33}z\sigma(z)+
  \varpi^{\alpha-1}\beta_{23}y'\sigma(z)+\varpi^{\alpha}\beta_{32} z\sigma(y')=0.
\end{equation}
Note that the coefficients of \eqref{type_B_aniso_q_2>q_1_sm_mod_2}
are integral and the two equations
\eqref{type_B_aniso_q_2>q_1_sm_mod_1} and
\eqref{type_B_aniso_q_2>q_1_sm_mod_2} define a flat closed subscheme
$\mathcal{X}_\beta$ of $\mathbb{P}^5_{\mathfrak{o}_{F_0}}$ such that
the generic fibre is $\mathfrak{X}_\beta$. The special fibre is given
by the set of equations
$$x\sigma(x)+z\sigma(z)=0$$
and $$C_1(y'\sigma(z)-z\sigma(y'))+C_2x\sigma(x)+ C_3z\sigma(z)=0,$$
where $C_1=\overline{\varpi^{\alpha-1}\beta_{23}}$,
$C_2=\overline{ \varpi^{\alpha-1}\beta_1}$, and
$C_3=\overline{\varpi^{\alpha-1}\beta_{33}}$.  Note that $C_1\neq 0$,
and therefore, the special fibre $\overline{\mathcal{X}}_\beta$ is
smooth. Since $\mathcal{X}_\beta$ is flat we get that
$\mathcal{X}_\beta$ is smooth over $\mathfrak{o}_F$. The special fibre
has a rational point and hence by Hensel's lemma the set
$\mathfrak{X}_\beta(F_0)$ is non-empty.

Now, consider the case where $F/F_0$ is a ramified extension. A Borel
subgroup $\sch{B}$ of $\sch{G}$ fixing the isotropic subspace
$\langle av_1+bv_2+cv_3\rangle$, belongs to $\mathfrak{X}_\beta$ if
and only if:
$$
\lambda_1a\sigma(a)+ \lambda_2b\sigma(b)+\lambda_3c\sigma(c)=0,
  $$
  and
 $$ \lambda_1\beta_1a\sigma(a)+\lambda_2\beta_{22}b\sigma(b)
    +\lambda_3\beta_{33}c\sigma(c)+
    b\sigma(c)\beta_{23}\lambda_2+\lambda_3\beta_{32}c\sigma(b)=0.
    $$
    In the present case $F[\beta_2]$ is an unramified extension of
    $F$. After rescaling by a power of $\varpi$, if necessary, we
    may assume that 
$$\beta_2=\begin{pmatrix}\beta_{22}&\beta_{23}\\
  \beta_{32}&\beta_{33}\end{pmatrix}\in
\begin{pmatrix}\mathfrak{o}_F&\mathfrak{o}_F\\
  \mathfrak{o}_F&\mathfrak{o}_F\end{pmatrix},$$ and
$\beta_1\in \mathfrak{o}_F$. Because $\beta_1$ is skew, we get that
$\beta_1\in \mathfrak{p}_F$. By a change of variable $b$ to
$\varpi b'$, the above set of equations become:
$$\lambda_1a\sigma(a)+\lambda_2\varpi_0b'\sigma(b')
+\lambda_3c\sigma(c)=0
$$
and
$$\varpi^{-1}\lambda_1\beta_1a\sigma(a)-\varpi\lambda_2\beta_{22}b\sigma(b)
+\varpi^{-1}\lambda_3\beta_{33}c\sigma(c)+ \lambda_2\beta_{23}
b\sigma(c)-\lambda_3\beta_{32}c\sigma(b)=0.$$ Since $\beta$ is skew,
we get that $\sigma(\beta_{22})+\beta_{22}=0$,
$\sigma(\beta_{33})+\beta_{33}=0$, and
$\lambda_1\beta_{23}=-\lambda_3\sigma(\beta_{32})$. Hence, the above
two equations have integral coefficients.  The above two equations
define a flat projective sub-variety $\mathcal{X}_\beta$ in
$\mathbb{P}^5_{\mathfrak{o}_{F_0}}$ with generic fibre
$\mathfrak{X}_\beta$. The special fibre is given by
$$\overline{\lambda_1}a^2+\overline{\lambda_3}c^2=0,$$
and
$$C_1a^2+C_2c^2+C_3bc=0,$$
where $C_1=\overline{\varpi^{-1}\lambda_1\beta_1}$,
$C_2=\overline{\varpi^{-1}\lambda_3\beta_{33}}$, and
$C_3=\overline{\lambda_2\beta_{23}}$. Clearly $C_{3}\neq 0$ as the
element $\beta_2$ is minimal.  The special fibre is smooth and hence,
$\mathcal{X}_\beta$ is a smooth model for $\mathfrak{X}_\beta$. Note
that the special fibre has a rational point. Using Hensel's lemma, we
get that the set $\mathfrak{X}_\beta(F_0)$ is non-empty.
\end{proof}
\begin{lemma}\label{type_B_aniso_q_2>q_1}
  Let $F/F_0$ be a quadratic extension and let $\mathfrak{x}$ be a
  stratum of type {\bf (B)} such that $(V_2, h)$ is anisotropic. If
  $q_2>q_1$, then every representation in the set $\Pi_\mathfrak{x}$
  is generic.
\end{lemma}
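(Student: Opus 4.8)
The plan is to deduce genericity directly from Proposition \ref{bl_st_prop}, in the same way as in the proof of Lemma \ref{type_B_iso_q_1>q_2_generic}, but the argument is cleaner here because of the shape of $H^1(\Lambda,\beta)$ when $q_2>q_1$. First, Lemma \ref{type_B_aniso_q_2>q_1_exis_whit_mod} already provides a Borel subgroup $\sch{B}\in\mathfrak{X}_\beta(F_0)$; I would fix such a $\sch{B}$ and let $\sch{U}$ be its unipotent radical. It then remains to check the two hypotheses of Proposition \ref{bl_st_prop}: that $J^0(\Lambda,\beta)/J^1(\Lambda,\beta)$ is anisotropic, and that $\res_{H^1(\Lambda,\beta)\cap U}\theta=\res_{H^1(\Lambda,\beta)\cap U}\psi_\beta$ for every skew semisimple character $\theta\in\mathcal{C}(\Lambda,0,\beta)$.

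For the first hypothesis: since $(V_2,h)$ is anisotropic, the group $G_\beta$ — which is the product of a one-dimensional unitary group on $V_1$ and the unitary group of the one-dimensional $F[\beta_2]$-hermitian space $V_2$ — is anisotropic, so the reductive quotient of its parahoric $P^0(\Lambda_\beta)$ is anisotropic; via the isomorphism $P_0(\Lambda_\beta)/P_1(\Lambda_\beta)\simeq J^0(\Lambda,\beta)/J^1(\Lambda,\beta)$ recalled in Section \ref{sec_cusp_reps}, it follows that $J^0(\Lambda,\beta)/J^1(\Lambda,\beta)$ is anisotropic.

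For the second hypothesis I would use that $q_2>q_1$, so that $H^1(\Lambda,\beta)=P_1(\Lambda_\beta)P_{(n/2)+}(\Lambda)$, with no intermediate factor attached only to $V_1$ of the kind that forced the extra case analysis in Lemma \ref{type_B_iso_q_1>q_2_generic}. Since $\mathfrak{x}$ is a skew semisimple stratum of type {\bf (B)}, the algebra $C_\beta(A)\simeq F\oplus F[\beta_2]$ is a product of fields and hence contains no nilpotent elements, so Lemma \ref{bl_st_lem} applies with $N=U$ and yields $H^1(\Lambda,\beta)\cap U=P_{(n/2)+}(\Lambda)\cap U$. Because $\res_{P_{(n/2)+}(\Lambda)}\theta=\psi_\beta$, the desired equality of restrictions is immediate, and Proposition \ref{bl_st_prop} then shows that every cuspidal representation in $\Pi_\mathfrak{x}$ is generic.

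I do not expect a serious obstacle: the genuinely hard input, namely the non-emptiness of $\mathfrak{X}_\beta(F_0)$ via a smooth integral model, has already been carried out in Lemma \ref{type_B_aniso_q_2>q_1_exis_whit_mod}, and what remains is to assemble Lemma \ref{bl_st_lem}, the explicit description of $H^1(\Lambda,\beta)$ for $q_2>q_1$, and Proposition \ref{bl_st_prop}. The only points requiring a little care are verifying that $\sch{U}$ is a bona fide maximal unipotent of $G$ (which it is, as the unipotent radical of an $F_0$-rational Borel in $\mathfrak{X}_\beta(F_0)$) and that the anisotropy of $G_\beta$ does pass to the reductive quotient $J^0(\Lambda,\beta)/J^1(\Lambda,\beta)$.
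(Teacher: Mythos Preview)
Your proposal is correct and follows essentially the same approach as the paper: invoke Lemma \ref{type_B_aniso_q_2>q_1_exis_whit_mod} for non-emptiness of $\mathfrak{X}_\beta(F_0)$, use the description $H^1(\Lambda,\beta)=P_1(\Lambda_\beta)P_{(q_2/2)+}(\Lambda)$ valid for $q_2>q_1$, apply Lemma \ref{bl_st_lem} to compute $H^1(\Lambda,\beta)\cap U=P_{(q_2/2)+}(\Lambda)\cap U$, and conclude via Proposition \ref{bl_st_prop}. Your extra paragraph verifying anisotropy of $J^0(\Lambda,\beta)/J^1(\Lambda,\beta)$ is a welcome check that the paper leaves implicit.
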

\begin{proof}
  Using Lemma \ref{type_B_aniso_q_2>q_1_exis_whit_mod}, we get that
  the set $\mathfrak{X}_\beta(F_0)$ is non-empty. Let $\sch{U}$ be the
  unipotent radical of a Borel subgroup in the set
  $\mathfrak{X}_\beta(F_0)$.  Note that the group
  $H^1(\Lambda, \beta)$ is equal to
  $P_1(\Lambda_\beta)P_{(q_2/2)+}(\Lambda)$ and it follows from Lemma
  \ref{bl_st_lem} that $H^1(\Lambda, \beta)\cap U$ is equal to
  $P_{(q_2/2)+}(\Lambda)\cap U$. Hence, we get that
$$\res_{H^1(\Lambda, \beta)\cap U}\theta=\psi_\beta,$$
where $\theta$ is any skew semisimple character of
$H^1(\Lambda, \beta)$. Now, genericity is a consequence of Proposition
\ref{bl_st_prop}. 
\end{proof}
\section{Non simple type (C) strata}\label{type_C_sec}
We say that a skew semisimple stratum $[\Lambda, n, 0, \beta]$,
denoted by $\mathfrak{r}$, is of type {\bf (C)} if the underlying
splitting of $\mathfrak{x}$ is given by $V=V_1\perp V_2$ with
$\dim_FV_i=i$, and $[F[\beta_i]: F]=1$. Here, $\beta_i$ is equal to
${\bf 1}_{V_i}\beta {\bf 1}_{V_i}$, for $i\in\{1,2\}$. From the
definition of skew semisimple stratum, we have
$\sigma(\beta_i)=-\beta_i$, for $i\in\{1,2\}$, and
$\beta=\beta_1+\beta_2$. We will show that every representation
contained in the set $\Pi_\mathfrak{x}$ is non-generic.
\subsection{Lattice sequences}\label{type_C_lat_seq_disc}
We will describe the lattice sequences up to $G_\beta$-conjugacy. Note
that $\Lambda$ is a lattice sequence on $V$ such that
$P^0(\Lambda_\beta)$ is a maximal parahoric subgroup of $G_\beta$. We
will  fix a Witt-basis of $(V, h)$ which gives a splitting for
these lattice sequences.
\subsubsection{The unramified case}\label{type_C_unram_seq}
Consider the case where $F/F_0$ is unramified and $(V_2,h)$ is
isotropic. We fix a Witt-basis $(e_1, e_0, e_{-1})$ for $(V, h)$ such
that $e_1, e_{-1}\in V_2$. The lattice sequence $\Lambda$, upto
$G_\beta$-conjugation, is given by one of the following two lattice
sequences: 
$$e(\Lambda)=2,\  \Lambda(-1)=\Lambda(0)=\mathfrak{o}_Fe_1\oplus
\mathfrak{o}_Fe_0\oplus \mathfrak{o}_Fe_{-1}$$ or
$$e(\Lambda)=2,\ \Lambda(0)=\mathfrak{o}_Fe_1\oplus\mathfrak{o}_Fe_0\oplus 
\mathfrak{p}_Fe_{-1}\ \text{and}\ \Lambda(1)=
\mathfrak{o}_Fe_1\oplus\mathfrak{p}_Fe_0\oplus \mathfrak{p}_Fe_{-1}.$$
If $F/F_0$ is an unramified extension and $(V_2, h)$ is anisotropic,
we fix vectors $v_1\in V_1$, $v_2, v_3\in V_2$ such that $(v_1, v_2,
v_3)$ is an orthogonal basis for $V$ and
$$h(v_1, v_1)=h(v_3, v_3)=\varpi\ \text{and}\ h(v_2, v_2)=1.$$
The lattice sequence $\Lambda$, upto $G_\beta$-conjugation, is
given by the following lattice sequence with
$$e(\Lambda)=2,\ \Lambda(0)=\mathfrak{o}_Fv_1\oplus
\mathfrak{o}_Fv_2\oplus\mathfrak{o}_Fv_3 \ \text{and}\
\Lambda(1)=\mathfrak{o}_Fv_1\oplus
\mathfrak{p}_Fv_2\oplus\mathfrak{o}_Fv_3.$$ Using Lemma
\ref{type_B_seq_unram_basis}, there exists a Witt-basis
$(e_1, e_0, e_{-1})$ of $V$ with
$e_1, e_{-1}\in \langle v_1, v_3\rangle$ such that
$(e_1, e_0, e_{-1})$ provides a splitting for the lattice sequence
$\Lambda$, and we have
$$\Lambda(0)= \mathfrak{o}_Fe_1\oplus
\mathfrak{o}_Fe_0 \oplus \mathfrak{p}_Fe_{-1}, \ \text{and}\
\Lambda(1)=
\mathfrak{o}_Fe_1\oplus \mathfrak{p}_Fe_0 \oplus
\mathfrak{p}_Fe_{-1}.
$$
\subsubsection{The ramified case}\label{type_C_ram_seq}
In this part, we assume that the extension $F/F_0$ is ramified. If
$(V_2, h)$ is isotropic, we fix a Witt-basis $(e_1, e_{-1})$ for
$(V_2, h)$ and an unit vector $e_0\in V_1$ such that
$(e_1, e_0, e_{-1})$ is a Witt-basis for $(V, h)$. The lattice
sequence $\Lambda$, upto $G_\beta$-conjugation, is given by the
following lattice sequence:
$$e(\Lambda)=2, \Lambda(0)=\mathfrak{o}_Fe_1\oplus 
\mathfrak{o}_Fe_0\oplus \mathfrak{p}_Fe_{-1}\ \text{and}\
\Lambda(1)=\mathfrak{o}_Fe_1\oplus \mathfrak{p}_Fe_0\oplus
\mathfrak{p}_Fe_{-1}.$$

Now assume that $(V_2, h)$ is anisotropic, and fix an orthogonal basis
$(v_1, v_2, v_3)$ of $(V, h)$ such that $v_1\in V_1$ and
$v_2, v_3\in V_2$, $h(v_i, v_i)\in \mathfrak{o}_F^\times$, and the
hermitian space $(\langle v_1, v_3\rangle, h)$ is isotropic. We denote
by $\lambda_i$ the constant $h(v_i, v_i)$, for $1\leq i\leq 3$. Up to
$G_\beta$-conjugacy, the lattice sequence $\Lambda$ is given by the
following lattice sequence:
$$e(\Lambda)=2,\ \Lambda(-1)=\Lambda(0)=\mathfrak{o}_Fv_1\oplus
\mathfrak{o}_Fv_2\oplus\mathfrak{o}_Fv_3.$$ There exists a Witt-basis
$(e_1, e_0, e_{-1})$ for the space $(V, h)$ with
$e_1, e_{-1}\in \langle v_1, v_3\rangle$ such that
$$\Lambda(-1)=\Lambda(0)=\mathfrak{o}_Fe_1\oplus
\mathfrak{o}_Fe_0\oplus\mathfrak{o}_Fe_{-1}$$ The groups $J^0(\Lambda,
\beta)$ and $H^1(\Lambda, \beta)$ are given by
$$P_0(\Lambda_{\beta})P_{(n/2)}(\Lambda)$$
and
$$P_1(\Lambda_{\beta})P_{(n/2)+}(\Lambda)$$
respectively.
\subsection{The case where
  \texorpdfstring{$(V_2, h)$}{} is anisotropic}
In this subsection, we assume that the hermitian space $(V_2, h)$ is
anisotropic. To show that any representation $\pi\in \Pi_\mathfrak{x}$
is non-generic, we will prove that the character $\psi_{\beta}^g$ is
non-trivial on $U_{\rm der}\cap P_{(n/2)+}(\Lambda)$, for all
$g\in P(\Lambda)$.
\begin{lemma}\label{type_C_aniso}
  Let $F/F_0$ be any quadratic extension and let $\mathfrak{x}$ be a
  strata of type {\bf (C)} such that $(V_2, h)$ is anisotropic. Every
  cuspidal representation in the set $\Pi_\mathfrak{x}$ is
  non-generic.
\end{lemma}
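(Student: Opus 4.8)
The plan is to run the argument sketched just before the statement: supposing some $\pi=\ind_{J^0(\Lambda,\beta)}^{G}\kappa$ in $\Pi_\mathfrak{x}$ is generic, I derive a contradiction by showing that the character $\psi_\beta^{g}$ is non-trivial on $U_{\rm der}\cap P_{(n/2)+}(\Lambda)$ for \emph{every} $g\in P(\Lambda)$, where $U$ is the unipotent radical of the Borel fixing $\langle e_1\rangle$ in the fixed Witt-basis. Since $(V_2,h)$ is anisotropic, $G_\beta\cong U(V_1)\times U(V_2)$ is anisotropic, so $P^0(\Lambda)$ is a (special) maximal parahoric and the Iwasawa decomposition gives $G=P(\Lambda)B$. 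Feeding this into the Mackey decomposition of $\res_U\pi$ shows that genericity of $\pi$ forces $\ho_{J^0(\Lambda,\beta)^{g}\cap U}(\kappa^{g},\Psi)\neq 0$ for some $g\in P(\Lambda)$ and some character $\Psi$ of $U$. Because $P(\Lambda)$ normalises $P_{(n/2)+}(\Lambda)\subseteq H^{1}(\Lambda,\beta)\subseteq J^{0}(\Lambda,\beta)$ and $\res_{P_{(n/2)+}(\Lambda)}\kappa$ is a multiple of $\psi_\beta$ (as $\res_{P_{(n/2)+}(\Lambda)}\theta=\psi_\beta$ for the underlying semisimple character $\theta$), non-vanishing of that $\ho$-space forces $\psi_\beta^{g}=\Psi$ on $P_{(n/2)+}(\Lambda)\cap U$; but $\Psi$, being a character of $U$, is trivial on $[U,U]=U_{\rm der}$, so $\psi_\beta^{g}$ must be trivial on $P_{(n/2)+}(\Lambda)\cap U_{\rm der}$ --- which is exactly what I aim to contradict.

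Writing $P_{(n/2)+}(\Lambda)\cap U_{\rm der}=U_{\rm der}(s)$ for the integer $s$ extracted from the period-two lattice sequences of \ref{type_C_unram_seq} and \ref{type_C_ram_seq}, Lemma \ref{basic_inequality} reduces the task to the inequality $\nu_{F_0}(\delta h(ge_1,\beta ge_1))\le -s$ for all $g\in P(\Lambda)$. The computational heart is the following: write $ge_1=w_1+w_2$ with $w_i={\bf 1}_{V_i}(ge_1)\in V_i$. Since $e_1$ is isotropic and $g\in G$, $h(w_1,w_1)+h(w_2,w_2)=h(ge_1,ge_1)=0$; as $V_1$ is one-dimensional and $(V_2,h)$ anisotropic, $h(w_i,w_i)=0$ would force $w_i=0$, so, $ge_1$ being non-zero, both $w_i\neq 0$, both $h(w_i,w_i)\neq 0$, and $h(w_1,w_1)=-h(w_2,w_2)$. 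Using $\sigma_h(\beta_i)=-\beta_i$, orthogonality of $V_1$ and $V_2$, and this last relation,
$$h(ge_1,\beta ge_1)=\sigma_h(\beta_1)h(w_1,w_1)+\sigma_h(\beta_2)h(w_2,w_2)=(\beta_2-\beta_1)h(w_1,w_1);$$
this is non-zero because $\beta_1\neq\beta_2$ (the splitting $V=V_1\perp V_2$ being genuine). Running the same computation on an arbitrary isotropic line already shows $\mathfrak{X}_\beta(F_0)=\emptyset$, in agreement with Theorem \ref{intro_main_thm}; for the present lemma the relevant consequence is $\nu_F(h(ge_1,\beta ge_1))=\nu_F(\beta_1-\beta_2)+\nu_F(h(w_1,w_1))$.

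It remains to bound the two contributions. First, $\nu_F(h(w_1,w_1))=\nu_F(h(w_2,w_2))$; since $g\in P(\Lambda)$ preserves $\nu_\Lambda$, $\nu_\Lambda(ge_1)=\nu_\Lambda(e_1)$, and as $\Lambda$ is split by $V_1\perp V_2$ at least one of $w_1,w_2$ sits at the exact $\Lambda$-level $\nu_\Lambda(e_1)$; a direct inspection of the lattice sequences then bounds $\nu_F(h(w_1,w_1))$ above by an explicit small constant. Second, for $\nu_F(\beta_1-\beta_2)$: after twisting $\pi$ by a suitable character of $G$ (which leaves genericity and $\mathfrak{X}_\beta(F_0)$ unchanged, since $U\subseteq[G,G]$, and replaces $\beta$ by $\beta-\mu\,\id_V$ for a skew scalar $\mu$, strictly lowering $n$ whenever $q_1=q_2$ and the leading residues of $\beta_1,\beta_2$ coincide) one reduces to the situation where those leading residues differ, i.e. $\nu_F(\beta_1-\beta_2)=-n/2$; this already holds whenever $q_1\neq q_2$. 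Substituting $\nu_F(\beta_1-\beta_2)=-n/2$ and the constant bound on $\nu_F(h(w_1,w_1))$, and keeping track of $\delta$ and of the passage between $\nu_F$ and the normalised $\nu_{F_0}$ separately in the ramified and unramified cases, yields $\nu_{F_0}(\delta h(ge_1,\beta ge_1))\le -s$, completing the contradiction.

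I expect the main obstacle to be this last bookkeeping step: matching the exact value of $s$ coming from each of the four lattice-sequence descriptions against the small constant bounding $\nu_F(h(w_1,w_1))$, and separately checking by hand the finitely many low-depth strata in which $n/2$, that constant, and $s$ are of comparable size. The conceptual core --- the identity $h(ge_1,\beta ge_1)=(\beta_2-\beta_1)h({\bf 1}_{V_1}ge_1,{\bf 1}_{V_1}ge_1)$ together with the anisotropy of both $V_1$ and $V_2$ forcing it to be non-zero --- is short.
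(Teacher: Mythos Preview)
Your approach is correct and matches the paper's strategy: reduce via Iwasawa--Mackey to showing that $\psi_\beta^{g}$ is non-trivial on $P_{(n/2)+}(\Lambda)\cap U_{\rm der}$ for every $g\in P(\Lambda)$, compute $h(ge_1,\beta ge_1)=(\beta_2-\beta_1)\,h({\bf 1}_{V_1}ge_1,{\bf 1}_{V_1}ge_1)$, and use anisotropy of $(V_2,h)$ together with isotropy of $ge_1$ to force the $V_1$-component to be a unit in the relevant lattice. The paper carries out exactly this computation in coordinates (its identity $h(ge_1,\beta ge_1)=(\beta_1-\beta_2)\lambda_1 a\sigma(a)$ is your coordinate-free formula written out).

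The one substantive difference is your twist. The paper does not twist; in the unramified case it runs a direct case analysis on which of $\nu_{\Lambda_1}(\beta_1)$, $\nu_{\Lambda_2}(\beta_2)$ realises $-n$, and on the relative sizes of $\nu_F(\beta_1)$ and $\nu_F(\beta_2)$, to bound $\nu_F(\beta_1-\beta_2)$ from above. Your reduction to $\nu_F(\beta_1-\beta_2)=-n/2$ is cleaner and collapses all of that; note that you can do it in a single step by taking $\mu=\beta_2$, after which $\beta_2'=0$, $n'=-\nu_{\Lambda_1}(\beta_1-\beta_2)=-2\nu_F(\beta_1-\beta_2)$, and the desired equality is automatic --- no iterative ``lowering $n$'' is needed. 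The final bookkeeping you flag then goes through exactly as you anticipate in both the ramified and unramified cases.

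One small imprecision: anisotropy of $G_\beta$ tells you that $P^0(\Lambda_\beta)$ is the unique parahoric of $G_\beta$, which is what makes the cuspidal construction go; it does not by itself say that $P^0(\Lambda)$ is a special parahoric of $G$. That fact, and hence the Iwasawa decomposition $G=P(\Lambda)B$, is read off from the explicit lattice sequences in \ref{type_C_unram_seq} and \ref{type_C_ram_seq}, as the paper does.
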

\begin{proof}
  We recall, from Subsection \ref{type_C_lat_seq_disc} the two
  $F$-basis $(v_1, v_2, v_3)$ and $(e_1, e_0, e_{-1})$ for the vector
  space $V$, when $(V_2, h)$ is anisotropic. Note that $\Lambda$ is
  uniquely determined by $\beta$, and recall the description from
  Subsection \ref{type_C_lat_seq_disc}. Let $\sch{U}$ be the unipotent
  radical of a Borel subgroup $\sch{B}$ of $\sch{G}$ such that $U$
  fixes the vector $e_1$. Since $(e_1, e_0, e_{-1})$ provides a
  splitting for the lattice $\Lambda$ and $P^0(\Lambda)$ is a special
  parahoric subgroup, we get that $G=P(\Lambda)B$. Let $\pi$ be a
  cuspidal representation in the set $\Pi_\mathfrak{x}$. Then we have
  $\pi\simeq \ind_{J^0(\Lambda, \beta)}^G\kappa$, where
  $(J^0(\Lambda, \beta), \kappa)$ is a Bushnell--Kutzko type contained
  in $\pi$. Now, assume that $\pi$ is generic. Then there exists a
  $g\in P(\Lambda)$ and a non-trivial character $\Psi$ of $U$ such
  that
\begin{equation}\label{type_C_aniso_eq_1}
\ho_{J^0(\Lambda, \beta)^g\cap U}(\kappa^g, \Psi)\neq 0.
\end{equation}

First consider the case where $F/F_0$ is a ramified extension. Let
$ge_1=av_1+bv_2+cv_3$, for some $a, b, c\in \mathfrak{o}_F$. Since
$e_1$ is isotropic, we get that
$$\lambda_1a\sigma(a)+\lambda_2b\sigma(b)+\lambda_3c\sigma(c)=0.$$
Since
$-\lambda_2\lambda_3^{-1}\not\in {\rm Nr}_{F/F_0}(F^\times)$, the
above equality implies
that $a\in \mathfrak{o}_F^\times$. Let
$\nu_{\Lambda}(\beta)=-n=-(4m+2r)$, for some integer $m$ and
$r\in \{0,1\}$.  Since $\sigma(\beta)=-\beta$, we get that $r=1$. Now,
$(n/2)+=2m+2$ and we have $P_{(n/2)+}(\Lambda)\cap U_{\rm der}$ is
equal to $U_{\rm der}([(m+1)/2])$. We also have
$$\nu_{F/F_0}(\delta h(ge_1, \beta ge_1))
=\nu_{F/F_0}(\delta(\beta_1-\beta_2)\lambda_1a\sigma(a))=-m.$$
As $\nu_{F/F_0}(h(ge_1, ge_1)\leq -[(m+1)/2]$, for any $m\geq 1$, we
get that $\psi_\beta^g$ is a non-trivial character on the group
$P_{(n/2)+}(\Lambda)\cap U_{\rm der}$. This is a contradiction to the
equation \eqref{type_C_aniso_eq_1}. Hence, any representation in the
set $\Pi_\mathfrak{x}$ is non-generic.

Consider the case where $F/F_0$ is unramified.  Since the isotropic
vector $e_1$ belongs to the lattice $\Lambda(1)$, we get that
$ge_1=av_1+bv_2+cv_3$, for some $ a, c\in \mathfrak{p}_F$ and
$b\in \mathfrak{p}_F$, with
$$\varpi a\sigma(a)+ b\sigma(b)+\varpi c\sigma(c)=0.$$
By a change of variable: $b'=\varpi b$, we have
$$a\sigma(a)+\varpi b'\sigma(b')+c\sigma(c)=0.$$
Since $\nu_\Lambda(e_1)=1$, the above equality implies that
$a, c\in \mathfrak{o}_F^\times$.  Now, we have
\begin{align*}
  \nu_F(h(ge_1, \beta ge_1))=&\nu_F(\beta_1\varpi a\sigma(a)+
  \beta_2( b\sigma(b)+ \varpi c\sigma(c)))\\
  &=\nu_F((\beta_1-\beta_2))+1
\end{align*}
We note that $2\nu_F(\beta_1)=\nu_{\Lambda_1}(\beta_1)$ and
$\nu_F(\beta_2)=\nu_{\Lambda_2}(\beta_2)$. We also have
$$-n=\nu_{\Lambda}(\beta)=\min\{\nu_{\Lambda_1}(\beta_1),
\nu_{\Lambda_2}(\beta_2)\}.$$ Assume that
$-n=\nu_{\Lambda_1}(\beta_1)\leq \nu_{\Lambda_2}(\beta_2)$.  In this
case, $n=4m+2r$, where $m$ is an integer and $r\in \{0,1\}$.  Now, we
have $(n/2)+=2m+r+1$ and the group $P_{(n/2)+}(\Lambda)\cap U_{\rm
  der}$ is equal to $U_{\rm der}(m)$. We may have two possibilities
either $\nu_{\Lambda_2}(\beta_2)\leq \nu_F(\beta_1)=-(2m+r)$ or
$\nu_{\Lambda_2}(\beta_2)\geq \nu_F(\beta_1)=2m+r$. In the first case,
we have
$$\nu_F(\beta_1-\beta_2)+1=\nu_F(\beta_2)+1\leq -(2m+r)+1\leq -m.$$
In the second case, we have
$$\nu_F(\beta_1-\beta_2)+1=\nu(\beta_1)+1=-(2m+r)+1\leq -m.$$
Hence, the character $\psi_{\beta^g}$ on $P_{(n/2)+}(\Lambda)$ is
non-trivial on $P_{(n/2)+}(\Lambda)\cap U_{\rm der}$ and we obtain a
contradiction to the equation \eqref{type_C_aniso_eq_1}.

Assume that $-n=\nu_{\Lambda_2}(\beta_2)\leq \nu_{\Lambda_1}(\beta_1)$
and set $n=4m+r$, for some integer $m$ and $0\leq r\leq 3$. In this
case, the group $P_{(n/2)+}(\Lambda)\cap U_{\rm der}$ is equal to
$U_{\rm der}(m)$. Since $\nu_{\Lambda_1}(\beta)\leq \nu_F(\beta_1)$, we
get that
$$\nu_F(\beta_1-\beta_2)+1=\nu_{\Lambda_2}(\beta_2)=-(4m+r)+1\leq -m.$$
The above inequality implies that the character $\psi_{\beta}^g$ is
non-trivial on the group $P_{(n/2)+}(\Lambda)\cap U_{\rm der}$ and
hence we get a contradiction to the equation
\eqref{type_C_aniso_eq_1}. In every case, we get that the
representation $\pi$ is non-generic.
\end{proof}
\subsection{\texorpdfstring{$(V_2, h)$}{} is isotropic}
In this part, we assume that $(V_2, h)$ is isotropic. Note that the
set $\mathfrak{X}_\beta(F_0)$ is non-empty. However, it turns out that
every cuspidal representation in the set $\Pi_\mathfrak{x}$ is
non-generic; the essential reason for this is that any cuspidal
representation of $P_0(\Lambda_\beta)/P_1(\Lambda_\beta)$ is
generic. The group $P_0(\Lambda_\beta)/P_1(\Lambda_\beta)$ is equal to
$U(1,1)(k_F/k_{F_0})\times U(1)(k_F/k_{F_0})$, when $F/F_0$ is
unramified and is equal to ${\rm SL}_2(k_F)\times\{\pm 1\}$, when
$F/F_0$ is ramified.

Let $\sch{B}$ be the Borel subgroup of $\sch{G}$ such that $B$ fixes the
subspace $\langle e_1\rangle$. Let $\sch{U}$ be the unipotent radical
of $\sch{B}$.  Although $P^0(\Lambda)$ is a special parahoric subgroup
of $G$, it is convenient to use the decomposition
$$G=\mathcal{I}W_GB$$
for Mackey-decompositions. Here, $\mathcal{I}$ is the Iwahori subgroup
contained in the subgroups $P(\Lambda)$, where $\Lambda$ varies over
the two (representatives for $G_\beta$ conjugacy classes) lattice
sequences in \ref{type_C_unram_seq} and \ref{type_C_ram_seq} when
$F/F_0$ is unramified and ramified respectively.
\subsubsection{Shallow elements}
To understand Mackey decompositions, we need to control the
conjugation by elements in $\mathcal{I}$. For the present purposes, it
is easy to understand the conjugation action of an element which is
contained in a sufficiently small compact subgroup of $\mathcal{I}$.
So, we introduce a measure of shallowness, relative to the
group~$P_{(n/2)+}(\Lambda)$, on the elements $u(x,y)$ and
$\bar{u}(x, y)$ in $\mathcal{I}\cap U$ and $\mathcal{I}\cap U^w$
respectively (here, $w$ is the non-trivial element in $W_G$).

We begin with defining an integer $d(\mathfrak{x}, w, x)$. The main
purpose of the definition of $d(\mathfrak{x}, w, x)$ becomes apparent
in Lemma \ref{type_C_iso_weight}.  Let $n=4m+2r$, for some positive
integer $m$ and $r\in\{0,1\}$. Consider the case where $F/F_0$ is
unramified. For any $x, y\in F$ such that $x\sigma(x)+y+\sigma(y)=0$,
$w\in W_G$ and $\Lambda$ a lattice sequence defined in
\ref{type_C_unram_seq} or in \ref{type_C_ram_seq}, we set
$d(\mathfrak{x}, w, x)$ to be
\begin{eqnarray}
  d(\mathfrak{x}, w, x)=
  \begin{cases}
    \max\{1, m+1-\nu_F(x)\},&\ \text{if}\ \Lambda(0)\cap
    V_2=\mathfrak{o}_Fe_1\oplus \mathfrak{o}_Fe_{-1},\\
    \max\{0, m+r-\nu_F(x)\}&\ \text{if}\ \Lambda(0)\cap
    V_2=\mathfrak{o}_Fe_1\oplus \mathfrak{p}_Fe_{-1}, w=\id,\\
    \max\{2, m+r+1-\nu_F(x)\}&\ \text{if}\ \Lambda(0)\cap
    V_2=\mathfrak{o}_Fe_1\oplus \mathfrak{p}_Fe_{-1}, w\neq \id.
  \end{cases}
\end{eqnarray}
If $F/F_0$ is ramified, then the lattice sequences $\Lambda$, defined
in \ref{type_C_ram_seq}, up to $G_\beta$ conjugacy, is the unique
lattice sequence such that $P^0(\Lambda_{F[\beta]})$ is a maximal
parahoric subgroup in $G_\beta$.  Note that
$\Lambda(0)=\mathfrak{o}_Fe_1\oplus \mathfrak{p}_Fe_{-1}$. We define
the integer $d(\mathfrak{x}, w, x)$ as follows:
\begin{eqnarray}
  d(\mathfrak{x}, w, x)=
  \begin{cases}
    \max\{0, \lceil
    (m+r-1)/2-\nu_F(x)\rceil\}&\ \text{if}\ w=\id,\\
    \max\{1, \lceil (m+r)/2-\nu_F(x)\rceil\}&\ \text{if}\ w\neq \id.
  \end{cases}
\end{eqnarray}
Note that $d(\mathfrak{x}, w, x)$ is a constant for $\nu_F(x)>>0$, and
we denote this constant by $d(\mathfrak{x}, w)$. For example, when
$F/F_0$ is unramified,
$\Lambda(0)=\mathfrak{o}_Fe_1\oplus \mathfrak{p}_Ee_{-1}$, and
$w\neq \id$ we have $d(\mathfrak{x}, w)=2$. If $F/F_0$ is ramified and
$w\neq \id$, we have $d(\mathfrak{x}, w)=1$.
\subsubsection{}
With these preliminaries we are ready to prove that any representation
in the set $\Pi_\mathfrak{x}$ is non-generic. Let
$\overline{\sch{U}^w}$ be the unipotent radical of the opposite Borel
subgroup of $\sch{B}^w$ with respect to the torus $\sch{T}$, for all
$w\in W_G$. Recall that $\sch{T}$ is the maximal $F_0$-split torus of
$\sch{G}$ such that $T$ stabilises the decomposition
$$V=\langle e_1\rangle\oplus \langle e_0\rangle\oplus \langle e_{-1}\rangle. $$
\begin{lemma}\label{type_C_iso_weight}
  Let $F/F_0$ be any quadratic extension and let $\mathfrak{x}$ be a
  skew semisimple strata of type {\bf (C)} such that $(V_2, h)$ is
  isotropic. Let $u^-=\bar{u}(x,y)$ be an element of
  $\mathcal{I}\cap \overline{U^w}$, then we have
  $$U_{{\rm der}}^w(d(\mathfrak{x}, w, x))\subseteq 
  H^1(\Lambda, \beta)^{u^-}\cap U_{{\rm der}}^w.$$
\end{lemma}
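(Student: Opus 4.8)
The plan is to reduce the statement to a single computation inside the lattice filtration of $\End_F(V)$. Recall that in every type {\bf (C)} case the group $H^1(\Lambda,\beta)$ contains $P_{(n/2)+}(\Lambda)=\tilde{P}_{(n/2)+}(\Lambda)\cap G$, and that $U_{\rm der}^w(s)\subseteq U_{\rm der}^w$ for every integer $s$, so the intersection with $U_{\rm der}^w$ in the statement is automatic. It therefore suffices to prove that $u^-\,U_{\rm der}^w(d(\mathfrak{x},w,x))\,(u^-)^{-1}\subseteq P_{(n/2)+}(\Lambda)$, and for that it is enough to show that the corresponding conjugate of the relevant line in $\mathfrak{g}$ lands in $\tilde{\mathfrak{a}}_{(n/2)+}(\Lambda)$ (unitarity of the conjugate being automatic). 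Writing $n=4m+2r$ with $r\in\{0,1\}$ we have $(n/2)+=2m+r+1$, so the target will be a lower bound of $2m+r+1$ on a $\Lambda$-valuation, and since $e(\Lambda)=2$ it has to be checked only for two consecutive indices $i$.

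First I would make the conjugation explicit in the Witt basis $(e_1,e_0,e_{-1})$. Taking $u^-=\bar{u}(x,y')$ and the long root element $u(0,y)$ (and, for $w\neq\id$, $\bar{u}(0,y)$, with the roles of $e_1$ and $e_{-1}$ interchanged), a short matrix computation shows that $u^-\,u(0,y)\,(u^-)^{-1}=\id_V+Z$, where $Z$ is a \emph{rank one} endomorphism: the product of the column vector $y\,(e_1+xe_0+y'e_{-1})$ with the covector $u_1e_1+u_2e_0+u_3e_{-1}\mapsto\sigma(y')u_1+\sigma(x)u_2+u_3$. Because $Z$ has rank one, $\nu_\Lambda(Z)$ equals $\nu_F(y)$ plus the $\Lambda$-valuation of that column vector plus the $\Lambda$-valuation of that covector. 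The condition $x\sigma(x)+y'+\sigma(y')=0$ together with $u^-\in\mathcal{I}$ controls $\nu_F(y')$ in terms of $\nu_F(x)$, and the explicit shapes of $\Lambda$ recorded in Subsections \ref{type_C_unram_seq} and \ref{type_C_ram_seq}, together with the appendix filtrations on $\End_F(V)$, let one compute the two relevant $\Lambda$-valuations as explicit functions of $\min\{0,\nu_F(x)\}$.

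Then I would check, case by case, that
\[
\nu_\Lambda(Z)\ \geq\ 2m+r+1
\]
holds precisely when $y\in\delta\mathfrak{p}_{F_0}^{\,d(\mathfrak{x},w,x)}$, using $\nu_F(\delta)=e(F|F_0)-1$ to turn membership in $\delta\mathfrak{p}_{F_0}^{\,d}$ into a bound on $\nu_F(y)$; this last translation is also why the unramified and ramified formulas for $d(\mathfrak{x},w,x)$ differ, and why the ramified formulas carry a $\lceil\,\cdot\,\rceil$, the filtration of $\End_F(V)$ jumping by two over $\mathfrak{o}_{F_0}$-steps in the ramified case. The definition of $d(\mathfrak{x},w,x)$ is engineered exactly for this: the affine term $m+1-\nu_F(x)$ — respectively $m+r-\nu_F(x)$, $m+r+1-\nu_F(x)$, or the ramified $\lceil\,\cdot\,\rceil$ expressions — is the binding constraint when $x$ is small, since then the entries of $Z$ carrying a factor $x$ or $y'$ are the obstruction, while the outer $\max$ with the fixed constant $1$, $0$ or $2$ (respectively $0$ or $1$) is binding when $\nu_F(x)\gg 0$, where those entries are harmless and only the pure long root entry $y$ matters. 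The three unramified sub-cases for $w=\id$ are separated by whether $\Lambda(0)\cap V_2$ is self-dual or equals $\mathfrak{o}_Fe_1\oplus\mathfrak{p}_Fe_{-1}$, which shifts $\nu_\Lambda(e_1)$ and $\nu_\Lambda(e_{-1})$ and hence the bounds.

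I expect the main obstacle to be the bookkeeping rather than any single idea: there are two unramified isotropic shapes of $\Lambda$, the ramified shape, two choices of $w$, and the ramified/unramified split, and in each branch one must track the $\Lambda$-valuation of a column vector and of a covector with entries among $1,x,y',\sigma(x),\sigma(y')$ against a non-standard, sometimes skewed, filtration, and confirm that the outcome is exactly the formula defining $d(\mathfrak{x},w,x)$. In any branch where the rank one conjugate $\id_V+Z$ fails to lie in $P_{(n/2)+}(\Lambda)$ on the nose, I would fall back on the full group $H^1(\Lambda,\beta)=P_1(\Lambda_\beta)P_{(n/2)+}(\Lambda)$, absorbing the residual diagonal-type part of $Z$ into $P_1(\Lambda_\beta)$.
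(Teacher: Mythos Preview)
Your fallback is not a fallback: it is the actual argument, and your main plan does not work. The reduction to $P_{(n/2)+}(\Lambda)$ fails already at the $(1,3)$ entry of $Z$, which is the bare long-root parameter $a$. In the case $\Lambda(0)\cap V_2=\mathfrak{o}_Fe_1\oplus\mathfrak{o}_Fe_{-1}$ (unramified), membership in $\tilde{\mathfrak{a}}_{(n/2)+}(\Lambda)=\tilde{\mathfrak{a}}_{2m+r+1}(\Lambda)$ forces every entry into $\mathfrak{p}_F^{m+1}$, whereas the constant $d(\mathfrak{x},w)=1$ only guarantees $a\in\mathfrak{p}_F$. More generally, the constant term in each $\max$ defining $d(\mathfrak{x},w,x)$ is calibrated so that the \emph{block-diagonal} entries of $Z$ (those supported on $\End_F(V_1)\oplus\End_F(V_2)$, i.e.\ positions $(1,1),(1,3),(3,1),(3,3),(2,2)$) land only at level $1$, not at level $(n/2)+$; it is the $x$-dependent term that pushes the \emph{off-block} entries $(1,2),(2,1),(2,3),(3,2)$ to level $(n/2)+$. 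So the conjugate never lies in $P_{(n/2)+}(\Lambda)$ unless $m$ is tiny.

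The paper proceeds exactly via what you call the fallback: from the same matrix identity one checks directly that $u^-\,u(0,a)\,(u^-)^{-1}\in P_1(\Lambda_{F[\beta]})\,P_{(n/2)+}(\Lambda)=H^1(\Lambda,\beta)$, the block-diagonal part of $Z$ being absorbed by $P_1(\Lambda_\beta)$ and the off-block part by $P_{(n/2)+}(\Lambda)$. Your rank-one description of $Z$ is a pleasant way to organise the valuation bookkeeping, but you should use it to bound the two pieces separately against the two different targets, not to bound $\nu_\Lambda(Z)$ globally against $2m+r+1$. One further check you will need (and which is implicit in the paper's ``from the definition of $d(\mathfrak{x},\id,x)$''): the $(2,2)$ entry $ax\sigma(x)$ must in fact reach level $(n/2)+$, since $1+ax\sigma(x)$ is not literally in $U(V_1,h)$ and so cannot be absorbed by $P_1(\Lambda_\beta)$; this follows because $\nu_F(a)+2\nu_F(x)\geq m+1$ in every branch of the $\max$.
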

\begin{proof}We prove the lemma in the case where $w=\id$; the case
  where $w\neq \id$ is entirely similar.  We have to show that
  $\{U_{{\rm der}}(d(\mathfrak{x}, \id, x))\}^{u^-}$ is contained in
  the group $H^1(\Lambda, \beta)$. First we note the following matrix
  identity
  \begin{equation}\label{type_C_iso_weight_eq_1}
\bar{u}(x, y)u(0, a)\bar{u}(-x, -y-x\sigma(x))=
\begin{pmatrix}1-a(x\sigma(x)+y)&a\sigma(x)&a\\
  ax(-y-x\sigma(x))&1+ax\sigma(x)&ax\\
  -ay(y+x\sigma(x))&a\sigma(x)y&ay+1\end{pmatrix}.
\end{equation}
Using the equality \eqref{type_C_iso_weight_eq_1}, for any element
$u(0, a)\in U_{{\rm der}}(d(\mathfrak{x}, \id, x))$, we get that 
 $$\bar{u}(x, y)u(0, a)\bar{u}(-x, -y-x\sigma(x))\in 
 P_1(\Lambda_{F[\beta]}) P_{(n/2)+}(\Lambda).$$
 The lemma now follows because we have
$$P_1(\Lambda_{F[\beta]})
P_{(n/2)+}(\Lambda)\subseteq H^1(\Lambda, \beta).$$
\end{proof}
\begin{lemma}\label{type_C_iso_sim_char_res}
  With the same assumptions and notations as in Lemma
  \ref{type_C_iso_weight}, for any skew semisimple character $\theta\in
  \mathcal{C}(\Lambda, 0, \beta)$ of the group
  $H^1(\Lambda, \beta)$, we have
$$\res_{U^w_{{\rm der}}(d(\mathfrak{x}, w, x))}\theta^{u^-}=
\psi_{\beta^{u^-}}.$$
\end{lemma}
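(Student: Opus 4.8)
The plan is to convert the identity into a comparison of $\theta$ and $\psi_\beta$ on explicit conjugates. I treat the case $w=\id$; the case $w\neq\id$ is handled identically, with $e_1$ replaced by $e_{-1}$ and with the corresponding constant in the definition of $d(\mathfrak{x},w,x)$. Since conjugation by $u^-$ is compatible with the trace pairing defining $\psi_\beta$, unwinding the definitions of $\theta^{u^-}$ and $\psi_{\beta^{u^-}}$ shows that the assertion is equivalent to $\theta(c)=\psi_\beta(c)$ for every element of the form $c=\bar{u}(x,y)\,u(0,a)\,\bar{u}(x,y)^{-1}$ with $u(0,a)\in U_{\rm der}(d(\mathfrak{x},\id,x))$; by the matrix identity \eqref{type_C_iso_weight_eq_1} and the proof of Lemma~\ref{type_C_iso_weight}, such a $c$ is written down explicitly and lies in $P_1(\Lambda_\beta)\,P_{(n/2)+}(\Lambda)$.

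In the range of $\nu_F(x)$ for which $d(\mathfrak{x},\id,x)$ is large enough that $\bar{u}(x,y)\,U_{\rm der}(d(\mathfrak{x},\id,x))\,\bar{u}(x,y)^{-1}$ already sits inside $P_{(n/2)+}(\Lambda)$, the identity is immediate from $\res_{P_{(n/2)+}(\Lambda)}\theta=\psi_\beta$ (Subsection~\ref{simple_characters}). The real content is the complementary range, where the $P_1(\Lambda_\beta)$-factor is genuinely present. There I would write $c=c_\beta c_n$ with $c_\beta\in P_1(\Lambda_\beta)=H^1(\Lambda,\beta)\cap C_\beta(A)^\times$ and $c_n\in P_{(n/2)+}(\Lambda)$; then $\theta(c_n)=\psi_\beta(c_n)$ as before, and a filtration estimate driven by the lower bound $\nu_F(a)\geq d(\mathfrak{x},\id,x)$ shows that the cross-term $\psi_F(\tr(\beta(c_\beta-\id)(\id-c_n)))$ --- the only obstruction to $\psi_\beta(c)=\psi_\beta(c_\beta)\psi_\beta(c_n)$ --- vanishes. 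Thus everything reduces to $\theta(c_\beta)=\psi_\beta(c_\beta)$. To see this I would invoke the defining property of skew semisimple characters that $\res_{P_1(\Lambda_\beta)}\theta$ factors through the block-determinant map $C_\beta(A)^\times\to F^\times\times F^\times$ (cf.\ the use of \cite[Definition~3.2.3(a)]{Orrangebook} in the proof of Lemma~\ref{type_B_iso_q_1>q_2_generic}); a direct computation from \eqref{type_C_iso_weight_eq_1} gives $\det_{V_1}(c_\beta)=1+ax\sigma(x)$, $\det_{V_2}(c_\beta)=1-ax\sigma(x)$, and $\tr(\beta(\id-c_\beta))=(\beta_2-\beta_1)\,ax\sigma(x)$, so that $\psi_\beta$ restricted to $P_1(\Lambda_\beta)$ likewise factors through these determinants in the filtration degree where $ax\sigma(x)$ lives (using $\det_{V_i}(\id+X)\equiv 1+\tr(X)$ modulo $\psi_F$-trivial terms). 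One then matches the two resulting characters of $F^\times\times F^\times$ on the units $1\pm ax\sigma(x)$ and concludes $\theta(c_\beta)=\psi_F((\beta_2-\beta_1)\,ax\sigma(x))=\psi_\beta(c_\beta)$.

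The hard part will be this last matching step together with the cross-term estimate: the constant $d(\mathfrak{x},w,x)$ is defined precisely so that $ax\sigma(x)$ lands deep enough in $\mathfrak{p}_F$ for $\theta$ restricted to $C_\beta(A)^\times$ to have already been pinned down there --- ultimately by $\res_{P_{(n/2)+}(\Lambda)}\theta=\psi_\beta$ together with the compatibility of $\theta$ with the simple characters of the blocks $[\Lambda_i,q_i,0,\beta_i]$ --- and carrying this bookkeeping out in each of the three lattice-sequence cases (and for $w=\id$ versus $w\neq\id$) is exactly what dictates the case division in the definition of $d(\mathfrak{x},w,x)$. The remaining ingredients --- the explicit conjugate, the block-determinant computation, and the easy range above --- are routine.
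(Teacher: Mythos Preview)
Your approach is correct and ultimately rests on the same key observation as the paper --- that the definition of $d(\mathfrak{x},w,x)$ forces $ax\sigma(x)$ deep enough that $1+ax\sigma(x)\in F^\times\cap P_{(n/2)+}(\Lambda)$ --- but you reach the conclusion by a considerably longer route. The paper's proof is essentially one paragraph: writing the conjugate as $g_1g_2$ with $g_1\in P_1(\Lambda_\beta)$ and $g_2\in P_{(n/2)+}(\Lambda)$, one reads off from \eqref{type_C_iso_weight_eq_1} that the $V_1$-entry $1+ax\sigma(x)$ of the product lies in $F^\times\cap P_{(n/2)+}(\Lambda)$, hence so does ${\bf 1}_{V_1}g_1{\bf 1}_{V_1}$; since $g_1g_2$ has determinant~$1$, the determinant of ${\bf 1}_{V_2}g_1{\bf 1}_{V_2}$ lies in $F^\times\cap P_{(n/2)+}(\Lambda)$ as well, and a single appeal to \cite[Definition~3.2.3(a)]{Orrangebook} finishes.

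Your split into easy and hard ranges, the cross-term estimate for $\psi_\beta(c_\beta c_n)=\psi_\beta(c_\beta)\psi_\beta(c_n)$, and the explicit matching of $\theta$ and $\psi_\beta$ on the units $1\pm ax\sigma(x)$ all unpack what that single invocation already encodes. In particular your ``hard part'' dissolves once you observe that $1\pm ax\sigma(x)$ already lie in $F^\times\cap P_{(n/2)+}(\Lambda)$: since $\theta|_{P_1(\Lambda_\beta)}$ factors through the block determinants and those determinant values are realised by elements of $P_{(n/2)+}(\Lambda)\cap C_\beta(A)^\times$ (where $\theta=\psi_\beta$ by construction), the value $\theta(c_\beta)$ is pinned down immediately with no separate character-matching. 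The paper's argument also sidesteps your cross-term computation entirely, since it never needs $\psi_\beta$ to be multiplicative across the factorisation $c=c_\beta c_n$; it compares $\theta$ and $\psi_\beta$ on $g_1g_2$ as a whole. Your explicit determinant and trace computations ($\det_{V_i}(c_\beta)=1\pm ax\sigma(x)$, $\tr(\beta(\id-c_\beta))=(\beta_2-\beta_1)ax\sigma(x)$) are correct for the natural choice of $c_\beta$ as the block-diagonal part of $c$, but the paper obtains the second block determinant for free from $\det(g_1g_2)=1$ rather than computing the $2\times2$ minor.
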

\begin{proof}
  Let $u^+=u(0, a)$ be an element in
  $U_{{\rm der}}(d(\mathfrak{x}, w, x))$, and $u^{-}=\bar{u}(x,y)$ be
  any element as in Lemma \ref{type_C_iso_weight}. Assume that
  $u^-u^+(u^{-})^{-1}=g_1g_2$, where $g_1\in P_1(\Lambda_{F[\beta]})$
  and $g_2\in P_{(n/2)+}(\Lambda)$. From the matrix identity
  \ref{type_C_iso_weight_eq_1} and from the definition of
  $d(\mathfrak{x}, \id, x)$, the constant $1+ax\sigma(x)$ belongs to
  $F^\times\cap P_{(n/2)+}(\Lambda)$. Hence, we get that
  ${\bf 1}_{V_1}g{\bf 1}_{V_1}\in F^\times \cap P_{(n/2)+}(\Lambda)$.
  This implies that the determinant of the element
  ${\bf 1}_{V_2}g_2{\bf 1}_{V_2}$ is contained in
  $F^\times\cap P_{(n/2)+}(\Lambda)$. Now, from the definition of a
  simple character, we get that $\theta(u^-u^+(u^-)^{-1})$ is equal to
  $\psi_\beta(u^-u^+(u^-)^{-1})$ and we get the lemma.
\end{proof}
\begin{lemma}\label{type_C_iso_non_gen_at_id}
  With the same assumptions in the lemma \ref{type_C_iso_weight}, the
  restriction
  $$\res_{J^0(\Lambda, \beta)\cap U^w_{\rm der}}(\kappa\otimes\tau)$$ is
  equivalent to a direct sum of non-trivial characters.
\end{lemma}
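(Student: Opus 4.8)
The plan is to reduce, by Frobenius reciprocity, to showing that $(\kappa\otimes\tau)^{N_w}=0$, where $N_w:=J^0(\Lambda, \beta)\cap U^w_{\rm der}$; indeed $U^w_{\rm der}$ is abelian, so $\res_{N_w}(\kappa\otimes\tau)$ is automatically a direct sum of characters and the content of the lemma is that none of them is trivial. First I would locate the group $N_w$. Since $[F[\beta_2]:F]=1$, the element $\beta$ acts on $V_2$ by a scalar, so $U^w_{\rm der}$ is contained in the unitary group of $(V_2, h)$, hence in $G_\beta$, and therefore $N_w=P_0(\Lambda_\beta)\cap U^w_{\rm der}$. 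Under the projection $P_0(\Lambda_\beta)\to P_0(\Lambda_\beta)/P_1(\Lambda_\beta)=:\mathcal{G}$, the group $N_w$ surjects onto the unipotent radical $\overline{N}_w$ of a Borel subgroup of the rank-one factor of $\mathcal{G}$ (the $U(1,1)(k_F/k_{F_0})$-factor if $F/F_0$ is unramified, the ${\rm SL}_2(k_F)$-factor if it is ramified), with kernel $N_w\cap J^1(\Lambda, \beta)=P_1(\Lambda_\beta)\cap U^w_{\rm der}$; here one uses that $P^0(\Lambda_\beta)$ is a maximal parahoric of $G_\beta$, realised on a vertex lattice of $V_2$ in each of the cases of Subsection~\ref{type_C_lat_seq_disc}, so that $\overline{N}_w$ is nontrivial.

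The crux is to show that $\res_{N_w}\kappa$ is a multiple of the trivial character. A direct matrix computation shows that $\psi_\beta$ is trivial on all of $U^w_{\rm der}$: because $\beta$ acts on $V_2$ by a scalar and $U^w_{\rm der}$ acts on $V_2$ by a transvection, the trace $\tr(\beta(\id_V-u))$ vanishes for $u\in U^w_{\rm der}$. In particular, by Lemmas~\ref{type_C_iso_weight} and~\ref{type_C_iso_sim_char_res} with $u^-=1$, the restriction $\res_{U^w_{\rm der}(d(\mathfrak{x}, w))}\theta=\psi_\beta$ is trivial, where $U^w_{\rm der}(d(\mathfrak{x}, w))\subseteq H^1(\Lambda, \beta)\cap U^w_{\rm der}$. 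Moreover $P_1(\Lambda_\beta)\subseteq H^1(\Lambda, \beta)$ and $\res_{J^1(\Lambda, \beta)}\kappa$ is a multiple of $\eta_\theta$, so $\res_{N_w\cap J^1(\Lambda, \beta)}\kappa=\res_{P_1(\Lambda_\beta)\cap U^w_{\rm der}}\kappa$ is a multiple of $\theta$; and $\theta$ coincides with $\psi_\beta$ on $P_1(\Lambda_\beta)\cap U^w_{\rm der}$ by the defining property of a simple character (the block--determinant argument used in the proof of Lemma~\ref{type_C_iso_sim_char_res}, applied to the projections of these unipotent elements onto $V_1$ and $V_2$, each of which has determinant one), hence is trivial. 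It then remains to pass from $N_w\cap J^1(\Lambda, \beta)$ up to $N_w$ --- a single step along the unipotent subgroup $\overline{N}_w$ of the reductive quotient --- and here I would invoke the structure of a $\beta$-extension on the parahoric $P_0(\Lambda_\beta)$ to conclude that $\res_{N_w}\kappa$ is still a multiple of the trivial character. I expect this last point to be the main obstacle: $\kappa$, unlike $\tau$, does not factor through $J^0(\Lambda, \beta)/J^1(\Lambda, \beta)$, its restriction to $N_w$ genuinely straddles the reductive quotient, and the argument must also absorb the (harmless) ambiguity of the $\beta$-extension by characters of $\mathcal{G}$.

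Granting the crux, I would conclude as follows. The representation $\tau$ is inflated from a cuspidal representation of $\mathcal{G}$, and $\overline{N}_w$ is the unipotent radical of a proper parabolic subgroup of $\mathcal{G}$, so $\tau^{\overline{N}_w}=0$; this is exactly the statement that ``every cuspidal representation of $P_0(\Lambda_\beta)/P_1(\Lambda_\beta)$ is generic''. Consequently $\res_{N_w}(\kappa\otimes\tau)$ is $(\dim\kappa)$ copies of the pull-back of $\res_{\overline{N}_w}\tau$ along the surjection $N_w\twoheadrightarrow\overline{N}_w$, and every character of $\overline{N}_w$ occurring in $\res_{\overline{N}_w}\tau$ is nontrivial; hence every character of $N_w$ occurring in $\res_{N_w}(\kappa\otimes\tau)$ is nontrivial, which is the claim.
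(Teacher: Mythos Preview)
Your overall architecture is correct and matches the paper's: reduce to showing that $\res_{N_w}\kappa$ is a multiple of the trivial character, then use cuspidality of $\tau$ on the reductive quotient to kill the $\overline{N}_w$-invariants. Your identification $N_w=P_0(\Lambda_\beta)\cap U^w_{\rm der}$, the observation that $\psi_\beta\equiv 1$ on $U^w_{\rm der}$, and the block--determinant computation giving $\res_{H^1(\Lambda,\beta)\cap U^w_{\rm der}}\theta=1$ are all exactly what the paper does.

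The point you flag as the ``main obstacle'' is indeed the entire content of the lemma, and your sentence ``invoke the structure of a $\beta$-extension on the parahoric $P_0(\Lambda_\beta)$'' is not a proof. The difficulty is real: $\kappa$ is only pinned down up to a character of $J^0/J^1$, and there is no direct statement in the construction of $\beta$-extensions saying that they restrict trivially to $N_w$. The paper (following \cite[Theorem 2.6]{reali_ep_fac_pas_st} and \cite[Theorem 4.3]{sp_4_genericity}) resolves this as follows. Set $\tilde{J}_1=(J^0(\Lambda,\beta)\cap U)J^1(\Lambda,\beta)$ and $\tilde{H}_1=(J^0(\Lambda,\beta)\cap U)H^1(\Lambda,\beta)$; extend $\theta$ to a character $\tilde{\theta}$ of $\tilde{H}_1$ by declaring it trivial on $J^0\cap U$, and show via Frobenius reciprocity plus a dimension count that $\tilde{\eta}:=\ind_{\tilde{H}_1}^{\tilde{J}_1}\tilde{\theta}$ is irreducible and restricts to $\eta_\theta$ on $J^1(\Lambda,\beta)$, hence is an extension of $\eta_\theta$ on which $J^0\cap U_{\rm der}$ acts trivially. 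The nontrivial step is then to identify $\res_{\tilde{J}_1}\kappa$ with $\tilde{\eta}$: one chooses an auxiliary (Iwahori-type) lattice sequence $\Lambda^m$ with $\tilde{J}_1=(P_1(\Lambda^m_\beta)\cap U_{\rm der})J^1(\Lambda,\beta)$, transfers $\theta$ to $\theta_m\in\mathcal{C}(\Lambda^m,0,\beta)$, checks that $\tilde{\theta}$ and $\theta_m$ agree on $\tilde{H}_1\cap H^1(\Lambda^m,\beta)$ via Iwahori decompositions, deduces $\ind_{\tilde{J}_1}^{P_1(\Lambda^m)}\tilde{\eta}\simeq\ind_{J^1(\Lambda^m,\beta)}^{P_1(\Lambda^m)}\eta_{\theta_m}$, and finally invokes the compatibility (``uniqueness'') of $\beta$-extensions across lattice sequences in the same building to force $\res_{\tilde{J}_1}\kappa\simeq\tilde{\eta}$. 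This transfer-and-compare argument is what replaces your unspecified appeal to ``the structure of a $\beta$-extension''; without it, the passage from $N_w\cap J^1$ to $N_w$ remains a genuine gap.
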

\begin{proof}
  We essentially follow ideas from \cite[Theorem
  2.6]{reali_ep_fac_pas_st} and \cite[Theorem 4.3]{sp_4_genericity}.
  We prove this lemma in the case where $w=\id$, and the other case is
  similar. If $F/F_0$ is unramified, then $P_0(\Lambda)/P_1(\Lambda)$
  is isomorphic to
$$ U(1,
1)(k_F/k_{F_0})\times U(1)(k_F/k_{F_0}).$$
If $F/F_0$ is ramified, then $P_0(\Lambda)/P_1(\Lambda)$ is isomorphic
to
$${\rm SL}_2(k_F)\times \{\pm 1\}.$$
Let $\tilde{J}_1$ be the group
$(J^0(\Lambda, \beta)\cap U)J^1(\Lambda, \beta)$ and observe that we
have $\tilde{J}_1$ is equal to
$(J^0(\Lambda, \beta)\cap U_{\text{der}})J^1(\Lambda, \beta)$.  Note
that the image of $J^0(\Lambda, \beta)\cap U$ in the quotient
$P_0(\Lambda)/P_1(\Lambda)$ is its $p$-Sylow subgroup. Let
$\tilde{H}_1$ be the group
$(J^0(\Lambda, \beta)\cap U)H^1(\Lambda, \beta)$.

We observe that $\psi_\beta(u)=1$, for all $u\in U$. Let $u$ be an
element of $H^1(\Lambda, \beta)\cap U$. Write $u=g_2g_1g_1'$ where
$g_2\in P_{(n/2)+}(\Lambda)$,
$g_2\in P_1(\Lambda_{\beta_1})\cap U(V_2, h)$, and
$g_1'\in P_1(\Lambda_{\beta_1})\cap U(V_1, h)$.  Since $g_1'e_1=e_1$,
we get that $g_2g_1e_1=e_1$. Hence, we get that $g_2g_1\in U$ and
$g_1'=\id$. Now, the determinant of $g_2^{-1}$ and $g_1$ are the
same. With this observation, we get that
\begin{equation}\label{type_C_iso_non_gen_at_id_eq_new_1}
\res_{H^1(\Lambda, \beta)\cap U}\theta=\res_{H^1(\Lambda, \beta)\cap U}\psi_\beta=\id.
\end{equation}
Let $\theta$ be a skew semisimple character of $H^1(\Lambda, \beta)$
and $\eta$ be a Heisenberg lift of $\theta$ to the group
$J^1(\Lambda, \beta)$. Using the equality
\eqref{type_C_iso_non_gen_at_id_eq_new_1}, we define a character
$\tilde{\theta}$ of the group $\tilde{H}_1$ by setting:
$$\tilde{\theta}(jh)=\theta(h),\ \text{for all}\ 
j\in J^0(\Lambda, \beta)\cap U,\ h\in H^1(\Lambda, \beta).$$
Using \cite[Lemma 2.5]{reali_ep_fac_pas_st}, we get that the
representation
$\ind_{\tilde{H}_1\cap J^1(\Lambda, \beta)}^{J^1(\Lambda,
  \beta)}\tilde{\theta}$ is isomorphic to $\eta$.

Note that the group $J^1(\Lambda, \beta)\cap U_{\text{der}}$
is equal to $H^1(\Lambda, \beta)\cap U_{\text{der}}$ and we get that
the representation 
$$\res_{J^1(\Lambda, \beta)\cap U_{\text{der}}}\eta\simeq 
\res_{H^1(\Lambda, \beta)\cap U_{\text{der}}}\eta\simeq
(\id)^{\dim_\mathbb{C}\eta}.$$
This implies that $\eta$ extends as a representation of $\tilde{J}_1$
such that $J^1(\Lambda, \beta)\cap U_{\text{der}}$ acts trivially on
this extension; let us denote this extension by $\tilde{\eta}$. By
Frobenius reciprocity we get a map
\begin{equation}\label{type_C_iso_non_gen_at_id_eq_1}
\ind_{\tilde{H}_1}^{\tilde{J}_1}\tilde{\theta}\rightarrow \tilde{\eta}.
\end{equation}
The representation $\eta$ is irreducible. The dimension of the
representation $\ind_{\tilde{H}_1}^{\tilde{J}_1}\tilde{\theta}$ is
equal to $[J^1(\Lambda, \beta):H^1(\Lambda, \beta)]^{1/2}$. Hence, the
map \eqref{type_C_iso_non_gen_at_id_eq_1} is an isomorphism.

Let $[\lambda^m, n_1, 0, \beta]$ be a skew semisimple stratum such
that $\tilde{J}_1$ is equal to
$(P_1(\Lambda^m_\beta)\cap U_{\rm der})J^1(\Lambda, \beta)$. Let
$\theta_m$ be the skew semisimple character of $H^1(\Lambda^m, \beta)$
obtained as a transfer from the skew semisimple character $\theta$ of
$H^1(\Lambda^m, \beta)$. We note that the groups $\tilde{H}_1$ and
$H^1(\Lambda^m, \beta)$ have the Iwahori decomposition with respect to
the pair $(B, T)$. We then get that
$$\ho_{\tilde{H}_1\cap H^1(\Lambda^m, \beta)}(\tilde{\theta},
\theta_m)\neq 0.$$ This implies that
$$\ind_{\tilde{J}^1}^{P_1(\Lambda_m)}\tilde{\eta}\simeq
\ind_{J^1(\Lambda^m, \beta)}^{P_1(\Lambda_m)}\eta_m.$$ From the
uniqueness properties of $\beta$-extensions, we get that
$\res_{\tilde{J}_1}\kappa\simeq \tilde{\eta}$. This shows that the
representation
$$\res_{J^0(\Lambda, \beta)\cap
  U_{\text{der}}}(\kappa\otimes\tau)$$
is a direct sum of non-trivial characters.
\end{proof}
The following lemma is a numerical verification to be used in the
subsequent Lemma \ref{type_C_iso_non-gen}.
\begin{lemma}\label{type_C_iso_numerical}
  Let $F/F_0$ be any quadratic extension and let
  $x, y\in \mathfrak{o}_F$ such that $x\sigma(x)+y+\sigma(y)=0$. If
  $d(\mathfrak{x}, w, x)>d(\mathfrak{x}, w)$, then we have
$$\nu_{F_0}(\delta(\beta_1-\beta_2)x\sigma(x))\leq -d(\mathfrak{x}, w, x).$$
\end{lemma}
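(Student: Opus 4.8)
The plan is to reduce the statement to an elementary inequality between integers and then verify it against the (three, respectively two) cases in the definition of $d(\mathfrak{x},w,x)$.

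First I would rewrite both quantities in terms of the normalised valuation $\nu_{F_0}$. Since $\sigma(\delta)=-\delta$ and $\sigma(\beta_i)=-\beta_i$, the product $\delta(\beta_1-\beta_2)$ is $\sigma$-fixed, hence lies in $F_0$; likewise $x\sigma(x)={\rm Nr}_{F/F_0}(x)\in F_0$. Using $\nu_F(\delta)=e(F|F_0)-1$ and $\varpi_0={\rm Nr}_{F/F_0}(\varpi)$, a short computation gives, when $F/F_0$ is unramified, $\nu_{F_0}(x\sigma(x))=2\nu_F(x)$ and $\nu_{F_0}(\delta(\beta_1-\beta_2))=\nu_F(\beta_1-\beta_2)$; and, when $F/F_0$ is ramified, $\nu_{F_0}(x\sigma(x))=\nu_F(x)$ and $\nu_{F_0}(\delta(\beta_1-\beta_2))=(\nu_F(\beta_1-\beta_2)+1)/2$.

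The one substantive input is that $\nu_F(\beta_1-\beta_2)=\nu_\Lambda(\beta)/2=-(2m+r)$, and that $r=1$ in the ramified case. Here $\nu_\Lambda(\beta)=\min\{\nu_{\Lambda_1}(\beta_1),\nu_{\Lambda_2}(\beta_2)\}$ by the block structure of the underlying splitting $V=V_1\perp V_2$, each of the lattice sequences of \ref{type_C_unram_seq} and \ref{type_C_ram_seq} inducing one of period $2$ on both blocks, so $\nu_{\Lambda_i}(\beta_i)=2\nu_F(\beta_i)$; and, since (after twisting $\pi$ by a character, i.e.\ replacing $\beta$ by $\beta+c\,\id_V$ with $c\in\delta F_0$, exactly as in the simple case) we may take $\mathfrak{x}$ in minimal form, the leading terms of the scalars $\beta_1,\beta_2$ do not cancel, whence $\nu_F(\beta_1-\beta_2)=\min\{\nu_F(\beta_1),\nu_F(\beta_2)\}=\nu_\Lambda(\beta)/2$. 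That $r=1$ when $F/F_0$ is ramified is forced because then $\beta_i\in\delta F_0$ with $\nu_F(\delta)=1$, so $\nu_F(\beta_i)$ is odd.

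It then remains to substitute into the definition of $d(\mathfrak{x},w,x)$. The hypothesis $d(\mathfrak{x},w,x)>d(\mathfrak{x},w)$ says exactly that the maximum defining $d(\mathfrak{x},w,x)$ is attained by the term involving $\nu_F(x)$; in particular it yields an upper bound on $\nu_F(x)$. In the three unramified cases, substituting $\nu_F(\beta_1-\beta_2)=-(2m+r)$ turns the desired inequality into an upper bound on $\nu_F(x)$ which, because $0\le r\le1$, is implied by the bound just obtained; in the two ramified cases the terms in $\nu_F(x)$ cancel and the inequality collapses to $\lceil m/2\rceil\le m$ (for $w=\id$) or to $\lceil(m+1)/2\rceil\le m$ for $m\ge1$ (for $w\neq\id$). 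The only part that is not pure bookkeeping is the identity $\nu_F(\beta_1-\beta_2)=\nu_\Lambda(\beta)/2$, which I would justify through minimality of $\mathfrak{x}$; the remainder is routine manipulation of floors and ceilings across the two ramification cases and the two shapes of $\Lambda$.
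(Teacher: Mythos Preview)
Your approach is correct and essentially the same as the paper's: both compute $\nu_{F_0}(\delta(\beta_1-\beta_2)x\sigma(x))$ as $2\nu_F(x)-(2m+r)$ (unramified) or $\nu_F(x)-m$ (ramified) and then verify the inequality against $-d(\mathfrak{x},w,x)$ case by case; the paper merely subdivides the unramified cases further by $r\in\{0,1\}$, which is organisational.

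The one place you are more explicit than the paper is in isolating and justifying the identity $\nu_F(\beta_1-\beta_2)=-(2m+r)$, which the paper uses without comment. Your twisting argument is the right mechanism, but note it is a reduction at the level of the representation $\pi\in\Pi_\mathfrak{x}$, not a property of an arbitrary type~{\bf (C)} stratum, so it belongs (as the paper implicitly treats it) to the ambient set-up rather than to this lemma per se. Also, replacing $\beta$ by $\beta+c\,\id_V$ does not change $\beta_1-\beta_2$; what the twist actually does is lower $n$ until $-n/2=\nu_F(\beta_1-\beta_2)$, so it is cleaner to say the reduction arranges $n=-2\nu_F(\beta_1-\beta_2)$ rather than that ``leading terms do not cancel''.
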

\begin{proof}
  Assume that $F/F_0$ is unramified and
  $\Lambda(0)\cap V_2=\mathfrak{o}_Fe_1\oplus
  \mathfrak{o}_Fe_{-1}$. If
  $d(\mathfrak{x}, w, x)>d(\mathfrak{x}, w)$, then we get that
  $\nu_F(x)<m$, and this implies that
$$2\nu_F(x)-2m-r\leq -(m+1)+\nu_F(x).$$ Now, consider the case
where
$\Lambda(0)\cap V_2=\mathfrak{o}_Fe_1\oplus \mathfrak{p}_Fe_{-1}$ and
assume that $d(\mathfrak{x}, w, x)>d(\mathfrak{x}, w)$. If $r=0$ and
$w=\id$, then we get that $\nu_F(x)<m$ which implies
that $$2\nu_F(x)-2m\leq -m+\nu_F(x).$$ If $r=0$ and $w\neq \id$, then
we get that $\nu_F(x)<m-1$ and hence
$$2\nu_F(x)-2m\leq -m-1+\nu_F(x).$$ If $r=1$ and $w=\id$, then we have
$\nu_F(x)<m+1$ and hence we get that
$$2\nu_F(x)-(2m+1)\leq -(m+1)+\nu_F(x).$$ Finally, we consider the
case where $r=1$ and $w\neq \id$; we then have $\nu_F(x)<m$. Hence, we
get that
  $$2\nu_F(x)-(2m+1)\leq -m-1+\nu_F(x).$$
  Now, assume that $F/F_0$ is a ramified extension and
  $d(\mathfrak{x}, w, x)>d(\mathfrak{x}, w)$. Note that
  $\nu_{F/F_0}(\beta_1-\beta_2)=-(2m+1)$, for some $m\in \mathbb{Z}$. If
  $w=\id$, then we have $\nu_{F/F_0}(x)<m/2$ and hence
  $$2\nu_{F/F_0}(x)-(2m+1)/2+1/2<-m/2+\nu_{F/F_0}(x)\leq
  -d(\mathfrak{x}, w, x).$$
If $w\neq \id$, then we have $\nu_{F/F_0}(x)<(m-1)/2$ and we have
$$2\nu_{F/F_0}(x)-(2m+1)/2+1/2<-(m+1)/2+\nu_{F/F_0}(x)\leq
-d(\mathfrak{x}, w, x).$$ From the above inequalities, in all
exhaustive cases, gives the required inequality:
$$\nu_{F_0}(\delta(\beta_2-\beta_1)x\sigma(x))\leq -d(\mathfrak{x}, w, x)$$
\end{proof}
\begin{lemma}\label{type_C_iso_non-gen}
  Let $F/F_0$ be an unramified extension and let $\mathfrak{x}$ be a
  strata of type {\bf (C)} such that $(V_2, h)$ is isotropic. Every
  cuspidal representation in the set $\Pi_\mathfrak{x}$ is
  non-generic.
\end{lemma}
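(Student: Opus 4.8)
The plan is to assume that some $\pi=\ind_{J^0(\Lambda,\beta)}^G(\kappa\otimes\tau)$ in $\Pi_\mathfrak{x}$ is generic and to contradict this by restricting a hypothetical Whittaker functional to suitable subgroups of the groups $U^w_{\rm der}$, $w\in W_G$. Exactly as at the start of the proofs of Lemmas \ref{type_B_iso_q_1<q_2} and \ref{type_C_aniso}, combining the decomposition $G=\mathcal{I}W_G B$ with the Mackey decomposition of $\res_U\pi$, and then the Iwahori factorisation of $\mathcal{I}$ with respect to $(\overline{B^w},T)$, genericity produces some $w\in W_G$, an element $u^-=\bar u(x,y)\in\overline{U^w}\cap\mathcal{I}$ and a character $\Psi'$ of $U^w$ with
\begin{equation}\label{type_C_iso_non-gen_mackey}
\ho_{J^0(\Lambda,\beta)^{u^-}\cap U^w}((\kappa\otimes\tau)^{u^-},\Psi')\neq 0.
\end{equation}
Since $U^w$ is two-step nilpotent, $\Psi'$ is trivial on $U^w_{\rm der}=[U^w,U^w]$; hence it suffices to exhibit a subgroup of $J^0(\Lambda,\beta)^{u^-}\cap U^w_{\rm der}$ on which $(\kappa\otimes\tau)^{u^-}$ decomposes into \emph{non-trivial} characters, which is incompatible with \eqref{type_C_iso_non-gen_mackey}.

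By Lemma \ref{type_C_iso_weight} the group $U^w_{\rm der}(d(\mathfrak{x},w,x))$ lies inside $H^1(\Lambda,\beta)^{u^-}\cap U^w_{\rm der}$, hence inside $J^0(\Lambda,\beta)^{u^-}\cap U^w_{\rm der}$; since $\res_{H^1(\Lambda,\beta)}(\kappa\otimes\tau)$ is a sum of copies of a skew semisimple character, Lemma \ref{type_C_iso_sim_char_res} shows that $(\kappa\otimes\tau)^{u^-}$ restricted to $U^w_{\rm der}(d(\mathfrak{x},w,x))$ is a multiple of $\psi_\beta^{u^-}$. A short computation in the Witt basis $(e_1,e_0,e_{-1})$ — using $\sigma_h(\beta_i)=-\beta_i$, the fact that $\beta_1,\beta_2$ are \emph{scalars} (this is where type {\bf (C)} enters), and the relation $y'+\sigma(y')=-x\sigma(x)$ between the coordinates of $u^-$ — gives
\begin{equation}\label{type_C_iso_non-gen_hform}
h(u^-e_w,\beta u^-e_w)=(\beta_2-\beta_1)\,x\sigma(x),
\end{equation}
so by Lemma \ref{basic_inequality} the character $\psi_\beta^{u^-}$ is non-trivial on $U^w_{\rm der}(d(\mathfrak{x},w,x))$ exactly when $\nu_{F_0}(\delta(\beta_2-\beta_1)x\sigma(x))\leq-d(\mathfrak{x},w,x)$.

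The argument now divides according to the shallowness of $u^-$. If $d(\mathfrak{x},w,x)>d(\mathfrak{x},w)$ (roughly, $\nu_F(x)$ small), then Lemma \ref{type_C_iso_numerical} supplies precisely the inequality above, so $\psi_\beta^{u^-}$, and hence $(\kappa\otimes\tau)^{u^-}$, is non-trivial on $U^w_{\rm der}(d(\mathfrak{x},w,x))$, contradicting \eqref{type_C_iso_non-gen_mackey}. If $d(\mathfrak{x},w,x)=d(\mathfrak{x},w)$ (roughly, $\nu_F(x)$ large), then $\psi_\beta^{u^-}$ is in general trivial on $U^w_{\rm der}(d(\mathfrak{x},w))$ and one cannot conclude from $\psi_\beta$ alone; instead a deep $u^-$ lies in a compact subgroup of $\mathcal{I}$ small enough that Lemma \ref{type_C_iso_non_gen_at_id} can be transported to the conjugate, giving that $\res_{J^0(\Lambda,\beta)^{u^-}\cap U^w_{\rm der}}(\kappa\otimes\tau)^{u^-}$ is still a direct sum of non-trivial characters — these non-trivialities now coming from the cuspidality of $\tau$ on $P_0(\Lambda_\beta)/P_1(\Lambda_\beta)$ rather than from $\psi_\beta$, which is the mechanism anticipated at the start of this subsection. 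Either way \eqref{type_C_iso_non-gen_mackey} is impossible, so every $\pi\in\Pi_\mathfrak{x}$ is non-generic.

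I expect the deep regime to be the main obstacle: one has to verify carefully that conjugation by a deep $u^-$ does not enlarge $J^0(\Lambda,\beta)^{u^-}\cap U^w_{\rm der}$ past $J^0(\Lambda,\beta)\cap U^w_{\rm der}$ in a way that would kill the characters produced by $\tau$, so that Lemmas \ref{type_C_iso_non_gen_at_id} and \ref{type_C_iso_numerical} together genuinely cover every $u^-=\bar u(x,y)\in\overline{U^w}\cap\mathcal{I}$; and this bookkeeping must be run uniformly over the two $G_\beta$-conjugacy classes of lattice sequences in \ref{type_C_unram_seq}, over both elements of $W_G$, and over the residue $r\in\{0,1\}$ in $n=4m+2r$.
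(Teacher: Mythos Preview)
Your approach is essentially the paper's: reduce via $G=\mathcal{I}W_GB$ and the Iwahori factorisation to an element $u^-\in\overline{U^w}\cap\mathcal{I}$, split into the shallow case $d(\mathfrak{x},w,x)>d(\mathfrak{x},w)$ (handled by Lemmas \ref{type_C_iso_sim_char_res}, \ref{type_C_iso_numerical} and \ref{basic_inequality} exactly as you outline) and the deep case $d(\mathfrak{x},w,x)=d(\mathfrak{x},w)$ (handled by Lemma \ref{type_C_iso_non_gen_at_id}). The computation \eqref{type_C_iso_non-gen_hform} and the use of the auxiliary lemmas match the paper line for line.

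The one point where you are vaguer than necessary is the deep regime, which you flag as ``the main obstacle''. In fact there is no obstacle: the equality $d(\mathfrak{x},w,x)=d(\mathfrak{x},w)$ is precisely the condition that $u^-$ already lies in $H^1(\Lambda,\beta)\subset J^0(\Lambda,\beta)$. (Check this against the definitions of $d(\mathfrak{x},w,x)$: the inequality on $\nu_F(x)$ that makes the maximum collapse is exactly what puts $\bar u(x,y)$ into $P_1(\Lambda_\beta)P_{(n/2)+}(\Lambda)$.) Since $u^-\in J^0(\Lambda,\beta)$, conjugation by $u^-$ is inner, $(\kappa\otimes\tau)^{u^-}\simeq\kappa\otimes\tau$ and $J^0(\Lambda,\beta)^{u^-}=J^0(\Lambda,\beta)$; so one may simply take $u^-=\id$ and invoke Lemma \ref{type_C_iso_non_gen_at_id} directly. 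No ``transport'' of that lemma is required, and the worry about $J^0(\Lambda,\beta)^{u^-}\cap U^w_{\rm der}$ being enlarged evaporates.
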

\begin{proof}
  Let $\pi$ be a cuspidal representation in the set
  $\Pi_\mathfrak{x}$.  Then we have
  $\pi\simeq \ind_{J^0(\Lambda, \beta)}^G(\kappa\otimes\tau)$, where
  $(J^0(\Lambda, \beta), \kappa\otimes\tau)$ is a Bushnell--Kutzko
  type contained in $\pi$.  Assume that the representation $\pi$ is
  generic. Then there exists a $g\in \mathcal{I}$, an element
  $w\in W_G$, and a character $\Psi$ of $U^w$ such that
\begin{equation}\label{type_C_iso_non-gen_eq_1}
\ho_{J^0(\Lambda, \beta)^g\cap U^w}((\kappa\otimes\tau)^g, \Psi)\neq 0.
\end{equation}
We write $g=p^+u^-$ such that $p^+\in B^w\cap \mathcal{I}$ and
$u^-\in \overline{U^w}\cap \mathcal{I}$, where $\overline{U^w}$ is the
unipotent radical of the opposite Borel subgroup of $B^w$ with respect
to the torus $T$. From the expression \eqref{type_C_iso_non-gen_eq_1},
we get that
\begin{equation}\label{type_C_iso_non-gen_eq_2}
\ho_{J^0(\Lambda, \beta)^{u^-}\cap U^w}((\kappa\otimes\tau)^{u^-}, \Psi')\neq 0,
\end{equation}
for some character $\Psi'$ of $U^w$.  We set $e_w=we_1$ and
$e_{-w}=we_{-1}$, we then have
$$h(u^-e_w, \beta u^-e_w)=
\beta_1x\sigma(x)+\beta_2h(e_w+ye_{-w}, e_w+ye_{-w})
=(\beta_1-\beta_2)x\sigma(x).$$ Hence, $\nu_F(\delta h(u^-e_w, \beta
u^-e_w))$ is equal to $\nu_F(\delta(\beta_1-\beta_2)x\sigma(x))$.  If
$d(\mathfrak{x}, w, x)>d(\mathfrak{x}, w)$, then Lemma
\ref{type_C_iso_numerical} implies that
$$\nu_F(\delta(\beta_1-\beta_2)x\sigma(x))\leq -d(\mathfrak{x}, w, x).$$
Now, using Lemmas \ref{type_C_iso_sim_char_res} and
\ref{basic_inequality}, we get that
$$\res_{U_{\rm der}(d(\mathfrak{x}, w, x))}\theta=\psi_{\beta}^{u^-}\neq \id.$$ 
But, this is a contradiction to the assumption in
\eqref{type_C_iso_non-gen_eq_1}. Hence, we obtain
$d(\mathfrak{x}, w, x)=d(\mathfrak{x}, w)$ and this implies that
$u^-\in H^1(\Lambda, \beta)$. We may as well assume that
$u^-=\id$. The lemma now follows from Lemma
\ref{type_C_iso_non_gen_at_id}.
\end{proof}
\section{Non simple type (D) strata}\label{type_D_sec}
\subsection{Inducing data}
A skew semisimple stratum $[\Lambda, n, 0, \beta]$, denoted by
$\mathfrak{x}$, is of type {\bf (D)} if the underlying splitting is of
the form $V=V_1\perp V_2\perp V_3$ with $\dim_FV_i=1$, for
$1\leq i\leq 3$. We use the notation $W_i$ for the space
$\oplus_{j\neq i}V_j$, for $1\leq i\leq 3$.

When $F/F_0$ is unramified, we fix a vector $v_i\in V_i$ such that
$\nu_{F_0}(h(v_i, v_i))\in\{0,1\}$, for $1\leq i\leq 3$.  When $F/F_0$
is ramified, we fix a vector $v_i\in V_i$ such that
$\nu_{F_0}(h(v_i, v_i))=0$, for $1\leq i\leq 3$. We denote by
$\lambda_i$ the inner product $h(v_i, v_i)$, for $1\leq i\leq 3$.  We
have $\beta=\beta_1+ \beta_2+ \beta_3$, where
$\beta_i={\bf 1}_{V_i}\beta{\bf 1}_{V_i}$, for $1\leq i\leq 3$. The
lattice sequence $\Lambda$ is uniquely determined by the element
$\beta$ and we have $e(\Lambda)=2$. Let $\Lambda_i$ be the
$\mathfrak{o}_F$-lattice sequence $\Lambda\cap V_i$, for
$1\leq i\leq 3$.

Since, $\mathfrak{x}$ is a skew semisimple stratum of
type {\bf (D)}, we have
$$|\{\beta_i:\beta_i\neq 0, 1\leq i\leq 3\}|\leq 1.$$
Without loss of generality, we assume that 
\begin{equation}\label{type_D_inducing_data}
\beta_1\neq 0,
\beta_2\neq 0.
\end{equation}
 Moreover, we assume that 
\begin{equation}\label{type_D_ordering}
\begin{array}{cc}
  -n=\nu_{\Lambda_1}(\beta_1)\leq
  \nu_{\Lambda_2}(\beta_2)\leq 0&\  \text{if}\ \beta_3=0, \\
  -n=\nu_{\Lambda_1}(\beta_1)\leq
  \nu_{\Lambda_2}(\beta_2)\leq \nu_{\Lambda_3}(\beta_3)\leq 0&\
  \text{if}\
  \beta_3\neq 0.
\end{array}\end{equation}
We have $e(\Lambda_i)=2$, for $1\leq i\leq 3$, and 
\begin{align*}
  \Lambda_i(-1)=\Lambda_i(0)=\mathfrak{o}_Fv_i\ &
  \text{if}\ \nu_{F_0}(\lambda_i)=0,\\
  \Lambda_i(0)=\Lambda_i(1)=\mathfrak{o}_{F}v_i\ &\text{if}\
  \nu_{F_0}(\lambda_i)=1.
\end{align*}
As the lattice sequence $\Lambda$ depends on various possibilities on
$V_i$, we will describe these lattice sequences and a Witt-basis,
which gives a splitting for $\Lambda$, as required in each individual
case. However, the group $J^0(\Lambda, \beta)$ and
$H^1(\Lambda, \beta)$ are given by
$$P_0(\Lambda_\beta)P_{(q_2/2)}(\Lambda_{\beta_1})P_{(n/2)}(\Lambda)$$
and 
$$P_1(\Lambda_\beta)P_{(q_2/2)+}(\Lambda_{\beta_1})P_{(n/2)+}(\Lambda)$$
respectively.
\subsection{Criterion for non-emptiness of the set
  \texorpdfstring{$\mathfrak{X}_\beta(F_0)$}{}}
\label{type_D_unram_whit_data}
When $F/F_0$ is unramified, the non-emptiness of the set
$\mathfrak{X}_\beta(F_0)$ depends only on the integers
$\{\nu_F(\beta_i), \nu_{F_0}(\lambda_i): 1\leq i\leq 3\}$. This will
be made precise in the following lemmas. However, when $F/F_0$ is
ramified, one requires more information on
$\{\beta_1, \beta_2, \beta_3\}$ to determine whether the set
$\mathfrak{X}_\beta(F_0)$ empty or not. In the case where $F/F_0$ is
ramified we will not make these conditions explicit, but we will show
that a cuspidal representation in $\Pi_\mathfrak{x}$ is generic if and
only if $\mathfrak{X}_\beta(F_0)$ is non-empty.
\begin{lemma}\label{type_D_unram_uniform_x_beta}
  Let $F/F_0$ be an unramified extension and let $\mathfrak{x}$ be a
  stratum of type {\bf (D)} such that $(W_i, h)$ is isotropic, for
  $1\leq i\leq 3$. The set $\mathfrak{X}_\beta(F_0)$ is non-empty if and
  only if $\nu_F(\beta_1)-\nu_F(\beta_2)$ is an even integer.
\end{lemma}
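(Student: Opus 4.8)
The plan is to reduce non-emptiness of $\mathfrak{X}_\beta(F_0)$ to a congruence between the valuations of the $\beta_i$ by writing an arbitrary isotropic line out explicitly in the basis $(v_1,v_2,v_3)$. As in the proof of Lemma~\ref{type_A_existence_flags}, an $F_0$-rational Borel subgroup of $\sch{G}$ is the stabiliser of an isotropic line $\langle v\rangle\subset V$ together with the flag $\langle v\rangle\subset\langle v\rangle^\perp$, and by \eqref{flag_to_character} the function $\psi_\beta$ is a character of its unipotent radical exactly when $\beta v\in\langle v\rangle^\perp$; thus $\mathfrak{X}_\beta(F_0)\neq\emptyset$ if and only if there is a non-zero $v\in V$ with $h(v,v)=0$ and $h(v,\beta v)=0$. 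Writing $v=x_1v_1+x_2v_2+x_3v_3$ with $x_i\in F$, and using the orthogonality of the $v_i$ together with $\sigma_h(\beta_i)=-\beta_i$, one gets $h(v,v)=\sum_i\lambda_i{\rm Nr}_{F/F_0}(x_i)$ and $h(v,\beta v)=-\sum_i\beta_i\lambda_i{\rm Nr}_{F/F_0}(x_i)$. So, setting $t_i=\lambda_i{\rm Nr}_{F/F_0}(x_i)\in F_0$, I must decide when the linear system $t_1+t_2+t_3=0$, $\beta_1t_1+\beta_2t_2+\beta_3t_3=0$ has a non-zero solution with each $t_i$ lying in the set $S_i:=\{\lambda_i{\rm Nr}_{F/F_0}(x):x\in F\}$.

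The next step is to pin down $S_i$. Since $F/F_0$ is unramified with odd residue characteristic, the norm map $k_F^\times\to k_{F_0}^\times$ is surjective, so $-1\in{\rm Nr}_{F/F_0}(F^\times)$ and ${\rm Nr}_{F/F_0}(F^\times)=\{y\in F_0^\times:\nu_{F_0}(y)\ \text{even}\}$. The hypothesis that $(W_i,h)$ is isotropic for all $i$ says $-\lambda_j\lambda_k\in{\rm Nr}_{F/F_0}(F^\times)$ whenever $\{j,k\}=\{1,2,3\}\setminus\{i\}$, hence $\nu_{F_0}(\lambda_1)\equiv\nu_{F_0}(\lambda_2)\equiv\nu_{F_0}(\lambda_3)\pmod 2$; as $\det(V,h)$ is a norm, $\nu_{F_0}(\lambda_1\lambda_2\lambda_3)$ is even, which forces all $\nu_{F_0}(\lambda_i)$ to be even. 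Therefore $S_i=\{0\}\cup\{y\in F_0^\times:\nu_{F_0}(y)\ \text{even}\}$ for every $i$, and the question becomes: does the system above admit a non-zero solution all of whose entries have even $F_0$-valuation?

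For this I would use that at most one of $\beta_1,\beta_2,\beta_3$ vanishes and the non-zero ones are pairwise distinct — both consequences of condition~(5) in the definition of a skew semisimple stratum in Section~\ref{strata}. This guarantees that the $2\times3$ matrix with rows $(1,1,1)$ and $(\beta_1,\beta_2,\beta_3)$ has rank $2$, that its kernel over $F_0$ is the line spanned by $c:=(\beta_2-\beta_3,\ \beta_3-\beta_1,\ \beta_1-\beta_2)$, and that all three coordinates of $c$ are non-zero. Hence a valid $v$ exists precisely when there is $s\in F_0^\times$ with $\nu_{F_0}(sc_i)$ even for $i=1,2,3$, i.e. precisely when $\nu_F(\beta_2-\beta_3)$, $\nu_F(\beta_3-\beta_1)$, $\nu_F(\beta_1-\beta_2)$ all have the same parity. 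Finally I would evaluate these valuations: combining the ordering \eqref{type_D_ordering} with the property of semisimple strata that $\nu_F(\beta_i)=\nu_F(\beta_j)$ implies $\nu_F(\beta_i-\beta_j)=\nu_F(\beta_i)$, one obtains $\nu_F(\beta_i-\beta_j)=\min\{\nu_F(\beta_i),\nu_F(\beta_j)\}$ for all $i\neq j$ (with the convention $\nu_F(0)=+\infty$), so the multiset $\{\nu_F(\beta_2-\beta_3),\nu_F(\beta_3-\beta_1),\nu_F(\beta_1-\beta_2)\}$ equals $\{\nu_F(\beta_1),\nu_F(\beta_1),\nu_F(\beta_2)\}$. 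These share a common parity if and only if $\nu_F(\beta_1)\equiv\nu_F(\beta_2)\pmod 2$, which is the assertion of the lemma.

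The step I expect to be the main obstacle is the input from semisimplicity, namely the implication $\nu_F(\beta_i)=\nu_F(\beta_j)\Rightarrow\nu_F(\beta_i-\beta_j)=\nu_F(\beta_i)$: heuristically, if the leading coefficients of $\beta_i$ and $\beta_j$ coincided modulo $\mathfrak{p}_F$ then the stratum $[\Lambda_i+\Lambda_j,\max\{q_i,q_j\},0,\beta_i+\beta_j]$ would become equivalent to a scalar, hence simple, stratum, contradicting condition~(5); but making this rigorous with the lattice-sequence conventions requires care, and is the one point where I would either argue carefully or quote a precise statement from \cite{semisimple_char_classical}. A minor, routine point is the book-keeping of the hermitian-form conventions and of the identification of $F_0$-rational Borel subgroups with isotropic flags.
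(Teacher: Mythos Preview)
Your argument is correct and follows essentially the same route as the paper's proof. Both reduce the question to the solvability of the linear system $\sum t_i=0$, $\sum\beta_i t_i=0$ with each $t_i$ a norm from $F$, and then use the semisimple condition on $\mathfrak{x}$ to compute the relevant valuations. The paper normalises $\lambda_i=1$ at the outset (which is exactly the consequence of your parity analysis of the $\lambda_i$) and then solves directly for $a\sigma(a)$ in terms of $c\sigma(c)$, obtaining $a\sigma(a)=-c\sigma(c)\,\beta_2\beta_1^{-1}(1-\beta_3\beta_2^{-1})(1-\beta_2\beta_1^{-1})^{-1}$; the parity condition on $\nu_F(\beta_1)-\nu_F(\beta_2)$ drops out immediately. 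Your presentation via the kernel vector $(\beta_2-\beta_3,\beta_3-\beta_1,\beta_1-\beta_2)$ is an equivalent, slightly more symmetric packaging of the same computation. The input you flag as delicate, namely $(1-\beta_j\beta_i^{-1})\in\mathfrak{o}_F^\times$ when $\nu_F(\beta_i)\le\nu_F(\beta_j)$, is precisely what the paper invokes (citing the ordering \eqref{type_D_ordering} and the semisimple condition) without further comment; your caution there is well placed but the statement is indeed a consequence of condition~(5) in Section~\ref{strata}.
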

\begin{proof}
  Since the extension $F/F_0$ is unramified, we may assume that
  $\lambda_i=1$, for $1\leq i\leq 3$. The set
  $\mathfrak{X}_\beta(F_0)$ is non-empty if and only if there exists a
  non-zero vector $v\in V$ such that
\begin{equation}\label{type_D_unram_uniform_x_beta_eq_2}
  h(v, v)=h(v, \beta v)=0.
  \end{equation}
  Let $v=av_1+bv_2+cv_3$, for some $a,b,c\in F$. From the equation
  \eqref{type_D_unram_uniform_x_beta_eq_2}, we get that
  \begin{equation}\label{type_D_unram_uniform_x_beta_eq_1}
    a\sigma(a)+b\sigma(b)+c\sigma(c)=0\ \text{and}\ 
    \beta_1a\sigma(a)+\beta_2b\sigma(b)+\beta_3c\sigma(c)=0.
  \end{equation}
  Using the assumption \eqref{type_D_ordering} on the skew semisimple
  stratum $\mathfrak{x}$ we get that
  $(1-\beta_2\beta_1^{-1})\in \mathfrak{o}_F^\times$ and
  $(1-\beta_3\beta_2^{-1})\in \mathfrak{o}_F^\times$.  The set of
  equations \eqref{type_D_unram_uniform_x_beta_eq_1} imply that
$$a\sigma(a)=-c\sigma(c)\beta_2\beta_1^{-1}
(1-\beta_3\beta_2^{-1})(1-\beta_2\beta_1^{-1})^{-1}.$$ Hence, we get
that $\nu_F(\beta_2)-\nu_F(\beta_1)$ is an even integer. Conversely, if
$\nu_F(\beta_2)-\nu_F(\beta_1)$ is even, we find can find a non-zero
tuple $(a, b, c)\in F^3$ satisfying the equalities in equation
\eqref{type_D_unram_uniform_x_beta_eq_1}; therefore, the set
$\mathfrak{X}_\beta(F_0)$ is non-empty.
\end{proof}
\begin{lemma}\label{type_D_unram_non_uni_x_beta}
  Let $F/F_0$ be an unramified extension and let $\mathfrak{x}$ be a
  stratum of type {\bf (D)} such that $(W_i, h)$ is anisotropic,
  for some $i$, $1\leq i\leq 3$. The set $\mathfrak{X}_\beta(F_0)$ is
  non-empty if and only if $\nu_{F}(\lambda_2)=\nu_{F}(\lambda_3)=1$
  and $\nu_{F}(\beta_1)-\nu_{F}(\beta_2)$ is an odd integer.
\end{lemma}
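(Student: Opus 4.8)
The plan is to translate non-emptiness of $\mathfrak{X}_\beta(F_0)$ into a congruence between valuations via the vector-level criterion from \S\ref{possible_whit_models}. Since the $F_0$-rational Borel subgroups of $\sch{G}$ are the stabilisers of the isotropic lines $\langle v\rangle\subset V$, and \eqref{flag_to_character} says $\psi_\beta$ is a character on the unipotent radical of such a Borel exactly when $\beta v\in\langle v\rangle^\perp$, the set $\mathfrak{X}_\beta(F_0)$ is non-empty if and only if there is a non-zero $v\in V$ with $h(v,v)=0=h(v,\beta v)$. First I would write $v=av_1+bv_2+cv_3$ and set $A=\lambda_1 a\sigma(a)$, $B=\lambda_2 b\sigma(b)$, $C=\lambda_3 c\sigma(c)$; using $\sigma_h(\beta_i)=-\beta_i$ and $\lambda_i\in F_0^\times$ the two equations become $A+B+C=0$ and $\beta_1A+\beta_2B+\beta_3C=0$. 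As $\mathfrak{x}$ is of type {\bf (D)} the scalars $\beta_1,\beta_2,\beta_3$ are pairwise distinct, so the solution space in $(A,B,C)$ is a line, spanned by $(\gamma_1,\gamma_2,\gamma_3):=(\beta_2-\beta_3,\,\beta_3-\beta_1,\,\beta_1-\beta_2)$, and one checks that in a non-zero solution none of $A,B,C$ (hence none of $a,b,c$) vanishes — if, say, $a=0$ then $\beta_2\neq\beta_3$ forces $b=c=0$. Consequently $\mathfrak{X}_\beta(F_0)\neq\emptyset$ iff there is $t\in F_0^\times$ with $t\gamma_i\in\lambda_i\,{\rm Nr}_{F/F_0}(F^\times)$ for each $i$, i.e. iff the three classes of $\gamma_i\lambda_i^{-1}$ agree in $F_0^\times/{\rm Nr}_{F/F_0}(F^\times)$.

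Next I would specialise to the unramified case, where ${\rm Nr}_{F/F_0}(F^\times)$ is the set of elements of even $F_0$-valuation and $\nu_F=\nu_{F_0}$ on $F_0$; the criterion then says that $\nu_F(\gamma_1)-\nu_F(\lambda_1)$, $\nu_F(\gamma_2)-\nu_F(\lambda_2)$, $\nu_F(\gamma_3)-\nu_F(\lambda_3)$ are pairwise congruent mod $2$. Two inputs finish the job. On the one hand, because $(V,h)$ has trivial determinant and each $\lambda_i$ is (by our normalisation) a unit or a uniformiser of $F_0$, the hypothesis that some $(W_i,h)$ is anisotropic forces $(\nu_F(\lambda_1),\nu_F(\lambda_2),\nu_F(\lambda_3))$ to be a permutation of $(1,1,0)$. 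On the other hand, the ordering \eqref{type_D_ordering} (together with the semisimplicity of $\mathfrak{x}$, which I use to see that the differences $\beta_i-\beta_j$ attain valuation $\min\{\nu_F(\beta_i),\nu_F(\beta_j)\}$, and allowing the case $\beta_3=0$) gives $\nu_F(\gamma_2)\equiv\nu_F(\gamma_3)\equiv\nu_F(\beta_1)$ and $\nu_F(\gamma_1)\equiv\nu_F(\beta_2)\pmod 2$. Feeding this in: the congruence of the classes at $i=2$ and $i=3$ is $\nu_F(\lambda_2)\equiv\nu_F(\lambda_3)\pmod 2$, which for a $(1,1,0)$-pattern can only hold with $\nu_F(\lambda_1)=0$ and $\nu_F(\lambda_2)=\nu_F(\lambda_3)=1$; and granting that, the congruence at $i=1$ and $i=2$ reads $\nu_F(\beta_2)\equiv\nu_F(\beta_1)-1\pmod2$, i.e. $\nu_F(\beta_1)-\nu_F(\beta_2)$ is odd. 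This is precisely the claimed equivalence; for the ``if'' direction one runs the computation in reverse and picks $a,b,c\in F^\times$ realising the prescribed norm classes.

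The hard part will be the valuation bookkeeping that underlies ``$\nu_F(\gamma_i)\equiv\nu_F(\beta_j)\pmod 2$''. The ordering \eqref{type_D_ordering} only gives $\nu_F(\beta_1)\le\nu_F(\beta_2)\le\nu_F(\beta_3)$ and the semisimplicity of $\mathfrak{x}$ only forces $\nu_F(\beta_i-\beta_j)<0$, so when two consecutive $\beta$'s share a valuation one cannot read off $\nu_F(\beta_i-\beta_j)$ directly; here I would mimic the type {\bf (A)} and type {\bf (B)} arguments and first twist the cuspidal representation by a character, replacing $\beta$ by $\beta+c\,\id_V$ for a skew $c\in F$ — which leaves $\mathfrak{X}_\beta(F_0)$ and all the $\gamma_i$ unchanged (since $h(v,cv)=-c\,h(v,v)$) — and, after a relabelling of the $V_i$ compatible with \eqref{type_D_ordering} (which only interchanges $\lambda_2$ and $\lambda_3$, preserving the symmetric condition $\nu_F(\lambda_2)=\nu_F(\lambda_3)=1$), reduce to a representative in which $\beta_3=0$ and $\nu_F(\beta_1)<\nu_F(\beta_2)$, where every difference is in generic position. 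The subcase $\beta_3=0$ should also be isolated at the outset; it is the cleanest one and serves as the target of the reduction.
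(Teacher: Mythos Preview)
Your argument is essentially the paper's argument, just organised around the explicit spanning vector $(\gamma_1,\gamma_2,\gamma_3)=(\beta_2-\beta_3,\beta_3-\beta_1,\beta_1-\beta_2)$ of the solution line, where the paper instead row-reduces the two equations and reads off the relation
\[
\lambda_1 a\sigma(a)=\lambda_3 c\sigma(c)\,\beta_2\beta_1^{-1}\,\dfrac{1-\beta_3\beta_2^{-1}}{1-\beta_2\beta_1^{-1}}.
\]
Both routes land on the same parity condition.

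The one place your proposal wobbles is the last paragraph. You first (correctly) say that semisimplicity forces $\nu_F(\beta_i-\beta_j)=\min\{\nu_F(\beta_i),\nu_F(\beta_j)\}$, and then immediately retract this to ``semisimplicity only forces $\nu_F(\beta_i-\beta_j)<0$'' and propose a twisting-and-relabelling reduction. The retraction is wrong and the reduction is both unnecessary and not always achievable. Condition~(5) in the definition of a semisimple stratum (Subsection~\ref{strata}) is exactly what rules out cancellation: as the paper records, it gives $(1-\beta_i\beta_j^{-1})\in\mathfrak{o}_F^\times$ for $j<i$, hence $\nu_F(\gamma_3)=\nu_F(\gamma_2)=\nu_F(\beta_1)$ and $\nu_F(\gamma_1)=\nu_F(\beta_2)$ on the nose, not merely mod~$2$. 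With that in hand your second paragraph already finishes the proof; you can delete the third. (For the record, the proposed reduction to $\nu_F(\beta_1)<\nu_F(\beta_2)$ fails when all three $\nu_F(\beta_i)$ coincide: twisting by any skew scalar and relabelling still leaves the two surviving valuations equal.)
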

\begin{proof}
  Recall that the determinant of $(V, h)$ is the trivial class in
  $F_0^\times/{\rm Nr}_{F/F_0}(F^\times)$.  With the hypothesis on the
  spaces $W_i$, there exists an unique $i\in\{1,2,3\}$ such that
  $\nu_{F}(\lambda_i)=0$. The set $\mathfrak{X}_\beta(F_0)$ is
  non-empty if and only if the following equations
  \begin{equation}\label{type_D_unram_non_uni_x_beta_eq_1}
    \lambda_1a\sigma(a)+\lambda_2b\sigma(b)+
    \lambda_3c\sigma(c)=0
    \ \text{and}\ \beta_1\lambda_1a\sigma(a)+\beta_2
\lambda_2 b\sigma(b)+ \beta_3 \lambda_3 c\sigma(c)=0
\end{equation}have a non-trivial
simultaneous solution. Recall that $(1-\beta_2\beta_1^{-1})\in
\mathfrak{o}_F^\times$ and $(1-\beta_3\beta_2^{-1})\in
\mathfrak{o}_F^\times$.
Assume that there exists 
a non-trivial simultaneous solution to the equations in
\eqref{type_D_unram_non_uni_x_beta_eq_1}. Then, we get that
$$(1-\beta_2\beta_1^{-1})\lambda_2b\sigma(b)+
(1-\beta_3\beta_1^{-1})\lambda_3c\sigma(c)=0.$$ This implies that
$\nu_F(\lambda_2)=\nu_F(\lambda_3)$. From the assumption on the spaces
$W_i$, for $1\leq i\leq 3$, we get that
$\nu_F(\lambda_2)=\nu_F(\lambda_3)=1$ and $\nu_F(\lambda_1)=0$. From
the equation \eqref{type_D_unram_non_uni_x_beta_eq_1} we have
$$\lambda_1a\sigma(a)=
\lambda_3c\sigma(c)\beta_2\beta_1^{-1}
(1-\beta_3\beta_2^{-1})/(1-\beta_2\beta_1^{-1}).$$ Hence,
$\mathfrak{X}_\beta(F_0)$ is non-empty if and only if $(W_1, h)$ is
isotropic and $\nu_F(\beta_2)-\nu_F(\beta_1)$ is an odd integer.
\end{proof}
\subsection{Estimating the valuation of
  \texorpdfstring{$h(gv, \beta gv)$}{}}
As observed in the previous sections, our approach to show
non-genericity is by showing an appropriate inequality on the function
sending $g\in P(\Lambda)$ to $\nu_F(h(gv, gv))$, where $v$ is a well
chosen isotropic vector with respect to $P(\Lambda)$. Hence, we need
some technical lemmas to understand the growth of this function.
\begin{lemma}\label{type_D_unram_non_gen_val_sep}
  Let $F/F_0$ be an unramified extension and let $\mathfrak{x}$ be a
  skew semisimple stratum of type {\bf (D)} such that
  $\mathfrak{X}_\beta(F_0)$ is the empty set. Let $v$ be an isotropic
  vector in $V$, and let $g\in G$. Assume that
  $gv=av_1+bv_2+cv_3$ for some $a, b, c\in F$.  If $a\neq 0$ and
  $\beta_2\lambda_2b\sigma(b)+ \beta_3\lambda_3c\sigma(c)\neq 0$, then
  we have
$$\nu_F(h(gv, \beta gv))=\min\{\nu_F(\beta_1\lambda_1a\sigma(a)), 
\nu_F(\beta_2\lambda_2b\sigma(b)+ \beta_3\lambda_3c\sigma(c))\}.$$
\end{lemma}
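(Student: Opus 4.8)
We want to show: for $F/F_0$ unramified, $\mathfrak{x}$ of type (D) with $\mathfrak{X}_\beta(F_0)=\emptyset$, $v$ isotropic, $g\in G$, and $gv = av_1+bv_2+cv_3$ with $a\neq 0$ and $\beta_2\lambda_2 b\sigma(b)+\beta_3\lambda_3 c\sigma(c)\neq 0$, we have
$$\nu_F(h(gv,\beta gv)) = \min\{\nu_F(\beta_1\lambda_1 a\sigma(a)), \nu_F(\beta_2\lambda_2 b\sigma(b)+\beta_3\lambda_3 c\sigma(c))\}.$$

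\section*{Proof proposal}

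The plan is to expand $h(gv,\beta gv)$ via the orthogonality of $v_1,v_2,v_3$ and then rule out cancellation at leading order between the two groups of terms. Since $\beta=\beta_1+\beta_2+\beta_3$ is block diagonal for $V=V_1\perp V_2\perp V_3$, we have $\beta gv=\beta_1 av_1+\beta_2 bv_2+\beta_3 cv_3$, and the off-diagonal contributions to $h$ vanish, so that $h(gv,\beta gv)$ equals a unit scalar times $X+Y$, where $X=\beta_1\lambda_1 a\sigma(a)$ and $Y=\beta_2\lambda_2 b\sigma(b)+\beta_3\lambda_3 c\sigma(c)$ (the scalar coming from $\sigma_h(\beta_i)=-\beta_i$ and $\lambda_i\in F_0$, which does not affect valuations). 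By the ultrametric inequality $\nu_F(X+Y)\geq\min\{\nu_F(X),\nu_F(Y)\}$, with equality whenever $\nu_F(X)\neq\nu_F(Y)$; so it suffices to treat the case $\nu_F(X)=\nu_F(Y)=:k$ and derive a contradiction from $\nu_F(X+Y)>k$. Observe first that $X+Y\neq 0$ is automatic: otherwise $gv$ would be a non-zero isotropic vector with $h(gv,\beta gv)=0$, so the Borel subgroup fixing $\langle gv\rangle$ would lie in $\mathfrak{X}_\beta(F_0)$, contrary to hypothesis. Note also that $a\neq 0$ and $Y\neq 0$ force $X\neq 0$ and $k<\infty$.

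I would first dispose of the degenerate case $\beta_3=0$ by a short parity count. Then $Y=\beta_2\lambda_2 b\sigma(b)$, so $\nu_F(X)=\nu_F(Y)$ reads $\nu_F(\beta_1)+\nu_F(\lambda_1)+2\nu_F(a)=\nu_F(\beta_2)+\nu_F(\lambda_2)+2\nu_F(b)$, using that $a\sigma(a)$ and $b\sigma(b)$ are norms from the unramified extension $F/F_0$ and hence have even valuation. Since $\det(V,h)$ is trivial in $F_0^\times/{\rm Nr}_{F/F_0}(F^\times)$, either all $\lambda_i$ are units or exactly one is, so $\nu_F(\lambda_1)-\nu_F(\lambda_2)$ has a prescribed parity in each configuration; in every case the displayed identity pins down the parity of $\nu_F(\beta_1)-\nu_F(\beta_2)$, and this parity is precisely the one excluded by the characterisations of emptiness of $\mathfrak{X}_\beta(F_0)$ in Lemmas \ref{type_D_unram_uniform_x_beta} and \ref{type_D_unram_non_uni_x_beta}. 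Thus $\nu_F(X)\neq\nu_F(Y)$ when $\beta_3=0$, and the claim follows.

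For the remaining case $\beta_3\neq 0$, with $\nu_F(X)=\nu_F(Y)=k$ and $\nu_F(X+Y)=k'>k$, the plan is to manufacture a point of $\mathfrak{X}_\beta(F_0)$ by reduction modulo $\mathfrak{p}_F$ and Hensel lifting, in the spirit of the integral-model arguments of Section \ref{type_B_sec} (cf.\ Lemma \ref{type_B_aniso_q_2>q_1_exis_whit_mod}). Rescale $gv$ by the power of $\varpi$ that makes one of its coordinates a unit, and rescale the defining equation $\beta_1\lambda_1 x\sigma(x)+\beta_2\lambda_2 y\sigma(y)+\beta_3\lambda_3 z\sigma(z)=0$ of $\mathfrak{X}_\beta$ by a suitable power of $\varpi$ so that (using the ordering $-n=\nu_{\Lambda_1}(\beta_1)\leq\nu_{\Lambda_2}(\beta_2)\leq\nu_{\Lambda_3}(\beta_3)$ from \eqref{type_D_ordering}) its coefficients become integral with a unit among them. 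The inequality $k'>k\geq\nu_F(\beta_1)+2\mu$, where $\mu$ is the least valuation among $a,b,c$, then shows that both the (integral) isotropy equation and the rescaled $\beta$-equation vanish modulo $\mathfrak{p}_F$ at the reduced coordinate vector, which is non-zero because a coordinate was made a unit and the residual norm form $k_F\to k_{F_0}$ is anisotropic. Hence one obtains a $k_{F_0}$-point of the special fibre of the associated integral model of $\mathfrak{X}_\beta$; applying Hensel's lemma lifts it to an $\mathfrak{o}_{F_0}$-point, contradicting $\mathfrak{X}_\beta(F_0)=\emptyset$. The main obstacle is the verification that this integral model is smooth at the produced residual point: one must exploit the non-degeneracy encoded in $X\neq 0$, $Y\neq 0$ and $k'>k$, and run through the sub-cases according to which of the $\lambda_i$ (equivalently, which residual coefficients of the $\beta$-equation) are units, much as in the smoothness checks of Section \ref{type_B_sec}.
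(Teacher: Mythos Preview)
Your reduction to showing that $X:=\beta_1\lambda_1 a\sigma(a)$ and $Y:=\beta_2\lambda_2 b\sigma(b)+\beta_3\lambda_3 c\sigma(c)$ cannot cancel at leading order is correct and is also the paper's starting point. But both halves of your argument have real gaps.

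\emph{The $\beta_3=0$ parity argument is not enough.} When some $(W_i,h)$ is anisotropic and $\nu_F(\lambda_2)\neq\nu_F(\lambda_3)$, Lemma~\ref{type_D_unram_non_uni_x_beta} says $\mathfrak{X}_\beta(F_0)=\emptyset$ \emph{regardless} of the parity of $\nu_F(\beta_1)-\nu_F(\beta_2)$. For example, take $\nu_F(\lambda_1)=\nu_F(\lambda_3)=1$, $\nu_F(\lambda_2)=0$; your identity $\nu_F(X)=\nu_F(Y)$ forces $\nu_F(\beta_1)-\nu_F(\beta_2)$ odd, but that parity is \emph{not} excluded by emptiness, so no contradiction arises. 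The paper handles this configuration not by parity but by using the isotropy of $gv$ to force $a,c\in\mathfrak{o}_F^\times$ and $b\in\mathfrak{p}_F$, then computing $\nu_F(X+Y)=\nu_F(\beta_1)+1$ directly and checking it equals $\nu_F(X)\leq\nu_F(Y)$ via the ordering $\nu_F(\beta_1)\leq\nu_F(\beta_2)\leq\nu_F(\beta_3)$.

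\emph{The Hensel strategy is not completed, and the hard part is exactly the case analysis you defer.} Granting that a would-be cancellation $\nu_F(X+Y)>k$ produces a $k_{F_0}$-point of some integral model of $\mathfrak{X}_\beta$, liftability requires smoothness at that residual point, and this depends on which of the $\lambda_i$ and which of the rescaled $\beta_i\lambda_i$ are units. Verifying that in each configuration is at least as much work as the paper's direct method, and there is no shortcut suggested by your sketch. The paper bypasses Hensel entirely: it partitions according to whether all $(W_i,h)$ are isotropic (then $\nu_F(\beta_1)-\nu_F(\beta_2)$ is odd by Lemma~\ref{type_D_unram_uniform_x_beta}, and parity forces $\nu_F(X)\neq\nu_F(Y)$ after pinning down which of $a,b,c$ are units via isotropy), or some $(W_i,h)$ is anisotropic (split further into $\nu_F(\lambda_2)\neq\nu_F(\lambda_3)$, handled by direct computation as above, and $\nu_F(\lambda_2)=\nu_F(\lambda_3)=1$, where again parity of $\nu_F(\beta_2\beta_1^{-1}\varpi)$ separates the two valuations). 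Throughout, the semisimplicity condition is used via $1-\beta_j\beta_i^{-1}\in\mathfrak{o}_F^\times$ for $i<j$, which keeps subleading terms from interfering.
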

\begin{remark}
  Since the set $\mathfrak{X}_\beta(F_0)$ is the empty-set, we get
  that $h(v, \beta v)\neq 0$, for all isotropic vectors $v\in V$.
  \end{remark}
  \begin{proof}[Proof of Lemma \ref{type_D_unram_non_gen_val_sep}]
  Before we begin the proof, it is useful to recall that the
  determinant of $(V, h)$ is the trivial class in
  $F_0^\times/{\rm Nr}_{F/F_0}(F^\times)$.  Since $v$ is an isotropic
  vector, we get that
  \begin{equation}\label{type_D_unram_non_gen_val_sep_eqn_1}
    \lambda_1a\sigma(a)+\lambda_2b\sigma(b)+
    \lambda_3c\sigma(c)=0.
  \end{equation}
  Observe  that rescaling the constants
$a, b, c$ does not effect the lemma. Hence, rescaling $a$, $b$ and
$c$, if necessary, we assume that $a, b, c\in \mathfrak{o}_F$ and the
$\mathfrak{o}_F$-ideal $(a, b, c)$ is equal to $\mathfrak{o}_F$.

First consider the case where $(W_i, h)$ is isotropic, for
$1\leq i\leq 3$.  In this case we have $\nu_{F_0}(\lambda_i)=0$, for
$1\leq i\leq 3$ and $\nu_{F}(\beta_1)-\nu_{F}(\beta_2)$ is an odd
integer. 
If $\nu_F(a)=0$, we get that
\begin{align*}&\nu_F(\beta_1\lambda_1a\sigma(a)+
  \beta_2\lambda_2b\sigma(b)+\beta_3\lambda_3c\sigma(c))\\
  =&\nu_F(\beta_1)+\nu_F(\lambda_1a\sigma(a)+
  \beta_2\beta_1^{-1}\lambda_2b\sigma(b)+\beta_3\beta_1^{-1}
  \lambda_3c\sigma(c))&\\
  =&\nu_F(\beta_1)\leq
  \nu_F(\beta_2\lambda_2b\sigma(b)+\beta_3\lambda_3c\sigma(c)).
\end{align*} 
Thus we prove the lemma in the case where $\nu_F(a)=0$. Consider the
case where $a\in \mathfrak{p}_F$; we necessarily have
$b, c\in \mathfrak{o}_F^\times$.  We have
$$\nu_F(\beta_2 \beta_1^{-1}\lambda_2b\sigma(b)+
\beta_3\beta_1^{-1}\lambda_3c\sigma(c)) =\nu_F(\beta_2\beta_1^{-1})+
\nu_F(\lambda_2b\sigma(b)+\lambda_3\beta_3\beta_2^{-1}c\sigma(c)).$$
Since $\mathfrak{x}$ is a skew semisimple stratum, we get that
$1-\beta_3\beta_2^{-1}\in \mathfrak{o}_F^\times$. This implies that
$\nu_F(\lambda_2b\sigma(b)+\lambda_3\beta_3\beta_2^{-1}c\sigma(c))=0$.
Observe that $\nu_F(a\sigma(a))$ is an even integer.  Therefore, we
conclude that
$$\nu_F(\beta_1\lambda_1a\sigma(a))\neq 
\nu_F(\beta_2\lambda_2b\sigma(b)+ \beta_3\lambda_3c\sigma(c)),$$ and
this proves the lemma in the case where $(W_i, h)$ is isotropic, for
$1\leq i\leq 3$.

Assume that $(W_i, h)$ is anisotropic for some $1\leq i\leq 3$. Using
Lemma \ref{type_D_unram_non_uni_x_beta}, the set
$\mathfrak{X}_\beta(F_0)$ is empty in either of the following cases:
case {\bf (I)} where $\nu_F(\lambda_2)\neq \nu_F(\lambda_3)$, case
{\bf (II)} where $\nu_F(\lambda_2)=\nu_F(\lambda_3)$ and
$\nu_F(\beta_1)-\nu_F(\beta_2)$ is an even integer. We first assume
that $\nu_{F_0}(\lambda_2)=0$, and we get that
$\nu_{F_0}(\lambda_1)=\nu_{F_0}(\lambda_3)=1$. We may also assume that
$\lambda_2=1$ and $\lambda_1=\lambda_3=\varpi$. Using equation
\eqref{type_D_unram_non_gen_val_sep_eqn_1}, we get that
$b\in \mathfrak{p}_F$, and set $b=\varpi b'$, for some
$b'\in \mathfrak{o}_F$.  Since we have
$$\varpi a\sigma(a)+\varpi^2b'\sigma(b')+\varpi c\sigma(c)=0,$$
we get that $a, c\in \mathfrak{o}_F^\times$.  We now have
$$\nu_F(h(ge_1, ge_1))=\nu_F(\beta_1)+
\nu_F((1-\beta_2\beta_1^{-1})\varpi^2 b'\sigma(b')+
(1-\beta_3\beta_1^{-1})\varpi c\sigma(c))$$
and note that 
$$\nu_F((1-\beta_2\beta_1^{-1})\varpi^2 b'\sigma(b')+
(1-\beta_3\beta_1^{-1})\varpi c\sigma(c))=1.$$ Hence, we get that
$\nu_F(h(ge_1, ge_1))=\nu_F(\beta_1)+1$.  Note that
$\nu_F(\beta_2\varpi^2 b'\sigma(b')+ \beta_3\varpi c\sigma(c))$ is
equal to $\nu_F(\beta_2)+1$. Hence, the lemma follows in case {\bf
  (I)}, from the observation that
$$\nu_F(\beta_1\lambda_1a\sigma(a))= 
\nu_F(\beta_1\varpi a\sigma(a))=
\nu_F(\beta_1)+1\leq \nu_F(\beta_2)+1.$$
The case where $\nu_F(\lambda_2)=1$--in which case $\nu_F(\lambda_1)=
\nu_F(\lambda_2)=1$ and $\nu_F(\lambda_3)=0$--is entirely similar. 

Assume that we are in case {\bf (II)}. In this case we may assume that
$\lambda_2=\lambda_3=\varpi$ and $\lambda_1=1$. Then the equation
\eqref{type_D_unram_non_gen_val_sep_eqn_1} implies that
$b, c\in \mathfrak{o}_F^\times$ and $a\in \mathfrak{p}_F$. We now have
$$h(gv, \beta gv)=
\beta_1\{a\sigma(a)+\beta_2\beta_1^{-1} \varpi
(b\sigma(b)+\beta_3\beta_2^{-1}c\sigma(c))\}.$$ Since $\mathfrak{x}$
is a skew semisimple strata and $b, c\in \mathfrak{o}_F^\times$, we
get that $\nu_F((b\sigma(b)+\beta_3\beta_2^{-1}c\sigma(c)))=0$. The
lemma now follows from the observation that the integer
$\nu_{F}(\lambda_1a\bar{a})$ is even and the integer
$\nu_F(\beta_2\beta_1^{-1}
\varpi_{F_0}(b\sigma(b)+\beta_3\beta_2^{-1}c\sigma(c)))$ is always odd.
\end{proof}
\begin{lemma}\label{type_D_ram_non_gen_val_sep}
  Let $F/F_0$ be a ramified extension and let $\mathfrak{x}$ be a
  stratum of type {\bf (D)} such that $\mathfrak{X}_\beta(F_0)$ is the
  empty set. Let $v$ be any isotropic vector in $V$ and let $g\in G$.
  Assume that $gv=av_1+bv_2+cv_3$ for some $a, b, c\in F$. If
  $a\neq 0$ and
  $\beta_2\lambda_2b\sigma(b)+ \beta_3\lambda_3c\sigma(c)\neq 0$, then
  we have
$$\nu_F(h(gv, \beta gv))=\min\{\nu_F(\beta_1\lambda_1a\sigma(a)), 
\nu_F(\beta_2\lambda_2b\sigma(b)+ \beta_3\lambda_3c\sigma(c))\}.$$
\end{lemma}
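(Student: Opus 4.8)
\medskip
\noindent\emph{Proof plan.} Since $\beta$ acts on each $V_i$ by the scalar $\beta_i$ and the $V_i$ are mutually orthogonal, sesquilinearity of $h$ together with $\sigma(\beta_i)=-\beta_i$ shows that $h(gv,\beta gv)$ equals $\beta_1\lambda_1 a\sigma(a)+\beta_2\lambda_2 b\sigma(b)+\beta_3\lambda_3 c\sigma(c)$ up to a unit factor. Write $X=\beta_1\lambda_1 a\sigma(a)$ and $Y=\beta_2\lambda_2 b\sigma(b)+\beta_3\lambda_3 c\sigma(c)$. The plan is to prove $\nu_F(X+Y)=\min\{\nu_F(X),\nu_F(Y)\}$; by hypothesis $Y\neq 0$, and, since $\mathfrak{X}_\beta(F_0)$ is empty and $gv$ is isotropic, $X+Y\neq 0$ as well. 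Rescaling $(a,b,c)$ by a common factor multiplies $X$, $Y$ and $X+Y$ by the same element and changes nothing, so I may assume $a,b,c\in\mathfrak{o}_F$ with $(a,b,c)=\mathfrak{o}_F$. The desired equality is automatic unless $\nu_F(X)=\nu_F(Y)$ and the leading terms of $X$ and $Y$ cancel, so the whole task reduces to excluding that borderline situation.

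The bulk of the argument will follow the scheme of Lemma~\ref{type_D_unram_non_gen_val_sep}, read through the norm form $N={\rm Nr}_{F/F_0}$. Writing $\beta_i=\delta\gamma_i$ with $\gamma_i\in F_0$, one has $\nu_F(\beta_i)=1+2\nu_{F_0}(\gamma_i)$ odd and $\nu_F(\lambda_i)=0$ since $\nu_{F_0}(\lambda_i)=0$. As $F/F_0$ is ramified, $\sigma$ acts trivially on $k_F=k_{F_0}$, so reducing the isotropy relation $\lambda_1 a\sigma(a)+\lambda_2 b\sigma(b)+\lambda_3 c\sigma(c)=0$ modulo $\mathfrak{p}_F$ gives $\bar\lambda_1\bar a^2+\bar\lambda_2\bar b^2+\bar\lambda_3\bar c^2=0$ in $k_{F_0}$, whence at least two of $\bar a,\bar b,\bar c$ are nonzero (recall also $a\neq 0$ and, by hypothesis, $(b,c)\neq(0,0)$). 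I would then go through the possibilities for which of $\nu_F(a),\nu_F(b),\nu_F(c)$ vanish, in each case computing $\nu_F(X)=\nu_F(\beta_1)+2\nu_F(a)$ and estimating $\nu_F(Y)$: the semisimplicity of $\mathfrak{x}$ (condition (5) of Paragraph~\ref{strata}) makes $1-\beta_2\beta_1^{-1}$ and $1-\beta_3\beta_2^{-1}$ units, which combined with the ordering $\nu_{\Lambda_1}(\beta_1)\le\nu_{\Lambda_2}(\beta_2)\le\nu_{\Lambda_3}(\beta_3)$ lets one factor $Y$ and pin down $\nu_F(Y)$. In every case except the borderline one this yields either $\nu_F(X)\neq\nu_F(Y)$, or $\nu_F(X)=\nu_F(Y)$ with the reductions of $X$ and $Y$ (after dividing by a uniformizer to the appropriate power) manifestly not opposite, and the lemma follows.

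The remaining possibility is $\nu_F(X)=\nu_F(Y)=t$ with $\nu_F(X+Y)>t$. Here the primitive isotropic vector $(a,b,c)$ is an \emph{approximate} common zero of the two non-degenerate ternary hermitian forms $Q_1=\lambda_1N\perp\lambda_2N\perp\lambda_3N$ and $Q_2=\gamma_1\lambda_1N\perp\gamma_2\lambda_2N\perp\gamma_3\lambda_3N$ over $F_0$, in the sense that $Q_1(a,b,c)=0$ exactly while $\nu_{F_0}(Q_2(a,b,c))$ exceeds the value forced by the valuations of $a,b,c$. The plan is to apply a multivariate Hensel's lemma to the pair $(Q_1,Q_2)$, perturbing $(a,b,c)$ to a genuine $F_0$-point of the intersection $\{Q_1=0\}\cap\{Q_2=0\}\subset\mathbb{P}^5_{F_0}$, that is, to a point of $\mathfrak{X}_\beta(F_0)$ — contradicting its emptiness. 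The hard part will be precisely this last step: unlike the unramified case of Lemma~\ref{type_D_unram_non_gen_val_sep}, where the emptiness of $\mathfrak{X}_\beta(F_0)$ is governed by a clean parity condition on $\nu_F(\beta_1)-\nu_F(\beta_2)$, here one must work directly from the equations, checking that the borderline case is exactly where emptiness of $\mathfrak{X}_\beta(F_0)$ has to be invoked and that in it the Jacobian of $(Q_1,Q_2)$ at $(a,b,c)$ is non-degenerate enough, relative to the size of the error, for Newton's method to converge.
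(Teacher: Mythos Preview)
Your setup and overall reduction are the same as the paper's: rescale so that $(a,b,c)$ is primitive, write $X=\beta_1\lambda_1a\sigma(a)$ and $Y=\beta_2\lambda_2b\sigma(b)+\beta_3\lambda_3c\sigma(c)$, and reduce to excluding a leading-term cancellation when $\nu_F(X)=\nu_F(Y)$. Where you diverge is in how the cancellation is ruled out. The paper does \emph{not} try to lift an approximate zero via Hensel/Newton. Instead it first translates the hypothesis ``$\mathfrak{X}_\beta(F_0)=\emptyset$'' into an explicit disjunction of two norm conditions on the $\beta_i,\lambda_i$ (Case~(1): $-(1-\beta_2\beta_1^{-1})(1-\beta_3\beta_1^{-1})^{-1}\lambda_2\lambda_3^{-1}\notin{\rm Nr}_{F/F_0}(F^\times)$; Case~(2): that quantity \emph{is} a norm but $\lambda_3\lambda_1^{-1}\beta_2\beta_1^{-1}(1-\beta_3\beta_2^{-1})(1-\beta_2\beta_1^{-1})^{-1}$ is not), and then uses those conditions purely algebraically---together with the isotropy relation and the unit property of $1-\beta_i\beta_j^{-1}$---to show that the putative one-step drop in valuation would force one of the forbidden elements to be a norm. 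No geometry, no smoothness check.

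Your Hensel strategy is plausible, but the part you flag as ``the hard part'' really is where the content lies, and it is not automatic. In the delicate sub-case $a\in\mathfrak{p}_F$, $b,c\in\mathfrak{o}_F^\times$ with $2\nu_F(a)=\nu_F(\beta_2)-\nu_F(\beta_1)$, the relevant $2\times 2$ minors of the Jacobian of $(Q_1,Q_2)$ in the $F_0$-coordinates need to be controlled relative to the size of the error $\nu_F(X+Y)-\nu_F(X)$, which may be just $1$; the partials of $Q_1$ in the $a$-direction are themselves in $\mathfrak{p}_F$, so you must work in the $(b,c)$-directions and verify non-degeneracy there after reducing modulo $\mathfrak{p}_{F_0}$. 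This can be done, but it amounts to re-deriving the paper's two norm conditions in disguise (the mod-$\mathfrak{p}$ Jacobian drops rank exactly when the relevant reduced binary form becomes isotropic). So your route and the paper's converge at the same arithmetic obstruction; the paper's advantage is that it names that obstruction at the outset and argues directly from it, avoiding the integral-model and smoothness bookkeeping.
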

\begin{proof}
Since $v$ is an isotropic vector, we get that 
\begin{equation}\label{type_D_ram_non_gen_val_sep_eq_2}
  \lambda_1a\sigma(a)+\lambda_2b\sigma(b)+\lambda_3c\sigma(c)=0.
\end{equation}
Rescaling the constants $a$, $b$, and $c$, if necessary, we assume
that $a, b, c\in \mathfrak{o}_F$ and the $\mathfrak{o}_F$-ideal $(a,
b, c)$ is equal to $\mathfrak{o}_F$.

Since $\mathfrak{x}$ is a skew semisimple stratum, using the
assumptions in \eqref{type_D_ordering}, we get that
$(1-\beta_i\beta_j^{-1})\in \mathfrak{o}_F^\times$, for
$1\leq j<i\leq 3$.  The set $\mathfrak{X}_\beta(F_0)$ is empty in one
of the two cases
\begin{enumerate}
  \item The case where
$$-(1-\beta_2\beta_1^{-1})(1-\beta_3\beta_1^{-1})^{-1}
\lambda_2\lambda_3^{-1}\not\in {\rm Nr}_{F/F_0}(F^\times)$$
\item The case where
$$-(1-\beta_2\beta_1^{-1})(1-\beta_3\beta_1^{-1})^{-1}
\lambda_2\lambda_3^{-1}\in {\rm Nr}_{F/F_0}(F^\times)$$
and
\begin{equation}\label{type_D_ram_non_gen_val_sep_eq_3}
\lambda_3/\lambda_1\beta_2\beta_1^{-1}(1-\beta_3\beta_2^{-1})
(1-\beta_2\beta_1^{-1})^{-1}\not\in {\rm Nr}_{F/F_0}(F^\times).
\end{equation}
\end{enumerate}

In Case $(1)$, unless $b, c\in \mathfrak{p}_F$, we have
\begin{equation}\label{type_D_ram_non_gen_val_sep_eq_1}
  (1-\beta_2\beta_1^{-1})\lambda_2b\sigma(b)
  +(1-\beta_3\beta_1^{-1})\lambda_3c\sigma(c)\not\in \mathfrak{p}_F.
\end{equation}
Hence, we get that
$$\nu_F(h(gv, \beta gv))=\min\{\nu_F(\beta_1\lambda_1a\sigma(a)),
\nu_F(\beta_2\lambda_2b\sigma(b) +\beta_3\lambda_3c\sigma(c))\}.$$ 

Now consider Case $(2)$ and assume that
\begin{equation}\label{type_D_ram_non_gen_val_sep_eq_4}
  (1-\beta_2\beta_1^{-1})\lambda_2b\sigma(b)
  +(1-\beta_3\beta_1^{-1})\lambda_3c\sigma(c)\in \mathfrak{p}_F;
\end{equation}
if the condition \eqref{type_D_ram_non_gen_val_sep_eq_4} is false we
get the lemma immediately.  If $\nu_F(\beta_1)=\nu_F(\beta_2)$, then
we have
$$-(1-\beta_2\beta_1^{-1})\beta_2^{-1}\beta_1\lambda_1a\sigma(a)+
(1-\beta_3\beta_2^{-1})\lambda_3c\sigma(c)\in \mathfrak{p}_F$$ But,
the above containment is a contradiction to the second condition in
case $(2)$, the equation
\eqref{type_D_ram_non_gen_val_sep_eq_3}. Hence, we get that
$\nu_F(\beta_1)\neq \nu_F(\beta_2)$.  If $a\in \mathfrak{o}_F^\times$,
the valuation of $h(gv, \beta gv)$ is equal to
$$\nu_F(\beta_1)+\nu_F(\lambda_1a\sigma(a)+
\beta_2\beta_1^{-1}(\lambda_2b\sigma(b)+
\beta_3\beta_2^{-1}\lambda_3c\sigma(c)))=
\nu_F(\beta_1).$$ Note that
the lemma follows from the observation that
$$\nu_F(\beta_1a\sigma(a))\leq
\nu_F(\beta_2\lambda_2b\sigma(b)+\beta_3\lambda_3c\sigma(c)).$$ We
consider the case where $a\in \mathfrak{p}_F$ and this implies that
$b, c\in \mathfrak{o}_F^\times$.  Thus we have
$\lambda_2b\sigma(b)+ \beta_3\beta_2^{-1}\lambda_3c\sigma(c)\in
\mathfrak{o}_F^\times$. Suppose
$$\lambda_1a\sigma(a)+\beta_2\beta_1^{-1}(\lambda_2b\sigma(b)+
\beta_3\beta_2^{-1}\lambda_3c\sigma(c))\in \mathfrak{p}_F^{2\nu_F(a)+1},$$ 
then we have 
$$\beta_1\beta_2^{-1}\lambda_1a\sigma(a)
+\lambda_2b\sigma(b)+\beta_3\beta_2^{-1}
\lambda_3c\sigma(c)\in \mathfrak{p}_F.$$
Using the equality \eqref{type_D_ram_non_gen_val_sep_eq_2}, we get that
$$(1-\beta_1\beta_2^{-1})\lambda_1a\sigma(a)+
(1-\beta_3\beta_2^{-1})\lambda_3c\sigma(c)\in \mathfrak{p}_F.$$ This is
a contradiction to the condition
\eqref{type_D_ram_non_gen_val_sep_eq_3}. Hence, we obtain
$$\nu(h(gv, \beta gv))=\min\{\nu_F(\beta_1\lambda_1a\sigma(a)), 
\nu_F(\beta_2\lambda_2b\sigma(b)+ \beta_3\lambda_3c\sigma(c))\}.$$
 \end{proof}
 \subsection{Generic cuspidal representations of type
   \texorpdfstring{{\bf (D)}}{}}
\begin{lemma}\label{type_D_gen_main}
  Let $F/F_0$ be any quadratic extension and let $\mathfrak{x}$ be a
  stratum of type {\bf (D)} such that $\mathfrak{X}_\beta(F_0)$ is
  non-empty. Every cuspidal representation contained in the set
  $\Pi_\mathfrak{x}$ is generic.
\end{lemma}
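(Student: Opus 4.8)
The plan is to twist $\pi$ so that the three blocks of $\beta$ acquire the same valuation, and then to apply Proposition \ref{bl_st_prop} essentially for free. First I would use twisting, as is already done in the proof of Proposition \ref{type_A_end_prop} and throughout Section \ref{type_B_sec}: replacing $\pi\in\Pi_\mathfrak{x}$ by $\pi\otimes(\chi\circ\det)$ for a character $\chi$ of $U(1)(F/F_0)$ does not affect whether $\pi$ is generic, since $\det$ is trivial on any maximal unipotent subgroup of $G$; on the level of strata it replaces $\mathfrak{x}=[\Lambda,n,0,\beta]$ by $[\Lambda,n,0,\beta+\gamma]$, where $\gamma=c\,\id$ is a skew scalar operator ($\sigma(c)=-c$) whose $\Lambda$-valuation is the one attached to $\chi$ and can be prescribed; and it does not change the variety $\mathfrak{X}_\beta$, because for any skew scalar $c$ and any isotropic vector $v$ one has $h(v,(\beta+c)v)=h(v,\beta v)+\sigma(c)h(v,v)=h(v,\beta v)$, hence $\mathfrak{X}_{\beta+c\,\id}=\mathfrak{X}_\beta$. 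Choosing a generic skew scalar $\gamma$ of $\Lambda$-valuation $-n$, each block $(\beta+\gamma)_i$ has $\Lambda_i$-valuation exactly $-n$ and these blocks remain pairwise distinct, so the new stratum is again a skew semisimple stratum of type {\bf (D)}, with the same centraliser $C_\beta(A)$, the same $\Lambda_\beta$, the same (non-empty) $\mathfrak{X}_\beta(F_0)$, and with $q_2=n$. Thus I may assume from now on that $q_2=n$.

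Under this assumption the description of $H^1(\Lambda,\beta)$ recalled in Section \ref{type_D_sec} reads $H^1(\Lambda,\beta)=P_1(\Lambda_\beta)P_{(n/2)+}(\Lambda_{\beta_1})P_{(n/2)+}(\Lambda)$, and since $P_{(n/2)+}(\Lambda_{\beta_1})\subseteq P_{(n/2)+}(\Lambda)$ this collapses to $H^1(\Lambda,\beta)=P_1(\Lambda_\beta)P_{(n/2)+}(\Lambda)$. Moreover, because $\mathfrak{x}$ is of type {\bf (D)}, the algebra $C_\beta(A)=\End_F(V_1)\oplus\End_F(V_2)\oplus\End_F(V_3)$ is a direct sum of three copies of $F$; in particular it contains no non-zero nilpotent element, and $G_\beta$ is a product of three anisotropic one-dimensional unitary groups, so that $J^0(\Lambda,\beta)/J^1(\Lambda,\beta)\cong P_0(\Lambda_\beta)/P_1(\Lambda_\beta)$ is the group of $k_{F_0}$-points of an anisotropic reductive group over $k_{F_0}$.

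Finally I would check the remaining hypothesis of Proposition \ref{bl_st_prop}. Fix $\sch{B}\in\mathfrak{X}_\beta(F_0)$, which is non-empty by assumption, and let $\sch{U}$ be its unipotent radical. Since $C_\beta(A)$ has no non-zero nilpotents, Lemma \ref{bl_st_lem} applies with $m=1$ and $k=(n/2)+$ and gives
$$H^1(\Lambda,\beta)\cap U=P_1(\Lambda_\beta)P_{(n/2)+}(\Lambda)\cap U=P_{(n/2)+}(\Lambda)\cap U.$$
Every skew semisimple character $\theta\in\mathcal{C}(\Lambda,0,\beta)$ restricts to $\psi_\beta$ on $P_{(n/2)+}(\Lambda)$, so $\res_{H^1(\Lambda,\beta)\cap U}\theta=\res_{H^1(\Lambda,\beta)\cap U}\psi_\beta$. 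Proposition \ref{bl_st_prop}, whose anisotropy hypothesis was verified above, then yields that every cuspidal representation in $\Pi_\mathfrak{x}$ is generic.

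I expect the delicate point to be the first step: one must justify that a suitable character twist really exists — producing a skew scalar shift $\gamma$ of exactly the prescribed $\Lambda$-valuation $-n$, generic enough that no block of $\beta+\gamma$ degenerates — and that the twisted stratum is genuinely again of type {\bf (D)} and governed by the structure of Section \ref{type_D_sec}; granting this, everything else is immediate. If one prefers not to twist, the alternative is to compute $H^1(\Lambda,\beta)\cap U$ directly for arbitrary $q_2\le n$, as in the proof of Lemma \ref{type_B_iso_q_1>q_2_generic}: pick an isotropic $v$ with $h(v,v)=h(v,\beta v)=0$, observe that ${\bf 1}_{V_1}v\neq 0$ because $\beta_2\neq\beta_3$, and then, for $g_1g_2\in U$ with $g_1\in P_{(q_2/2)+}(\Lambda_{\beta_1})$ and $g_2\in P_{(n/2)+}(\Lambda)$, show that $\theta(g_1)=\psi_\beta(g_1)$ using the defining property of skew semisimple characters; there the extra work over the type {\bf (B)} case is that the component ${\bf 1}_{V_1}v$ need not be the $\Lambda$-dominant one, so the valuation bookkeeping relating the $V_1$- and $W_1=V_2\oplus V_3$-components of $g_1$ must be carried out carefully, using the constraints on $\beta$ from Lemmas \ref{type_D_unram_uniform_x_beta} and \ref{type_D_unram_non_uni_x_beta} and their ramified analogue.
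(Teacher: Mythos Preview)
The paper takes your \emph{alternative} route, not the twisting one. It argues directly, splitting only on whether $q_1=q_2$ or $q_1>q_2$: in the first case $H^1(\Lambda,\beta)=P_1(\Lambda_\beta)P_{(n/2)+}(\Lambda)$ and Lemma~\ref{bl_st_lem} finishes immediately; in the second case one first uses Lemma~\ref{bl_st_lem} (applied to $H'=P_1(\Lambda_\beta)P_{(q_2/2)+}(\Lambda)$) to get $H^1(\Lambda,\beta)\cap U=P_{(q_2/2)+}(\Lambda_{\beta_1})P_{(q_1/2)+}(\Lambda)\cap U$, and then for $g_1g_2\in U$ with $g_1\in P_{(q_2/2)+}(\Lambda_{\beta_1})$, $g_2\in P_{(q_1/2)+}(\Lambda)$, the non-vanishing of ${\bf 1}_{V_1}v$ forces ${\bf 1}_{V_1}g_1{\bf 1}_{V_1}\in F^\times\cap P_{(q_1/2)+}(\Lambda)$, whence $\det({\bf 1}_{W_1}g_1{\bf 1}_{W_1})\in F^\times\cap P_{(q_1/2)+}(\Lambda)$ and the defining property of $\theta$ yields $\theta(g_1g_2)=\psi_\beta(g_1g_2)$. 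This is a verbatim rerun of Lemma~\ref{type_B_iso_q_1>q_2_generic}; the paper invokes neither Lemma~\ref{type_D_unram_uniform_x_beta} nor Lemma~\ref{type_D_unram_non_uni_x_beta}, and makes no use of which component of $v$ is $\Lambda$-dominant. So the ``extra work'' you anticipate does not materialise in the paper's treatment.

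Your twisting reduction is a genuinely different and rather elegant shortcut: it collapses the two-case analysis into one line at the cost of the existence step you already flag. If you pursue it, the point that needs care beyond producing a skew scalar $c$ of valuation $-n$ is checking that $\pi\otimes(\chi\circ\det)$ really lands in $\Pi_{[\Lambda,n,0,\beta+c]}$ with $H^1$ as described in Section~\ref{type_D_sec} --- i.e.\ that twisting by a character of $U(1)$ interacts with Stevens's construction in the expected way at this depth. Granting that, your argument is correct and shorter; the paper's direct approach trades that verification for a brief case split and a determinant computation identical to the type~(B) case.
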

\begin{proof}
  Let $\theta$ be any semisimple character in
  $\mathcal{C}(\Lambda, 0, \beta)$. Let $\sch{U}$ be the unipotent
  radical of a Borel subgroup in the set $\mathfrak{X}_\beta(F_0)$.
  We will first show that
$$\res_{H^1(\Lambda, \beta)\cap U}\theta=\psi_\beta.$$
If $q_1=q_2$, then the group $H^1(\Lambda, \beta)$ is equal to
$P_1(\Lambda_\beta)P_{q_1/2}(\Lambda)$ and from Lemma \ref{bl_st_lem},
we get that $H^1(\Lambda, \beta)\cap U$ is equal to
$P_{q_1/2+}(\Lambda)\cap U$. In the case where $q_1>q_2$, the group
$H^1(\Lambda, \beta)$ is equal to
$$P_1(\Lambda_\beta)P_{(q_2/2)+}(\Lambda_{\beta_1})P_{(q_1/2)+}(\Lambda).$$
Let $H'$ be the group $P_1(\Lambda_\beta)P_{(q_2/2)+}(\Lambda)$.  From
Lemma \ref{bl_st_lem} we get that the group $H'\cap U$ is equal to
$P_{(q_2/2)+}(\Lambda)\cap U$ and hence, $H^1(\Lambda, \beta)\cap U$ is
equal to $P_{(q_2/2)+}(\Lambda_{\beta_1})P_{(q_1/2)+}(\Lambda)\cap U$.

Let $v=av_1+bv_2+cv_3$ be an isotropic vector fixed by $U$.  Since
$\mathfrak{x}$ is a skew semisimple stratum of type {\bf (D)}, we get
that $a\neq 0$.  Assume that $g_1g_2\in U$, for some
$g_1\in P_{(q_2/2)+}(\Lambda_{\beta_1})$ and
$g_2\in P_{(q_1/2)+}(\Lambda)$. From the equality
$$g_1(av_1+bv_2+cv_3)=a'av_1+g_1(bv_2+cv_3)=g_2(av_1+bv_2+cv_3)$$
we get that 
$${\bf 1}_{\langle v_1\rangle }g_1{\bf 1}_{\langle v_1\rangle}=a'\in F^\times \cap
P_{(q_1/2)+}(\Lambda).$$
Which implies that the determinant of
${\bf 1}_{W_1}g_1{\bf 1}_{W_1}$ belongs to
$F^\times \cap P_{(q_1/2)+}(\Lambda)$. Hence, the simple character
$\theta(g_1g_2)$ is equal to $\psi_\beta(g_1g_2)$. Now, the lemma
follows from Proposition \ref{bl_st_prop}.
\end{proof}
 \subsection{Non-generic cuspidal 
representations of type
\texorpdfstring{{\bf (D)}}{}}
We will show that $\pi\in\Pi_\mathfrak{x}$ is non-generic if and only
if the set $\mathfrak{X}_\beta(F_0)$ is empty. We will devide the
proof into several cases beginning with the easier case where
$(W_1, h)$ is anisotropic; in which case we will show that
$\psi_\beta^g$ is non-trivial on
$P_{(n/2)+}(\Lambda)\cap U_{\rm der}$, for all $g\in P(\Lambda)$. In
the case where $(W_1, h)$ is isotropic, the method of proof is more
involved and we had to deal with conjugation of some shallow elements
in the group $P(\Lambda)$.
\subsubsection{The case where \texorpdfstring{$(W_1, h)$}{} is
  anisotropic}
\begin{lemma}\label{type_D_main_W_1_aniso}
  Let $F/F_0$ be a quadratic extension and let $\mathfrak{x}$ be a
  skew semisimple stratum of type {\bf (D)} such that $(W_1, h)$ is
  anisotropic. If the set $\mathfrak{X}_\beta(F_0)$ is empty, then
  every representation in the set $\Pi_\mathfrak{x}$ is non-generic.
\end{lemma}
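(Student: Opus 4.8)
The plan is to argue by contradiction in the style of Lemma \ref{type_C_aniso}: assume that some cuspidal representation $\pi=\ind_{J^0(\Lambda,\beta)}^G(\kappa\otimes\tau)$ in $\Pi_\mathfrak{x}$ is generic and exhibit a unipotent subgroup on which the inducing datum is forced to transform by a non-trivial character. First I would fix, as announced in the discussion preceding this lemma, the orthogonal basis $(v_1,v_2,v_3)$ and a Witt-basis $(e_1,e_0,e_{-1})$ giving a splitting of the (uniquely determined, self-dual) lattice sequence $\Lambda$, with $e_1$ isotropic. Since $(W_1,h)$ is anisotropic, $P^0(\Lambda)$ is a special maximal parahoric of the relative rank one group $G$, so the Iwasawa decomposition $G=P(\Lambda)B$ holds for the Borel subgroup $\sch{B}$ fixing $\langle e_1\rangle$; combining it with the Mackey decomposition of $\res_U\pi$, where $\sch{U}$ is the unipotent radical of $\sch{B}$, gives
\[
\ho_{J^0(\Lambda,\beta)^g\cap U}\bigl((\kappa\otimes\tau)^g,\Psi\bigr)\neq 0
\]
for some $g\in P(\Lambda)$ and some non-trivial character $\Psi$ of $U$.

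I would then restrict attention to the subgroup $P_{(n/2)+}(\Lambda)\cap U_{\rm der}$. Since $g$ normalises $P_{(n/2)+}(\Lambda)$ and $P_{(n/2)+}(\Lambda)\subseteq H^1(\Lambda,\beta)\subseteq J^1(\Lambda,\beta)\subseteq J^0(\Lambda,\beta)$, this subgroup lies in $J^0(\Lambda,\beta)^g\cap U$; on it $\tau^g$ is trivial (because $\tau$ factors through the finite torus quotient $J^0(\Lambda,\beta)/J^1(\Lambda,\beta)$, hence is a character trivial on $J^1(\Lambda,\beta)$), and $(\kappa\otimes\tau)^g$ restricts to $\psi_\beta^g$, since every skew semisimple character $\theta$ occurring in $\res_{H^1(\Lambda,\beta)}\kappa$ satisfies $\res_{P_{(n/2)+}(\Lambda)}\theta=\psi_\beta$. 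As $\Psi$ is trivial on $U_{\rm der}$, a contradiction follows once we show that $\psi_\beta^g$ is non-trivial on $P_{(n/2)+}(\Lambda)\cap U_{\rm der}$ for every $g\in P(\Lambda)$; by Lemma \ref{basic_inequality} this is the inequality $\nu_{F_0}(\delta h(ge_1,\beta ge_1))\leq -s$, where $s$ is the small integer, read off from the $\Lambda$-filtration of $\End_F(V)$, defined by $U_{\rm der}(s)=P_{(n/2)+}(\Lambda)\cap U_{\rm der}$.

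The crux is this valuation estimate, and it is where the hypothesis $\mathfrak{X}_\beta(F_0)=\emptyset$ enters. Writing $ge_1=av_1+bv_2+cv_3$ with $a,b,c\in\mathfrak{o}_F$ and $(a,b,c)=\mathfrak{o}_F$, the isotropy relation $\lambda_1a\sigma(a)+\lambda_2b\sigma(b)+\lambda_3c\sigma(c)=0$ reduced modulo $\mathfrak{p}_F$, together with $(W_1,h)$ being anisotropic (so $-\lambda_2\lambda_3^{-1}\notin{\rm Nr}_{F/F_0}(F^\times)$), forces $a\in\mathfrak{o}_F^\times$; hence $\nu_F(\beta_1\lambda_1a\sigma(a))=\nu_F(\beta_1)+\nu_F(\lambda_1)$, which by $\nu_{\Lambda_1}(\beta_1)=-n$ and the ordering \eqref{type_D_ordering} is the most negative of the three terms $\nu_F(\beta_i\lambda_i(\cdot))$. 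If $\beta_2\lambda_2b\sigma(b)+\beta_3\lambda_3c\sigma(c)=0$ then $h(ge_1,\beta ge_1)=\beta_1\lambda_1a\sigma(a)\neq0$; otherwise Lemmas \ref{type_D_unram_non_gen_val_sep} and \ref{type_D_ram_non_gen_val_sep} apply and give
\[
\nu_F(h(ge_1,\beta ge_1))=\min\bigl\{\nu_F(\beta_1\lambda_1a\sigma(a)),\ \nu_F(\beta_2\lambda_2b\sigma(b)+\beta_3\lambda_3c\sigma(c))\bigr\}.
\]
In either case this quantity is at most $\nu_F(\beta_1)+\nu_F(\lambda_1)$, and after multiplying by $\delta$ one reads off $\nu_{F_0}(\delta h(ge_1,\beta ge_1))\leq -s$, which closes the argument. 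I expect the main obstacle to be precisely this last bookkeeping step: one must split into the unramified and ramified cases, track which $\lambda_i$ is a unit and which is a uniformizer among the configurations forced by $(W_1,h)$ anisotropic and $\det(V,h)$ trivial, identify $\Lambda$ and the integer $s$ in each case, and verify the inequality $\nu_F(\beta_1)+\nu_F(\lambda_1)\leq -s$; the valuation-separation lemmas supply the analytic input, but reconciling it with the combinatorics of the parahoric filtration is what needs care.
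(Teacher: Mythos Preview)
Your strategy is exactly that of the paper: assume genericity, use the Iwasawa decomposition $G=P(\Lambda)B$ (after choosing a Witt basis splitting $\Lambda$), and derive a contradiction by showing that $\psi_\beta^g$ is non-trivial on $P_{(n/2)+}(\Lambda)\cap U_{\rm der}$ via Lemma~\ref{basic_inequality}. The one genuine difference is how you obtain the valuation bound: you invoke Lemmas~\ref{type_D_unram_non_gen_val_sep} and~\ref{type_D_ram_non_gen_val_sep} to get $\nu_F(h(ge_1,\beta ge_1))\leq \nu_F(\beta_1)+\nu_F(\lambda_1)$, whereas the paper computes $\nu_F(h(ge_1,\beta ge_1))$ directly in each of the ramified and unramified cases and does not appeal to those lemmas here at all (they are used only later, in the $(W_1,h)$ isotropic case). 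Your route is perfectly valid and arguably more economical, since the separation lemmas already encode the $\mathfrak{X}_\beta(F_0)=\emptyset$ hypothesis. One small caution: your one-line argument that anisotropy of $(W_1,h)$ alone forces $a\in\mathfrak{o}_F^\times$ is correct as stated only in the ramified case (where all $\lambda_i$ are units and the reduction modulo $\mathfrak{p}_F$ is immediate); in the unramified case one of $\lambda_2,\lambda_3$ is a uniformizer, and the conclusion $a\in\mathfrak{o}_F^\times$ comes instead from combining the isotropy relation with the lattice constraint $ge_1\in\Lambda(1)\setminus\Lambda(2)$. You already anticipate exactly this kind of case-check in your final paragraph, so this is not a gap, just a point where the write-up will need to split cases as the paper does.
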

\begin{proof}
Let us begin with the case where $F/F_0$ is ramified. In this case,
$e(\Lambda)=2$, and we have
$$\Lambda(-1)=\Lambda(0)=\mathfrak{o}_Fv_1\oplus
\mathfrak{o}_Fv_2\oplus \mathfrak{o}_Fv_3.$$
It is possible that neither of the spaces $(W_2, h)$ and $(W_3, h)$
are isotropic. However, there exists an orthogonal
$\mathfrak{o}_F$-basis, $(\tilde{v}_2, \tilde{v}_3)$, for the lattice
$\mathfrak{o}_Fv_2\oplus \mathfrak{o}_Fv_3$ such that
$\langle v_1, \tilde{v}_2\rangle$ is isotropic. Using arguments
similar to Lemma \ref{type_B_ram_seq_split}, we can we choose a
Witt-basis $(e_1, e_{-1})$ of $\langle v_1, \tilde{v}_2\rangle$ and an
unit vector $e_0\in \langle v_1, \tilde{v}_2\rangle^\perp$ such that
the Witt-basis $(e_1, e_0, e_{-1})$ provides a splitting for the
lattice sequence $\Lambda$. In the basis $(e_1, e_0, e_{-1})$, we have
$$\Lambda(-1)=\Lambda(0)=\mathfrak{o}_Fe_1\oplus
\mathfrak{o}_Fe_0\oplus \mathfrak{o}_Fe_{-1}.$$

Let $\sch{U}$ be the unipotent radical of the Borel subgroup,
$\sch{B}$, such that $\langle e_1 \rangle$ is fixed by $B$. We set
$$-\nu_{\Lambda}(\beta)=4m+2,$$ for some integer $m$. Assume that
$\pi\in \Pi_\mathfrak{x}$ is a generic representation. Let
$(J^0(\Lambda, \beta), \kappa)$ be a Bushnell--Kutzko type contained
in $\pi$. There exists a $g\in P(\Lambda)$, and a non-trivial
character $\Psi$ of $U$ such that
$$\ho_{J^0(\Lambda, \beta)^g\cap U}(\kappa^g, \Psi)\neq 0.$$
Since $g$ normalises $P_r(\Lambda)$, for $r>0$, we get that the group
$P_{(n/2)+}(\Lambda)\cap U_{\rm der}$ is equal to
$U_{\rm der}(\lceil m/2\rceil)$.  Let $ge_1=av_1+bv_2+cv_3$ for some
$a, b, c\in \mathfrak{o}_F$.  Then we have
$$\lambda_1a\sigma(a)+\lambda_2b\sigma(b)+\lambda_3c\sigma(c)=0.$$ If
$a\in \mathfrak{p}_F$, then $b, c\in \mathfrak{p}_F$ as $(W_1, h)$ is
anisotropic. Since $\nu_\Lambda(e_1)=0$, we get that
$a\in \mathfrak{o}_F^\times$. Now, we have
\begin{align*}
h(ge_1, \beta ge_1)&=\beta_1(\lambda_1a\sigma(a)+
\lambda_2\beta_2\beta_1^{-1}b\sigma(b)+
\lambda_3\beta_3\beta_1^{-1}c\sigma(c))\\
&=\beta_1(\lambda_1(1-\beta_3\beta_1^{-1})a\sigma(a)+
\lambda_2\beta_2\beta_1^{-1}(1-\beta_3\beta_2^{-1})b\sigma(b)).
\end{align*}
If $\nu_F(\beta_2)>\nu_F(\beta_1)$, then we get that
$\nu_F(h(ge_1, \beta ge_1))$ is equal to $\nu_F(\beta_1)$. Assume that
$\nu_F(\beta_1=\nu_F(\beta_2)$. Since, both the constants $b, c$
cannot be in $\mathfrak{p}_F$, without loss of generality, we assume
that $b\in \mathfrak{o}_F^\times$. If
$$\lambda_1(1-\beta_3\beta_1^{-1})a\sigma(a)+
\lambda_2\beta_2\beta_1^{-1}(1-\beta_3\beta_2^{-1})b\sigma(b)\in
\mathfrak{p}_F,$$
then we get that 
$$\overline{-\lambda_2\lambda_3^{-1}b\sigma(b)(1-\beta_2\beta_1^{-1})
  (1-\beta_3\beta_1^{-1})^{-1}}\in (k_F^\times)^2;$$
therefore, we get a contradiction to the assumption that the set
$\mathfrak{X}_\beta(F_0)$ is empty.  Hence, we have
$$\nu_{F/F_0}(\delta h(ge_1, \beta ge_1))=\nu_{F/F_0}(\beta_1)+1/2=m+1.$$
Thus we get that
$$\nu_{F/F_0}(h(ge_1, \beta ge_1))\leq -\lceil m/2\rceil.$$
Now, Lemma \ref{basic_inequality} implies that the character
$\psi_\beta^g$ is non-trivial on the group
$P_{(n/2)+}(\Lambda)\cap U_{\rm der}$, and we get a contradiction to
the assumption that $\pi$ is generic.

Consider the case where $F/F_0$ is unramified. In this case, we may
assume that $\lambda_1=\varpi$ and
$(\lambda_2, \lambda_3)\in \{(\varpi, 1), (1, \varpi)\}$. So we define
$\tilde{v}_3$ to be the vector in the set $\{v_2, v_3\}$ with
$h(\tilde{v}_3, \tilde{v}_3)=\varpi$, and the remaining vector in the
set $\{v_2, v_3\}$ is denoted by $\tilde{v}_2$. The notation
$\tilde{\beta}_i$ will be used for
${\bf 1}_{\langle\tilde{v}_i\rangle}\beta {\bf
  1}_{\langle\tilde{v}_i\rangle}$,
for $i\in \{2,3\}$. The period $2$ lattice sequence $\Lambda$ is given
by
$$\Lambda(0)=\mathfrak{o}_Fv_1\oplus\mathfrak{o}_F\tilde{v}_2
\oplus \mathfrak{o}_F\tilde{v}_3\ \text{and}\
\Lambda(1)=\mathfrak{o}_Fv_1\oplus\mathfrak{p}_F\tilde{v}_2 \oplus
\mathfrak{o}_F\tilde{v}_3.$$
Let $e_0$ be the vector $\tilde{v}_2$. Since the space
$\langle v_1, \tilde{v}_3 \rangle$ is isotropic, and there exists a
Witt-basis $(e_1, e_{-1})$ for the space
$\langle v_1, \tilde{v}_3 \rangle$ such that
$$\Lambda(0)=\mathfrak{o}_Fe_1\oplus\mathfrak{o}_Fe_0
\oplus \mathfrak{p}_Fe_{-1}\ \text{and}\
\Lambda(1)=\mathfrak{o}_Fe_1\oplus\mathfrak{p}_Fe_0 \oplus
\mathfrak{o}_Fe_{-1}.$$

Let $\sch{U}$ be the unipotent radical of the Borel subgroup,
$\sch{B}$, such that $\langle e_1 \rangle$ is fixed by $B$. Assume
that $\pi\in \Pi_\mathfrak{x}$, containing a Bushnell--Kutzko type
$(J^0(\Lambda, \beta), \kappa)$, is a generic representation. There
exists a $g\in P(\Lambda)$ and a non-trivial character $\Psi$ of $U$
such that
$$\ho_{J^0(\Lambda, \beta)^g\cap U}(\kappa^g, \Psi)\neq 0.$$
Note that $\nu_\Lambda(e_1)=1$ and hence we get that $ge_1=av_1+\varpi
b\tilde{v}_2+c\tilde{v}_3$, for some $a, b, c\in \mathfrak{o}_F$.
Since $e_1$ is an isotropic vector, we get that 
$$a\sigma(a)+\varpi b\sigma(b)+c\sigma(c)=0.$$
From the above equality and the fact that $\nu_\Lambda(e_1)=1$, we get
that $a, c\in \mathfrak{o}_F^\times$.  We set $n=4m+2r$, for some
integer $m$ and $r\in \{0,1\}$.  The valuation of
$h(ge_1, \beta ge_1)$ is equal to
$$\nu_F(\beta_1)+\nu_F(a\sigma(a)+\tilde{\beta}_2\beta_1^{-1}b\sigma(b)+
\tilde{\beta}_3\beta_1^{-1}c\sigma(c))=\nu_F(\beta_1)=-(2m+r).$$
We observe that $g$ normalises the group $P_{(n/2)+}(\Lambda)$;
therefore, we get that $P_{(n/2)+}(\Lambda)^g\cap U_{\rm der}$ is
equal to $U_{\rm der}(m)$. Since $\nu_F(h(ge_1, \beta ge_1))\leq -m$,
using Lemma \ref{basic_inequality}, we get that the character
$\psi_\beta^g$ is non-trivial on the group
$P_{(n/2)+}(\Lambda)^g\cap U_{\rm der}$. Thus we obtain a
contradiction to the assumption on the genericity of the
representation $\pi\in \Pi_\mathfrak{x}$.
\end{proof}
\subsubsection{The case where \texorpdfstring{$(W_1, h)$}{} is
  isotropic}\label{type_D_W_1_iso_prelim}
\paragraph{\it Lattice sequences:}
We begin with the description of the lattice sequence $\Lambda$.  Let
$(e_1, e_{-1})$ be a Witt-basis for the space $(W_1, h)$, and $e_0$ be
an unit vector in $V_1$.  Let $\sch{U}$ be the unipotent radical of
the Borel subgroup, $\sch{B}$, such that $\langle e_1 \rangle$ is
fixed by $B$. Let $\sch{T}$ be the maximal torus of $\sch{G}$,
contained in $\sch{B}$ such that $T$ fixes the decomposition
$$\langle e_1\rangle\oplus \langle e_0\rangle \oplus \langle e_{-1}\rangle.$$

If $F/F_0$ is unramified and $h(v_2, v_2)=h(v_3, v_3)=\varpi$, then
$e(\Lambda)=2$ and
$$\Lambda(0)=\mathfrak{o}_Fe_1\oplus
\mathfrak{o}_Fe_0\oplus \mathfrak{p}_Fe_{-1}\ \text{and}\
\Lambda(1)=\mathfrak{o}_Fe_1\oplus \mathfrak{p}_Fe_0\oplus
\mathfrak{p}_Fe_{-1}.$$
If $F/F_0$ is unramified and $h(v_2, v_2)=h(v_3, v_3)=1$, then
$e(\Lambda)=2$ and
$$
  \Lambda(-1)=\Lambda(0)=\mathfrak{o}_Fe_1\oplus
  \mathfrak{o}_Fe_0\oplus \mathfrak{o}_Fe_{-1}.
$$
If $F/F_0$ is ramified, then $e(\Lambda)=2$ and
$$
  \Lambda(-1)=\Lambda(0)=\mathfrak{o}_Fe_1\oplus
  \mathfrak{o}_Fe_0\oplus \mathfrak{o}_Fe_{-1}.
$$
Let $\mathcal{I}$ be the Iwahori subgroup defined, in its matrix form
with respect to the basis $(e_1, e_0, e_{-1})$, by
$$\begin{pmatrix}\mathfrak{o}_F&\mathfrak{o}_F&\mathfrak{o}_F\\
  \mathfrak{p}_F&\mathfrak{o}_F&\mathfrak{o}_F\\
  \mathfrak{p}_F&\mathfrak{p}_F&\mathfrak{o}_F
\end{pmatrix}\cap G.$$

Recall that we use the notation $q_i$ for the integer
$-\nu_{\Lambda_i}(\beta_i)$, for $1\leq i\leq 3$.  We set
$q_1=4m_1+2r_1$ and $q_2=4m_2+2r_2$, for some integers $m_1, m_2$ and
$r_1, r_2\in \{0,1\}$. Also recall our convention that $q_1\geq q_2$.

\paragraph{\it Shallow elements:}
For the purpose of understanding linear functionals supported on
$J^0(\Lambda, \beta)u U$, for $u\in \mathcal{I}$, we need a certain
measure of depth of an element $u$ with respect to the group
$P_{(n/2)+}(\Lambda)$.  Let $x, y\in \mathfrak{o}_F$ be two elements
such that $x\sigma(x)+y+\sigma(y)=0$, and let $w$ be an element in
$W_G$. The function $d(\mathfrak{x}, w, x)$, defined below, measures
the depth of an element $u(x, y)^w$ with respect to
$P_{(n/2)+}(\Lambda)$. When $F/F_0$ is an unramified extension, we set
\begin{eqnarray}
  d(\mathfrak{x}, w, x)=
  \begin{cases}
    \max\{m_2+1, m_1+1-\nu_F(x)\},&\ \text{if}\ \Lambda(0)\cap
    V_2=\mathfrak{o}_Fe_1\oplus \mathfrak{o}_Fe_{-1},\\
    \max\{m_2, m_1+r_1-\nu_F(x)\}&\ \text{if}\ \Lambda(0)\cap
    V_2=\mathfrak{o}_Fe_1\oplus \mathfrak{p}_Fe_{-1}, w=\id,\\
    \max\{m_2+2, m_1+r_1+1-\nu_F(x)\}&\ \text{if}\ \Lambda(0)\cap
    V_2=\mathfrak{o}_Fe_1\oplus \mathfrak{p}_Fe_{-1}, w\neq \id.
  \end{cases}
\end{eqnarray}
When $F/F_0$ is a ramified extension, the lattice sequences $\Lambda$
is uniquely determined (see subsection \ref{type_D_W_1_iso_prelim}).
We define the integer $d(\mathfrak{x}, w, x)$ as follows:
\begin{equation}
  d(\mathfrak{x}, w, x)=
  \max\{\lceil m_2/2\rceil, \lceil
  m_1/2-\nu_F(x)\rceil\}
\end{equation}
\begin{lemma}\label{type_D_unram_weight_der}
  Let $\mathfrak{x}$ be a skew semisimple stratum of type {\bf (D)}.
  Let $w$ be an element of $W_G$.  Let $u=\dot{u}(x,y)$ be an element
  of $\mathcal{I}\cap \overline{U^w}$, where $\overline{\sch{U}^w}$ is
  the unipotent radical of the opposite Borel subgroup of $\sch{B}^w$
  with respect to $\sch{T}$. We have
  $$U^w_{{\rm der}}(d(\mathfrak{x}, w, x))\subseteq
  H^1(\Lambda, \beta)^{u}\cap U^w_{{\rm der}}.$$
\end{lemma}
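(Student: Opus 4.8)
The plan is to reduce to the case $w=\mathrm{id}$ (the other Weyl element being handled by the symmetric computation, exactly as in Lemma \ref{type_C_iso_weight}), and then to conjugate a generic derived-group element $u(0,a)$ with $a\in\delta\mathfrak{p}_{F_0}^{d(\mathfrak{x},\mathrm{id},x)}$ by $u=\bar u(x,y)$ and show directly that the resulting matrix lies in $P_1(\Lambda_\beta)P_{(q_2/2)+}(\Lambda_{\beta_1})P_{(n/2)+}(\Lambda)$, which is contained in $H^1(\Lambda,\beta)$ by the description of $H^1(\Lambda,\beta)$ recalled in the subsection on inducing data. The first step is to write down the conjugation identity: using the matrix identity
\[
\bar u(x,y)\,u(0,a)\,\bar u(-x,-y-x\sigma(x))
=\begin{pmatrix}1-a(x\sigma(x)+y)&a\sigma(x)&a\\
a x(-y-x\sigma(x))&1+a x\sigma(x)&a x\\
-a y(y+x\sigma(x))&a\sigma(x)y&a y+1\end{pmatrix}
\]
(this is \eqref{type_C_iso_weight_eq_1}), I read off that each entry of the difference from $\mathrm{id}_V$ has valuation, with respect to the relevant lattice filtration on $\End_F(V)$, controlled by $\nu_F(a)$ and $\nu_F(x)$.

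The second step is the bookkeeping: I split the displayed matrix, according to the underlying orthogonal decomposition $V=W_1\perp V_1$ (equivalently, the split into the $\langle e_1,e_{-1}\rangle$ block and the $\langle e_0\rangle$ component), into the part lying in $C_{\beta_1}(A)$ and a remainder, and check the two valuation inequalities defining membership in $P_{(q_2/2)+}(\Lambda_{\beta_1})$ and $P_{(n/2)+}(\Lambda)$ respectively. Concretely, the diagonal correction $1+a x\sigma(x)$ and the $\langle v_1\rangle$-corner must be forced into $F^\times\cap P_{(n/2)+}(\Lambda)$; this is exactly where the first alternative in the definition of $d(\mathfrak{x},w,x)$ — the term $m_1+r_1-\nu_F(x)$ (unramified) or $\lceil m_1/2-\nu_F(x)\rceil$ (ramified) — is calibrated so that $2\nu_F(x)+\nu_F(a)$ lands in the right filtration step of $\Lambda$. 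The off-diagonal entries $a\sigma(x)$, $ax$, $a\sigma(x)y$, etc., are pushed into $P_{(n/2)+}(\Lambda)$ by the same count, once one remembers $\nu_\Lambda(e_0)$ versus $\nu_\Lambda(e_{\pm1})$ in each of the three lattice-sequence cases listed in \ref{type_D_W_1_iso_prelim}. The term $a(x\sigma(x)+y)$, and hence the whole first row/column beyond the $\langle v_1\rangle$-corner, is of even higher valuation since $x\sigma(x)+y\in\mathfrak{o}_F$, so no loss occurs there. Meanwhile the second alternative — the constant $m_2+1$, $m_2$, $m_2+2$ (or $\lceil m_2/2\rceil$) — guarantees that even when $\nu_F(x)$ is large the element $u(0,a)$ itself already sits in $P_{(q_2/2)+}(\Lambda_{\beta_1})$, so conjugation does nothing.

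The third step is simply to note that all three pieces — the $P_1(\Lambda_\beta)$ factor (trivial here, as the $\langle v_1\rangle$-corner furnishes it after extracting the diagonal scalar), the $P_{(q_2/2)+}(\Lambda_{\beta_1})$ factor, and the $P_{(n/2)+}(\Lambda)$ factor — combine to an element of $H^1(\Lambda,\beta)$, whence $U^w_{\mathrm{der}}(d(\mathfrak{x},w,x))^{u}\subseteq H^1(\Lambda,\beta)\cap U^w_{\mathrm{der}}$; that the conjugate stays inside $U^w_{\mathrm{der}}$ is clear since $\bar u(x,y)$ normalises $U^w_{\mathrm{der}}$ up to the center, and in any case $U^w_{\mathrm{der}}$ is the relevant ambient group on both sides. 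I expect the main obstacle to be the case analysis across the three distinct lattice sequences (and the unramified/ramified split), each with its own values of $\nu_\Lambda$ on the basis vectors; one has to verify that the four-way maximum in the definition of $d(\mathfrak{x},w,x)$ is exactly the right threshold in every case, which is a somewhat delicate but entirely mechanical valuation computation of the type already carried out in Lemma \ref{type_C_iso_weight}.
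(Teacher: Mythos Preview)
Your proposal is correct and follows essentially the same approach as the paper's proof: reduce to $w=\id$, use the conjugation identity \eqref{type_C_iso_weight_eq_1}, and verify entrywise that the result lands in $P_1(\Lambda_{F[\beta]})P_{(q_2/2)+}(\Lambda_{F[\beta_1]})P_{(n/2)+}(\Lambda)\subseteq H^1(\Lambda,\beta)$, with the definition of $d(\mathfrak{x},w,x)$ calibrated precisely to make this work. The paper's proof is terser (it simply asserts that the matrix identity together with the definition of $d(\mathfrak{x},\id,x)$ gives the containment), while you spell out the block decomposition and the role of each branch of the $\max$; but the substance is identical.
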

\begin{proof}
  We will prove the above lemma when $w=\id$, and the proof is
  entirely similar in the case where $w\neq \id$.  The group
  $H^1(\Lambda, \beta)$ is equal to
  $$P_1(\Lambda_{F[\beta]})P_{(q_2/2)+}(\Lambda_{F[\beta_1]})
  P_{(n/2)+}(\Lambda).$$ Let $u=\bar{u}(x, y)$ be an element of
  $\mathcal{I}\cap \bar{B}$. Using the the matrix identity
\begin{equation}\label{type_D_unram_weight_der_eq_1}
\bar{u}(x, y)u(0, a)\bar{u}(-x, -y-x\sigma(x))=
\begin{pmatrix}1-a(x\sigma(x)+y)&a\sigma(x)&a\\
  ax(-y-x\sigma(x))&1+ax\sigma(x)&ax\\
  -ay(y+x\sigma(x))&a\sigma(x)y&ay+1\end{pmatrix}
\end{equation}
and the definition of $d(\mathfrak{x}, \id, x)$, we see that the group
$\{U_{{\rm der}}(d(\mathfrak{x}, \id, x))\}^{u}$ is contained in the
group $H^1(\Lambda, \beta)$.
\end{proof}
\begin{lemma}
  With the same notations as in Lemma \ref{type_D_unram_weight_der},
  and for any skew semisimple character $\theta$ in
  $\mathcal{C}(\Lambda, 0, \beta)$ we have
  $$\res_{U^w_{\rm der}(d(\mathfrak{x}, w, x))}\theta^{u}=\psi_\beta^{u}.$$
\end{lemma}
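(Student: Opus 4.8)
The plan is to imitate, nearly verbatim, the proof of Lemma~\ref{type_C_iso_sim_char_res}, the difference being that the rôle played there by the two-dimensional factor $V_2$ is now played by the isotropic plane $W_1=V_2\perp V_3$ on which the Witt-vectors $e_1,e_{-1}$ lie. As there, it is enough to treat $w=\id$; the case $w\neq\id$ is obtained by conjugating the matrix identity \eqref{type_D_unram_weight_der_eq_1} and the function $d(\mathfrak{x},w,x)$ by a representative of $w$, which I would spell out but which introduces nothing new. So fix $u^{+}=u(0,a)\in U_{\rm der}(d(\mathfrak{x},\id,x))$ and $u=\bar{u}(x,y)$ as in Lemma~\ref{type_D_unram_weight_der}. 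By that lemma $u\,u^{+}u^{-1}\in H^1(\Lambda,\beta)$, and I would write $u\,u^{+}u^{-1}=g_1g_2g_3$ with $g_1\in P_1(\Lambda_\beta)$, $g_2\in P_{(q_2/2)+}(\Lambda_{\beta_1})$ and $g_3\in P_{(n/2)+}(\Lambda)$ (when $q_1=q_2$ the factor $g_2$ is absent).

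The first substantive step is to read the $V_1$-block of $u\,u^{+}u^{-1}$ off \eqref{type_D_unram_weight_der_eq_1}: since $e_0$ spans $V_1$, the $(e_0,e_0)$-entry gives ${\bf 1}_{V_1}(u\,u^{+}u^{-1}){\bf 1}_{V_1}=1+ax\sigma(x)$, and by the choice of $d(\mathfrak{x},\id,x)$ this scalar lies in $F^\times\cap P_{(n/2)+}(\Lambda)$. Next I would use that $g_1$ commutes with $\beta$ and $g_2$ with $\beta_1$, so $g_1g_2$ preserves the orthogonal decomposition $V=V_1\perp W_1$; inserting ${\bf 1}_{V_1}+{\bf 1}_{W_1}$ appropriately then yields ${\bf 1}_{V_1}(g_1g_2){\bf 1}_{V_1}\in F^\times\cap P_{(n/2)+}(\Lambda)$, and since $\det(u\,u^{+}u^{-1})=1$ (a product of unipotents) and $\det(g_3)\in F^\times\cap P_{(n/2)+}(\Lambda)$, also $\det({\bf 1}_{W_1}(g_1g_2){\bf 1}_{W_1})\in F^\times\cap P_{(n/2)+}(\Lambda)$. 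With the block-determinants of $g_1g_2$ in this subgroup, I would invoke the defining property of a skew semisimple character (see \cite[Definition~3.2.3(a)]{Orrangebook} and \cite[Section~3.6]{semisimple_char_classical})—the one already used in the proof of Lemma~\ref{type_C_iso_sim_char_res}, which controls $\theta/\psi_\beta$ on $H^1(\Lambda,\beta)\cap C_\beta(A)^\times$ through such determinants—to conclude $\theta(g_1g_2)=\psi_\beta(g_1g_2)$. Since $\res_{P_{(n/2)+}(\Lambda)}\theta=\psi_\beta$ gives $\theta(g_3)=\psi_\beta(g_3)$, one gets $\theta(u\,u^{+}u^{-1})=\psi_\beta(u\,u^{+}u^{-1})$ for every $u^{+}\in U_{\rm der}(d(\mathfrak{x},\id,x))$; conjugating back by $u$, this is exactly $\res_{U_{\rm der}(d(\mathfrak{x},\id,x))}\theta^{u}=\psi_\beta^{u}$.

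The step I expect to be the main obstacle is this last one, and specifically the mismatch with the type~{\bf (C)} situation: here $H^1(\Lambda,\beta)$ has a three-term factorization while the vectors $e_1,e_{-1}$ only see the coarse splitting $V_1\perp W_1$, so one must check that in the regime $q_1>q_2$ the intermediate group $P_{(q_2/2)+}(\Lambda_{\beta_1})$ is compatible with $V_1\perp W_1$ in the way needed for the block decomposition of $g_1g_2$ to be legitimate, and that the block-determinant condition produced above is precisely of the form on which the correction character between $\theta$ and $\psi_\beta$—as normalised in \cite{semisimple_char_classical}—is trivial. Everything else is a transcription of the type~{\bf (C)} argument together with the routine bookkeeping of carrying the $w$-conjugate through \eqref{type_D_unram_weight_der_eq_1}.
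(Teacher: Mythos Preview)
Your proposal is correct and follows essentially the same route as the paper's proof: read off the $(e_0,e_0)$-entry $1+ax\sigma(x)$ from the matrix identity \eqref{type_D_unram_weight_der_eq_1}, use the definition of $d(\mathfrak{x},w,x)$ to place it in $F^\times\cap P_{(n/2)+}(\Lambda)$, pass to the determinant on the $W_1$-block, and invoke \cite[Definition~3.2.3(a)]{Orrangebook}. The paper spells out the case $w\neq\id$ (with $\Lambda\cap W_1=\mathfrak{o}_Fe_1\oplus\mathfrak{p}_Fe_{-1}$) rather than $w=\id$, and writes the conjugate directly as a two-term product $g_1g_2$ with $g_1\in P_{(q_2/2)+}(\Lambda_{F[\beta_1]})$ and $g_2\in P_{(n/2)+}(\Lambda)$, bypassing your $P_1(\Lambda_\beta)$ factor; this makes the block-diagonal step slightly cleaner, but the substance is identical and your worry about the three-term factorization is not a genuine obstacle.
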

\begin{proof}
  We prove this in the case where $\Lambda\cap W_1$ equal to
  $\mathfrak{o}_Fe_1\oplus \mathfrak{p}_Fe_{-1}$ and $w\neq \id$. The
  rest of the cases are similar and are simpler. Let $u^+$ be the
  element $u(x, y)\in \mathcal{I}\cap U$, and let $u=\bar{u}(0, a)$ be
  an element in the group $U^w_{\rm der}(d(\mathfrak{x}, w,
  x))$. Using the identity \eqref{type_D_unram_weight_der_eq_1}, we
  get that the element $u^+u(u^+)^{-1}$ is of the form $g_1g_2$, where
  $g_1\in P_{(q_2/2)+}(\Lambda_{F[\beta_1]})$ and
  $g_2\in P_{(n/2)+}(\Lambda)$. From the definition of the integer
  $d(\mathfrak{x}, w, x)$, we get that
  $1+ax\sigma(x)\in F^\times \cap P_{(n/2)+}(\Lambda)$; therefore, we
  get that ${\bf 1}_{V_1}g_1{\bf 1}_{V_1}$ belongs to
  $F^\times \cap P_{(n/2)+}(\Lambda)$. Hence, the determinant of
  ${\bf 1}_{W_1}g_1{\bf 1}_{W_1}$ belongs to
  $F^\times \cap P_{(n/2)+}(\Lambda)$. The lemma follows from the
  defining property, \cite[Definition 3.2.3(a)]{Orrangebook}, of the
  character $\theta$.
\end{proof}
\begin{lemma}\label{type_D_unram_non_gen}
  Let $F/F_0$ be a any quadratic extension and
  $\mathfrak{x}=[\Lambda, n, 0, \beta]$ be a stratum of type {\bf (D)}
  such that $\mathfrak{X}_\beta(F_0)$ is the empty set. Every cuspidal
  representation contained in the set $\Pi_\mathfrak{x}$ is
  non-generic.
\end{lemma}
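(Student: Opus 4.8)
The plan is to run the argument of Lemma~\ref{type_C_iso_non-gen} in the present setting, with the type~{\bf (C)} shallow-element apparatus replaced by that of Subsection~\ref{type_D_W_1_iso_prelim}. By Lemma~\ref{type_D_main_W_1_aniso} we may assume $(W_1, h)$ is isotropic, and we work with the Witt-basis $(e_1, e_0, e_{-1})$ with $e_1, e_{-1}\in W_1$, $e_0\in V_1$, the Borel subgroup $\sch{B}$ with $B$ fixing $\langle e_1\rangle$ and unipotent radical $\sch{U}$, the split maximal torus $\sch{T}$, and the Iwahori subgroup $\mathcal{I}$, as in Subsection~\ref{type_D_W_1_iso_prelim}. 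Since $\mathcal{I}\subseteq P(\Lambda)$ and $G=\mathcal{I}W_GB$, a Mackey-decomposition of $\res_U\pi$ shows that if $\pi=\ind_{J^0(\Lambda, \beta)}^G(\kappa\otimes\tau)\in\Pi_\mathfrak{x}$ is generic then
\[
\ho_{J^0(\Lambda, \beta)^{g}\cap U^w}\bigl((\kappa\otimes\tau)^{g}, \Psi\bigr)\neq 0
\]
for some $g\in\mathcal{I}$, $w\in W_G$ and a character $\Psi$ of $U^w$. Writing $g=p^+u^-$ with $p^+\in B^w\cap\mathcal{I}$, $u^-=\dot{u}(x,y)\in\overline{U^w}\cap\mathcal{I}$ via the Iwahori decomposition relative to $(\overline{B^w}, T)$, this reduces as in Lemma~\ref{type_C_iso_non-gen} to
\[
\ho_{J^0(\Lambda, \beta)^{u^-}\cap U^w}\bigl((\kappa\otimes\tau)^{u^-}, \Psi'\bigr)\neq 0
\]
for some character $\Psi'$ of $U^w$.

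Put $e_w=we_1$, $e_{-w}=we_{-1}$; the non-trivial element of $W_G$ preserves $W_1$ and $V_1$, so $u^-e_w=e_w+xe_{0,w}+ye_{-w}$ with $e_{0,w}=we_0\in V_1$, and decomposing along $V_1\perp W_1$ exhibits $h(u^-e_w,\beta u^-e_w)$ as the sum of a scalar multiple of $\beta_1x\sigma(x)$ and the quantity $h(e_w+ye_{-w},\beta(e_w+ye_{-w}))$, which is supported on $W_1$. Writing $u^-e_w=av_1+bv_2+cv_3$, we have $a\neq 0$ if and only if $x\neq 0$, and since $\mathfrak{X}_\beta(F_0)$ is empty, $h(u^-e_w,\beta u^-e_w)\neq 0$. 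When $x\neq 0$ and $\beta_2\lambda_2b\sigma(b)+\beta_3\lambda_3c\sigma(c)\neq 0$, the valuation of $h(u^-e_w,\beta u^-e_w)$ is given by Lemma~\ref{type_D_unram_non_gen_val_sep} if $F/F_0$ is unramified and by Lemma~\ref{type_D_ram_non_gen_val_sep} if $F/F_0$ is ramified; the two degenerate subcases---$x=0$, when $e_w+ye_{-w}$ is isotropic in $W_1$ and $h(u^-e_w,\beta u^-e_w)=(\beta_2-\beta_3)\lambda_2b\sigma(b)$, or $\beta_2\lambda_2b\sigma(b)+\beta_3\lambda_3c\sigma(c)=0$, when $h(u^-e_w,\beta u^-e_w)$ is a scalar multiple of $\beta_1\lambda_1a\sigma(a)$---are read off from the displayed identity using $\lambda_2b\sigma(b)+\lambda_3c\sigma(c)=y+\sigma(y)=-x\sigma(x)$.

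Next I would prove the analogue of Lemma~\ref{type_C_iso_numerical}: if $d(\mathfrak{x}, w, x)>d(\mathfrak{x}, w)$ then $\nu_{F_0}\bigl(\delta h(u^-e_w,\beta u^-e_w)\bigr)\leq -d(\mathfrak{x}, w, x)$. Combined with the lemma immediately following Lemma~\ref{type_D_unram_weight_der} (which yields $\res_{U^w_{\rm der}(d(\mathfrak{x},w,x))}\theta^{u^-}=\psi_\beta^{u^-}$ for $\theta\in\mathcal{C}(\Lambda, 0, \beta)$) and Lemma~\ref{basic_inequality}, this forces $\psi_\beta^{u^-}$ to be non-trivial on $U^w_{\rm der}(d(\mathfrak{x},w,x))\subseteq J^0(\Lambda, \beta)^{u^-}\cap U^w_{\rm der}$, contradicting the non-vanishing above since every character $\Psi'$ of $U^w$ is trivial on $[U^w, U^w]$. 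Hence $d(\mathfrak{x}, w, x)=d(\mathfrak{x}, w)$, so $u^-\in H^1(\Lambda, \beta)\subseteq J^0(\Lambda, \beta)$ by Lemma~\ref{type_D_unram_weight_der}; as $\kappa\otimes\tau$ and its conjugate by $u^-$ are isomorphic $J^0(\Lambda, \beta)$-representations we may take $u^-=\id$. In that case $h(e_w,\beta e_w)=(\beta_2-\beta_3)\lambda_2b_0\sigma(b_0)\neq 0$ with $e_w=b_0v_2+c_0v_3$, and the lattice description of Subsection~\ref{type_D_W_1_iso_prelim} pins its valuation down so that $\psi_\beta=\res_{P_{(n/2)+}(\Lambda)}\theta$ is non-trivial on $P_{(n/2)+}(\Lambda)\cap U^w_{\rm der}$; the Heisenberg-lift argument of Lemma~\ref{type_C_iso_non_gen_at_id}---restricting $\kappa$ to $(J^0(\Lambda, \beta)\cap U^w_{\rm der})J^1(\Lambda, \beta)$ and invoking the defining property of a skew semisimple character---then upgrades this to the statement that $\res_{J^0(\Lambda, \beta)\cap U^w_{\rm der}}(\kappa\otimes\tau)$ is a direct sum of non-trivial characters, giving the final contradiction.

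The main obstacle is the numerical inequality together with the $u^-=\id$ base case: type~{\bf (D)} carries three one-dimensional blocks, and the vanishing of $\mathfrak{X}_\beta(F_0)$ splits into the regimes isolated in Lemma~\ref{type_D_unram_non_uni_x_beta} and, for ramified $F/F_0$, the casework inside Lemma~\ref{type_D_ram_non_gen_val_sep}, so one must run the valuation bookkeeping through each combination of $F/F_0$ ramified or unramified, $\beta_3=0$ or $\beta_3\neq 0$, the two possibilities for $\Lambda\cap W_1$, and $w=\id$ versus $w\neq\id$, all the while keeping the stable constant $d(\mathfrak{x}, w)$ aligned with the actual filtration level of $J^0(\Lambda, \beta)^{u^-}\cap U^w_{\rm der}$. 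In each case, however, this reduces to an elementary inequality among the valuations of $\beta_1,\beta_2,\beta_3$ and $\lambda_1,\lambda_2,\lambda_3$, exactly as in the type~{\bf (C)} analysis.
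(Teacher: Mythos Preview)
Your overall strategy---Mackey decomposition via $\mathcal{I}W_GB$, reduction to $u^-\in\overline{U^w}\cap\mathcal{I}$, and the shallow-element machinery of Subsection~\ref{type_D_W_1_iso_prelim}---matches the paper. But the paper does \emph{not} split into the two regimes $d(\mathfrak{x},w,x)>d(\mathfrak{x},w)$ and $d(\mathfrak{x},w,x)=d(\mathfrak{x},w)$; it proves the valuation inequality
\[
\nu_{F_0}\bigl(\delta\,h(u^-e_w,\beta u^-e_w)\bigr)\ \le\ -\,d(\mathfrak{x},w,x)
\]
\emph{unconditionally}, for every $u^-\in\mathcal{I}\cap\overline{U^w}$ (this is the content of the Claim in the proof). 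Combined with Lemma~\ref{type_D_unram_weight_der} and the lemma following it, this gives $\theta^{u^-}=\psi_\beta^{u^-}$ non-trivial on $U^w_{\rm der}(d(\mathfrak{x},w,x))\subseteq H^1(\Lambda,\beta)^{u^-}$ in one stroke, and the contradiction follows. No separate ``$u^-=\id$'' base case is needed, precisely because $e_w\in W_1$ and $\beta_2\neq\beta_3$ force $h(e_w,\beta e_w)\neq 0$; this is what distinguishes type~{\bf (D)} from type~{\bf (C)}, where $e_w\in V_2$ and $\beta$ acts by a scalar there.

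Your Step~3 contains a genuine gap. You assert that at $u^-=\id$ the character $\psi_\beta$ is non-trivial on $P_{(n/2)+}(\Lambda)\cap U^w_{\rm der}$. This is false in general: in the unramified case with $\Lambda(0)\cap W_1=\mathfrak{o}_Fe_1\oplus\mathfrak{o}_Fe_{-1}$ one has $P_{(n/2)+}(\Lambda)\cap U^w_{\rm der}=U^w_{\rm der}(m_1+1)$ while $\nu_F(h(e_w,\beta e_w))=\nu_F(\beta_2)=-(2m_2+r_2)$, and there is no reason for $2m_2+r_2\ge m_1+1$ when $q_1\gg q_2$ (take $m_1=5$, $m_2=1$, $r_1=1$, $r_2=0$; then $\mathfrak{X}_\beta(F_0)=\emptyset$ by Lemma~\ref{type_D_unram_uniform_x_beta}, yet $2\not\ge 6$). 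What \emph{is} true is that $\psi_\beta$ is non-trivial on the larger group $U^w_{\rm der}(d(\mathfrak{x},w))=U^w_{\rm der}(m_2+1)$, and on that group one still has $\theta=\psi_\beta$ by the lemma after Lemma~\ref{type_D_unram_weight_der}. But once you observe this, you are simply verifying the paper's unconditional inequality at $u^-=\id$, and the case split (together with the reduction ``$d=d(\mathfrak{x},w)\Rightarrow u^-\in H^1$'', which Lemma~\ref{type_D_unram_weight_der} does not actually assert) becomes superfluous. The Heisenberg-lift argument of Lemma~\ref{type_C_iso_non_gen_at_id} is likewise a red herring here: in type~{\bf (D)} the quotient $J^0(\Lambda,\beta)/J^1(\Lambda,\beta)$ is anisotropic, so there is no unipotent contribution from $\tau$ to exploit, and none is needed since $\psi_\beta$ itself is already non-trivial at the correct filtration level.
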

\begin{proof}
  Let $\pi$ be a cuspidal representation in the set
  $\Pi_\mathfrak{x}$.  Let $(J^0(\Lambda, \beta), \kappa)$ be a
  Bushnell--Kutzko's type contained in the representation
  $\pi$. Assume that the representation $\pi$ is generic. Then there
  exists a $w\in W_G$, an element
  $u=\dot u(x, y)\in \mathcal{I}\cap \overline{U^w}$, and a
  non-trivial character $\Psi$ of $U$ such that
$$\ho_{J^0(\Lambda, \beta)^{u}\cap U^w}(\kappa^{u}, \Psi^w)\neq 0.$$
In particular, the above identity implies that 
\begin{equation}\label{type_D_W_1_isotropic_non-gen_gen_cond}
  \ho_{H^1(\Lambda, \beta)^{u}\cap U^w_{\rm der}}(\theta^{u},
  \id)\neq 0,
  \end{equation}
  where $\theta$ is the skew semisimple character contained in
  $\res_{H^1(\Lambda, \beta)}\kappa$.

  Consider the case where $F/F_0$ is any quadratic extension and
  $\Lambda\cap W_1$ is equal to $\mathfrak{o}_Fe_1\oplus
  \mathfrak{o}_Fe_{-1}$. In this case, we have $\nu_\Lambda(e_w)=0$;
  therefore, we get that $ue_w=bv_2+xv_1+cv_3$, for some $ b, c\in
  \mathfrak{o}_F$. If $\nu_{F/F_0}(x)=0$, then we have $$\nu_{F/F_0}(h(ue_w, \beta
  ue_w))=\nu_{F/F_0}(\beta_1).$$ If $\nu_{F/F_0}(x)>0$, then we have $b, c\in
  \mathfrak{o}_F^\times$. Since $\mathfrak{x}$ is skew semisimple
  stratum, we have $(1-\beta_2\beta_3^{-1})\in \mathfrak{o}_F^\times$.
  From the assumption that $\nu_{F/F_0}(x)>0$, we get that
  $\lambda_2b\sigma(b)+\lambda_3c\sigma(c)\in \mathfrak{p}_F$. Hence,
  we have
  $$\nu_{F/F_0}(\lambda_2b\sigma(b)+\beta_3\beta_2^{-1}\lambda_3c\sigma(c))=0.$$  
  Using Lemmas \ref{type_D_unram_non_gen_val_sep} and
  \ref{type_D_ram_non_gen_val_sep}, we get that
\begin{align*}
  \nu_{F/F_0}(h(ue_w, \beta ue_w))&=\min\{\nu_{F/F_0}(\beta_1)+2\nu_{F/F_0}(x),
  \nu_{F/F_0}(\beta_2\lambda_2b\sigma(b)+\beta_3\lambda_3c\sigma(c))\}\\
  &=\min\{\nu_{F/F_0}(\beta_1)+2\nu_{F/F_0}(x),
  \nu_{F/F_0}(\beta_2)+\nu_{F/F_0}(\lambda_2b\sigma(b)+
  \beta_3\beta_2^{-1}\lambda_3c\sigma(c))\}\\
  &=\min\{\nu_{F/F_0}(\beta_1)+2\nu_{F/F_0}(x), \nu_{F/F_0}(\beta_2)\}.
\end{align*}

Consider the case where $F/F_0$ is unramified and $\Lambda\cap W_1$ is
equal to $\mathfrak{o}_Fe_1\oplus \mathfrak{p}_Fe_{-1}$. In this case,
we have $\nu_\Lambda(e_{\pm 1})=\pm 1$; therefore, we get that
\begin{eqnarray}
ue_w=
\begin{cases}
xv_1+bv_2+cv_3,&\ b, c\in \mathfrak{o}_F,\  \text{if}\ w=\id\\
xv_1+bv_2+cv_3, &\  b, c\in \mathfrak{p}^{-1}_F,\  \text{if}\ w\neq\id.
\end{cases}
\end{eqnarray}
If $w=\id$, we observe that $b\sigma(b)+c\sigma(c)\in \mathfrak{p}_F$;
which together with $\nu_F(e_1)=1$ implies that $b, c\in
\mathfrak{o}_F^\times$. Since, $\mathfrak{x}$ is skew semisimple we
have get that $b\sigma(b)+\beta_3\beta_2^{-1}c\sigma(c)\in
\mathfrak{o}_F^\times$. From Lemma \ref{type_D_unram_non_gen_val_sep},
we get that
$$\nu_F(h(ue_1, ue_1))=\min\{\nu_F(\beta_1)+2\nu_F(x),
\nu_F(\beta_2)+1\}.$$
If $w\neq\id$, then similar arguments as above imply that
$\nu_F(b)=\nu_F(c)=-1$, and
$\nu_F(b\sigma(b)+\beta_3\beta_2^{-1}c\sigma(c))=-2$. Hence, using
lemma we get that
$$\nu_F(h(ue_1, ue_1))=\min\{\nu_F(\beta_1)+2\nu_F(x),
\nu_F(\beta_2)-1\}.$$
\begin{claim}\label{type_D_claim}
  We claim that
  $$\nu_{F/F_0}(\delta h(ue_w, ue_w))\leq -d(\mathfrak{x}, w, x).$$
\end{claim}
Assuming Claim \ref{type_D_claim} we complete the proof of the lemma.
Using Lemma \ref{basic_inequality}, we get that the character
$\psi_\beta^{u}$ is non-trivial on the group $U_{\rm
  der}(d(\mathfrak{x}, w, x))$. Thus, we get a contradiction to the
equation \eqref{type_D_W_1_isotropic_non-gen_gen_cond}, and hence, the
cuspidal representation $\pi$ is non-generic.

\begin{proof}[Proof of Claim \ref{type_D_claim}]
  {\bf Case 1:} First consider the case where $F/F_0$ is unramified
  and
  $$\Lambda(0)\cap W_1=\mathfrak{o}_Fe_1\oplus
  \mathfrak{o}_Fe_{-1}.$$ The integer $\nu_F(h(ue_1, ue_1))$ is equal
  to $\min\{-(2m_2+r_2), -(2m_1+r_1)+2\nu_F(x)\}$.  Recall that
  $-d(\mathfrak{x}, w, x)$ is equal to $\min\{-m_2-1,
  -m_1-1+\nu_F(x)\}$.  Observe that $-(2m_2+r_2)\leq -m_2-1$, unless
  $m_2+r_2=0$; since the stratum $\mathfrak{x}$ is a skew semisimple
  stratum of type {\bf (D)}, we have $m_2+r_2>0$ (see the assumption
  in the equation \eqref{type_D_inducing_data}). Assume that
  $-(2m_2+r_2)$ is equal to $\min\{-(2m_2+r_2),
  -(2m_1+r_1)+2\nu_F(x)\}$. Then we have
$$\nu_F(x)\geq m_1-m_2+(r_1-r_2)/2=
(m_1+1)-(2m_2+r_2)+(m_2-1+(r_1+r_2)/2)).$$ Using Lemma
\ref{type_D_unram_uniform_x_beta}, we get that $r_1-r_2$ is an odd
integer. Hence, we get that $m_2-1+(r_1+r_2)/2)\geq -1/2$.  Therefore,
we have
$$\nu_F(x)-m_1-1\geq -(2m_2+r_2).$$
Assume that $-(2m_1+r_1)+2\nu_F(x)$ is equal to $\min\{-(2m_2+r_2),
-(2m_1+r_1)+2\nu_F(x)\}$.
We have
$$\nu_F(x)\leq m_1-m_2+(r_1-r_2)/2\leq
m_1+r_1-1-(m_2+(r_1+r_2)/2-1).$$ Since, $r_1+r_2$ is odd, we get that
$m_2+(r_1+r_2)/2-1\geq -1/2$. Hence, we the inequality $\nu_F(x)\leq
m_1+r_1-1$ and we deduce that $-(2m_1+r_1)+2\nu_F(x)\leq
-(m_1+1)+\nu_F(x)$.  Finally, using the inequality
$$-(2m_1+r_1)+2\nu_F(x)\leq -(2m_2+r_2)\leq -(m_2+1),$$
we complete the verification of our claim in the present case.

{\bf Case 2:} Let us consider the case where $F/F_0$ is unramified and
 $$\Lambda(0)\cap W_1=\mathfrak{o}_Fe_1\oplus
 \mathfrak{p}_Fe_{-1}.$$ In this case, we have $r_1=r_2$. First,
 assume that $w=\id$. Then we have
 \begin{align*}
-d(\mathfrak{x}, \id, x)&=\min\{-m_2, -(m_1+r_1)+\nu_F(x)\},\\
 \nu_F(h(ue_1, ue_1))&=\min\{-(2m_2+r_2)+1,
 -(2m_1+r_1)+2\nu_F(x)\}.
\end{align*}
If $-(2m_2+r_2)+1$ is equal to $\nu_F(h(ue_1, ue_1))$, then we get
that
$$\nu_F(x)-m_1-r_1\geq -m_2-(r_1+r_2)/2+1/2\geq 
-(2m_2+r_2)+1+(m_2+(r_2-r_1)/2-1/2).$$ 
Since
$m_2+(r_2-r_1)/2-1/2\geq -1/2$, we get that 
$$\nu_F(x)-m_1-r_1\geq -(2m_2+r_2)+1.$$ 
If $-(2m_1+r_1)+2\nu_F(x)$ is equal to $\nu_F(h(ue_1, ue_1))$, then we
that
$$\nu_F(x)\leq m_1-m_2+1/2\leq m_1+1/2.$$
Since $\nu_F(x)$ is an integer, we get that $\nu_F(x)\leq
m_1$. Therefore, we get that
$$-(2m_1+r_1)+2\nu_F(x)\leq -(m_1+r_1)+\nu_F(x).$$
Hence, in the case where $w=\id$ we get that
$$\min\{-(2m_2+r_2)+1, -(2m_1+r_1)+2\nu_F(x)\}\leq -d(\mathfrak{x}, w, x).$$

Let us continue with the case considered in the previous paragraph but
with $w\neq \id$. We have 
\begin{align*}
-d(\mathfrak{x}, \id, x)=&
\min\{-(m_2+2), -(m_1+r_1+1)+\nu_F(x)\},\\
\nu_F(h(ue_{-1}, ue_{-1}))=&\min\{-(2m_2+r_2)-1,
-(2m_1+r_1)+2\nu_F(x))\}.
\end{align*}
If $-(2m_2+r_2)-1$ is equal to
$\nu_F(h(ue_{-1}, ue_{-1}))$, then we get that
$$\nu_F(x)\geq m_1-m_2-1/2.$$
From this we get that $\nu_F(x)\geq m_1$; therefore, we have
$$-(m_1+r_1+1)+\nu_F(x)\geq -(2m_2+r_2)-1.$$
If $-(2m_1+r_1)+2\nu_F(x)$ is equal to $\nu_F(h(ue_{-1}, ue_{-1}))$,
then we get that $\nu_F(x)\leq m_1-m_2-1/2$. Since $\nu_F(x)$ is an
integer, we get that $\nu_F(x)\leq m_1-1$. Hence, we complete the
verification of the claim in the present case.

{\bf Case 3:} Let us consider the case where $F/F_0$ is a ramified
extension.  In this case, we have
\begin{align*}
  -d(\mathfrak{x}, w, x)&=\min\{-\lceil m_2/2\rceil, -\lceil
  m_1/2-\nu_{F/F_0}(x)\rceil\},\\
  \nu_{F/F_0}(\delta h(ue_w, ue_w))&=\min\{-m_2, -m_1+2\nu_{F/F_0}(x)\}.
\end{align*}
We clearly have $-m_2\leq -\lceil m_2/2\rceil$, for $m_2\geq 0$. Now,
assume that $-m_2$ is equal to $ \nu_{F/F_0}(\delta h(ue_w, ue_w))$.  Then we
have
$$m_2\geq m_2/2\geq m_1/2-\nu_{F/F_0}(x).$$
From the above inequality we get that $m_2\geq \lceil
m_1/2-\nu_{F/F_0}(x)\rceil $. We assume that $-m_1+2\nu_{F/F_0}(x)$ is equal to $
\nu_{F/F_0}(\delta h(ue_w, ue_w))$. Then we get that $\nu_{F/F_0}(x)\leq
m_1/2-m_2/2$. Hence, we have $\nu_{F/F_0}(x)\leq m_1/2$ and this is
equivalent to the inequality
$$-m_1+2\nu_{F/F_0}(x)\leq -\lceil m_1/2-\nu_{F/F_0}(x) \rceil.$$
With this we complete the verification of the claim in all cases.
\end{proof}
\end{proof}
\section{The depth-zero case.}
When $F/F_0$ is unramified, the classification of generic depth-zero
cuspidal representations of $G$ can be deduced from the general work
of DeBacker--Reeder in the article \cite[Section
6]{depth-zero_debacker_reeder}. From their results, generic depth-zero
cuspidal representations are precisely the representations of the form
$$\ind_{P_0(\Lambda)}^G\sigma,$$
where $P_0(\Lambda)$ is a parahoric subgroup such that
$P_0(\Lambda)/P_1(\Lambda)$ is isomorphic to $U(2,1)(k_F/k_{F_0})$, and
$\sigma$ is the inflation of a cuspidal generic representation of
$P_0(\Lambda)/P_1(\Lambda)$. Now, we assume that $F/F_0$ is a ramified
extension and consider a cuspidal representation of $G$, isomorphic
to 
\begin{equation}\label{depth-zero_eq_1}
\ind_{P_0(\Lambda)}^G\sigma,
\end{equation} where $P^0(\Lambda)$ is a maximal
parahoric subgroup of $G$, and $\sigma$ is the inflation of a cuspidal
representation of $P_0(\Lambda)/P_1(\Lambda)$.

If $F/F_0$ is ramified, the groups $P_0(\Lambda)/P_1(\Lambda)$ is the
$k_F$-rational points of a disconnected reductive group over $k_F$. An
irreducible representation $\sigma$ of $P_0(\Lambda)/P_1(\Lambda)$ is
called a cuspidal representation if
$\res_{P^0(\Lambda)/P_1(\Lambda)}\sigma$ is a direct sum of cuspidal
representations.  An irreducible representation $\sigma$ of
$P_0(\Lambda)/P_1(\Lambda)$ is called generic if and only if its
restriction to a $p$-Sylow subgroup, say $H$, contains a non-trivial
character of $H$.

Let $(e_1, e_0, e_{-1})$ be any Witt-basis for $(V,h)$ then upto $G$
conjugation there are two lattice sequences $\Lambda_1$ and
$\Lambda_2$ such that $P^0(\Lambda_i)$ is a maximal parahoric
subgroup, for $i\in\{1,2\}$.  We have $e(\Lambda_i)=2$, for
$i\in\{1,2\}$, and
$$\Lambda_1(-1)=\Lambda_1(0)=
\mathfrak{o}_Fe_1\oplus \mathfrak{o}_Fe_0\oplus
\mathfrak{o}_Fe_{-1},$$
$$\Lambda_2(0)=\mathfrak{o}_Fe_1\oplus \mathfrak{o}_Fe_0\oplus 
\mathfrak{p}_Fe_{-1}\ \text{and}\ \Lambda_2(1)=\mathfrak{o}_Fe_1\oplus
\mathfrak{p}_Fe_0\oplus \mathfrak{p}_Fe_{-1}.$$ 

Let $\sch{B}$ be the Borel subgroup of $\sch{G}$ such that $\langle
e_1\rangle$ is fixed by $\sch{B}$. Let $\sch{U}$ be the unipotent
radical of $\sch{B}$.  The groups $P^0(\Lambda_1)$ and
$P^0(\Lambda_2)$ are special maximal compact subgroups of $G$, and we
have the Iwasawa decomposition $$G=P_0(\Lambda_i)B,$$ for
$i\in\{1,2\}$.  The representation of the form \eqref{depth-zero_eq_1}
is generic if and only if
$$\ho_{P_0(\Lambda)\cap U}(\sigma, \Psi)\neq 0,$$
for some character $\Psi$ of $U$.
\begin{lemma}\label{depth_zero_non_gen}
  Let $F/F_0$ be a ramified quadratic extension. A depth-zero cuspidal
  representation $\pi$ of $G$ is generic if and only if
  $$\pi\simeq\ind_{P_0(\Lambda_1)}^G\sigma,$$
  where $\sigma$ is a generic cuspidal representation of
  $P_0(\Lambda_1)/P_1(\Lambda_1)$.
\end{lemma}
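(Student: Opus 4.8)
The plan is to invoke the reduction recorded just before the statement: up to $G$-conjugacy the inducing lattice sequence is $\Lambda_1$ or $\Lambda_2$, and for $\pi\simeq\ind_{P_0(\Lambda_i)}^G\sigma$ one has $\pi$ generic if and only if $\ho_{P_0(\Lambda_i)\cap U}(\sigma,\Psi)\neq 0$ for some non-trivial (non-degenerate) character $\Psi$ of $U$. Since $\sigma$ is inflated from $\bar G_i:=P_0(\Lambda_i)/P_1(\Lambda_i)$, the restriction of a $\Psi$-Whittaker functional to $P_0(\Lambda_i)\cap U$ is a functional on $\sigma$ transforming, under the image $\bar U_i$ of $P_0(\Lambda_i)\cap U$ in $\bar G_i$, by the character induced from $\Psi$; conversely such a functional produces genericity of $\pi$. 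So the whole question reduces to a finite--group one: which characters of $\bar U_i$ arise from non-trivial characters of $U$, and whether a cuspidal $\sigma$ contains them.

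First I would record the structure of $U$ in the Witt basis $(e_1,e_0,e_{-1})$. The commutator subgroup of $U$ is $U_{\rm der}=\{u(0,d):d+\sigma(d)=0\}$, and every character of $U$ factors through $U/U_{\rm der}$, which is isomorphic to $(F,+)$ via the coordinate $c$; thus every character of $U$ has the form $u(c,d)\mapsto\psi_F(ac)$ for some $a\in F$, and it is non-degenerate exactly when $a\neq 0$. Next I would carry out the (elementary) lattice computations of $P_0(\Lambda_i)\cap U$ and $P_1(\Lambda_i)\cap U$. For $\Lambda_1$ one finds $P_0(\Lambda_1)\cap U=\{u(c,d):c,d\in\mathfrak{o}_F\}$ and $P_1(\Lambda_1)\cap U=\{u(c,d):c,d\in\mathfrak{p}_F\}$; reducing the relation $c\sigma(c)+d+\sigma(d)=0$ modulo $\mathfrak{p}_F$, where $\sigma$ acts trivially since $F/F_0$ is ramified, shows that $\bar U_1$ is one--dimensional over $k_F$, parametrised by the image of $c$, and meets the image of $U_{\rm der}$ trivially. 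For $\Lambda_2$ one finds $P_1(\Lambda_2)\cap U=\{u(c,d):c,d\in\mathfrak{o}_F\}$ but $P_0(\Lambda_2)\cap U=\{u(c,d):c\in\mathfrak{o}_F,\ d\in\mathfrak{p}_F^{-1}\}$, so that $P_0(\Lambda_2)\cap U=(P_1(\Lambda_2)\cap U)(U_{\rm der}\cap P_0(\Lambda_2))$ and the image $\bar U_2$ of $P_0(\Lambda_2)\cap U$ in $\bar G_2$ is the non-trivial one--dimensional unipotent subgroup coming entirely from $U_{\rm der}$.

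With these in hand the two cases separate cleanly. For $\Lambda_2$: any character $\Psi$ of $U$ is trivial on $U_{\rm der}$, hence on $U_{\rm der}\cap P_0(\Lambda_2)$, so a $\Psi$-Whittaker functional on $\sigma$ restricts to a $\bar U_2$-invariant functional on $\sigma$; but $\bar U_2$ is the unipotent radical of a proper parabolic subgroup of the semisimple rank one group $\bar G_2$, and $\sigma$ is cuspidal, so no such non-zero functional exists. Hence $\ho_{P_0(\Lambda_2)\cap U}(\sigma,\Psi)=0$ for every $\Psi$, and every representation of the form $\ind_{P_0(\Lambda_2)}^G\sigma$ is non-generic. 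For $\Lambda_1$: choosing $\Psi(u(c,d))=\psi_F(ac)$ with $a\in\mathfrak{o}_F^\times$ gives a non-degenerate character of $U$ that is trivial on $P_1(\Lambda_1)\cap U$ and descends to a non-trivial character $\bar\Psi$ of $\bar U_1\cong(k_F,+)$; letting $a$ vary over $\mathfrak{o}_F^\times$ realises every non-trivial character of $\bar U_1$ this way. Since $\sigma$ is cuspidal, $\sigma|_{\bar U_1}$ contains no trivial character, and being non-zero it therefore contains some non-trivial character $\bar\Psi_0$; taking the corresponding $\Psi$ gives $\ho_{P_0(\Lambda_1)\cap U}(\sigma,\Psi)\neq 0$, so $\ind_{P_0(\Lambda_1)}^G\sigma$ is generic. (With the definition of genericity recalled above --- non-triviality of $\sigma$ on a $p$-Sylow subgroup, which here is $\bar U_1$ --- every cuspidal representation of $\bar G_1$ is in fact generic, which is why the statement may be phrased with ``generic cuspidal''.) Combining the two cases gives the lemma.

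The step I expect to be the main obstacle is pinning down the reductive quotients $\bar G_1$ and $\bar G_2$, the residue groups of the two maximal parahoric subgroups of the ramified $U(2,1)$, precisely enough to know that $\bar U_2$ is the unipotent radical of a proper parabolic of $\bar G_2$ (so cuspidality of $\sigma$ forces the vanishing of $\bar U_2$-invariants) and that $\bar U_1$ is a full maximal unipotent of $\bar G_1$ detected by the restriction of every cuspidal $\sigma$; once these finite groups of Lie type and their cuspidal representations are identified, what remains is the bookkeeping of the lattice intersections above together with the character analysis of $U$.
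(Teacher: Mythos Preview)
Your proposal is correct and follows essentially the same approach as the paper: both reduce via the Iwasawa decomposition to checking $\ho_{P_0(\Lambda_i)\cap U}(\sigma,\Psi)$, compute the lattice intersections $P_0(\Lambda_i)\cap U$ and $P_1(\Lambda_i)\cap U$, and then separate the two cases by observing that for $\Lambda_2$ the image $\bar U_2$ lies in (indeed equals) the image of $U_{\rm der}$, so any $\Psi$ restricts trivially and cuspidality of $\sigma$ forbids $\bar U_2$-invariants, while for $\Lambda_1$ a suitable non-degenerate $\Psi$ descends to a non-trivial character of $\bar U_1$. The only cosmetic difference is that the paper phrases the $\Lambda_2$ obstruction as ``$\res_H\sigma$ is a sum of non-trivial characters of the $p$-Sylow $H$'' whereas you phrase it as ``$\bar U_2$ is a unipotent radical and $\sigma$ is cuspidal''; your additional remark that every cuspidal $\sigma$ of $\bar G_1$ is automatically generic (so the word ``generic'' in the statement is redundant) is a valid observation the paper leaves implicit.
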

\begin{proof}
  Let $\pi$ be a depth zero cuspidal representation isomorphic to
  $\ind_{P_0(\Lambda_1)}^G\sigma$. The image of $U\cap P_0(\Lambda_1)$
  in the quotient $P_0(\Lambda_1)/P_1(\Lambda_1)$ is the pro-$p$ Sylow
  subgroup of $P_0(\Lambda_1)/P_1(\Lambda_1)$.  Note that
  $P_0(\Lambda_1)\cap U$ is equal to
  $$\{u(x, y): x, y\in F, y+\sigma(y)+x\sigma(x)=0, \nu_{F/F_0}(y)\geq 0\},$$
  and the group $P_1(\Lambda_1)\cap U$ is equal to
  $$\{u(x, y): x, y\in F, y+\sigma(y)+x\sigma(x)=0, \nu_{F/F_0}(y)\geq 1/2\}.$$ 
  The quotient $(P_0(\Lambda_1)\cap U)/(P_1(\Lambda_1)\cap U)$ is
  isomorphic to $\{u(x, -x^2/2): x\in k_F\}$.  Let $\Psi$ be any
  non-trivial character of $U$ such that
  $\res_{P_0(\Lambda)\cap U}\Psi\neq \id$ and
  $\res_{P_1(\Lambda)\cap U}\Psi= \id$. For such a character $\Psi$,
 the space
  $$\ho_{P_0(\Lambda)\cap U}(\sigma, \Psi)\neq 0$$
  if and only if $\sigma$ is the inflation of a generic cuspidal
  representation of $P_0(\Lambda_1)/P_1(\Lambda_1)$. Hence, $\pi$ is
  generic if and only if $\sigma$ is generic.

  Let $\pi$ be a depth-zero cuspidal representation of the form
  $\pi\simeq \ind_{P_0(\Lambda_2)}^G\sigma$.  Assume that $\pi$ is
  generic, then we get that
\begin{equation}\label{depth-zero_eq_2}
\ho_{P_0(\Lambda_2)\cap U_{\rm der}}(\sigma, \id)\neq 0.
\end{equation}
The image of the group $P_0(\Lambda_2)\cap U_{\rm der}$ in the
quotient $P_0(\Lambda_2)/P_1(\Lambda_2)\simeq {\rm
  SL}_2(k_F)\times\{\pm 1\}$, is a $p$-Sylow subgroup, say $H$. Note
that $\res_H\sigma$ is a direct sum of non-trivial characters of
$H$. Thus we get a contradiction to the condition to the equation
\eqref{depth-zero_eq_2}. Hence, the representation $\pi$ is
non-generic.
\end{proof}
\section{Main theorem}
In this section, we combine the results obtained so far in the
following theorem. Recall the following notation: if $W$ is a
non-degenerate subspace of $V$, then ${\bf 1}_{W}$ is the projection
onto $W$ with kernel $W^\perp$.
\begin{theorem}\label{summary_main_theorem}
  Let $F$ be a non-Archimedean local
  field with odd residue characteristic.  Let
  $\mathfrak{x}=[\Lambda, n, 0, \beta]$ be a skew semisimple stratum
  and let $\Pi_\mathfrak{x}$ be the set of cuspidal representations
  containing a Bushnell--Kutzko type of the form
  $(J^0(\Lambda, \beta), \lambda)$. The cuspidal representations
  in the set $\Pi_\mathfrak{x}$ are either all generic or all
  non-generic. Furthermore, the following holds. 
\begin{enumerate}[label=(\Alph*)]
\item Let $\mathfrak{x}$ is a skew simple stratum, i.e., the case
  where $F[\beta]$ is a degree $3$ field extension of $F$.  Then the
  set $\mathfrak{X}_\beta(F_0)$ is non-empty, and every representation
  contained in the set $\Pi_\mathfrak{x}$ is generic.
\item Let $\mathfrak{x}$ be a skew semisimple stratum with the
  underlying splitting $V=V_1\perp V_2$ such that $\dim_FV_i=i$, for
  $i\in\{1,2\}$. Assume that $\beta=\beta_1+\beta_2$, where $\beta_i$
  is equal to ${\bf 1}_{V_i}\beta{\bf 1}_{V_i}$,
  $\sigma_h(\beta_i)=-\beta_i$, $F[\beta_2]$ is a degree $2$ field
  extension of $F$. Let $q_i$ be the integer
  $\nu_{\Lambda_i}(\beta_i)$, for $i\in\{1,2\}$.  If $q_1>q_2$, then a
  cuspidal representation in the set $\Pi_\mathfrak{x}$ is generic if
  and only if $(V_2, h)$ is isotropic. If $q_2>q_1$, then a cuspidal
  representation in the set $\Pi_\mathfrak{x}$ is generic if and only
  if $(V_2, h)$ is anisotropic. In this case, a cuspidal
  representation in the set $\Pi_\mathfrak{x}$ is generic if and only
  if the set $\mathfrak{X}_\beta(F_0)$ is non-empty.
\item Let $\mathfrak{x}$ be a skew semisimple stratum with the
  underlying splitting $V=V_1\perp V_2$ such that $\dim_FV_i=i$, for
  $i\in\{1,2\}$. We assume that $\beta=\beta_1+\beta_2$, where
  $\beta_i$ is equal to ${\bf 1}_{V_i}\beta{\bf 1}_{V_i}$,
  $\beta_i\in F$, and $\sigma(\beta_i)=-\beta_i$, for
  $i\in\{1,2\}$. Every representation in the set $\Pi_\mathfrak{x}$ is
  non-generic. The set $\mathfrak{X}_\beta(F_0)$ is non-empty if and only
  if $(V_2, h)$ is isotropic.
\item Let $\mathfrak{x}$ be a skew semisimple stratum with the underlying 
splitting $V=V_1\perp V_2\perp V_3$. Then a representation in the set 
$\Pi_\mathfrak{x}$ is generic if and only if $\mathfrak{X}_\beta(F_0)$ is 
non-empty. 
\end{enumerate}
\end{theorem}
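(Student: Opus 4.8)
The plan is to collate the case-by-case analysis of Sections \ref{the-simple_case} through \ref{type_D_sec} with the depth-zero discussion. First I would reduce to the case in which $\mathfrak{x}$ is of one of the four types \textbf{(A)}--\textbf{(D)}. By the exhaustion property recalled in Subsection \ref{simple_characters}, every cuspidal representation of $G$ lies in $\Pi_\mathfrak{x}$ for some skew semisimple stratum $\mathfrak{x}=[\Lambda,n,0,\beta]$ with $P^0(\Lambda_\beta)$ a maximal parahoric subgroup of $G_\beta$. Since $\dim_F V=3$, the decomposition $F[\beta]=F[\beta_1]\oplus\cdots\oplus F[\beta_k]$ satisfies $\sum_i\dim_F V_i=3$ and $[F[\beta_i]:F]\leq\dim_F V_i$; enumerating the possibilities shows that, apart from the scalar case $F[\beta]=F$, the stratum $\mathfrak{x}$ is of exactly one of the types \textbf{(A)} ($F[\beta]$ a cubic field), \textbf{(B)} ($\dim_F V_2=2$ and $F[\beta_2]/F$ quadratic), \textbf{(C)} ($\dim_F V_2=2$ and $\beta_2\in F$), or \textbf{(D)} ($V=V_1\perp V_2\perp V_3$ with each $\dim_F V_i=1$). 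In the scalar case, twisting $\pi$ by a suitable character of $G$ (which is trivial on unipotent subgroups, hence does not affect genericity) reduces $\mathfrak{x}$ to a zero stratum, so the classification follows from the depth-zero results, namely Lemma \ref{depth_zero_non_gen} together with \cite[Section 6]{depth-zero_debacker_reeder}. Finally, the dichotomy ``all generic or all non-generic'' in $\Pi_\mathfrak{x}$ needs no separate argument: each of the lemmas invoked below reaches its conclusion for every member of $\Pi_\mathfrak{x}$ at once.

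For type \textbf{(A)}, Lemma \ref{type_A_existence_flags} gives $\mathfrak{X}_\beta(F_0)\neq\emptyset$ and Proposition \ref{type_A_end_prop} gives genericity of every member of $\Pi_\mathfrak{x}$, which is statement \textbf{(A)}. For type \textbf{(B)}, I would first recall from Subsection \ref{type_B_valuations} that the arithmetic constraints on $q_1,q_2$ force $q_1\neq q_2$, so that $\mathfrak{x}$ falls into exactly one of four sub-cases according to whether $q_1>q_2$ or $q_2>q_1$ and whether $(V_2,h)$ is isotropic or anisotropic. When $q_1>q_2$: in the isotropic case Lemmas \ref{type_B_iso_q_1>q_2_whit_data} and \ref{type_B_ram_iso_q_1>q_2_whit_mod} give $\mathfrak{X}_\beta(F_0)\neq\emptyset$ and Lemma \ref{type_B_iso_q_1>q_2_generic} gives genericity, while in the anisotropic case Lemmas \ref{type_B_aniso_q_1>q_2} and \ref{type_B_ram_aniso_q_1>q_2} give $\mathfrak{X}_\beta(F_0)=\emptyset$ and non-genericity. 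When $q_2>q_1$: the isotropic case is handled by Lemmas \ref{type_B_iso_q_1<q_2} and \ref{type_B_ram_iso_q_1>q_2_non_gen} ($\mathfrak{X}_\beta(F_0)=\emptyset$, non-generic), and the anisotropic case by Lemmas \ref{type_B_aniso_q_2>q_1_exis_whit_mod} and \ref{type_B_aniso_q_2>q_1} ($\mathfrak{X}_\beta(F_0)\neq\emptyset$, generic). Reading off these four sub-cases yields statement \textbf{(B)}, and in particular recovers the equivalence ``generic if and only if $\mathfrak{X}_\beta(F_0)\neq\emptyset$'' by matching with the criterion of Subsection \ref{criterion_non-empty}.

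For type \textbf{(C)}, every member of $\Pi_\mathfrak{x}$ is non-generic: by Lemma \ref{type_C_aniso} when $(V_2,h)$ is anisotropic, and by Lemma \ref{type_C_iso_non-gen} when $(V_2,h)$ is isotropic, the latter resting on the shallow-element estimates of Lemmas \ref{type_C_iso_weight}--\ref{type_C_iso_numerical}, which are formulated for an arbitrary quadratic $F/F_0$. The non-emptiness clause is an elementary computation: a nonzero isotropic $v=v_1'+v_2'$ (with $v_i'\in V_i$) spans a line fixed by a Borel subgroup in $\mathfrak{X}_\beta(F_0)$ precisely when $h(v,v)=h(v,\beta v)=0$, and since $\beta_1,\beta_2\in F$ with $\beta_1\neq\beta_2$ one computes $h(v,\beta v)=(\beta_1-\beta_2)h(v_2',v_2')$, so such a $v$ exists if and only if $(V_2,h)$ carries a nonzero isotropic vector. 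For type \textbf{(D)}, Lemma \ref{type_D_gen_main} shows $\mathfrak{X}_\beta(F_0)\neq\emptyset$ implies every member of $\Pi_\mathfrak{x}$ is generic, while Lemmas \ref{type_D_main_W_1_aniso} (the case $(W_1,h)$ anisotropic) and \ref{type_D_unram_non_gen} (the case $(W_1,h)$ isotropic, which uses the valuation lemmas \ref{type_D_unram_non_gen_val_sep} and \ref{type_D_ram_non_gen_val_sep}) show $\mathfrak{X}_\beta(F_0)=\emptyset$ implies every member is non-generic; combining these gives statement \textbf{(D)}, the explicit non-emptiness criteria in the unramified case being Lemmas \ref{type_D_unram_uniform_x_beta} and \ref{type_D_unram_non_uni_x_beta}.

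Since the genuine work lies in Sections \ref{the-simple_case}--\ref{type_D_sec}, there is no substantive obstacle; the points needing care are organisational. One must check that the list \textbf{(A)}--\textbf{(D)} together with the scalar case is exhaustive and mutually exclusive given $\dim_F V=3$ and $[F[\beta_i]:F]\leq\dim_F V_i$. In type \textbf{(B)} one must verify that the four sub-cases are correctly matched against the non-emptiness criterion of Subsection \ref{criterion_non-empty}, so that ``generic if and only if $\mathfrak{X}_\beta(F_0)\neq\emptyset$'' is recovered; the relation $q_1\neq q_2$ is essential here. And in type \textbf{(C)} with $F/F_0$ ramified and $(V_2,h)$ isotropic one should confirm that the argument of Lemmas \ref{type_C_iso_weight}--\ref{type_C_iso_non-gen}, developed for a general quadratic extension, applies verbatim. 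Modulo these verifications the theorem follows.
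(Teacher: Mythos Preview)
Your proposal is correct and follows essentially the same approach as the paper's own proof, which is likewise a collation of the case-by-case lemmas from Sections \ref{the-simple_case}--\ref{type_D_sec}. You are in fact slightly more thorough: you supply the elementary computation for the non-emptiness clause in case \textbf{(C)} (which the paper leaves implicit), and you correctly flag that Lemma \ref{type_C_iso_non-gen} is stated only for unramified $F/F_0$ even though its supporting Lemmas \ref{type_C_iso_weight}--\ref{type_C_iso_numerical} are written for an arbitrary quadratic extension.
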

In part $(2)$ of the above theorem, we have $q_1\neq q_2$ (see
paragraph \ref{type_B_valuations}).  In the case where the underlying
splitting of a skew semisimple strata $\mathfrak{x}$ is equal to
$V=V_1\perp V_2\perp V_3$, it is fairly easy to determine the
necessary and sufficient conditions on $\beta$ for the non-emptiness
of the set $\mathfrak{X}_\beta(F_0)$. Assume that
$\beta_i={\bf 1}_{V_i}\beta {\bf 1}_{V_i}$, for $1\leq i\leq 3$. When
$F/F_0$ is unramified, the non-emptiness of $\mathfrak{X}_\beta(F_0)$
depends only on the set of integers
$\{\nu_F(\beta_1), \nu_F(\beta_2), \nu_F(\beta_3)\}$ and the
isomorphism classes of $(V_i, h)$, for $1\leq i \leq 3$. We refer to
Lemmas \ref{type_D_unram_uniform_x_beta} and
\ref{type_D_unram_non_uni_x_beta} for these results.  However, in the
case where $F/F_0$ is ramified, one requires more invariants on
$\beta$ to determine whether $\mathfrak{X}_\beta(F_0)$ is empty or
not. Since these invariants are not the natural invariants attached to
a stratum, we did not make them explicit.  For details, we refer to
the proof of Lemma \ref{type_D_ram_non_gen_val_sep}.
\begin{proof}[Proof of Theorem \ref{summary_main_theorem}]
  We indicate the precise references to proofs of various parts
  enumerated in the theorem. The first part, case $(A)$, follows from
  Proposition \ref{type_A_end_prop}. In case $(B)$, when $q_1>q_2$,
  the corresponding statements are proved in Lemmas
  \ref{type_B_iso_q_1>q_2_generic}, \ref{type_B_aniso_q_1>q_2}, and
  \ref{type_B_ram_aniso_q_1>q_2}. In case $(B)$, when $q_2>q_1$, the
  corresponding statements are proved in Lemmas
  \ref{type_B_iso_q_1<q_2}, \ref{type_B_ram_iso_q_1>q_2_non_gen}, and
  \ref{type_B_aniso_q_2>q_1}. The statements in case $(C)$ are proved
  in Lemmas \ref{type_C_aniso} and \ref{type_C_iso_non-gen}.  The
  statements in case $(D)$ are proved in \ref{type_D_gen_main},
  \ref{type_D_main_W_1_aniso} and \ref{type_D_unram_non_gen}. Now, the
  main statement of the theorem on genericity or non-genericity of all
  representations in $\Pi_\mathfrak{x}$ follows from statements in
  cases $(A), (B), (C)$, and $(D)$.
  \end{proof}
\appendix
\section{Appendix: Filtration of
  \texorpdfstring{$U_{\rm der}$}{} induced by lattice sequences}
In this section we fix some representatives for $G$-conjugacy classes
of self-dual lattice sequences on $V$ and describe $a_{n}(\Lambda)$,
for $n\in \mathbb{Z}$. Then we use them to determine
$U_{\rm der}\cap a_n(\Lambda)$, for $n\in \mathbb{Z}$. These
calculations are used in showing certain representations are
non-generic. 
\subsection{The unramified case:}
We begin with
the case where $F/F_0$ is unramified. Let $\Lambda_1$ be the lattice
sequence of periodicity $2$ and
$$\Lambda_1(-1)=\Lambda_1(0)=\mathfrak{o}_Fe_1\oplus
\mathfrak{o}_Fe_0\oplus \mathfrak{o}_Fe_{-1}.$$
The filtration $\{a_n(\Lambda)\ |\ n\in \mathbb{Z}\}$ of $\End_F(V)$
is given by
\begin{equation}\label{unram_fil_1}
a_{2m-1}(\Lambda_1)=\varpi^{m}\begin{pmatrix}
  \mathfrak{o}_F&\mathfrak{o}_F&\mathfrak{o}_F\\
  \mathfrak{o}_F&\mathfrak{o}_F&\mathfrak{o}_F\\
  \mathfrak{o}_F&\mathfrak{o}_F&\mathfrak{o}_F
\end{pmatrix}\cap \mathfrak{g}\ \text{and}\ 
a_{2m}(\Lambda_1)=\varpi^{m}\begin{pmatrix}
  \mathfrak{o}_F&\mathfrak{o}_F&\mathfrak{o}_F\\
  \mathfrak{o}_F&\mathfrak{o}_F&\mathfrak{o}_F\\
  \mathfrak{o}_F&\mathfrak{o}_F&\mathfrak{o}_F
\end{pmatrix}\cap \mathfrak{g},
\end{equation}
for all $m\in \mathbb{Z}$. Let $\Lambda_2$ be a
period $2$ lattice sequence given by
$$\Lambda_2(0)=\mathfrak{o}_Fe_1\oplus
\mathfrak{o}_Fe_0\oplus \mathfrak{p}_Fe_{-1}\ \text{and}\
\Lambda_2(1)=\mathfrak{o}_Fe_1\oplus \mathfrak{p}_Fe_0\oplus
\mathfrak{p}_Fe_{-1}.$$ The filtration
$\{a_n(\Lambda_2)\ |\ n\in \mathbb{Z}\}$ is given by:
\begin{equation}\label{unram_fil_2}
a_{2m}(\Lambda_2)=\varpi^{m}\begin{pmatrix}
    \mathfrak{o}_F&\mathfrak{o}_F&\mathfrak{p}_F^{-1}\\
    \mathfrak{p}_F&\mathfrak{o}_F&\mathfrak{o}_F\\
    \mathfrak{p}_F&\mathfrak{p}_F&\mathfrak{o}_F
\end{pmatrix}\cap \mathfrak{g}\ \text{and}\ 
a_{2m+1}(\Lambda_2)=\varpi^{m}\begin{pmatrix}
  \mathfrak{p}_F&\mathfrak{o}_F&\mathfrak{o}_F\\
  \mathfrak{p}_F&\mathfrak{p}_F&\mathfrak{o}_F\\
  \mathfrak{p}_F^2&\mathfrak{p}_F&\mathfrak{p}_F
\end{pmatrix}\cap \mathfrak{g},\end{equation} for all $m\in \mathbb{Z}$. 
Let $\Lambda_3$ be the lattice sequence
of period $4$ given by
\begin{align*}
  \Lambda_3(-1)=\mathfrak{o}_Fe_1\oplus \mathfrak{o}_Fe_0\oplus
  \mathfrak{o}_Fe_{-1}, &\ \Lambda_3(0)=\mathfrak{o}_Fe_1\oplus
                          \mathfrak{o}_Fe_0\oplus\mathfrak{p}_Fe_{-1},\\
  \Lambda_3(1)=\mathfrak{o}_Fe_1\oplus \mathfrak{p}_Fe_0\oplus
  \mathfrak{p}_Fe_{-1},\ &\ \Lambda_3(2)=\mathfrak{p}_Fe_1\oplus
                           \mathfrak{p}_Fe_0\oplus \mathfrak{p}_Fe_{-1}.
\end{align*}
The filtration $\{a_n(\Lambda_3)\ |\ n\in \mathbb{Z}\}$ on
$\mathfrak{g}$ is given by:
\begin{eqnarray}\label{unram_fil_3}
  a_{4m+r}(\Lambda_3)=
\begin{cases}
  \varpi^{m}\begin{pmatrix}
    \mathfrak{o}_F&\mathfrak{o}_F&\mathfrak{o}_F\\
    \mathfrak{p}_F&\mathfrak{o}_F&\mathfrak{o}_F\\
    \mathfrak{p}_F&\mathfrak{p}_F&\mathfrak{o}_F
\end{pmatrix}\cap \mathfrak{g}\ \text{if}\ r=0,\\
\varpi^{m}\begin{pmatrix}
  \mathfrak{p}_F&\mathfrak{o}_F&\mathfrak{o}_F\\
  \mathfrak{p}_F&\mathfrak{p}_F&\mathfrak{o}_F\\
  \mathfrak{p}_F&\mathfrak{p}_F&\mathfrak{p}_F
\end{pmatrix}\cap \mathfrak{g}\ \text{if}\ r=1,\\
\varpi^{m}\begin{pmatrix}
\mathfrak{p}_F&\mathfrak{p}_F&\mathfrak{o}_F\\
\mathfrak{p}_F&\mathfrak{p}_F&\mathfrak{p}_F\\
\mathfrak{p}_F&\mathfrak{p}_F&\mathfrak{p}_F
\end{pmatrix}\cap \mathfrak{g}\ \text{if}\ r=2,\\
\varpi^{m}\begin{pmatrix}
\mathfrak{p}_F&\mathfrak{p}_F&\mathfrak{p}_F\\
\mathfrak{p}_F&\mathfrak{p}_F&\mathfrak{p}_F\\
\mathfrak{p}^2_F&\mathfrak{p}_F&\mathfrak{p}_F
\end{pmatrix}\cap \mathfrak{g}\ \text{if}\ r=3.
\end{cases}
\end{eqnarray}
Although, there is a lattice sequence, say $\Lambda_4$, with period
$6$, we do not need to write it down explicitly. This corresponds to
type {\bf (A)} strata and in this case all representations are
generic. The filtration $\{U_{\rm der}\cap a_n(\Lambda_1)\ |\
n\in \mathbb{Z}\}$ is given by:
$$U_{\rm der}\cap a_{2m-1}(\Lambda_1)=U_{\rm der}\cap a_{2m}(\Lambda_1)=
U_{\rm der}(m),$$ for $m\in \mathbb{Z}$. The filtration $\{U_{\rm
  der}\cap a_n(\Lambda_2)\ |\ n\in \mathbb{Z}\}$ is given by
$$U_{\rm der}\cap a_{2m}(\Lambda_2)=U_{\rm der}(m-1)\
 \text{and}\ U_{\rm der}\cap a_{2m+1}(\Lambda_2)=
U_{\rm der}(m).$$ The filtration $\Lambda_3$ is given by 
\begin{eqnarray}
U_{\rm der}\cap a_{4m+r}(\Lambda_2)=
\begin{cases}
U_{\rm der}(m)\ \text{if}\ r=0,\\
U_{\rm der}(m)\ \text{if}\ r=1,\\
U_{\rm der}(m)\ \text{if}\ r=2,\\
U_{\rm der}(m+1)\ \text{if}\ r=3. 
\end{cases}
\end{eqnarray}
\subsection{The ramified case:}
Now, assume that $F/F_0$ is a ramified extension and $\Lambda_1$ and
$\Lambda_2$ be the lattice sequence of period $2$ given by
$$\Lambda_1(-1)=\Lambda(0)=\mathfrak{o}_Fe_{1}\oplus   \mathfrak{o}_Fe_0
\oplus \mathfrak{o}_Fe_{-1}.$$ and
$$\Lambda_2(0)=\mathfrak{o}_Fe_{1}\oplus   \mathfrak{o}_Fe_0
\oplus \mathfrak{p}_Fe_{-1}\ \text{and}\
\Lambda_2(1)=\mathfrak{o}_Fe_{1}\oplus \mathfrak{p}_Fe_0 \oplus
\mathfrak{p}_Fe_{-1}$$ The filtration $\{a_n(\Lambda_1)\ |\ n\in
\mathbb{Z}\}$ is similar to the filtration in \eqref{unram_fil_1}.
The flitration $\{a_n(\Lambda_2)\ |\ n\in \mathbb{Z}\}$, in this case,
is similar to the filtration in \eqref{unram_fil_2}. We will not
require to write the filtrations $\{a_n(\Lambda')\ |\ n\in
\mathbb{Z}\}$ for which $P^0(\Lambda')$ is an Iwahori subgroup of $G$.
The filtration $\{U_{\rm der}\cap a_n(\Lambda_1)\ |\
n\in \mathbb{Z}\}$ is given by 
$$U_{\rm der}\cap a_{2m-1}(\Lambda_1)=
U_{\rm der}\cap a_{2m}(\Lambda_1)=U_{\rm der}([m/2]),$$
for all $m\in \mathbb{Z}$. 
The filtration $\{U_{\rm der}\cap a_n(\Lambda_2)\ |\
n\in \mathbb{Z}\}$ is given by 
$$U_{\rm der}\cap a_{2m-1}(\Lambda_2)=
U_{\rm der}\cap a_{2m}(\Lambda_2)=U_{\rm der}([(m-1)/2]),$$ for any
$m\in \mathbb{Z}$.

 \bibliography{../biblio}
\bibliographystyle{abbrv}
\noindent
Santosh Nadimpalli, IMAPP, Radboud Universiteit Nijmegen,
Heyendaalseweg 135, 6525AJ Nijmegen, The Netherlands.
\texttt{nvrnsantosh@gmail.com}, \texttt{Santosh.Nadimpalli@ru.nl}.
\end{document}